\documentclass[a4paper,11pt]{article}
\usepackage{amsmath,amsthm,amssymb}
\usepackage{enumerate}

\usepackage[utf8]{inputenc}
\usepackage{amsfonts}

\usepackage[english]{babel}
\usepackage[utf8]{inputenc}
\usepackage[a4paper]{geometry}
\usepackage{latexsym}
\usepackage{amscd}
\usepackage{graphics}
\usepackage{color}
\usepackage{array}
\usepackage{mathrsfs}
\usepackage{graphicx}
\usepackage{stmaryrd}
\usepackage{mathabx}
\usepackage{dsfont}
\usepackage{comment}
\usepackage{calligra}
\usepackage{varioref}
\usepackage{prettyref}
\usepackage{tikz}
\usepackage{tikz-cd}

\def\subjclass#1{{\renewcommand{\thefootnote}{}%
\footnote{\emph{Mathematics Subject Classification (2020):} #1}}}

\def\keywords#1{{\renewcommand{\thefootnote}{}%
\footnote{\emph{Keywords:} #1}}}

\def\ackn#1{{\renewcommand{\thefootnote}{}%
\footnote{#1}}}

\newtheorem{thm}{Theorem}[section]
\newtheorem{cor}[thm]{Corollary}
\newtheorem{lem}[thm]{Lemma}

\newtheorem{prop}[thm]{Proposition}
\newtheorem{conj}[thm]{Conjecture}

\newtheorem{question}[thm]{Question}
\newtheorem{claim}[thm]{Claim}

\newrefformat{sec}{Section \ref{#1}}
\newrefformat{subsec}{Subsection \ref{#1}}
\newrefformat{thm}{Theorem \ref{#1}}
\newrefformat{cor}{Corollary \ref{#1}}
\newrefformat{lem}{Lemma \ref{#1}}
\newrefformat{prob}{Problem \ref{#1}}
\newrefformat{prop}{Proposition \ref{#1}}
\newrefformat{conj}{Conjecture \ref{#1}}
\newrefformat{fact}{Fact \ref{#1}}
\newrefformat{question}{Question \ref{#1}}
\newrefformat{defin}{Definition \ref{#1}}
\newrefformat{claim}{Claim \ref{#1}}
\newrefformat{exa}{Example \ref{#1}}
\newrefformat{exs}{Examples \ref{#1}}
\newrefformat{rem}{Remark \ref{#1}}

\theoremstyle{definition}
\newtheorem{defin}[thm]{Definition}
\newtheorem{rem}[thm]{Remark}
\newtheorem{exa}[thm]{Example}
\newtheorem{exs}[thm]{Examples}


\numberwithin{equation}{section}

\newcommand{\N}{\mathbb{N}}
\newcommand{\R}{\mathbb{R}}
\newcommand{\U}{\mathcal{U}}
\newcommand{\X}{\mathcal{X}}
\renewcommand{\H}{\mathcal{H}}

\newcommand{\G}{\mathcal{G}}
\newcommand{\I}{\textbf{I}}
\newcommand{\II}{\textbf{II}}

\renewcommand{\P}{\mathbb{P}}
\newcommand{\Ros}{\mathcal{R}}
\newcommand{\HHP}{$d_2$-HI{}}
\newcommand{\MNH}{$d_2$-minimal{}}
\newcommand{\TNH}{$d_2$-tight{}}
\newcommand{\concat}{%
  \mathbin{\raisebox{1ex}{\scalebox{.7}{$\frown$}}}%
}

\DeclareMathOperator{\supp}{supp}
\DeclareMathOperator{\spa}{span}
\DeclareMathOperator{\Seq}{Seq}
\DeclareMathOperator{\Id}{Id}

\DeclareMathOperator{\Sub}{Sub}
\DeclareMathOperator{\Ban}{Ban}
\DeclareMathOperator{\AP}{AP}
\DeclareMathOperator{\bs}{bs}
\DeclareMathOperator{\Ball}{B}

\newcommand{\Subinf}{\Sub^\infty}
\newcommand{\Subfin}{\Sub^{<\infty}}
\newcommand{\ttau}{\widetilde{\tau}}

\newcommand{\elinf}{[\N]^\infty}
\newcommand{\Ezero}{\mathbf{E}_0}

\newcommand{\Baninf}{\Ban^\infty}
\newcommand{\Banfin}{\Ban^{<\infty}}

\makeatletter
\newcommand{\alaligne}{~\vspace*{\topsep}\nobreak\@afterheading}
\makeatother

\usepackage{glossaries}

\newcommand{\cL}{\mathcal{L}}
\newcommand{\Part}{\mathcal P}

\begin{document}

\title{Local Banach-space dichotomies and  ergodic spaces}

\author{W. Cuellar Carrera \and N. de Rancourt \and V. Ferenczi}

\date{\today}

\maketitle

\begin{abstract}
We prove a local version of Gowers' Ramsey-type theorem \cite{GowersRamsey}, as well as local versions both  of the  Banach space first dichotomy
(the ``unconditional/HI'' dichotomy) of Gowers \cite{GowersRamsey} and of the third dichotomy  (the ``minimal/tight'' dichotomy) due to
Ferenczi--Rosendal \cite{FerencziRosendalMinimal}. This means that we obtain versions of these dichotomies restricted to  certain
families of subspaces called D-families, of which several concrete examples are given. As a main example,
 non-Hilbertian spaces form D-families; therefore versions of the above properties for non-Hilbertian spaces appear in new Banach space dichotomies.
 As a consequence we obtain new information on the number of subspaces of non-Hilbertian Banach spaces, making some progress towards
 the ``ergodic'' conjecture of Ferenczi--Rosendal and towards a question of Johnson.

\keywords{Ergodic Banach spaces, Ramsey theory, Banach-space dichotomies, non-Hilbertian spaces, minimal Banach spaces, hereditarily indecomposable Banach spaces.}
\end{abstract}

\subjclass{Primary: 46B20; Secondary: 46B03, 03E15, 03E60, 05D10.}

\ackn{Cuellar was supported by FAPESP grants (2018/18593-1) and (2019/23669-0). De Rancourt was supported by FWF Grant P29999 and by the joint FWF–GA\v{C}R Grant No. 17-33849L. Ferenczi was supported by CNPq grants (303034/2015-7) and (303721/2019-2). Cuellar and Ferenczi were supported by FAPESP grant (2016/25574-8). De Rancourt and Ferenczi were supported by USP-COFECUB grant 31466UC.}
\tableofcontents





\

\setcounter{footnote}{0}

\section{Introduction and background}

In this paper, we will only consider real Banach spaces; however, all of our results transpose to the complex case. Unless otherwise specified, when writing about a \textit{Banach space} (or simply a \textit{space}), we shall mean an infinite-dimensional Banach space, by \textit{subspace} of a Banach space, we shall mean infinite-dimensional, closed vector subspace, and by \textit{direct sum}, we shall mean topological direct sum. By \textit{operator}, we shall always mean bounded linear operator. By \textit{Hilbertian space} we mean a space which is linearly isomorphic (but not necessarily isometric) to a Hilbert space.
For all other unexplained notation, see the end of this introduction.

\bigskip

\subsection{Ergodic Banach spaces}

A Banach space is said to be \textit{homogeneous}  if it is isomorphic to all of its (closed, infinite-dimensional) subspaces. A famous problem due to Banach, and known as the \textit{homogeneous space problem}, asked whether, up to isomorphism, $\ell_2$ is the only homogeneous Banach space. The answer turned out to be positive; this problem was eventually solved in the 1990's by a combination of results by Gowers--Maurey \cite{GowersMaurey}, Komorowski--Tomczak-Jaegermann \cite{KomorowskiTomczak}, and Gowers \cite{GowersRamsey}.

\smallskip

The homogeneous space characterization of the Hilbert space shows that, as soon as a separable Banach space $X$ is non-Hilbertian, it should have at least two non-isomorphic subspaces. Thus, the following general question was asked by Godefroy:

\begin{question}[Godefroy]\label{question:Godefroy}
 How many different subspaces, up to isomorphism, can a separable, non-Hilbertian Banach space have? 
\end{question}

This question seems to be very difficult in general, although good lower bounds for several particular classes of spaces are now known. A seemingly simplest particular case of Godefroy's question was formulated by Johnson:

\begin{question}[Johnson]\label{question:Johnson}

Does there exist a separable Banach space having exactly two different subspaces, up to isomorphism?

\end{question}

Even this question is still open. More generally, it is not known whether there exist a separable, non-Hilbertian Banach space with at most countably many different subspaces, up to isomorphism; anticipating on considerations following below, let us note that a positive answer to  Conjecture \ref{conj:Ergodic} would imply that such a space does not exist.
 In the rest of this paper, a separable Banach space having exactly two different subspaces, up to isomorphism, will be called a \textit{Johnson space}.

\smallskip

It turns out that the right setting to study Godefroy's question is the theory of the \textit{classification of definable equivalence relations}. This theory studies equivalence relations $E$ on nonempty standard Borel spaces $X$ which, when seen as subsets of $X^2$, have a sufficiently low descriptive complexity (in general, Borel or analytic). Recall that a \textit{Polish space} is a separable and completely metrizable topological space. A \textit{standard Borel space} is a set $X$ equipped with a $\sigma$-algebra $\mathcal{B}$ such that $\mathcal{B}$ is the Borel $\sigma$-algebra associated to some Polish topology on $X$. When $X$ is a standard Borel space, the $X^n$'s, for $n \geqslant 1$, will always be endowed with the product $\sigma$-algebras; this makes them standard Borel spaces as well. A subset $A$ of a standard Borel space $(X, \mathcal{B})$ is said to be \textit{Borel} if it is an element of $\mathcal{B}$, \textit{analytic} if it is the projection of a Borel subset of $X^2$, and \textit{coanalytic} if its complement is analytic. A \textit{Borel mapping} between two standard Borel spaces is a mapping for which the preimage of every Borel set is Borel, and an \textit{isomorphism} is a Borel bijection (it automatically follows that its inverse is Borel). It is a classical fact in descriptive set theory that all uncountable standard Borel spaces are isomorphic, and that a Borel subset of a standard Borel space is itself a standard Borel space when equipped with the induced $\sigma$-algebra. For proofs of all the forementioned facts, see \cite{Kechris}. 
The central notion of the theory is  \textit{Borel-reducibility}.

\begin{defin}

Let $X, Y$ be nonempty standard Borel spaces, and $E, F$ be equivalence relations on $X$ and $Y$ respectively.

\begin{itemize}

    \item it is said that $E$ \textit{Borel-reduces} to $F$, denoted by $(X, E) \leqslant_B (Y, F)$ (or simply $E \leqslant_B F$) if there is a Borel mapping $f : X \longrightarrow Y$ (called a \textit{reduction}) such that for every $x, y \in X$, we have $x \, E \, y \Leftrightarrow f(x) \, F \, f(y)$.
    
    \item it is said $E$ and $F$ are Borel-equivalent, denoted by $E \equiv_B F$, if $E \leqslant_B F$ and $F \leqslant_B E$.
    
    \item we denote by $E <_B F$ the fact that $E \leqslant_B F$ and $E \nequiv_B F$.
    
\end{itemize}

\end{defin}

\noindent The Borel-reducibility relation defines a hierarchy of complexities on the class of all equivalence relations on standard Borel spaces, the complexity classes being the equivalence classes of $\equiv_B$.

\smallskip

Observe that a reduction $f$ from $(X, E)$ to $(Y, F)$ induces a one-to-one mapping $X/E \rightarrow Y/F$, and in particular, if $E \leqslant_B F$, then $|X/E| \leqslant |Y/F|$. Thus, classes of complexity can be seen as Borel cardinalities: studying the complexity of an equivalence relation gives us at least as much information than counting its classes. If $E$ is analytic and has at most countably many classes, then $E$ is actually Borel and $E \leqslant_B F \Leftrightarrow |X/E| \leqslant |Y/F|$. Thus, for such an $E$, the complexity of $E$ and the number of its classes agree. However, for relations with uncountably many classes, it turns out that the complexity of the relation gives strictly more information than the number of its classes.
The classification of relations with exactly continuum many classes is extremely complex and is actually the main focus of the theory.
\smallskip

We now define a particular equivalence relation that will be important in the rest of this paper. Denote by $\Delta$ the Cantor space, that is, $\{0, 1\}^\N$ with the product topology and the associated standard Borel structure.

\begin{defin}

The equivalence relation $\Ezero$ on $\Delta$ is defined as follows: two sequences $(x_n)_{n \in \N}$ and $(y_n)_{n \in \N}$ are $\Ezero$-equivalent if and only if $x_n = y_n$ eventually.

\end{defin}

It can easily be shown that $(\Delta, =) <_B (\Delta, \Ezero)$; in particular, $\Ezero$ and the equality on the Cantor space are examples of two inequivalent equivalence relations both having continuum-many classes. 
%
%
It follows from important dichotomies by Silver \cite{SilverEqRel} and by Harrington--Kechris--Louveau \cite{hkl} that the following family of equivalence relations:
$$(1, =) <_B (2, =) <_B (3, =) <_B \ldots <_B (\N, =) <_B (\Delta, =) <_B (\Delta, \mathbf{E_0})$$ is an exhaustive initial segment of the whole hierarchy of Borel equivalence relations, in the sense that every Borel equivalence relation $E$ is either Borel-equivalent to some element of this hierarchy, or is strictly above $\Ezero$. Note that this is not true anymore when $E$ is only supposed analytic 
(an analytic equivalence relation $E$ which is strictly above $(\N, =)$ and incomparable to $(\R, =)$ is constructed in \cite{SilverEqRel}).
For a complete presentation of the theory of the classification of definable equivalence relations, see \cite{Kanovei}; note for example that $\Ezero$ is still quite low in the whole hierarchy.

\smallskip

One of the main applications of this theory is the study of the complexity of classification problems in mathematics. When one wants to classify a class $\mathscr{C}$ of objects up to isomorphism, it is often possible to equip $\mathscr{C}$  with a natural Borel structure, for which the isomorphism relation is, in general, analytic. Knowing the complexity of this isomorphism relation gives an indication on the difficulty of the associated classification problem.
For instance, such a class of structures can be classified by real invariants if and only if the isomorphism relation on this class is reducible to $(\R, =)$ (or equivalently, to $(\Delta, =)$). Conversely, if $(\Delta, \Ezero)$ is reducible to the isomorphism relation on this class, this implies that the associated classification problem is quite complex.

\smallskip

One can, in particular, study the classification problem for closed vector-subspaces of a given separable Banach space $X$. To do this, we first need to put a standard Borel structure on $\Sub(X)$; this was first done by Bossard \cite{Bossard}. We refer to his paper for more details and proofs. The set $\Sub(X)$ is endowed with the \textit{Effros Borel structure}, that is, the $\sigma$-algebra generated by sets of the form $\{Y \in \Sub(X) \mid Y \cap U \neq \varnothing\}$, where $U$ ranges over all open subsets of $X$. This makes it a standard Borel space, on which the isomorphism relation is analytic. It is clear from the definition that this Borel structure on $\Sub(X)$ only depends on the isomorphic structure of $X$; in particular, if $T \colon X \to Y$ is an isomorphism between two separable Banach spaces, then $T$ induces a Borel isomorphism between $\Sub(X)$ and $\Sub(Y)$. It is also easy to see that if $Y$ is a subspace of $X$, then the Effros Borel structure on $\Sub(Y)$ coincides with the trace on $\Sub(Y)$ of the Effros Borel structure on $\Sub(X)$. We also mention the following lemma, which will be useful in applications. Here, $\Part(\N)$ is identified with the Cantor space and  if $(x_i)_{i \in I}$ is a family of elements of a Banach space $X$, we will let $[x_i \mid i \in I] = \overline{\spa(x_i \mid i \in I)}$.

\begin{lem}\label{lem:PNtoSub}

Let $X$ be a separable Banach space and let $(x_n)_{n \in \N}$ be a sequence of elements of $X$. Then the mapping $j \colon \Part(\N) \to \Sub(X)$ defined by $j(A) = [x_n \mid n \in A]$ is Borel.

\end{lem}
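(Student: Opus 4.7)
The plan is to verify the Borel measurability of $j$ by checking it on a generating family for the Effros Borel structure. Since this structure is generated by the sets $V_U := \{Y \in \Sub(X) \mid Y \cap U \neq \varnothing\}$ as $U$ ranges over open subsets of $X$, it is enough to fix an arbitrary open $U \subseteq X$ and show that
\[
j^{-1}(V_U) = \bigl\{A \in \Part(\N) \;\big|\; [x_n \mid n \in A] \cap U \neq \varnothing \bigr\}
\]
is Borel in $\Part(\N)$.

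The first key observation is that openness of $U$ lets us replace the closed span by the uncompleted span with rational coefficients. Indeed, if some $y \in [x_n \mid n \in A]$ lies in $U$, then by density of $\spa_\mathbb{Q}(x_n \mid n \in A)$ in $[x_n \mid n \in A]$ and openness of $U$, there exist finitely many indices $n_1, \dots, n_k \in A$ and rationals $\lambda_1, \dots, \lambda_k$ such that $\sum_{i=1}^k \lambda_i x_{n_i} \in U$; conversely, any such finite rational combination already belongs to $[x_n \mid n \in A] \cap U$. Hence
\[
j^{-1}(V_U) = \bigcup \bigl\{A \in \Part(\N) \;\big|\; F \subseteq A \bigr\},
\]
where the union runs over all pairs $(F, (\lambda_n)_{n \in F})$ with $F \subseteq \N$ finite and $(\lambda_n)_{n \in F} \in \mathbb{Q}^F$ satisfying $\sum_{n \in F} \lambda_n x_n \in U$.

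For each such pair, the set $\{A \in \Part(\N) \mid F \subseteq A\}$ is a basic clopen subset of the Cantor space $\Part(\N)$, and there are only countably many pairs $(F, (\lambda_n)_{n \in F})$ in the index set. Consequently $j^{-1}(V_U)$ is an open (in particular Borel) subset of $\Part(\N)$, which completes the verification.

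There is no real obstacle here: the only substantive point is the passage from the closed span to finite rational combinations, which is immediate from openness of $U$ and the density of $\mathbb{Q}$-combinations. Everything else is the standard reduction of Borel measurability to a generating family of sets, together with the elementary structure of basic open sets in the Cantor space.
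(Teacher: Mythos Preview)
Your proof is correct and follows essentially the same approach as the paper: both reduce to showing that $j^{-1}(V_U)$ is open in $\Part(\N)$ by using openness of $U$ to pass from the closed span to a finite linear combination, and then observing that the cylinder $\{A \mid F \subseteq A\}$ is a basic open neighborhood. The only cosmetic difference is that you write $j^{-1}(V_U)$ explicitly as a countable union of clopen sets (indexing over finite $F$ and rational coefficients), while the paper argues directly that each point of $j^{-1}(V_U)$ has an open neighborhood inside it.
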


\begin{proof}

Let $U$ be an open subset of $X$; we prove that $\mathcal{V}:=\{A \in \Part(N) \mid j(A) \cap U \neq \varnothing\}$ is an open subset of $\Part(\N)$, which is enough to conclude. Let $A \in \mathcal{V}$. Then since $U$ is open, $U$ contains a finite linear combination of the $x_n$'s, $n \in A$, so there is a finite $s \subseteq A$ such that $[x_n \mid n \in s] \cap U \neq \varnothing$. In particular, the open neighborhood $\{B \in \Part(\N) \mid s \subseteq B\}$ of $A$ is entirely contained in $\mathcal{V}$.

\end{proof}

Let us mention that the Effros Borel structure can also be used to study the isomorphism relation on the class of \textit{all} finite- and infinite-dimensional separable Banach spaces. Indeed, using the fact that the separable Banach space $\mathcal{C}(\Delta)$ is isometrically universal for this class,
we can identify the class of all finite- and infinite-dimensional separable Banach spaces with $\Sub(\mathcal{C}(\Delta))$. Using this coding, it has been shown by Ferenczi, Louveau and Rosendal \cite{felouro} that the isomorphism relation on the class of all finite- and infinite-dimensional separable Banach spaces is \textit{analytic-complete}, that is, is maximum for $\leqslant_B$ among all analytic equivalence relations on standard Borel spaces. This gives a formal proof of the heuristic fact that there is no reasonable classification of separable Banach spaces, up to isomorphism.

\smallskip

We can also simply study the complexity of the isomorphism relation on $\Sub(X)$ for any separable Banach space $X$; this complexity gives strictly more information that the number of different subspaces of $X$, up to isomorphism, \textit{including the finite-dimensional ones}. So Godefroy's question can be generalized by asking, for spaces $X$ with infinitely many different subspaces up to isomorphism, what is the complexity of the isomorphism relation of $\Sub(X)$. In their investigation on this question, Ferenczi and Rosendal defined the following class of separable Banach spaces in \cite{FerencziRosendalErgodic}:

\begin{defin}

A separable Banach space $X$ is said to be \textit{ergodic} if $\Ezero$ is Borel-reducible to the isomorphism relation on $\Sub(X)$.

\end{defin}

In particular, ergodic Banach spaces have continuum many pairwise non-isomorphic subpaces, and their subspaces cannot be classified by real numbers, up to isomorphism. Immediate consequences of this definition are that $\ell_2$ is non-ergodic, that a subspace of a non-ergodic space is itself non-ergodic, and that the notion of ergodicity is invariant under isomorphism. Ergodic Banach spaces are quite complex and on the contrary,  non-ergodic spaces are expected to be regular in some sense. Ferenczi and Rosendal have shown several regularity properties for non-ergodic spaces. For instance:

\begin{thm}[Ferenczi--Rosendal, \cite{FerencziRosendalOnTheNumber, RosendalCaracErgodic}]\label{square}

Let $X$ be a non-ergodic Banach space with an unconditional basis. Then $X$ is isomorphic to $X \oplus Y$ for every subspace $Y$ spanned by a (finite or infinite) subsequence of the basis. In particular, $X$ is isomorphic to its square and to its hyperplanes.

\end{thm}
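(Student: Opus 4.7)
The plan is to prove the contrapositive: assuming $X$ has an unconditional basis $(e_n)_{n\in\N}$ and that some $A\subseteq\N$ satisfies $X \not\cong X \oplus X_A$ (where $X_A := [e_n \mid n \in A]$), I shall construct a Borel reduction of $\Ezero$ to the isomorphism relation on $\Sub(X)$. Producing such a reduction exhibits $X$ as ergodic, contradicting the hypothesis.

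For the construction, I partition a cofinite subset of $\N$ into infinitely many disjoint infinite pieces $(N_k)_{k \in \N}$, chosen so that each $X_{N_k}$ contains a complemented isomorphic copy of $X_A$; this can be arranged by taking block-sequence copies of $A$ inside $\N$ and exploiting the unconditionality of the basis (so that each such block subspace is complemented in $X$ by the natural coordinate projection). For $\alpha \in 2^\N = \Delta$, define
\[
W_\alpha := X_{A \cup \bigcup\{N_k \mid \alpha(k) = 1\}} \in \Sub(X).
\]
The map $\alpha \mapsto W_\alpha$ is Borel by Lemma \ref{lem:PNtoSub}. What remains is to verify (a) $\alpha \Ezero \beta \Rightarrow W_\alpha \cong W_\beta$, and (b) $\alpha \not\Ezero \beta \Rightarrow W_\alpha \not\cong W_\beta$.

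Direction (a) is routine: the symmetric difference of the two index sets is a finite union of $N_k$'s, so $W_\alpha$ and $W_\beta$ differ only by adding or removing finitely many complemented summands of the form $X_{N_k}$, and since each $W_\alpha$ already contains infinitely many disjoint such summands, a Pe\l{}czy\'nski-style absorption argument yields $W_\alpha \cong W_\beta$. Direction (b) is where the standing hypothesis must intervene: the construction must be arranged so that any putative isomorphism $W_\alpha \cong W_\beta$ with $\alpha \not\Ezero \beta$ can be transported --- via a Cantor--Bernstein-type cancellation, using the complementedness of $X_A$ inside $X$ afforded by the unconditional basis --- into an isomorphism $X \cong X \oplus X_A$, contradicting our hypothesis.

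The main obstacle will be this negative direction (b): the ``copies of $X_A$'' encoded by the $N_k$'s must be rigid enough that an infinite mismatch between $\alpha$ and $\beta$ really produces a non-absorbable obstruction. Designing $(N_k)$ appropriately (possibly passing to a block subspace of $(e_n)$ that carries enough symmetry, or iterating the construction on copies of $X$ itself) and executing the precise Schr\"oder--Bernstein-like cancellation to reach the contradiction is the delicate step. Once the reduction is in hand, the final consequences follow: the case $A = \{0\}$ gives $X \cong X \oplus \R$, i.e.\ $X$ is isomorphic to its hyperplanes (and by induction to $X \oplus \R^n$ for every finite $n$, covering all finite subsequences), and the case $A = \N$ gives $X \cong X \oplus X$, i.e.\ $X$ is isomorphic to its square.
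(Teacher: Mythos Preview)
First, note that the paper does not actually prove this theorem: it is quoted as a known result from \cite{FerencziRosendalOnTheNumber, RosendalCaracErgodic} and no argument is given in the text. So there is no ``paper's proof'' to compare against; your proposal stands or falls on its own.

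As written, your proposal is a plan rather than a proof, and the plan has real gaps on both sides of the reduction. For direction~(a), the Pe\l{}czy\'nski-style absorption you invoke requires that $W_\alpha$ already absorb a single extra summand $X_{N_k}$, i.e.\ that $W_\alpha \cong W_\alpha \oplus X_{N_k}$. Containing infinitely many disjoint complemented copies of a space $Z$ does \emph{not} by itself give $W \cong W \oplus Z$; that is precisely the Schroeder--Bernstein obstruction. You would need the $N_k$'s arranged so that the relevant infinite sum is isomorphic to itself plus one more summand (e.g.\ by making all $X_{N_k}$ mutually isomorphic and ensuring every $W_\alpha$ contains the full infinite sum as a complemented summand), and you would also need to handle $\alpha$ with only finitely many $1$'s, where there is no infinite tail to absorb into. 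None of this is set up.

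Direction~(b) is the heart of the matter, and you explicitly leave it open: you say the $N_k$'s ``must be arranged'' so that an isomorphism $W_\alpha \cong W_\beta$ with $\alpha \not\mathrel{\Ezero} \beta$ can be ``transported'' into $X \cong X \oplus X_A$, but you give no mechanism for this transport. A generic isomorphism between two subspaces spanned by subsets of an unconditional basis need not respect the block structure in any usable way, so there is no obvious cancellation. The arguments in \cite{FerencziRosendalOnTheNumber, RosendalCaracErgodic} do not proceed by an ad hoc Cantor--Bernstein cancellation of this kind; they go through a Baire-category criterion for $\Ezero$-reducibility (the analogue of \prettyref{prop:RosendalE0} in the present paper) together with the observation that, under an unconditional basis, the isomorphism relation pulled back along $A \mapsto X_A$ contains $\mathbf{E}_0'$ and is analytic, so one is reduced to checking meagerness of isomorphism classes --- which is where the hypothesis $X \not\cong X \oplus X_A$ actually enters. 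Your sketch does not engage with this mechanism, and without it the crucial implication remains unproved.
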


\begin{thm}[Ferenczi--Rosendal, \cite{FerencziRosendalErgodic}]

Let $X$ be a non-ergodic separable Banach space. Then $X$ has a subspace $Y$ with an unconditional basis, such that $Y$ is isomorphic to $Y \oplus Z$ for every block-subspace $Z$ of $Y$.

\end{thm}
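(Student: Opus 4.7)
The plan has two main steps: first produce a subspace with an unconditional basis, then upgrade to the absorption property on all block-subspaces.

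\textbf{Step 1 (unconditional basis).} I apply Gowers' first dichotomy to $X$: every Banach space contains either a hereditarily indecomposable (HI) subspace or a subspace with an unconditional basis. Hereditarily indecomposable spaces are known to be ergodic; and ergodicity passes upward from subspaces to superspaces, since if $Y \subseteq X$ then $\Sub(Y)$ embeds Borel-canonically into $\Sub(X)$, so a reduction $\Ezero \leqslant_B (\Sub(Y), \cong)$ immediately yields $\Ezero \leqslant_B (\Sub(X), \cong)$. Therefore non-ergodicity of $X$ forbids the HI alternative, and $X$ has a subspace $Y_0$ with an unconditional basis $(e_n)_{n \in \N}$.

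\textbf{Step 2 (absorption on block-subspaces).} I work inside $Y_0$. By \prettyref{square} applied to $Y_0$, non-ergodicity already gives $Y_0 \cong Y_0 \oplus W$ for every $W$ spanned by a (finite or infinite) subsequence of $(e_n)$; in particular $Y_0 \cong Y_0 \oplus Y_0$. The task is to promote this from subsequence subspaces to arbitrary block-subspaces in some block-subspace $Y \subseteq Y_0$. I would argue by a Gowers–Ramsey stabilization on the Polish space $\bs(Y_0)$ of normalized block-sequences of $(e_n)$: given a block-subspace $Y \subseteq Y_0$, consider the assignment $(z_n) \in \bs(Y) \longmapsto [Y \oplus [z_n \mid n \in \N]]_{\cong}$. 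Using \prettyref{lem:PNtoSub} (or its block-subspace analogue) to ensure that the relevant set-theoretic map is Borel, the plan is to apply the appropriate Gowers Ramsey-type theorem for block-sequences to find a further block-subspace $Y' \subseteq Y$ on which this assignment is constant. Comparing with \prettyref{square} inside $Y'$ (note $Y'$ itself has unconditional basis and remains non-ergodic) forces the constant value to be $[Y']$, which is exactly the desired property.

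\textbf{Main obstacle.} The delicate step is step 2: turning non-ergodicity into Ramsey-type stabilization of the map $(z_n) \mapsto [Y \oplus [z_n]]$. If this map were not constant on any block-subspace of $Y_0$, then one would like to extract continuum many pairwise non-isomorphic block-subspaces in a sufficiently uniform manner to exhibit a Borel reduction from $\Ezero$, contradicting non-ergodicity. Making this extraction \emph{genuinely Borel} while keeping the isomorphism classes \emph{genuinely distinct}, rather than merely distinct along a chosen sequence, is the technical core of the argument, and is exactly where the Ramsey-theoretic machinery (Gowers' theorem for block-sequences, or an associated infinite asymptotic game) is needed.
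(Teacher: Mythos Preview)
The paper does not prove this theorem. It is quoted in the background section as a known result from \cite{FerencziRosendalErgodic}, with no proof supplied here, so there is nothing in the present paper to compare your proposal against.

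On the proposal itself: Step~1 is correct and standard. Step~2, however, is only a plan, and as written it does not go through. Gowers' Ramsey-type theorem applies to analytic (or at least suitably definable) subsets of the Polish space of block-sequences; the ``assignment'' $(z_n)\mapsto [Y\oplus[z_n]]_{\cong}$ you propose to stabilize takes values in isomorphism classes, which do not form a Polish space, so you cannot feed this map to Gowers' theorem directly. The actual argument in \cite{FerencziRosendalErgodic} does not attempt to stabilize this map. Instead one works, for each constant $K$, with genuinely analytic sets of block-sequences (roughly: those $(z_n)$ such that the ambient space $K$-embeds complementably into $[z_n]$), applies Gowers' theorem to these, and then uses the Pe\l czy\'nski decomposition method together with \prettyref{square} to conclude $Y\cong Y\oplus Z$. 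Your ``Main obstacle'' paragraph correctly senses that the passage from non-constancy to an $\Ezero$-reduction is the heart of the matter, but the mechanism you sketch (extracting continuum many non-isomorphic block-subspaces from failure of stabilization) is not how the original proof proceeds, and you have not indicated how to make it Borel and injective on $\Ezero$-classes.
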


All these results led them to the following conjecture:

\begin{conj}[Ferenczi--Rosendal]\label{conj:Ergodic}

Every separable non-Hilbertian Banach space is ergodic.

\end{conj}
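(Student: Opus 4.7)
The plan is to attack Conjecture \ref{conj:Ergodic} by combining the two local dichotomies announced in the abstract with previously known ergodicity results for the ``extreme'' members of the Gowers dichotomy list. Given a separable non-Hilbertian Banach space $X$, the strategy is to reduce, via successive applications of dichotomies restricted to non-Hilbertian subspaces (legitimate because non-Hilbertian spaces form a D-family), to one of a small number of structural cases, in each of which ergodicity can be verified by a tailor-made argument.

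First, I would apply the local first dichotomy (unconditional/HI restricted to non-Hilbertian subspaces) to $X$. In the HI branch, $X$ contains a non-Hilbertian HI subspace $Y$; the proof of ergodicity should then go through the fact that HI spaces have very few operators, so that isomorphisms between subspaces are necessarily of Fredholm type, which in turn forces many non-isomorphic subspaces by a spreading-model / perturbation argument in the spirit of Ferenczi--Galego and Anisca. In the unconditional branch, I may pass to a non-Hilbertian subspace $Z \subseteq X$ with an unconditional basis in which every non-Hilbertian (block) subspace inherits an unconditional structure; here \prettyref{square} already places strong restrictions on $Z$ if it were non-ergodic ($Z \simeq Z \oplus W$ for every block subspace $W$), and the goal is to combine this rigidity with the local theory to force a contradiction.

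Second, to the space $Z$ of the unconditional branch I would apply the local third dichotomy (minimal/tight restricted to non-Hilbertian subspaces). In the tight branch, a non-Hilbertian tight subspace is known, by Ferenczi--Rosendal's prior work, to admit a continuum of pairwise non-isomorphic subspaces coded in a sufficiently Borel way to witness $\Ezero \leqslant_B \, \simeq$, using \prettyref{lem:PNtoSub} to transfer sequences of coordinate subsets to actual subspaces. In the minimal branch, one obtains a non-Hilbertian minimal subspace $M$ with unconditional basis; here every infinite-dimensional subspace of $M$ contains a copy of $M$, so the ergodicity must come from \emph{quantitative} non-isomorphism between subspaces spanned by different infinite subsequences of the basis, exploiting that $M \not\simeq \ell_2$.

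The main obstacle, as I see it, is precisely this last minimal non-Hilbertian case: minimality means the subspaces are very homogeneous at the level of containment, so producing continuum many pairwise non-isomorphic ones requires a genuinely new invariant sensitive to the failure of being Hilbertian. A natural approach is to iterate the first dichotomy \emph{inside} $M$ (using that $M$ is non-Hilbertian and hence lies in our D-family) to produce a further HI or tight subspace, thereby reducing this recalcitrant case to one of the earlier cases; success depends on whether the local dichotomies can be applied in cascade without leaving the class of non-Hilbertian spaces, which is exactly the kind of closure property that the D-family framework is designed to guarantee. This gives a proof in the presence of these cascading dichotomies; absent a resolution of the remaining case, one still obtains a substantial partial result towards both Conjecture \ref{conj:Ergodic} and Question \ref{question:Johnson}.
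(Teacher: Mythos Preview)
The statement in question is an \emph{open conjecture}: the paper explicitly says ``This conjecture is still open'' and provides no proof. So there is no paper proof to compare against; what the paper does is establish partial progress (Theorems \ref{thm:ErgodicMNH} and \ref{thm:MNHHHP}) that narrows the conjecture down to a specific structural case it cannot resolve.

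Your proposal contains a systematic conflation that is exactly the reason the conjecture remains open. The local dichotomies of this paper do \emph{not} output HI, minimal, or tight subspaces; they output the strictly weaker properties \HHP{}, \MNH{}, and \TNH{}. Concretely: in your ``HI branch'' you only obtain a \HHP{} subspace $Y$, i.e.\ one containing no direct sum of two \emph{non-Hilbertian} subspaces. Such a $Y$ may well be $\ell_2$-saturated (see \prettyref{question:MNHHHPErg} and the examples following it), and then the Fredholm-type operator arguments you invoke for HI spaces simply do not apply. Likewise, in your ``minimal branch'' you obtain a \MNH{} space $M$, one embedding into all of its \emph{non-Hilbertian} subspaces; this is compatible with $M$ being $\ell_2$-saturated and far from classically minimal.

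Your final cascade idea also fails: the local dichotomy properties are hereditary (if $Y$ is \HHP{} then every non-Hilbertian subspace of $Y$ is \HHP{}; similarly for \MNH{}), so reapplying the dichotomies inside $M$ cannot produce anything new. The irreducible obstruction is the case of a space that is simultaneously \MNH{} and \HHP{}---precisely the paper's \prettyref{conj:MNHHHP}, which it leaves open. Your honest closing sentence (``absent a resolution of the remaining case, one still obtains a substantial partial result'') is correct, and that partial result is essentially the content of \prettyref{thm:MNHHHP}; but the body of your proposal overstates what the local dichotomies deliver.
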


This conjecture is still open. 
We quote below some of the most relevant partial results supporting the conjecture.

\begin{defin}

A Banach space $X$ is said to be \textit{minimal} if it embeds isomorphically into all of its subspaces.

\end{defin}

The notion of minimality was based on the classical examples of the $\ell_p$'s, $1 \leqslant p < \infty$, and $c_0$ (and their subspaces). Later on the dual of Tsirelson's space, and then Schlumprecht's arbitrarily distortable space were added to the list, see \cite{tsirelson74, casazzashura, schlumprecht}, as well as \cite{CasazzaKaltonKutzarovaMastylo} for variants on Schlumprecht's example. 

\begin{thm}[Ferenczi, \cite{FerencziErgodicMinimal}]\label{thm:FerencziMinimal}

Every non-ergodic separable Banach space contains a minimal subspace.

\end{thm}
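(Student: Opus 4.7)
The plan is to prove the contrapositive: if the separable Banach space $X$ contains no minimal subspace, then $X$ is ergodic, i.e., $\Ezero$ Borel-reduces to the isomorphism relation on $\Sub(X)$. The natural strategy is to pass to a ``rigid'' subspace via the known dichotomies and then build a reduction of $\Ezero$ out of that rigidity, exploiting \prettyref{lem:PNtoSub} to keep the construction Borel for free.

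First I would apply Gowers's first (unconditional/HI) dichotomy to $X$, obtaining a subspace $Y$ of $X$ which is either hereditarily indecomposable or has an unconditional basis. The HI case should be handled by appealing to the already established fact that every HI Banach space is ergodic (in HI spaces, embeddings between subspaces are so rigid that subsequence subspaces of a basic sequence in $Y$ can be used directly to code $\Ezero$); since having an ergodic subspace makes $X$ ergodic, this branch is closed.

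In the unconditional case, let $(e_n)_{n \in \N}$ denote the basis of $Y$. Since $Y$ inherits the absence of a minimal subspace from $X$, the Ferenczi--Rosendal third (minimal/tight) dichotomy, applied to $Y$, yields a subspace $Z$ with a basis $(z_n)$ that is \emph{tight} in a suitable sense: for every infinite-dimensional subspace $W$ of $Z$ there exists a partition $\N = \bigsqcup_k J_k^W$ into successive intervals such that $W$ does not embed into $[z_n \mid n \notin J_k^W]$ for any $k$.

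With this at hand, fix once and for all a partition $\N = \bigsqcup_k I_k$ into successive intervals and define $\phi : \Part(\N) \to \Sub(Z)$ by $\phi(A) = [z_n \mid n \in \bigcup_{k \in A} I_k]$, which is Borel by \prettyref{lem:PNtoSub}. I would then check that $\phi$ is a reduction from $\Ezero$ to $\cong$ on $\Sub(Z) \subseteq \Sub(X)$. The $\Rightarrow$ direction is easy: if $A \mathrel{\Ezero} B$, then $A \triangle B$ is finite, so $\phi(A)$ and $\phi(B)$ differ by a finite-dimensional correction and are isomorphic. The main obstacle is the converse: if $A \triangle B$ is infinite, one must derive a contradiction from a hypothetical isomorphism $\phi(A) \cong \phi(B)$. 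The idea is to use mutual embedding together with the tightness property, combined with a Gowers-type Ramsey stabilisation of the block structure of the embedding, to find an infinite block subspace of $\phi(A)$ that is forced to embed into the complement of one of the intervals $J_k^W$ for $W = \phi(A)$, contradicting tightness. Arranging the fixed $I_k$'s so that they can absorb whatever tight intervals $J_k^W$ arise (for example, by refining or diagonalising against all possible $W$) is the delicate combinatorial point of the argument.
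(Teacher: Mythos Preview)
Your overall contrapositive strategy is right, and passing to a tight subspace is indeed the key move. But there are two genuine gaps, and one unnecessary detour.

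First, the detour: Gowers's first dichotomy is not needed. The minimal/tight dichotomy (\prettyref{thm:MinimalTight}) applies to any Banach space, not just to spaces with an unconditional basis, so you can go straight from ``$X$ has no minimal subspace'' to ``$X$ has a tight subspace $Z$'' without splitting into HI and unconditional cases. (The HI branch is also heavier than you suggest: that HI spaces are ergodic is itself a nontrivial theorem.)

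Second, your proposed reduction $\phi(A) = [z_n \mid n \in \bigcup_{k \in A} I_k]$ fails already in the forward direction. If $A \mathrel{\Ezero} B$ with $A \triangle B$ finite, then $\phi(A)$ and $\phi(B)$ are both finite-dimensional extensions of $\phi(A \cap B)$, but the added dimensions $\sum_{k \in A \setminus B} |I_k|$ and $\sum_{k \in B \setminus A} |I_k|$ need not agree, and in general $Y \oplus \R^m \not\cong Y \oplus \R^n$ for $m \neq n$. This is exactly why the paper works with the finer relation $\mathbf{E}_0'$ (requiring equal cardinalities below some level) rather than $\Ezero$ itself; see \prettyref{prop:RosendalE0}.

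Third, and most seriously, the backward direction is not a ``delicate combinatorial point'' that can be patched by choosing the $I_k$'s cleverly: tightness gives you, for each $W$, a sequence of intervals depending on $W$, and there are continuum many $W$'s to handle, so no single fixed sequence $(I_k)$ can absorb them all in the way you describe. The paper's route (see the proof of \prettyref{thm:TNHImpliesErgodic}) avoids building an explicit reduction altogether: one shows that the isomorphism relation pulled back to $\Part(\N)$ is \emph{meager} (using the Ferenczi--Godefroy meagerness characterisation of tightness together with Kuratowski--Ulam), checks that it contains $\mathbf{E}_0'$, and then invokes Rosendal's criterion \prettyref{prop:RosendalE0} to conclude $\Ezero \leqslant_B E$. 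That abstract criterion is what replaces the hands-on construction you are attempting.
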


It is a consequence of Kwapien's theorem \cite{kwapien} that a space is Hilbertian if and only if there exists a constant $K$ such all its finite-dimensional subspaces are $K$-isomorphic to a Euclidean space. This property may be relaxed as follows:

\begin{defin}\label{defin:AsympHilbert}

A Banach space $X$ is said to be \textit{asymptotically Hilbertian} if there exists a constant $K$ such that for every $n \in \N$, there exists a finite-codimensional subspace $Y$ of $X$ all of whose $n$-dimensional subspaces are $K$-isomorphic to $\ell_2^n$.

\end{defin}

\begin{thm}[Anisca, \cite{Anisca}]\label{thm:AniscaAsympHilbert}

Every asymptotically Hilbertian, non-Hilbertian separable Banach space is ergodic.

\end{thm}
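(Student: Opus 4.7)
The plan is to construct a Borel map $j \colon \Part(\N) \to \Sub(X)$ that reduces $\Ezero$ to the isomorphism relation on $\Sub(X)$. Since a subspace of $X$ inherits ergodicity to $X$ (by composing a reduction into $\Sub(Y)$ with the Borel inclusion $\Sub(Y) \hookrightarrow \Sub(X)$), it suffices to produce such a reduction inside a well-chosen subspace of $X$.

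Combining Kwapien's theorem with the hypothesis that $X$ is non-Hilbertian, the quantities $d(F, \ell_2^{\dim F})$ are unbounded as $F$ ranges over the finite-dimensional subspaces of $X$. Combined with the asymptotic Hilbertian hypothesis (which provides, inside any finite-codim subspace of $X$, excellent Hilbertian control on subspaces of small dimension), this would allow me to extract a normalized basic sequence $(x_n)_{n \in \N}$ in $X$ and a partition $\N = \bigsqcup_{m \in \N} I_m$ into successive finite intervals so that the block subspaces $F_m = [x_n \mid n \in I_m]$ satisfy $d(F_m, \ell_2^{|I_m|}) \to \infty$, with the pairs $(|I_m|, d(F_m, \ell_2^{|I_m|}))$ pairwise distinct and growing fast enough. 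After a further diagonal refinement I would also arrange that the closed span $Y = [x_n \mid n \in \N]$, together with each of the canonical block subspaces defined below, is isomorphic to its finite-codim subspaces obtained by removing finitely many consecutive blocks; this asymptotic homogeneity follows from standard Pe{\l}czy\'nski-type decomposition arguments valid in asymptotically Hilbertian settings.

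For $A \in \Part(\N)$, set $X_A := [x_n \mid n \in \bigcup_{m \in A} I_m]$. By Lemma~\ref{lem:PNtoSub}, the map $j(A) = X_A$ is Borel. If $A \, \Ezero \, B$, then $X_A$ and $X_B$ share a common finite-codim subspace (the closed span of the blocks indexed by their common tail), and the homogeneity arranged above yields $X_A \cong X_B$. This gives the easy direction of the reduction.

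For the hard direction, suppose $A \triangle B$ is infinite, but, toward a contradiction, that $T \colon X_A \to X_B$ is an isomorphism. For every $m \in A$, the image $T(F_m) \subseteq X_B$ has Banach--Mazur distance to $\ell_2^{|I_m|}$ comparable, up to a factor depending only on $\|T\|\|T^{-1}\|$, to $d(F_m, \ell_2^{|I_m|})$, hence diverging with $m$. On the other hand, the asymptotic Hilbertian property applied inside $X_B$ should force any finite-dim subspace whose "support" in the block decomposition omits a large initial segment of $B$ and spreads over many blocks to be nearly Euclidean. It would then follow that for $m$ large, $T(F_m)$ is essentially concentrated on a single block $F_{\sigma(m)}$ with $\sigma(m) \in B$, and matching the dimensions and distortions would force $\sigma(m) = m$ on a tail, contradicting $|A \triangle B| = \infty$. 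The main obstacle is this last rigidity step, and specifically the claim that $T(F_m)$ is essentially concentrated on a single block: rigorous justification will require a quantitative perturbation/blocking lemma in the spirit of those used in the proof of the Komorowski--Tomczak-Jaegermann theorem, to approximate $T|_{F_m}$ by an operator supported on finitely many blocks, together with a delicate calibration of the growth rates of $|I_m|$ and $d(F_m, \ell_2^{|I_m|})$ against the Hilbertian thresholds provided by the asymptotic Hilbertian hypothesis, in such a way that exactly one dominant target block survives. Balancing these two quantitative inputs is the heart of the argument.
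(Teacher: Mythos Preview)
Your approach is a direct construction of a reduction, in the spirit of Anisca's original proof and quite different from the paof the paper. The paper recovers the theorem as the case $d = d_2$ of \prettyref{thm:AniscaGeneralization}: a non-ergodic $d$-large space must contain a $d$-minimal subspace (\prettyref{cor:HMinErgodic}, via the second local dichotomy and \prettyref{thm:TNHImpliesErgodic}), while $d$-minimal spaces cannot be asymptotically $d$-small (\prettyref{cor:MinimalAsymp}, from the uniform-largeness \prettyref{lem:UnifLarge}); hence asymptotically $d$-small, $d$-large spaces are ergodic. No explicit reduction is ever built.

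Your proposal has genuine gaps in both directions. For the easy direction, the assertion that $A \Ezero B \Rightarrow X_A \cong X_B$ is not justified by an appeal to ``Pe\l czy\'nski-type decomposition arguments valid in asymptotically Hilbertian settings'': there is no general principle guaranteeing that an asymptotically Hilbertian space is isomorphic to its subspaces obtained by deleting finitely many blocks of an FDD. The standard remedy (used in the proof of \prettyref{thm:TNHImpliesErgodic}) is to pass to the finer relation $\mathbf{E}_0'$, which forces $X_A$ and $X_B$ to have the same finite codimension in a common superspace, and then invoke Rosendal's criterion (\prettyref{prop:RosendalE0}); you do not mention this.

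For the hard direction, the rigidity step you flag as ``the main obstacle'' is indeed unproved. Asymptotic Hilbertianness gives Euclidean control only on subspaces of a \emph{fixed} dimension lying in a suitable finite-codimensional tail; it says nothing directly about an arbitrary image $T(F_m)$, whose support in $X_B$ may begin at index $0$. Even after approximating $T(F_m)$ by a finitely supported subspace, the claim that it must concentrate on a single block $F_{\sigma(m)}$ does not follow: a subspace spread over several blocks need not be nearly Euclidean without some unconditionality or additional structure you have not arranged. And even granting concentration, ``matching dimensions and distortions forces $\sigma(m)=m$ on a tail'' requires a careful quantitative choice of the growth rates so that the pairs $(|I_m|, d_{BM}(F_m,\ell_2^{|I_m|}))$ are separated well enough to be distinguished up to the distortion factor $\|T\|\cdot\|T^{-1}\|$, and so that $\sigma$ is forced to be injective. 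You acknowledge this calibration is the heart of the argument but do not supply it; as written, the proposal is an outline rather than a proof.
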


A generalization of the last result will be proved in this paper (see \prettyref{thm:AniscaGeneralization}), using a different method than Anisca's original one.

\begin{thm}[Cuellar Carrera, \cite{Cuellar}]\label{thm:CuellarNearHilbert}

Every non-ergodic separable Banach space has type $p$ and cotype $q$ for every $p < 2 < q$.

\end{thm}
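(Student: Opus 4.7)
I would argue by contrapositive: if $X$ fails to have type $p$ for some $p_0 < 2$, I claim $X$ is ergodic; the failure-of-cotype case is handled analogously, working with the dual Maurey-Pisier invariant $q_X > 2$ and $\ell_{q_X}$-blocks in place of $\ell_{p_X}$-blocks. By the Maurey-Pisier theorem, $p_X := \sup\{p : X \text{ has type } p\}$ satisfies $p_X \leq p_0 < 2$, and $\ell_{p_X}$ is finitely representable in $X$. Applying Krivine's theorem, I extract a normalized basic sequence $(x_n)$ in $X$ together with consecutive intervals $(I_k)_{k \in \N}$ of $\N$ such that each block $[x_n \mid n \in I_k]$ is isomorphic to $\ell_{p_X}^{|I_k|}$ with constants tending to $1$. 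By Gowers' first dichotomy \cite{GowersRamsey}, after passing to a subspace we may further assume that either $(x_n)$ is unconditional, or that it generates an HI space, while preserving this $\ell_{p_X}$-block structure.

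I focus on the unconditional case, which appears to be the main case (the HI case would be handled in parallel by combining the $\ell_{p_X}$-block structure with the intrinsic rigidity of HI spaces). By \prettyref{lem:PNtoSub}, the map $j \colon \Part(\N) \to \Sub(X)$ defined by $j(A) = [x_n \mid n \in \bigcup_{k \in A} I_k]$ is Borel. The aim is to show that $j$ is a Borel reduction of $\Ezero$ to isomorphism on $\Sub(X)$. The direction $A \mathrel{\Ezero} B \Rightarrow j(A) \cong j(B)$ is routine: $\Ezero$-equivalent sets differ finitely, so $j(A)$ and $j(B)$ differ only by a finite-dimensional piece that is absorbed thanks to the unconditionality of the basis.

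The main obstacle is the converse $j(A) \cong j(B) \Rightarrow A \mathrel{\Ezero} B$. The strategy is to choose the block sizes $|I_k|$ growing very rapidly and to calibrate the $\ell_{p_X}$-blocks so that any candidate isomorphism $T \colon j(A) \to j(B)$ must asymptotically almost match blocks indexed by $A$ with blocks indexed by $B$. This is exactly where the hypothesis $p_X < 2$ is crucial: the $\ell_{p_X}^n$-blocks, for $p_X \neq 2$, carry rigidity invariants (sharp type-$p_X$ constants, essential uniqueness of the $\ell_{p_X}$-block decomposition up to perturbation, etc.) that $\ell_2^n$-blocks do not — Hilbertian blocks being freely rearrangeable. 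Concretely, one would compare type-$p_X$ constants on candidate images of the blocks, or invoke Kalton-type stability results for $\ell_p$-sums, to force the symmetric difference $A \triangle B$ to be finite. Calibrating the construction precisely enough to defeat every possible non-trivial isomorphism is the delicate technical core of the argument, and is where almost all of the work should go.
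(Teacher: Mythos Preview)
The paper does not prove this theorem; it is quoted as a result of Cuellar Carrera \cite{Cuellar} and used only as background. There is therefore no proof in the paper against which to compare your proposal.

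That said, your sketch has genuine gaps beyond the one you acknowledge. First, the ``routine'' forward direction $A \mathrel{\Ezero} B \Rightarrow j(A) \cong j(B)$ is not routine: if $A$ and $B$ are $\Ezero$-equivalent, the spaces $j(A)$ and $j(B)$ differ by finite-dimensional summands of possibly \emph{different} dimensions (your $|I_k|$ are growing), and unconditionality alone does not force isomorphism in that situation. This is exactly why one uses Rosendal's criterion (\prettyref{prop:RosendalE0}) or the relation $\Ezero'$ rather than a bare reduction of $\Ezero$. Second, the claim that Gowers' first dichotomy can be applied ``while preserving this $\ell_{p_X}$-block structure'' needs a real argument: the subspace produced by the dichotomy has no reason to respect a prearranged block decomposition, so the $\ell_{p_X}$-blocks must be rebuilt after the dichotomy, not before. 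Third --- and this you already concede --- the backward direction is the entire content of the theorem, and you defer it to unspecified ``Kalton-type stability results'' and vague comparisons of type constants. The actual proof in \cite{Cuellar} constructs subspaces whose local $\ell_p^n$-structure (with carefully calibrated dimensions) encodes the set $A$ in a way that is provably recoverable from the isomorphism type; this is substantial work, not a calibration detail. Your proposal names the right raw ingredients (Maurey--Pisier, Krivine, the significance of $p_X \neq 2$) but is not yet a proof.
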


This last result is particularly significant since it shows that counterexamples to \prettyref{conj:Ergodic} should be geometrically very close to be Hilbertian. In particular, the $\ell_p$'s, $1 \leqslant p \neq 2 < \infty$ and $c_0$ are ergodic (this had already been shown by Ferenczi and Galego \cite{FerencziGalego} for the $\ell_p$'s, $1 \leqslant p < 2$, and $c_0$). A consequence of this, combined with James' theorem, is that non-ergodic spaces having an unconditional basis should be reflexive.

\smallskip

We refer to the survey \cite{FerencziRosendalSurvey} as well as to Ferenczi's Th\`ese d'Habilitation (in French) \cite{FerencziHabilitation} for more details. These references list, in particular, better estimates on the complexity of the isomorphism relation between subspaces for several classical Banach spaces.

\smallskip



On the path to possible answers to Johnson's \prettyref{question:Johnson} and of Ferenczi--Rosendal's \prettyref{conj:Ergodic} we identify two weaker conjectures to be studied in the present paper.

\begin{conj}\label{conj:WeakJohnson}
Every Johnson space has an unconditional basis.
\end{conj}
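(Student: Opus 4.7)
The plan is to combine the local (non-Hilbertian) unconditional/HI dichotomy announced in the paper with an ergodicity result for non-Hilbertian HI (equivalently, tight) subspaces. Let $X$ be a Johnson space. Then $X$ is non-ergodic (having only two isomorphism classes of subspaces) and non-Hilbertian (since $\ell_2$ has a unique subspace class up to isomorphism). By \prettyref{thm:CuellarNearHilbert}, $X$ has non-trivial cotype.

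First I would apply the local Gowers dichotomy for the D-family of non-Hilbertian subspaces: $X$ contains either a non-Hilbertian subspace with an unconditional basis, or a non-Hilbertian HI subspace. The HI alternative should be excluded by the local minimal/tight dichotomy (since an HI space is tight) combined with the ergodicity of non-Hilbertian tight spaces, both of which are part of the paper's local-dichotomy machinery; this would contradict the hereditary non-ergodicity of $X$. Hence there is a non-Hilbertian subspace $U \subseteq X$ with an unconditional basis. By the Johnson property, $U$ is isomorphic either to $X$ or to the unique other class $Y$; in the first case $X$ inherits an unconditional basis and we are done.

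The main obstacle is to exclude the case $U \simeq Y$. Here $Y$ is non-Hilbertian with unconditional basis $(y_n)_{n \in \N}$. By Komorowski--Tomczak-Jaegermann applied to $Y$ (which has non-trivial cotype) there is a subspace of $Y$ without unconditional basis, and by the Johnson property this subspace must be isomorphic to $X$, so $X$ itself has no unconditional basis. To extract a contradiction I would examine the block subspaces $[y_n \mid n \in A]$ for $A \subseteq \N$ infinite: each is isomorphic to $X$ or to $Y$, and \prettyref{square} applied to $Y$ yields $Y \simeq Y \oplus [y_n \mid n \in A]$ in both cases. If some block is isomorphic to $X$, then $X$ embeds complementably in $Y$; combined with $Y$ embedded in $X$, and subject to establishing a complementary embedding of $Y$ into $X$, Pe\l czy\'nski's decomposition method would give $X \simeq Y$, a contradiction. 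If on the contrary every infinite block of $(y_n)$ is isomorphic to $Y$, then $Y$ is strongly self-similar; since $Y$ must still contain subspaces isomorphic to $X$ (else it would be homogeneous and hence Hilbertian), some non-block subspace of $Y$ would be isomorphic to $X$, and one would hope to exploit this asymmetry together with a further local Gowers argument run inside $Y$ to reach a contradiction.

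The delicate points are the missing complementation in the first sub-case and the absence of a direct handle on non-block subspaces in the second. Bridging these gaps likely requires a sharper form of the paper's local minimal/tight dichotomy, or an additional structural result tailored to pairs of bi-embeddable, non-isomorphic separable Banach spaces.
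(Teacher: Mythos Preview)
The statement you are trying to prove is listed in the paper as a \emph{conjecture}; the paper explicitly says ``We shall not solve these conjectures'' and leaves \prettyref{conj:WeakJohnson} open. The closest result actually proved is \prettyref{cor:Johnsonbasis}: a Johnson space has an unconditional basis \emph{if and only if} it is isomorphic to its square. So there is no ``paper's own proof'' to compare against, and any complete argument would settle an open problem.

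Your first step contains a genuine error, not merely a gap. The local first dichotomy (\prettyref{thm:FirstDichoto} / \prettyref{thm:FirstDichotofordegrees}) does \emph{not} yield, in the second alternative, a non-Hilbertian HI subspace; it yields a $d_2$-HI subspace, i.e.\ one containing no direct sum of two \emph{non-Hilbertian} subspaces. A $d_2$-HI space may well be $\ell_2$-saturated and is not known to be HI, tight, or ergodic. Your sentence ``since an HI space is tight'' is therefore not applicable: $d_2$-HI does not imply (classical) tightness, and the combination ``$d_2$-minimal and $d_2$-HI'' is exactly the alternative the paper cannot exclude (see \prettyref{thm:MNHHHP}, \prettyref{question:MNHHHPErg}, and \prettyref{conj:MNHHHP}). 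In the paper's proof of \prettyref{cor:Johnsonbasis}, this alternative is eliminated by the additional hypothesis $X \cong X \oplus X$, which immediately contradicts $d_2$-HI; without that hypothesis no argument is known.

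Your second line (Komorowski--Tomczak-Jaegermann plus a Pe\l czy\'nski decomposition) has precisely the obstructions you note: there is no available complemented embedding of $Y$ into $X$, and no control over non-block subspaces in the block-homogeneous case. These are not technicalities but the heart of the difficulty; bridging them would amount to proving the conjecture.
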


\begin{conj}\label{conj:WeakErgodic}
Every non-ergodic non-Hilbertian separable Banach space contains a non-Hilbertian subspace having an unconditional basis.
\end{conj}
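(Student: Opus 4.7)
The plan is to apply the local version of Gowers' first dichotomy (the ``unconditional/HI'' dichotomy) announced in the abstract, localized to the D-family of non-Hilbertian subspaces of $X$. Since $X$ is itself non-Hilbertian, this family is nonempty, so the local dichotomy produces one of two alternatives: either $X$ contains a non-Hilbertian subspace with an unconditional basis --- in which case the conclusion is immediate --- or $X$ contains a non-Hilbertian hereditarily indecomposable subspace $Y$.

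In the remaining HI case, the plan is to derive a contradiction with the non-ergodicity of $X$. Since $Y$ is a subspace of $X$, $Y$ is itself non-ergodic, so by \prettyref{thm:FerencziMinimal}, $Y$ contains a minimal subspace $Z$. Being a subspace of the HI space $Y$, the space $Z$ is itself HI; moreover it is minimal, non-Hilbertian, and non-ergodic. The goal is then to rule out the existence of such a $Z$. A natural route is to invoke the local version of the Ferenczi--Rosendal third dichotomy (the ``minimal/tight'' dichotomy) applied to an appropriate D-family of subspaces of $Z$, and to combine the minimality of $Z$ with its HI character to reach a contradiction; alternatively one might construct directly a Borel reduction from $\Ezero$ into the isomorphism relation on $\Sub(Z)$, exploiting the classical fact that an HI space is never isomorphic to any of its proper subspaces.

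The principal obstacle is precisely this HI case: closing it amounts to showing that every non-Hilbertian HI space --- or at the very least every minimal such space --- is ergodic, which is a substantive problem not obviously implied by the dichotomies proved here, and is presumably why the statement is only conjectured rather than proved in full. On the other hand, the localization of Gowers' first dichotomy to the D-family of non-Hilbertian subspaces is what makes the overall strategy viable at all: without it, the unconditional branch of the classical dichotomy could a priori produce a Hilbertian unconditional subspace of $X$, which would be useless for the desired conclusion, whereas the local version guarantees that any unconditional subspace produced by the dichotomy automatically belongs to the D-family, and is therefore non-Hilbertian.
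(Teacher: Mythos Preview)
Your proposal rests on a misreading of what the local first dichotomy actually outputs. In the second alternative, \prettyref{thm:FirstDichotofordegrees} does \emph{not} produce a genuinely HI subspace; it produces a $d_2$-HI subspace, i.e.\ one containing no direct sum of two \emph{non-Hilbertian} subspaces. This is strictly weaker than HI: the paper exhibits concrete $d_2$-HI spaces that are $\ell_2$-saturated (\prettyref{exa:HIplusl2}, \prettyref{exa:ArgyrosRaikofstalis}). Consequently, when you pass to a minimal subspace $Z\subseteq Y$ via \prettyref{thm:FerencziMinimal}, nothing prevents $Z$ from being $\ell_2$ itself --- minimal, non-ergodic, and Hilbertian --- so the chain ``$Z$ is HI because $Y$ is'' breaks, and with it the rest of the argument.

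The obstacle you name is in fact not the obstacle. A space that is simultaneously HI and minimal cannot exist: by \prettyref{thm:GowersMaurey} an HI space is isomorphic to no proper subspace of itself, whereas minimality forces it to embed into (hence be isomorphic to a subspace of) any hyperplane. Combined with \prettyref{thm:FerencziMinimal}, this already shows every HI space is ergodic. The genuine obstruction, and the one the paper isolates, is different: after first securing $d_2$-minimality via \prettyref{cor:HMinErgodic} and then applying the local first dichotomy, one is left (see \prettyref{thm:MNHHHP}) with the problem of ruling out spaces that are simultaneously \MNH{} and \HHP{} --- this is \prettyref{conj:MNHHHP}, and it is open. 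There is also a smaller gap in your first alternative: the dichotomy yields a UFDD, not an unconditional basis; the upgrade is \prettyref{thm:BasesMNH}(2), and it relies on having $d_2$-minimality in hand first.
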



 Conjectures \ref{conj:WeakJohnson} and \ref{conj:WeakErgodic} are important because they allow us to reduce Johnson's and Ferenczi--Rosendal conjectures to the case of spaces having an unconditional basis, for which, as we saw above, we already know many properties. We shall not solve these conjectures, but we make significant progress on them as will appear in \prettyref{sec:Hilbert}.

\bigskip

\subsection{Gowers' classification program}

In order to motivate our forthcoming definitions, we first present the main steps of the solution of the homogeneous space problem. We start with a definition.

\begin{defin}[Gowers--Maurey, \cite{GowersMaurey}]

A Banach space $X$ is \textit{hereditarily indecomposable (HI)} if it contains no direct sum of two subspaces.

\end{defin}

HI spaces exist; they were first built by Gowers and Maurey \cite{GowersMaurey}, as a solution to the unconditional basic sequence problem: they were the first spaces known to contain no subspace with an unconditional basis. Independently of the existence of HI spaces, the combination of the following three results solves positively the homogeneous space problem:

\begin{thm}[Gowers--Maurey, \cite{GowersMaurey}]\label{thm:GowersMaurey}

An HI space is isomorphic to no proper subspace of itself.

\end{thm}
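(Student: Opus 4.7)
The plan is to use the fundamental structural result about operators on HI spaces, namely that every bounded operator from an HI space to itself has the form $\lambda \Id + S$ where $S$ is strictly singular. I would first state (or recall) this as the key input, since it is the substantive content of Gowers--Maurey's analysis of HI spaces; once it is available, the theorem follows by a short Fredholm-theoretic argument.

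Suppose for contradiction that $X$ is HI and there is an isomorphism $T \colon X \to Y$ with $Y$ a proper subspace of $X$. Viewing $T$ as an operator $X \to X$ via the inclusion $Y \hookrightarrow X$, the structural result gives $T = \lambda \Id + S$ with $S$ strictly singular. The plan is then to rule out both possibilities for $\lambda$.

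If $\lambda = 0$, then $T = S$ is strictly singular, i.e., it fails to be an isomorphism on any infinite-dimensional subspace. But by assumption $T$ is an isomorphism from $X$ (which is infinite-dimensional) onto $Y$, a direct contradiction. If $\lambda \neq 0$, then after rescaling, $T = \lambda(\Id + \lambda^{-1} S)$, and since $\lambda^{-1} S$ is strictly singular, $T$ is a Fredholm operator of index $0$ (this is the standard fact that perturbing the identity by a strictly singular operator yields a Fredholm operator of index zero). But $T$ is injective (being an isomorphism onto $Y$), so $\ker T = \{0\}$, and Fredholm index $0$ forces $\operatorname{codim} \ima T = 0$, i.e., $Y = \ima T = X$, contradicting the assumption that $Y$ is a proper subspace.

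The only real obstacle is the structural result about operators on HI spaces being scalar-plus-strictly-singular; this is the hard Gowers--Maurey input and presumably would be quoted rather than reproved here. Everything else is the clean two-case dichotomy above, using only (i) the definition of strictly singular operators and (ii) the stability of the Fredholm index under strictly singular perturbations.
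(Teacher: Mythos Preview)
Your argument is correct and is essentially the standard proof of this result: the scalar-plus-strictly-singular decomposition of operators on HI spaces is exactly the Gowers--Maurey input, and the Fredholm index-zero argument disposes of the nonzero-scalar case cleanly.

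However, the paper does not prove this theorem at all. \prettyref{thm:GowersMaurey} is stated in the introductory background section as a result quoted from \cite{GowersMaurey}, alongside \prettyref{thm:ktj} and \prettyref{thm:Gowers1stDichoto}, to explain how the homogeneous space problem was solved. No proof is supplied; the reader is referred to the original source. So there is nothing to compare your proposal against in this paper --- your write-up is the standard argument one would find in \cite{GowersMaurey} or in expository accounts such as Maurey's survey.
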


\begin{thm}[Komorowski--Tomczak-Jaegermann, \cite{KomorowskiTomczak}]\label{thm:ktj}

Every Banach space either contains a subspace without unconditional basis, or an isomorphic copy of $\ell_2$.

\end{thm}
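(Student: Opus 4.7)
The plan is to argue by contradiction. Suppose $X$ is a Banach space in which every (infinite-dimensional, closed) subspace admits an unconditional basis, yet $X$ contains no isomorphic copy of $\ell_2$; I aim to derive a contradiction. As a first reduction, applying the hypothesis to $X$ itself yields that $X$ has an unconditional basis $(e_n)_{n \in \N}$. Moreover, both hypotheses pass to every block-subspace of $X$, so throughout the argument I may freely replace $X$ by any such block-subspace.

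The strategy exploits a tension between two phenomena. On one side, Dvoretzky's theorem provides, for each $n$ and each $\varepsilon > 0$, a finite-dimensional subspace of $X$ that is $(1+\varepsilon)$-isomorphic to $\ell_2^n$, so arbitrarily large Euclidean pieces are available inside every block-subspace. On the other side, if such Euclidean pieces could be chosen on disjoint blocks of $(e_n)$ and glued together using unconditionality of $(e_n)$, one would obtain an isomorphic copy of $\ell_2$ inside $X$, contradicting the second assumption. Hence the unconditional structure of subspaces of $X$ must become asymptotically incompatible with the local Euclidean structure furnished by Dvoretzky: block-sequences of near-Euclidean pieces must span spaces in which any candidate unconditional basis has arbitrarily bad sign-sum constants.

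The technical core is then a diagonal construction: iteratively pick a block sequence $(y_n)$ and near-Euclidean subspaces $F_n$ sitting inside $[y_k \mid k \in I_n]$ for suitable successive intervals $I_n \subseteq \N$ of growing length, arranged so that averaging any purported unconditional basis of the closed linear span of $\bigcup_n F_n$ against random sign changes produces norm estimates that contradict any uniform unconditional bound. The averaging is carried out via Rademacher (or Gaussian) projections together with Pisier's theory of $K$-convexity and type/cotype duality: the absence of $\ell_2$ inside $X$, combined with Maurey--Pisier, forces certain Gaussian-type norms on the $F_n$'s to drift away from the unconditional estimates, producing a block-subspace of $X$ with no unconditional basis and thereby the desired contradiction.

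The main obstacle lies in this last step. The statistical existence of near-Euclidean finite-dimensional subspaces given by Dvoretzky is fundamentally different from the pointwise, coordinate-aware control one needs to execute the random averaging, and the interaction between the global unconditional structure of $(e_n)$ and the local Euclidean structures at each scale is genuinely subtle. This is the heart of the Komorowski--Tomczak-Jaegermann argument; it relies essentially on Pisier's Gaussian and $K$-convexity machinery, and no purely combinatorial block-picking argument appears to suffice.
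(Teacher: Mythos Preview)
The paper does not prove this theorem: it is quoted as a known result of Komorowski and Tomczak-Jaegermann, with the reader referred to \cite{KomorowskiTomczak} for the precise statement and the proof. So there is no ``paper's own proof'' to compare against.

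As for your sketch on its own terms, it is not a proof but a heuristic outline, and even at that level the central logical step does not hold. In your second paragraph you claim that if near-Euclidean pieces $F_n$ could be placed on disjoint blocks of the unconditional basis $(e_n)$, then ``unconditionality of $(e_n)$'' would glue them into an isomorphic copy of $\ell_2$. This is false: unconditionality of $(e_n)$ only guarantees that the resulting block-FDD $(F_n)$ is an \emph{unconditional} FDD, not that it is an $\ell_2$-sum. The space $\left(\bigoplus_n \ell_2^{k_n}\right)_{\ell_p}$ for $p\neq 2$ shows exactly this: it has an unconditional basis, contains uniformly Euclidean blocks on disjoint supports, and yet contains no copy of $\ell_2$ when $p>2$ (and is not Hilbertian for any $p\neq 2$). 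So the ``tension'' you identify between Dvoretzky and the absence of $\ell_2$ is not a tension at all, and the ``hence'' that follows is unjustified.

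The actual Komorowski--Tomczak-Jaegermann argument is substantially different in structure. One first reduces (via type/cotype considerations and the Maurey--Pisier theorem) to the case where $X$ has finite cotype, and then constructs, inside a space with an unconditional basis not containing $\ell_2$, a subspace with arbitrarily large Gordon--Lewis local unconditional structure constant --- hence without any unconditional basis. The construction does use random/Gaussian methods and $K$-convexity as you suspect, but the combinatorial object built is quite specific (involving carefully chosen finite-dimensional pieces whose GL-constants blow up), and the mechanism is not the vague ``averaging against random sign changes'' you describe. Your final paragraph essentially concedes that you have not carried out the main step; the earlier paragraphs do not set it up correctly either.
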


\begin{thm}[Gowers' first dichotomy, \cite{GowersRamsey}]\label{thm:Gowers1stDichoto}

Every Banach space either contains a subspace with an unconditional basis, or an HI subspace.

\end{thm}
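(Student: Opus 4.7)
The strategy is to apply Gowers' Ramsey-type theorem from \cite{GowersRamsey} to the set of ``unconditional'' block sequences. First, by Mazur's theorem, pass to a subspace of $X$ admitting a Schauder basis and work inside this subspace, so that we may speak of block sequences and block subspaces. For each integer $K \geqslant 1$, let $U_K$ be the set of normalized block sequences $(x_n)$ satisfying $\|\sum \varepsilon_n a_n x_n\| \leqslant K \|\sum a_n x_n\|$ for all scalars $(a_n)$ and all signs $(\varepsilon_n) \in \{-1,1\}^{\N}$; this is a closed (hence Borel) subset of the space of block sequences. Apply Gowers' Ramsey-type dichotomy to $U_K$: for each $K$, either \textbf{(i)} there is a block subspace $Y_K$ all of whose block sequences lie outside $U_K$, or \textbf{(ii)} there is a block subspace $Y_K$ in which Player II has a winning strategy in Gowers' infinite asymptotic game to produce block sequences in an arbitrarily small enlargement of $U_K$.

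If (ii) occurs for some $K$, the plan is to iterate the strategy against a decreasing sequence of block subspaces of $Y_K$ to extract a single infinite block sequence in $Y_K$ every block subsequence of which is $(K+\varepsilon)$-unconditional. A standard argument then shows that such a sequence is itself an unconditional basic sequence, producing a subspace of $X$ with an unconditional basis.

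If (i) occurs for every $K$, diagonalize: choose a nested sequence of block subspaces corresponding to increasing $K$ and pass to a common block subspace $Z$ in which, for every $K$, no normalized block sequence is $K$-unconditional. The remaining (and main) task is to extract an HI subspace of $Z$. The heuristic is that for disjointly supported block subspaces $U,V \subseteq Z$, the uniform failure of $K$-unconditionality inside $U+V$ forces the existence of a unit vector $x = u + v$ with $u \in U$, $v \in V$, and $\|u - v\| \geqslant K$ (choosing $\varepsilon_n = +1$ on $\supp u$ and $-1$ on $\supp v$); a direct estimate then shows $\bigl\|u/\|u\| + v/\|v\|\bigr\| = O(1/K)$, contradicting any uniform direct-sum constant between $U$ and $V$.

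The main obstacle is promoting this ``one pair at a time'' failure into the full HI property inside a single block subspace $W \subseteq Z$, which must handle \emph{every} pair of infinite-dimensional subspaces. The plan here is: by Bessaga--Pe\l czy\'nski, any two infinite-dimensional subspaces of $W$ contain block basic sequences, so it suffices to handle block subspaces; via a further block perturbation one may assume the supports are disjoint, and then the computation above applies. To secure the whole property simultaneously one performs a second diagonal argument along a countable dense family of pairs of finitely-supported block configurations, producing $W \subseteq Z$ in which every pair of disjointly supported block subspaces has direct-sum constant $\geqslant K$ for every $K$, i.e., $W$ is HI.
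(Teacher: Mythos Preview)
Your case (ii) is essentially fine, and in fact simpler than you suggest: once II has a strategy in Gowers' game to land in a small enlargement of $U_K$, a single run with I playing successive tail subspaces already produces a normalized block sequence that is $(K+\varepsilon)$-unconditional. There is no need to arrange that ``every block subsequence'' has this property.

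The genuine gap is in case (i). You correctly reach a block subspace $Z$ in which no normalized block sequence is unconditional, but your extraction of an HI subspace does not work as written. Your heuristic claims that if $(w_n)$ interleaves $U$ and $V$ (say $w_{2i}\in U$, $w_{2i+1}\in V$) and fails $K$-unconditionality, then one obtains $u\in U$, $v\in V$ with $\|u-v\|\geqslant K\|u+v\|$. But failure of $K$-unconditionality only supplies \emph{some} sign pattern $(\varepsilon_n)$ and scalars $(a_n)$ with $\|\sum \varepsilon_n a_n w_n\|>K\|\sum a_n w_n\|$; it does not hand you the specific pattern $\varepsilon_{2i}=+1$, $\varepsilon_{2i+1}=-1$. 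Grouping by sign yields $u'=\sum_{\varepsilon_n=+1}a_n w_n$ and $v'=\sum_{\varepsilon_n=-1}a_n w_n$ with $\|u'-v'\|>K\|u'+v'\|$, but there is no reason whatsoever for $u'\in U$ and $v'\in V$. Your ``second diagonal argument along a countable dense family of pairs'' does not repair this: the obstruction is not a countability issue but a loss of information in your choice of colour. Indeed, ``no unconditional block sequence'' is strictly weaker than HI (think of $GM\oplus GM$), so some further input is genuinely required.

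This is exactly why the paper (following Gowers' original argument) applies the Ramsey theorem not to $U_K$ but to the set
\[
\X=\Bigl\{(x_i):\ \forall N\ \exists (a_i)\ \Bigl\|\sum_{i\text{ even}}a_i x_i\Bigr\|>N\Bigl\|\sum_i a_i x_i\Bigr\|\Bigr\},
\]
which hard-codes the even/odd pattern into the colour. In the ``II has a strategy'' alternative, player I plays alternately subspaces of $U$ and of $V$ (intersected with suitable finite-codimensional tails), so that the even-indexed outputs lie in $U$ and the odd-indexed ones in $V$; the very definition of $\X$ then yields $u\in U$, $v\in V$ with $\|u\|$ arbitrarily large compared to $\|u+v\|$, giving HI directly. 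In the other alternative one checks (by a blocking argument) that an FDD all of whose normalized block sequences avoid $\X$ is automatically a UFDD. Your set $U_K$ handles the unconditional side but discards precisely the leverage needed for the HI side; replacing it by $\X$ fixes the proof.
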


We refer to \cite{KomorowskiTomczak} for a more precise statement of Theorem \ref{thm:ktj}.
Gowers' first dichotomy is especially important, since it allows to restrict the homogeneous space problem to two special cases, the case of spaces with an unconditional basis and the case of HI spaces. In both of these radically opposite cases, we dispose of specific tools allowing us to solve the problem more efficiently. Based on this remark, 
Gowers suggested in \cite{GowersRamsey} a classification program for separable Banach spaces ``up to subspace''. The goal is to build a list of classes of separable Banach spaces, as fine as possible, satisfying the following requirements:
\begin{enumerate}
    \item the classes are hereditary: if $X$ belongs to a class $\mathcal{C}$ then all subspaces of $X$ also belong to $\mathcal{C}$ (or, in the case of classes defined by properties of bases, all block-subspaces of $X$ belong to $\mathcal{C}$);
    \item the classes are pairwise disjoint;
    \item knowing that a space belongs to a class gives much information about the structure of this space;
    \item every Banach space contains a subspace belonging to one of the classes.
\end{enumerate}
Such a list is in general called a \textit{Gowers list}. The most difficult property to prove among the above is in general 4.; Gowers' first dichotomy proves this property for the two classes of spaces with an unconditional basis and HI spaces, thus showing that these two classes form a Gowers list. In the same paper \cite{GowersRamsey}, Gowers suggests that this list could be refined by proving new dichotomies in the same spirit, and himself proves a second dichotomy. Three other dichotomies were then proved by Ferenczi and Rosendal \cite{FerencziRosendalMinimal}, leading to a Gowers list with $6$ classes (all of whose are now known to be nonempty) and $19$ possible subclasses.

\smallskip

All of these dichotomies draw a border between a class of ``regular'' spaces (spaces sharing many properties with classical spaces such as the $\ell_p$'s, $1 \leqslant p < \infty$, or $c_0$), and a class of ``pathological'' or ``exotic'' spaces. These dichotomies are often important in the study of the problem of the complexity of the isomorphism relation between subspaces of a space $X$; when $X$ is on the ``pathological'' side, we expect this relation to be rather complex. We present below the most important of the dichotomies by Ferenczi and Rosendal (called ``third dichotomy'' in \cite{FerencziRosendalMinimal}), which will be particularly relevant in this paper.

\begin{defin}[Ferenczi--Rosendal]\label{defin:Tight}

\alaligne

\begin{enumerate}

\item Let $(e_n)_{n \in \N}$ be a basis of some Banach space. A Banach space $X$ is \textit{tight in} the basis $(e_n)$ if there is an infinite sequence of nonempty intervals $I_0 < I_1 < \ldots$ of integers such that for every infinite $A \subseteq \N$, we have $X \nsqsubseteq \left[e_n \left| n \notin \bigcup_{i \in A} I_i\right.\right]$.

\item A basis $(e_n)_{n \in \N}$ is said to be \textit{tight} if every Banach space is tight in it. A Banach space $X$ is \textit{tight} if it has a tight basis.

\end{enumerate}

\end{defin}

In the case of reflexive Banach spaces it is known that all bases are tight if one of them is tight \cite[Corollary 3.5]{FerencziRosendalMinimal}. Note that there is a more intuitive characterization of  tightness, see \cite{FerencziGodefroy}. Namely $X$ is tight in $(e_n)$ exactly when the set of $A \subseteq \N$ such that $X$ embeds into $[e_n \mid n\in A]$ is meager (in the natural topology on $\Part(\N)$ obtained by identifying it with the Cantor space). However the definition with the intervals $I_i$ is more operative, allowing for example to distinguish forms of tightness according to the dependence between $X$ and the associated sequence of intervals $(I_i)$.

\begin{thm}[Ferenczi--Rosendal]\label{thm:MinimalTight}

Every Banach space either has a minimal subspace, or has a tight subspace.

\end{thm}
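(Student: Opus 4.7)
The plan is to proceed by contraposition: assuming $X$ has no minimal subspace, I construct a subspace with a tight basis. First, by Mazur's theorem, I pass to a subspace with a Schauder basis $(e_n)$, and the goal is to extract from $(e_n)$ a block basis $(f_n)$ that is tight, i.e., such that every Banach space $W$ is tight in $(f_n)$ in the sense of \prettyref{defin:Tight}.

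The main technical tool is Gowers' Ramsey-type theorem applied, for each candidate subspace $W$, to the analytic set $\mathcal{A}_W = \{(z_n) : W \sqsubseteq [z_n]\}$ of block sequences. Gowers' theorem yields, inside any block subspace $Y$, one of two alternatives: either (a) some further block-subspace $Y' \subseteq Y$ has no block sequence belonging to a small expansion of $\mathcal{A}_W$, so $W$ embeds into no block-span inside $Y'$; or (b) player II has a winning strategy in Gowers' game on $Y$ producing block sequences whose span contains a copy of $W$. A standard fusion using such a winning strategy, applied with $W = Y$ itself, would yield a block subspace of $Y$ into which $Y$ embeds in every further block-subspace, i.e., a minimal subspace---contradicting our contrapositive hypothesis. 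So alternative (a) must always be available.

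I then carry out an iterated fusion. Fix a countable family $(W_k)_{k \in \N}$ of subspaces dense in a suitable sense (for instance, block subspaces generated by rational block vectors of $(e_n)$, up to small perturbation). At stage $k$ of the fusion, I apply the avoidance alternative (a) to $W_k$ in order to refine the block basis built so far and to select a finite interval $I_k^{W_k}$ of integers, ensuring that $W_k$ fails to embed into $[f_n \mid n \notin \bigcup_{i \in A} I_i^{W_k}]$ for every infinite $A \subseteq \N$. Taking a diagonal limit of the fusion produces a single block basis $(f_n)$ and, for each $W_k$, a sequence of intervals $I_0^{W_k} < I_1^{W_k} < \ldots$ witnessing that $W_k$ is tight in $(f_n)$. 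Since embeddability is stable under small perturbations and the family $(W_k)$ is dense among block subspaces, a standard approximation argument extends tightness from $(W_k)_{k \in \N}$ to \emph{all} Banach spaces, so $(f_n)$ is a tight basis.

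The main obstacle lies in two interlocked steps. First, the extraction of a minimal subspace from alternative (b) is the delicate part: it requires the full strength of Gowers' game, likely together with its equivalence to the asymptotic game (or to the strong Gowers' game), in order to convert a winning strategy producing individual embedded copies into a single block subspace all of whose block-subspaces contain a copy of the ambient space. Second, the fusion producing a single basis tight for \emph{all} of the $W_k$ simultaneously must be organized along a tree-like indexing, with intervals $I_i^{W_k}$ for different $k$ required to be mutually compatible with the block structure of the final basis $(f_n)$; this bookkeeping is the source of most of the technicality of the argument.
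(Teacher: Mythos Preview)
Your outline has a genuine gap at the step ``So alternative (a) must always be available.'' You only justify this when $W = Y$: indeed, if player \II{} wins $G_Y$ for the target $(\mathcal{A}_Y)_\Delta$, then letting player \I{} play a fixed subspace $Z\subseteq Y$ at every turn does show $Y\sqsubseteq Z$, hence $Y$ is minimal. But during the fusion you need alternative (a) for \emph{each} $W_k$, and for $W_k\neq Y^{(k)}$ nothing rules out alternative (b). If alternative (b) holds for some $W_k$ inside the current stage $Y^{(k)}$, then $W_k$ embeds into \emph{every} block subspace of $Y^{(k)}$; consequently $W_k$ can never be made tight in any further block basis you extract, yet this situation does not produce a minimal subspace either (it only says $W_k$ is ``ubiquitous'' in $Y^{(k)}$, not that $W_k$ embeds into all of its own subspaces). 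So the contrapositive breaks down precisely at the point where you pass from a single application of Gowers' theorem to the iterated fusion over all $W_k$.

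This is not a cosmetic issue: the known proofs of the minimal/tight dichotomy all need a tool strictly stronger than Gowers' Ramsey-type theorem applied colour-by-colour. In the paper the proof goes through the \emph{adversarial Ramsey principle} (\prettyref{thm:AdvRamsey}): one applies it once to the Borel set of pairs of equivalent sequences, obtaining a single block-FDD $\mathscr{F}$ on which either player \I{} wins $A_\mathscr{F}$ (inequivalent pair) or player \II{} wins $B_\mathscr{F}$ (equivalent pair). From this, \prettyref{prop:2ndDichotoCombi} derives a dichotomy phrased in terms of the asymptotic game $F_\mathscr{F}$ and Gowers' game $G_\mathscr{F}$ that holds \emph{uniformly for all} $\mathscr{F}$-correct sequences at once; tightness of the resulting FDD (\prettyref{lem:CaracHTight}) is then read off from player \I{}'s strategies in $F_\mathscr{F}$, with the intervals $I_k$ built directly from the strategy rather than by a separate fusion over countably many target spaces. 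The passage from ``strategy in $F_\mathscr{F}$'' to ``tight intervals'' is the technical heart, and it is exactly what your iteration over $(W_k)$ is trying to shortcut. Your instinct that ``the equivalence to the asymptotic game'' is needed is correct --- but it enters through the adversarial games $A_\mathscr{F}, B_\mathscr{F}$, not through a naive diagonalisation.
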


This dichotomy will be referred as \textit{the minimal/tight dichotomy} in the rest of this paper. Here, the ``regular'' class is the class of minimal spaces, and the ``pathological'' class is the class of tight spaces: these spaces are isomorphic to very few of their own subspaces. An example of a tight space is Tsirelson's space (see \cite{FerencziRosendalMinimal}). The minimal/tight dichotomy is a generalization of \prettyref{thm:FerencziMinimal} (which itself improved the main result of \cite{Pelczar}): indeed, it can be shown quite easily that tight spaces are ergodic, which, combined with the dichotomy, shows that non-ergodic separable spaces should have a minimal subspace.

\

Ferenczi--Rosendal's definition of tightness is restricted to Schauder bases. This was not a relevant loss of generality for Theorem \ref{thm:MinimalTight}. For our local versions of this dichotomy, however, it will be important to extend the notion to FDD's. To give a concrete example of our need to use FDD's, note that one may force a space to be non-Hilbertian just by imposing restrictions on the summands of an FDD, without condition on the way they ``add up''; this would of course not be possible with bases. 
The  definition is straightforward, and properties of tight bases extend without harm to tight FDD's:

\begin{defin}[Tight FDD's]\label{defin:TightFDD}

\alaligne

\begin{enumerate}

\item Let $(F_n)_{n \in \N}$ be an FDD of some Banach space. A Banach space $X$ is \textit{tight in}  $(F_n)$ if there is an infinite sequence of nonempty intervals $I_0 < I_1 < \ldots$ of integers such that for every infinite $A \subseteq \N$, we have $X \nsqsubseteq \left[F_n \left| n \notin \bigcup_{i \in A} I_i\right.\right]$.

\item An FDD $(F_n)_{n \in \N}$ is said to be \textit{tight} if every Banach space is tight in it.
\end{enumerate}

It is clear from the definition that if a space is spanned by a tight FDD, then it has a tight subspace.

\end{defin}

\bigskip

\subsection{Local Ramsey theory}\label{subsec:LocalRamsey}

Dichotomies such as Gowers' or Ferenczi--Rosendal's present drawbacks if one wants to deal with problems related to ergodicity. Indeed, $\ell_2$ always belongs to the ``regular'' class defined by those dichotomies, which makes them useless to apply to spaces containing an isomorphic copy of $\ell_2$. Typically, if a space is $\ell_2$-saturated, but non-Hilbertian, then these dichotomies do not provide information on the structure of the space itself.

For this reason, it would be interesting to have dichotomies similar to Gowers' or Ferenczi--Rosendal's, but which avoid $\ell_2$, that are, dichotomies of the form ``every non-Hilbertian Banach space $X$ contains a non-Hilbertian subspace either in $\mathcal{R}$, or in $\mathcal{P}$'', where $\mathcal{R}$ is a class of ``regular'' spaces, and $\mathcal{P}$ is a class of ``pathological'' spaces. Proving such dichotomies is the main goal of this paper.

\smallskip

Gowers' and Ferenczi--Rosendal's dichotomies are proved using combinatorial methods, and especially Ramsey theory. 
 Here, for an infinite $M \subseteq \N$, we denote by $[M]^\infty$ the set of infinite subsets of $M$; we see $\elinf$ as a subset of the Cantor space, endowed with the induced topology.

\begin{thm}[Silver, \cite{SilverRamsey}]\label{thm:Silver}

Let $\mathcal{X} \subseteq \elinf$ be analytic. Then there exists an infinite $M \subseteq \N$ such that either $[M]^\infty \subseteq \X$, or $[M]^\infty \subseteq \X^c$.

\end{thm}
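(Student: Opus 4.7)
The plan is to prove the statement, originally due to Silver \cite{SilverRamsey}, via a combinatorial fusion argument for simple sets, extended by induction on Borel complexity and then by an unfolding argument to reach analytic sets. A convenient framework is the Ellentuck topology on $\elinf$, whose basic open sets have the form $[s, A] = \{B \in \elinf \mid s \sqsubset B \subseteq s \cup A\}$, with $s$ a finite initial segment, $A \in \elinf$, and $\max(s) < \min(A)$. In this topology, a set is \emph{completely Ramsey} if and only if it has the Baire property, so the goal is to check that every analytic subset of $\elinf$ has the Ellentuck-Baire property.

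First, I would handle the case of clopen subsets of $\elinf$ (in the standard Cantor topology). Given such $\mathcal{X}$, I would construct by fusion a decreasing sequence $\N = M_0 \supseteq M_1 \supseteq \ldots$ together with elements $n_0 < n_1 < \ldots$ with $n_k \in M_k$, chosen so that for every finite $s \subseteq \{n_0, \ldots, n_{k-1}\}$, the membership $s \cup C \in \mathcal{X}$ is constant as $C$ ranges over infinite subsets of $M_{k+1}$ with $\min C > n_k$. A pigeonhole argument on initial segments of the diagonal set $M = \{n_k : k \in \N\}$ then produces an infinite $N \subseteq M$ with $[N]^\infty$ homogeneous. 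Second, I would extend to Borel sets by induction on Borel rank: the successor step is immediate, and the limit step (countable intersections) uses a second fusion to find a single $M$ that is simultaneously homogeneous for each member of a countable family, by shrinking at each stage.

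The main obstacle is the extension to analytic sets. Writing $\mathcal{X}$ as the projection of a closed set $F \subseteq \elinf \times \N^\N$, one must run a simultaneous fusion on pairs $(M, t)$, where $t$ is a finite approximation to a potential witness in the second coordinate. The difficulty is that one must homogenize not only membership in $\mathcal{X}$ but also the availability of such a witness, and both pieces of data have to stabilize compatibly along the fusion. The cleanest way to organize this is via the closure of the class of Ellentuck-BP sets under the Souslin operation applied to Ellentuck-closed sets: combined with the fact that standard-closed sets are Ellentuck-closed, this yields that all analytic sets have the Ellentuck-BP, hence are completely Ramsey. This Souslin-closure statement is the genuine combinatorial core, and typically requires a delicate diagonal fusion on a tree of finite approximations indexed by $\N^{<\N}$.
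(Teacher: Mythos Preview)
The paper does not prove this theorem at all: it is stated as a classical result of Silver with a citation, and the paper only remarks that ``a topological proof of Silver's theorem was obtained by E.~Ellentuck.'' So there is no paper's own proof to compare against.

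Your sketch is essentially the Ellentuck route the paper alludes to, and the overall architecture is sound: one shows that in the Ellentuck topology on $\elinf$ a set is completely Ramsey exactly when it has the Baire property, and then one checks that analytic sets have this property. A couple of minor comments. First, the clopen and Borel steps you describe are really the Galvin--Prikry theorem; once you commit to the Ellentuck framework, they become redundant, since Ellentuck-open sets are automatically completely Ramsey and the class of sets with the Baire property in any topology is a $\sigma$-algebra containing the open sets. Second, for the analytic step you correctly identify the key fact as closure of the Baire-property class under the Souslin operation, but note that this is a general theorem about Baire-category in arbitrary topological spaces (Nikodym's theorem), not something specific to $\elinf$; no ``delicate diagonal fusion on a tree'' is needed beyond that. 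So your plan works, but it can be streamlined: Ellentuck's theorem plus the abstract Souslin-operation closure already gives the result, and the intermediate Borel induction is unnecessary.
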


A topological proof of Silver's theorem was obtained by E. Ellentuck \cite{ellentuck74}. Similar topologies to those introduced by Ellentuck will be considered in section \ref{EllentuckTop}.

\smallskip
The proofs of both Gowers' dichotomies in \cite{GowersRamsey} are based on a version of \prettyref{thm:Silver} in the context of Banach spaces, known as \textit{Gowers' Ramsey-type theorem for Banach spaces}.  Here, $\N$ is replaced with a separable Banach space $X$, the set $\X$ becomes a set of normalized sequences in $X$, and the monochromatic set $M$ becomes a subspace of $X$. In this context, a result exactly similar to \prettyref{thm:Silver} does not hold, and the conclusion has to be weakened, using a game-theoretic framework. The exact statement of Gowers' Ramsey-type theorem is a bit technical and will be given in \prettyref{sec:GowersSpaces} (\prettyref{thm:GowersThm}); a more comprehensive presentation of this theory can be found in \cite{todorcevicbleu}, Part B, Chapter IV. The proofs of the dichotomies of Ferenczi and Rosendal in \cite{FerencziRosendalMinimal} use either Gowers' Ramsey-type theorem, or similar methods based on Ramsey theory and games.

\smallskip

If one wants to prove Banach-space dichotomies where the outcome space lies in some prescribed family of subspaces (for instance, non-Hilbertian subspaces), one needs adapted Ramsey-theoretic results. Fortunately, such results exist in classical Ramsey theory; they form a topic usually called \textit{local Ramsey theory}. Here, the word \textit{local} refers to the fact that we want to find a monochromatic subset \textit{locally}; meaning, in a prescribed family of subsets. We present below the local version of Silver's \prettyref{thm:Silver}, due to Mathias \cite{MathiasLocal}. A complete presentation of local Ramsey theory can be found in \cite{todorcevicorange}, Chapter 7.

\begin{defin}

\alaligne

\begin{enumerate}
    
    \item A \textit{coideal} on $\N$ is a nonempty subset $\H \subseteq \elinf$ satisfying, for all $A, B \in \Part(\N)$:
    \begin{enumerate}
        \item if $A \in \H$ and $A \subseteq B$, then $B \in \H$;
        \item if $A \cup B \in \H$, then either $A \in \H$ or $B \in \H$.
    \end{enumerate}
    
    \item The coideal $\H$ is said to be \textit{$P^+$} if for every decreasing sequence $(A_n)_{n \in \N}$ of elements of $\H$, there exists $A_\infty \in \H$ such that for every $n \in \N$, $A_\infty \subseteq^* A_n$ (meaning, here, that $A_\infty \setminus A_n$ is finite).
    
    \item The coideal $\H$ is said to be \textit{selective} if for every decreasing sequence $(A_n)_{n \in \N}$ of elements of $\H$, there exists $A_\infty \in \H$ such that for every $n \in \N$, $A_\infty \setminus \llbracket 0, n \rrbracket \subseteq A_n$.
    
\end{enumerate}
\end{defin}

\begin{thm}[Mathias, \cite{MathiasLocal}]

Let $\H$ be a selective coideal on $\N$, and let $\mathcal{X} \subseteq \elinf$ be analytic. Then there exists $M \in \H$ such that either $[M]^\infty \subseteq \X$, or $[M]^\infty \subseteq \X^c$.

\end{thm}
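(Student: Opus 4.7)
The plan is to adapt the Galvin--Prikry--Ellentuck proof of the classical Ramsey theorem to the localized setting. Equip $\elinf$ with the \emph{$\H$-Ellentuck topology}, whose basic open sets are
$$[s, A] = \{B \in \elinf : s \sqsubseteq B,\ B \setminus s \subseteq A\},$$
where $s$ ranges over $[\N]^{<\infty}$ and $A$ over $\H$ with $\max(s) < \min(A)$. Call $\X \subseteq \elinf$ \emph{$\H$-Ramsey} if every such basic $[s, A]$ admits $B \in \H$ with $B \subseteq A$ such that $[s, B]$ is entirely contained in $\X$ or entirely contained in $\X^c$. The theorem reduces to showing that every analytic subset of $\elinf$ is $\H$-Ramsey, since $\N \in \H$ (by upward closure of the coideal), and applying the property at $(s, A) = (\varnothing, \N)$ produces the required $M \in \H$.

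The proof proceeds in three layers. First, a local Galvin--Prikry lemma: for every $\X \subseteq \elinf$ and every $(s, A)$ as above, there exists $B \in \H$ with $B \subseteq A$ that decides $\X$ on $[s, B]$. One inductively builds a decreasing sequence $A_0 \supseteq A_1 \supseteq \ldots$ in $\H$ where the $n$-th step uses the coideal partition property $(b)$ to resolve, for each of the finitely many extensions of $s$ by an element $\leqslant n$ of the current $A_{n-1}$, whether that extension can be continued into $\X$; the $P^+$ property yields a pseudo-intersection $A_\infty \in \H$, and selectivity allows one to thin $A_\infty$ into $B \in \H$ on which these local decisions propagate uniformly.

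Second, one verifies that the class of $\H$-Ramsey sets is a $\sigma$-algebra containing the closed subsets of the $\H$-Ellentuck topology. Closure under complements is immediate from the symmetry of the definition, and closure under countable unions relies again on $P^+$ and selectivity: given $\H$-Ramsey sets $\X_n$ and a basic $[s, A]$, one recursively chooses $A_n \in \H$ with $A_n \subseteq A_{n-1}$ deciding $\X_n$, fuses them into $A_\infty \in \H$ via $P^+$, and applies selectivity to extract a single $B \in \H$ on which all the $\X_n$ are simultaneously decided.

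The main obstacle is the third layer: lifting $\H$-Ramseyness from closed to analytic sets. Representing an analytic $\X$ as a Souslin scheme $\X = \bigcup_{\alpha \in \N^\N} \bigcap_n F_{\alpha \restriction n}$ with each $F_t$ closed, one carries out a tree-indexed fusion, using the previous step to decide each $F_t$ and combining the countably many resulting witnesses through repeated applications of $P^+$ and selectivity to produce a single $B \in \H$ on which $\X$ or $\X^c$ is forced. The combinatorics of this final fusion, which faithfully reproduces the analytic-set argument of Silver and of Ellentuck in the localized setting, is where selectivity of $\H$ is essentially used.
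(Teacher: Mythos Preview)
The paper does not give a proof of this theorem. It is stated in the introductory \prettyref{subsec:LocalRamsey} purely as background, attributed to Mathias with a citation, to motivate the authors' own local Ramsey-type results in Banach spaces. So there is no ``paper's own proof'' to compare against.

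As for your outline itself: the overall architecture is the right one (an Ellentuck-style argument localized to $\H$), and selectivity is indeed what makes the fusion steps go through. But two points deserve care. First, in your ``second layer'' you assert that the $\H$-Ramsey sets form a $\sigma$-algebra; this is true, but it is not proved by the short fusion argument you sketch. The standard route is to show that $\H$-Ramsey null coincides with nowhere dense in the $\H$-Ellentuck topology, whence $\H$-Ramsey coincides with having the Baire property in that topology, and closure under countable unions comes for free. Trying to verify countable unions directly by ``deciding each $\X_n$ and fusing'' runs into the usual difficulty that the decisions for different $\X_n$ may be incompatible along different branches; one really needs the nowhere-dense characterization. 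Second, your ``third layer'' via a Souslin scheme is workable but again is most cleanly handled through the Baire-property identification: analytic sets in the Polish topology are Souslin over closed sets, hence have the Baire property in any finer topology, hence are $\H$-Ramsey. Your sketch is a correct high-level map, but as written it underplays the role of the nowhere-dense/Baire-property step, which is where the argument actually lives.
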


A local Ramsey theory in Banach spaces has already been developed by Smythe in \cite{Smythe}. There, he proves an analogue of Gowers' Ramsey-type theorem where the outcome space is ensured to lie in some prescribed family $\H$ of subspaces of the space $X$ in which we work. The conditions on the family $\H$ are similar to those in the definition of a selective coideal. However, in the context of Banach spaces, these conditions become quite restrictive and it is not clear that they are met by ``natural'' families in a Banach-space-theoretic sense. Smythe's theory seems to be more adapted to dealing with problems of genericity, as illustrated in \cite{Smythe}.

\smallskip

In this paper, we shall prove a local version of Gowers' Ramsey-type theorem for families $\H$ satisfying weaker conditions, which are closer to the definition of $P^+$-coideals (\prettyref{thm:LocalGowersThm}). This theorem has a weaker conclusion than Smythe's theorem; however, the range of families $\H$ to which it applies is much broader and includes ``natural'' families in a Banach-space-theoretic sense, for instance the family of non-Hilbertian subspaces of a given space. These families, called \textit{D-families}, will be defined and studied in \prettyref{sec:DFamilies}. In order to motivate their definition, we state below a sufficient condition for being a $P^+$-coideal which is well-known to set-theoreticians. This fact is folklore; it is, for instance, an easy consequence of Lemma 1.2 in \cite{Mazur}.

\begin{lem}\label{ppp}

Let $\H$ be a coideal on $\N$. If $\H$ is $G_\delta$ when seen as a subset of the Cantor space, then $\H$ is $P^+$.

\end{lem}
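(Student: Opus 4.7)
My plan is to write $\H = \bigcap_{k \in \N} U_k$ with each $U_k$ open in $\Part(\N)$ and the sequence $(U_k)$ decreasing, and then, given a decreasing sequence $(A_n)_{n \in \N}$ in $\H$, to construct a pseudo-intersection $A_\infty$ by a fusion-style diagonal argument that simultaneously certifies $A_\infty \in U_k$ for every $k$ and $A_\infty \subseteq^* A_n$ for every $n$.

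Concretely, I would inductively build finite sets $C_0 \subsetneq C_1 \subsetneq \cdots$ and integers $N_0 < N_1 < \cdots$ satisfying: (i) $C_k \subseteq [0, N_k]$, and every $B \in \Part(\N)$ with $B \cap [0, N_k] = C_k$ belongs to $U_k$; (ii) $C_k \setminus C_{k-1} \subseteq A_k$; (iii) $C_k \cap [0, N_{k-1}] = C_{k-1}$. The base step uses openness of $U_0$ at the point $A_0$: one chooses $N_0$ so that every $B$ agreeing with $A_0$ on $[0, N_0]$ lies in $U_0$, and sets $C_0 := A_0 \cap [0, N_0]$. For the inductive step, the essential trick is to replace $A_k$ by the superset $A_k \cup C_{k-1}$, which still lies in $\H$ by upward-closure of the coideal, hence in $U_k$; openness then produces $N_k > N_{k-1}$ such that any $B$ agreeing with $A_k \cup C_{k-1}$ on $[0, N_k]$ belongs to $U_k$. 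Enlarging $N_k$ if needed so that $A_k \cap (N_{k-1}, N_k]$ is nonempty (possible since $A_k$ is infinite), one defines $C_k := (A_k \cup C_{k-1}) \cap [0, N_k]$. Condition (iii) then follows from $A_k \subseteq A_{k-1}$ combined with the inductive inclusion $C_{k-1} \supseteq A_{k-1} \cap [0, N_{k-1}]$.

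Finally, setting $A_\infty := \bigcup_k C_k$, repeated use of (iii) gives $A_\infty \cap [0, N_k] = C_k$ for every $k$, so $A_\infty \in U_k$, whence $A_\infty \in \H$; and (ii) yields $C_k \setminus C_{k-1} \subseteq A_n$ whenever $k \geq n$, so $A_\infty \setminus A_n \subseteq C_{n-1}$ is finite. The main obstacle is reconciling the openness witnesses for the $U_k$'s with the tail inclusions $A_\infty \subseteq^* A_n$: the naive choice $C_k = A_k \cap [0, N_k]$ violates (iii), because the elements of $A_{k-1} \setminus A_k$ already committed to $C_{k-1}$ would disappear at stage $k$. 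Passing to $A_k \cup C_{k-1}$ before extracting the open witness is precisely what fixes this, and this is the only place where the coideal hypothesis enters — via upward-closure alone; the second coideal axiom is not needed.
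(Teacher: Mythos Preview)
Your argument is correct. The fusion construction works as described: the key point---replacing $A_k$ by $A_k \cup C_{k-1}$ before invoking openness of $U_k$, so that the already-committed finite set $C_{k-1}$ is preserved---is exactly right, and your verification of condition~(iii) via $A_k \cap [0,N_{k-1}] \subseteq A_{k-1} \cap [0,N_{k-1}] \subseteq C_{k-1}$ goes through. One tiny cosmetic point: the conclusion $A_\infty \setminus A_n \subseteq C_{n-1}$ is stated for $n \geq 1$; for $n=0$ one actually has $A_\infty \subseteq A_0$, which you implicitly know but might state.

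As for comparison with the paper: there is nothing to compare. The paper does not prove this lemma; it declares it folklore and points to Lemma~1.2 of Mazur's paper as a source from which it follows easily. Your direct diagonalization is precisely the standard folklore argument one would write out, and it is self-contained, which is arguably preferable to an external citation.
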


\bigskip

\subsection{Organization of the paper}

After the introductory Section 1, Section 2 is still mainly a background section, presenting the formalism of Gowers spaces, as well as their approximate versions, developed by de Rancourt \cite{RancourtRamseyI} as a generalization of Gowers Ramsey-type theory in Banach spaces, and necessary to prove local dichotomies.

\smallskip

In Section 3 we define and study the notion of D-family, \prettyref{defin:D-family}. In similarity to Lemma \ref{ppp}, a set of subspaces of a Banach space $X$ will be called a D-family if it is closed under finite-dimensional modifications and is $G_\delta$ for 
a certain rather fine
 topology  on the set of subspaces of $X$. This will ensure on one hand that such families have a diagonalization property similar to the $P^+$-property, and on the other hand that they have a good behavior relative to FDD's, so that ``local'' Ramsey theorems, i.e. restricted to subspaces in the D-family, may be hoped for.
The concrete examples of D-families are associated to the important notion of degree $d$,  \prettyref{defin:degree}, which allow us to formalize quantitative estimates relating the finite-dimensional subspaces $F$ of a space $X$, and $X$ itself, by assigning a positive real number $d(X,F)$.  A subspace $Y$ of $X$ is $d$-small,  \prettyref{defin:d-small},
when the degrees $d(Y,F)$ are uniformly bounded for $F \subseteq Y$, and $d$-large otherwise;  the conditions on the definition of degree imply that the family of $d$-large subspaces of $X$ is a D-family,  \prettyref{prop:DegDFamily}. Several classical properties of Banach spaces are equivalent to being $d$-small for a well-chosen degree $d$, for instance being Hilbertian, having a certain fixed type or cotype, or having Gordon-Lewis local unconditional structure \cite{GordonLewis}, \prettyref{exs:Degree}.

\smallskip

 In Section 4, we concentrate on the formalism of approximate Gowers spaces to prove our local version of Gowers' Ramsey-type theorem (\prettyref{thm:LocalGowersThm}) for analytic games. Then we deduce from it a local version of Gowers' first dichotomy (\prettyref{thm:FirstDichoto}). This ``first dichotomy'' in the case of a D-family induced by a degree $d$ may be stated as follows:
 
 \begin{thm}[see \prettyref{thm:FirstDichotofordegrees}]
 
 Let $X$ be a $d$-large Banach space. Then $X$ has a $d$-large subspace $Y$ such that:
 
 \begin{enumerate}
     \item either $Y$ is spanned by a UFDD;
     \item or $Y$ contains no direct sum of two $d$-large subspaces.
 \end{enumerate}
 
 \end{thm}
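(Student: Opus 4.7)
The plan is to obtain the statement as an immediate specialization of the local first dichotomy \prettyref{thm:FirstDichoto}, applied to a carefully chosen D-family. I would take $\mathcal{F}_d$ to be the family of $d$-large subspaces of $X$; by \prettyref{prop:DegDFamily} this is a D-family, which is precisely the hypothesis needed to invoke \prettyref{thm:FirstDichoto}.

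Concretely, since $X$ is $d$-large, $X$ itself belongs to $\mathcal{F}_d$, so \prettyref{thm:FirstDichoto} applied to the pair $(X, \mathcal{F}_d)$ yields a subspace $Y \in \mathcal{F}_d$ such that either $Y$ is spanned by a UFDD, or $Y$ contains no direct sum of two subspaces from $\mathcal{F}_d$. Unfolding the definition of $\mathcal{F}_d$, this is exactly the two-way alternative in the conclusion.

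The real content of the proof therefore lives in two places earlier in the paper, neither of which needs to be reproduced here. First, \prettyref{prop:DegDFamily} provides the bridge between the degree formalism and D-families: one has to check closure under finite-dimensional modifications (essentially built into the axioms of a degree) and the $G_\delta$ character of $\mathcal{F}_d$ in the fine topology on $\Sub(X)$. The latter would be verified by writing $d$-largeness of $Y$ as the countable intersection, over $n \in \N$, of the conditions ``there exists a finite-dimensional $F \subseteq Y$ with $d(Y,F) \geqslant n$'', each of which should be open in the fine topology by the monotonicity and approximation properties imposed on a degree. Second, \prettyref{thm:FirstDichoto} itself, which is the abstract local version of Gowers' first dichotomy for an arbitrary D-family, and whose proof rests on the local Ramsey-type theorem \prettyref{thm:LocalGowersThm}. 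The main obstacle is thus not in the present statement, which is a direct corollary, but in the upstream work of showing that degrees actually do produce D-families and in the proof of the abstract localized dichotomy itself.
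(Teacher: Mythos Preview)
Your proposal is correct and follows exactly the paper's approach: the theorem is obtained by applying the general first dichotomy \prettyref{thm:FirstDichoto} to the D-family $\H_d$ of $d$-large subspaces of $X$, which is a D-family by \prettyref{prop:DegDFamily}. The paper adds only one cosmetic refinement you did not mention, namely that \prettyref{lem:VeryGoodBlocking} allows the UFDD in the first alternative to be taken $d$-better, but this is not required for the version of the statement you were asked to prove.
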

 
The first alternative is stronger than containing an unconditional basic sequence, and the second one, a ``pathological'' property, is weaker than the HI property.
 
 \smallskip

 In Section 5, we will then prove a local version of the minimal/tight dichotomy, \prettyref{thm:2ndDichoto}. In the case of a degree $d$, this dichotomy may be stated as follows: 
 
 \begin{thm}[see \prettyref{thm:2ndDichotoForDegrees}]\label{thm:2ndDichotoIntro}
 
 Let $X$ be a $d$-large Banach space. Then $X$ has a $d$-large subspace $Y$ such that:
 
 \begin{enumerate}
     \item either $Y$ isomorphically embeds into all of its $d$-large subspaces;
     \item or $Y$ is spanned by an FDD in which every $d$-large Banach space is tight.
 \end{enumerate}
 
 \end{thm}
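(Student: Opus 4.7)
The plan is to transpose the proof of Ferenczi--Rosendal's original minimal/tight dichotomy (\prettyref{thm:MinimalTight}) to the local setting, with every use of Gowers' Ramsey-type theorem replaced by its local version (\prettyref{thm:LocalGowersThm}) and every passage to a subspace carried out within the family $\H$ of $d$-large subspaces of $X$. Since $\H$ is a D-family by \prettyref{prop:DegDFamily}, the local machinery of Section 4 is available.

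First, I would assume that alternative (1) fails hereditarily: no $d$-large subspace of $X$ embeds isomorphically into each of its $d$-large subspaces. The goal is then to produce a $d$-large $Y \subseteq X$ spanned by an FDD in which every $d$-large Banach space is tight. I would fix a countable family $(Z_k)_{k \in \N}$ of $d$-large spaces which is sufficient, meaning that every separable $d$-large Banach space embeds into some $Z_k$; such a family can be extracted from a countable family of $d$-large elements of $\Sub(\mathcal{C}(\Delta))$, using universality of $\mathcal{C}(\Delta)$ together with the Effros Borel structure and \prettyref{lem:PNtoSub}.

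Second, for each $k$ I would set up a Gowers-type game on the approximate Gowers space associated to $X$ in which Player II attempts to build a block-sequence whose span, outside a prescribed sequence of intervals, refuses to contain $Z_k$. The relevant winning set is analytic, again thanks to the Effros Borel structure on $\Sub(\mathcal{C}(\Delta))$ and the analyticity of embeddability. Applying \prettyref{thm:LocalGowersThm} inside $\H$, the failure of alternative (1) forbids Player I from trivially winning, so Player II has a winning strategy on a $d$-large subspace. From this strategy one extracts a $d$-large subspace $Y_k$ spanned by an FDD $(F_n^{(k)})$ together with intervals $I_0^{(k)} < I_1^{(k)} < \ldots$ witnessing tightness of $Z_k$ in $(F_n^{(k)})$.

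Third, I would diagonalize over $k$. The D-family property of $\H$ is the subspace-level analogue of the $P^+$ property used in \prettyref{ppp} via the $G_\delta$ condition in a suitably fine topology on $\Sub(X)$, and it is designed to allow exactly this sort of diagonalization while preserving the FDD structure and $d$-largeness. This produces a single $d$-large $Y \subseteq X$ spanned by an FDD $(F_n)$ such that, for each $k$, an appropriate subsequence of intervals witnesses the tightness of $Z_k$ in $(F_n)$. A final perturbation/approximation step, using that every $d$-large space embeds into some $Z_k$, transfers tightness from the countable family $(Z_k)$ to every $d$-large Banach space, yielding alternative (2). The main obstacle will be this last diagonalization: one must preserve simultaneously the FDD structure (so that the final $(F_n)$ arises coherently from the $(F_n^{(k)})$), the $d$-largeness of $Y$, and the tightness witnesses, and this is precisely what the $G_\delta$/D-family formalism is tailored to deliver.
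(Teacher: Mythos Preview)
Your proposal has a genuine gap, in fact two. First, the transfer in your last step goes the wrong way: if $W \sqsubseteq Z_k$ and $Z_k$ is tight in $(F_n)$, you \emph{cannot} conclude that $W$ is tight in $(F_n)$; tightness passes to superspaces, not to subspaces (if $Z_k \sqsubseteq W$ and $Z_k$ is tight, then $W$ is tight with the same intervals, but not conversely). So a ``universal-from-above'' countable family is useless here. And a ``universal-from-below'' family, i.e.\ countably many $Z_k$ such that every $d$-large space contains one of them, need not exist: already for $d = \dim$ this would require a countable family of infinite-dimensional spaces one of which embeds in every Banach space, which is ruled out by the existence of continuum many totally incomparable spaces. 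So the reduction to countably many test spaces cannot work.

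Second, and more structurally, your plan to run \prettyref{thm:LocalGowersThm} (the approximate-Gowers-space machinery of Section~4) once per $Z_k$ does not match what the dichotomy actually needs. The paper is explicit that this proof ``cannot be done in the formalism of approximate Gowers spaces''; instead it passes to a \emph{discrete} Gowers space built on a countable field, whose subspaces are $\H$-good block-FDD's, and applies the \emph{adversarial Ramsey principle} (\prettyref{thm:AdvRamsey}) to the Borel relation ``$(x_i)$ and $(y_i)$ are equivalent''. This yields a single block-FDD $\mathscr{F}$ for which either \I{} wins $A_\mathscr{F}$ (producing inequivalent pairs) or \II{} wins $B_\mathscr{F}$ (producing equivalent pairs). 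From this one proves a combinatorial alternative (\prettyref{prop:2ndDichotoCombi}): either for \emph{every} $\mathscr{F}$-correct sequence $(u_i)$ player \I{} has a strategy in the asymptotic game $F_\mathscr{F}$ to avoid equivalence with $(u_i)$, or for \emph{some} such $(u_i)$ player \II{} has a strategy in $G_\mathscr{F}$ to achieve equivalence. The second case gives $\H$-minimality directly; the first case gives $\H$-tightness of $(\overline{\mathscr{F}_n})$, not by enumerating target spaces but by showing, via \prettyref{lem:ReductionToGoodFDDs}, that it suffices to make every $\overline{[\mathscr{G}]}$ with $\mathscr{G}\leqslant\mathscr{F}$ tight, and then reading off the tightening intervals from \I{}'s strategy. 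The key point you are missing is that the adversarial game lets one handle all potential embeddings at once, rather than one test space at a time.
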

 
The property satisfied by $Y$ in the first alternative will be called \emph{$d$-minimality}. Note that the word ``minimal" here refers to a minimal element, among $d$-large spaces, for the relation of embedding between subspaces.   So a $d$-minimal space should not be thought of as small is this context; it is
a $d$-large space. The proof of \prettyref{thm:2ndDichotoIntro} is more delicate than for the first local dichotomy; it is inspired by a proof by Rosendal of a variant of the classical minimal/tight dichotomy \cite{RosendalAlphaMinimal} and relies on the formalism of Gowers spaces.
Quite importantly towards the questions of Godefroy and Johnson, we prove that the relation between tightness and ergodicity still holds in the local version. Our precise result, in the case of a degree $d$, is the following:

\begin{thm}[see \prettyref{thm:TNHImpliesErgodic}]

Let $X$ be a $d$-large Banach space spanned by an FDD in which every $d$-large Banach space is tight. Then $X$ is ergodic.

\end{thm}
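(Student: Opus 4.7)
The plan is to construct a Borel reduction from $\Ezero$ to the isomorphism relation on $\Sub(X)$, adapting the classical argument of Ferenczi--Rosendal that a space with a tight basis is ergodic. Let $(F_n)_{n \in \N}$ denote the given FDD of $X$. Since $X$ is itself a $d$-large Banach space, the hypothesis yields a sequence of nonempty intervals $I_0 < I_1 < \cdots$ such that $X \nsqsubseteq \left[F_n \mid n \notin \bigcup_{i \in C} I_i\right]$ for every infinite $C \subseteq \N$. For each $A \in \Part(\N)$ (identified with the Cantor space $\Delta$), I set
\[
  X_A := \left[F_n \;\middle|\; n \in \bigcup_{i \in A} I_i\right].
\]
Applying \prettyref{lem:PNtoSub} to an enumeration of a dense sequence in $\bigcup_n F_n$ indexed by the intervals $I_i$, the map $A \mapsto X_A$ is Borel, and the claim is that it reduces $\Ezero$ to the isomorphism relation.

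The easy direction is standard: if $A$ and $B$ eventually coincide, then $X_A$ and $X_B$ differ only by a finite-dimensional summand and are therefore isomorphic. For the hard direction, suppose $(A,B)$ is not $\Ezero$-related, so without loss of generality $A \setminus B$ is infinite. If $X_A \cong X_B$, then
\[
  X_A \sqsubseteq X_B \subseteq \left[F_n \;\middle|\; n \in \bigcup_{i \in B} I_i\right] \subseteq \left[F_n \;\middle|\; n \notin \bigcup_{i \in A \setminus B} I_i\right],
\]
since $B \cap (A \setminus B) = \varnothing$. It would then suffice to show that $X_A$ does \emph{not} embed into $\left[F_n \mid n \notin \bigcup_{i \in C} I_i\right]$ for the infinite set $C := A \setminus B$, yielding a contradiction.

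The main obstacle lies precisely here: the hypothesis only grants that the $d$-large subspace $X_A$ is tight in $(F_n)$ for \emph{some} intervals $(J_k^A)$ a priori unrelated to the fixed sequence $(I_i)$ used to define the reduction. To overcome this I would proceed in two steps. First, verify that $X_A$ is itself $d$-large, which should follow from the axiomatic properties of the degree $d$ in \prettyref{defin:degree} together with the fact that $X_A$ contains arbitrarily large finite-dimensional slices of $X$, so that $d$-largeness propagates. Second, reconcile the two interval sequences, either by upgrading the conclusion of the local minimal/tight dichotomy to a uniform (Borel-in-$Y$) choice of tightness intervals --- so that a single sequence $(I_i)$ witnesses tightness of \emph{all} $d$-large subspaces of $X$ simultaneously --- or by a pigeonhole argument extracting from $(J_k^A)$ an infinite sub-collection confined to $\bigcup_{i \in A \setminus B} I_i$, using that $X_A$ is supported on $\bigcup_{i \in A} I_i$ and that $A \setminus B$ is infinite. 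Either route gives $X_A \nsqsubseteq \left[F_n \mid n \notin \bigcup_{i \in A \setminus B} I_i\right]$ and closes the contradiction. Carrying out this bookkeeping between possibly $Y$-dependent tightness intervals and the fixed partition $(I_i)$ of the reduction is, I expect, the hardest technical step of the proof.
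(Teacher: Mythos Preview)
Your approach has a genuine gap, and the paper's proof takes a different route that sidesteps it.

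The core problem is that the intervals $(I_i)$ you fix at the outset witness tightness of $X$, not of $X_A$. From $X \nsqsubseteq [F_n \mid n \notin \bigcup_{i \in C} I_i]$ you cannot conclude $X_A \nsqsubseteq [F_n \mid n \notin \bigcup_{i \in C} I_i]$, since $X_A$ is a \emph{subspace} of $X$. Granting that $X_A$ is $d$-large (which, incidentally, requires first passing to a $d$-good blocking via \prettyref{lem:VeryGoodBlocking}; ``large finite-dimensional slices'' is not enough for a general degree), the hypothesis only gives you tightness intervals $(J_k^A)$ for $X_A$ that depend on $A$ and bear no relation to the $(I_i)$ used to define the reduction. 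Your first fix (a single interval sequence witnessing tightness of \emph{all} $d$-large subspaces simultaneously) is a substantial strengthening not proved anywhere in the paper, and there is no reason to expect it in general. Your second fix (pigeonhole between $(J_k^A)$ and $(I_i)_{i \in A\setminus B}$) does not work as stated: the $(J_k^A)$ are intervals of $\N$ indexed on the FDD $(F_n)$, with no a priori alignment to the partition $\bigcup_i I_i$, and there is no mechanism forcing infinitely many of them inside $\bigcup_{i \in A\setminus B} I_i$.

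The paper avoids this obstacle entirely. Rather than constructing an explicit reduction along a fixed interval partition, it first passes to an $\H$-good, $\H$-tight FDD (\prettyref{cor:GoodTightFDD}), enumerates bases $(e_i)$ of the $F_n$'s, and sets $X_A = [e_i \mid i \in A]$ for $A \subseteq \N$. It then invokes Rosendal's criterion (\prettyref{prop:RosendalE0}): any meager equivalence relation on $\Part(\N)$ containing $\mathbf{E}_0'$ has $\Ezero$ reducible to it. The inclusion $\mathbf{E}_0' \subseteq E$ is immediate, and meagerness of $E$ follows from Kuratowski--Ulam once every $E$-class is shown to be meager. For the latter, each $A$ is treated separately: if $X_A$ contains some $d$-large subspace, that subspace is tight in $(F_n)$, hence so is $X_A$ (if $Y \subseteq X_A$ and $Y \nsqsubseteq Z$ then $X_A \nsqsubseteq Z$), yielding $A$-dependent intervals and a comeager set of non-isomorphic $X_B$'s; if $X_A$ has no $d$-large subspace, goodness of the FDD forces the generic $X_B$ to contain one, again making the class meager. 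The point is that Rosendal's criterion absorbs the $A$-dependence of the tightness witnesses, which is exactly what your direct reduction cannot do.
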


Consequently a $d$-large and non-ergodic separable space must contain a $d$-minimal subspace, \prettyref{cor:HMinErgodic}, and the study of $d$-minimal spaces turns out to be quite relevant. We end the section with additional observations about the $d$-minimality property, and consequences. For example we generalize the result by Anisca that non-Hilbertian spaces which are asymptotically
Hilbertian must be ergodic (\prettyref{thm:AniscaAsympHilbert}), to the case of $d$-large spaces which are ``asymptotically $d$-small'', \prettyref{thm:AniscaGeneralization}.

\smallskip

Finally in Section 6, we consider the Hilbertian degree $d_2(F)$, defined as the Banach-Mazur distance of $F$ to the euclidean space of the same dimension,
and for which the class of $d$-small spaces is exactly the class of Hilbertian spaces. 
In this case the two dichotomies immediately translate as
(to avoid confusion let us insist on the fact that each of 1. and 2. states a dichotomy, but 1. versus 2. is not):

\begin{thm}
Every non-Hilbertian Banach space contains a non-Hilbertian subspace which:
\begin{enumerate}
 \item either is spanned by a UFDD, or does not contain any direct sum of non-Hilbertian subspaces,
 \item either isomorphically embeds into all its non-Hilbertian subspaces, or has an FDD in which every non-Hilbertian space is tight.
\end{enumerate}

\end{thm}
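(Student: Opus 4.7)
The strategy is simply to specialize the two main abstract dichotomies (\prettyref{thm:FirstDichotofordegrees} and \prettyref{thm:2ndDichotoForDegrees}) to the Hilbertian degree $d_2$. The plan is first to define $d_2$ concretely, check that it satisfies the axioms of a degree in the sense of \prettyref{defin:degree}, then identify the $d_2$-small spaces with the Hilbertian ones, and finally feed this into each of the two dichotomies.

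More precisely, I would first define, for every finite-dimensional Banach space $F$, the quantity $d_2(F) := d_{BM}(F, \ell_2^{\dim F})$, where $d_{BM}$ denotes the Banach--Mazur distance. Then I would verify the axioms of a degree: monotonicity under the inclusion of finite-dimensional subspaces (here only the dimension of the subspace matters, and $d_{BM}$ behaves well by restriction and direct sum), invariance (up to the usual multiplicative factor) under $K$-isomorphism of the ambient space, and finally the measurability / coherence properties that make the family of $d_2$-large subspaces $G_\delta$ in the topology of \prettyref{sec:DFamilies}. This last point is the technical heart of the verification, and I expect it to be the main obstacle: it amounts to showing that ``there exists a finite-dimensional $F \subseteq Y$ with $d_2(F) > n$'' is an open condition on $Y$ in the fine topology used to define D-families. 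Since $d_{BM}$ is continuous on finite-dimensional subspaces under perturbations of a basis, and a witness of $d_2(F) > n$ lifts to any nearby subspace spanned by a close basis, this should go through.

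Once $d_2$ is verified to be a degree, the identification of $d_2$-small with Hilbertian is exactly Kwapien's theorem, as recalled just before \prettyref{defin:AsympHilbert}: $X$ is Hilbertian iff there exists a constant $K$ with $d_2(F) \leqslant K$ for all finite-dimensional $F \subseteq X$, which is precisely the definition of $d_2$-smallness from \prettyref{defin:d-small}. Consequently, $d_2$-large means non-Hilbertian.

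With this in hand, item 1 of the theorem is a direct restatement of \prettyref{thm:FirstDichotofordegrees} for $d = d_2$: every non-Hilbertian $X$ has a non-Hilbertian subspace $Y$ that is either spanned by a UFDD or contains no direct sum of two non-Hilbertian subspaces. Similarly, item 2 is the specialization of \prettyref{thm:2ndDichotoForDegrees} to $d = d_2$: every non-Hilbertian $X$ has a non-Hilbertian subspace $Y$ that either is $d_2$-minimal, i.e. embeds into all its non-Hilbertian subspaces, or is spanned by an FDD in which every non-Hilbertian (= $d_2$-large) Banach space is tight. Both conclusions follow with no further work beyond translating terminology.
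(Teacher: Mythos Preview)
Your proposal is correct and follows exactly the paper's approach: specialize the two abstract dichotomies for degrees (\prettyref{thm:FirstDichotofordegrees} and \prettyref{thm:2ndDichotoForDegrees}) to $d=d_2$, using Kwapien's theorem to identify $d_2$-small with Hilbertian. Note only that what you describe as the ``technical heart'' is already handled in full generality: that $d_2$ is an internal degree with $K_{d_2}(s,t)=st$ is item~2 of \prettyref{exs:Degree}, and the $G_\delta$ property of the family of $d_2$-large subspaces then follows from \prettyref{prop:DegDFamily} for any degree, so no separate argument for $d_2$ is required.
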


We therefore give some applications of the theory developed 
in the previous sections for the study of ergodicity and Johnson's question, applying these new dichotomies using only non-Hilbertian subspaces.
We reproduce two of our results below as an illustration:

\begin{thm}[see \prettyref{cor:Johnsonbasis}]
Let $X$ be a Johnson space. Then $X$ has a Schauder basis; moreover, $X$ has an unconditional basis if and only if it is isomorphic to its square.
\end{thm}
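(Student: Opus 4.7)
The plan is to exploit the local dichotomies from Section 6 for the Hilbertian degree $d_2$, together with the strong constraint that a Johnson space $X$ has exactly two isomorphism classes of subspaces $[X]$ and $[W]$, where $W$ denotes the second class. First observe that $X$ is non-ergodic (since ergodic spaces have continuum many classes) and non-Hilbertian (since Hilbert spaces have a single class), so both $[X]$ and $[W]$ consist of non-Hilbertian spaces; in particular every subspace of $X$ is non-Hilbertian.

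For the Schauder basis assertion, I would apply the local first dichotomy for $d_2$ (the specialization of \prettyref{thm:FirstDichotofordegrees}) to $X$. This yields a non-Hilbertian subspace $Y \subseteq X$ that is either (a) spanned by a UFDD, or (b) contains no direct sum of two non-Hilbertian subspaces. In case (b), every subspace of $Y$ being non-Hilbertian forces $Y$ to be hereditarily indecomposable; then, by a case analysis on $Y \cong X$ versus $Y \cong W$ combined with the Gowers--Maurey rigidity of HI spaces (\prettyref{thm:GowersMaurey}) and the solution of the homogeneous space problem, one reaches a contradiction in each sub-case: the HI property on $X$ (respectively $W$) forces all proper subspaces of $X$ (respectively $W$) to lie in the other class, which in turn makes one of $X$, $W$ homogeneous and thus Hilbertian. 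Hence case (a) holds: $X$ has a non-Hilbertian subspace with a UFDD and in particular with a Schauder basis. To promote this to a Schauder basis of $X$ itself, I would invoke the local second dichotomy (\prettyref{thm:2ndDichotoForDegrees}); the tight alternative would force ergodicity by \prettyref{thm:TNHImpliesErgodic}, contradicting the Johnson property, so $X$ has a $d_2$-minimal subspace, automatically minimal in the usual sense since all subspaces of $X$ are non-Hilbertian. A block-selection argument inside the UFDD subspace, combined with minimality and the two-class structure, then yields a Schauder basis of $X$ itself.

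For the unconditional basis assertion, the direction ``unconditional basis $\Rightarrow X \cong X^2$'' follows immediately from Theorem \ref{square}: a non-ergodic space with an unconditional basis is isomorphic to $X \oplus V$ for every subspace $V$ spanned by a subsequence of the basis, and taking $V$ to be the span of the whole basis gives $X \cong X \oplus X$. Conversely, under the assumption $X \cong X^2$, case (b) of the first dichotomy is ruled out even more directly, since the HI alternative is incompatible with the decomposition $X \cong X \oplus X$ of $X$ into two non-Hilbertian summands. Combining the UFDD obtained in Part 1, the Schauder basis of $X$ from Part 1, and the square isomorphism, together with \prettyref{thm:ktj} to locate subspaces without unconditional basis and excise the case that such a subspace lies in $[X]$ using the two-class structure (together with the impossibility of $\ell_2 \sqsubseteq X$), one refines the UFDD of a suitable subspace into an unconditional basis of $X$.

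The main obstacle I foresee is the propagation step, going from ``$X$ has a non-Hilbertian UFDD subspace'' to ``$X$ has a Schauder basis'' in Part 1, and then from this to ``$X$ has an unconditional basis'' under $X \cong X^2$ in Part 2. Both steps demand a careful interplay between minimality, the local dichotomies, block-selection inside UFDDs, and the characteristic two-class constraint on subspaces of a Johnson space.
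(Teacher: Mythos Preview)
Your argument rests on a false premise: you claim that since $X$ is non-Hilbertian, ``both $[X]$ and $[W]$ consist of non-Hilbertian spaces; in particular every subspace of $X$ is non-Hilbertian,'' and later you invoke ``the impossibility of $\ell_2 \sqsubseteq X$.'' This is exactly backwards. By Anisca's \prettyref{thm:AniscaUnconditional}, any separable space with only finitely many subspaces up to isomorphism \emph{must} contain $\ell_2$. Hence for a Johnson space the two isomorphism classes are precisely $[X]$ and $[\ell_2]$, and in particular $X$ is $\ell_2$-saturated. Your case analysis in (b), your use of \prettyref{thm:ktj}, and your propagation strategy all collapse once this is corrected.

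The paper's route is then almost immediate, and your ``main obstacle'' evaporates. Since the two classes are $[X]$ and $[\ell_2]$, every non-Hilbertian subspace of $X$ is isomorphic to $X$; this is exactly the statement that $X$ is \MNH{} (Proposition preceding \prettyref{cor:Johnsonbasis}). Now \prettyref{thm:BasesMNH}(1) produces a non-Hilbertian subspace with a Schauder basis, and that subspace is isomorphic to $X$: done. For the equivalence, one direction is \prettyref{square} as you say; conversely, if $X \cong X \oplus X$ then $X$ is a direct sum of two non-Hilbertian subspaces, so $X$ is not \HHP{}, and the first local dichotomy forces $X$ (via some non-Hilbertian subspace, hence $X$ itself) to have a UFDD; then \prettyref{thm:BasesMNH}(2) yields a non-Hilbertian subspace with an unconditional basis, again isomorphic to $X$. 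No block-selection or minimality arguments are needed for propagation: the Johnson property \emph{is} the propagation mechanism.
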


\begin{thm}[see Theorems \ref{thm:ErgodicMNH} and \ref{thm:MNHHHP}]\label{thm:MNHHHPIntro}
Let $X$ be a separable, non-Hilbertian, non-ergodic Banach space. Then $X$ has a non-Hilbertian subspace $Y$ which isomorphically embeds
into all of its non-Hilbertian subspaces, and which moreover satisfies one of the following two properties:

\begin{enumerate}
    \item $Y$ has an unconditional basis;
    \item $Y$ contains no direct sum of two non-Hilbertian subspaces.
\end{enumerate}
\end{thm}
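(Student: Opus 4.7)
The plan is to iterate two local dichotomies for the Hilbertian degree $d_2$, using the non-ergodicity of $X$ to eliminate the pathological alternative of the minimal/tight dichotomy.

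First, I would apply \prettyref{thm:2ndDichotoForDegrees} to $X$: this yields a non-Hilbertian subspace $Y_1 \subseteq X$ which is either $d_2$-minimal, or is spanned by an FDD in which every non-Hilbertian Banach space is tight. In the latter case, \prettyref{thm:TNHImpliesErgodic} would make $Y_1$, and hence $X$ (since subspaces of non-ergodic spaces are non-ergodic, as recalled in the introduction), ergodic; this contradicts our hypothesis, so $Y_1$ must be $d_2$-minimal. Next, apply \prettyref{thm:FirstDichotofordegrees} to $Y_1$ to obtain a non-Hilbertian subspace $Y_2 \subseteq Y_1$ which either (a) is spanned by a UFDD, or (b) contains no direct sum of two non-Hilbertian subspaces. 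A useful inheritance observation: for any non-Hilbertian $W \subseteq Y_2 \subseteq Y_1$, the $d_2$-minimality of $Y_1$ gives $Y_1 \hookrightarrow W$, hence $Y_2 \hookrightarrow W$; so $Y_2$, and in fact every non-Hilbertian subspace of $Y_2$, is itself $d_2$-minimal. In case (b), setting $Y := Y_2$ yields alternative (2).

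Case (a) contains the main technical obstacle: upgrading the UFDD to a genuine unconditional Schauder basis while preserving non-Hilbertianity. The approach is to select one normalized vector $y_n$ from each summand $F_n$ of the UFDD; the UFDD property forces $(y_n)$ to be an unconditional basic sequence, and $Y := [y_n \mid n \in \N]$ automatically inherits $d_2$-minimality by the observation above. What remains is to arrange that $Y$ is non-Hilbertian, which is where a Komorowski--Tomczak-Jaegermann style argument must be invoked: if every such block-diagonal subspace of $Y_2$ were Hilbertian, then the UFDD structure, combined with \prettyref{thm:ktj} or one of its refinements, would force $Y_2$ itself to be Hilbertian, contradicting our choice. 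A suitable non-Hilbertian choice of $(y_n)$ then gives $Y$ satisfying alternative (1), completing the proof.
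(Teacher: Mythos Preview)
Your overall strategy matches the paper's: pass to a $d_2$-minimal subspace using the second dichotomy together with \prettyref{thm:TNHImpliesErgodic}, then apply the first dichotomy, and note that $d_2$-minimality is inherited by every non-Hilbertian subspace. Case (b) is fine.

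The gap is in case (a). Selecting one normalized vector $y_n$ from each summand $F_n$ of the UFDD need not produce a non-Hilbertian span, and your appeal to \prettyref{thm:ktj} does not close this. Concretely, in a space of the form $\left(\bigoplus_n F_n\right)_{\ell_2}$ with $d_2(F_n)\to\infty$, every such one-vector-per-block selection is isometrically equivalent to the canonical basis of $\ell_2$, so $[y_n\mid n\in\N]$ is always Hilbertian even though the ambient space is not. Komorowski--Tomczak-Jaegermann says nothing about this situation: it does not assert that a non-Hilbertian space with a UFDD has a non-Hilbertian block-diagonal basic sequence of this simple type. So the implication ``all such $[y_n]$ Hilbertian $\Rightarrow$ $Y_2$ Hilbertian'' is false in general, and you have not used $d_2$-minimality at this point to rule the bad scenario out.

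The paper handles case (a) through \prettyref{thm:BasesMNH}(2), whose proof is substantially more delicate than a one-vector selection. One first observes that if some normalized block-sequence of $(F_n)$ spans a non-Hilbertian subspace, that sequence is already the desired unconditional basis. Otherwise every normalized block-sequence is (uniformly, after a short argument) $C$-equivalent to the $\ell_2$ basis. Here $d_2$-minimality enters decisively: by \prettyref{prop:AsympMNH} the space is not asymptotically Hilbertian, hence fails Pisier's property (H), and \prettyref{lem:MNHandPropH} then yields, inside every tail, finite-dimensional subspaces $G_k$ with $\lambda_0$-unconditional bases and $d_2(G_k)\to\infty$. One builds a $d_2$-better block-FDD $(G_k)$ from these, concatenates their finite unconditional bases, and proves the resulting sequence is unconditional by playing the uniform $\ell_2$-equivalence of block-sequences against the uniform unconditionality inside each $G_k$. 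This is the missing idea in your sketch.
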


We moreover conjecture that the second alternative in \prettyref{thm:MNHHHPIntro} cannot actually happen, \prettyref{conj:MNHHHP}.
We end the section by identifying non trivial examples of spaces which do not contain direct sums
of non-Hilbertian subspaces, \prettyref{exa:HIplusl2} and \prettyref{exa:ArgyrosRaikofstalis}, and giving a list of open problems.

\bigskip

\subsection{Definitions and notation}\label{subsec:DefNot}

This subsection lists the main classical definitions and notation that will be needed in this work. We denote by $\N$ the set of nonnegative integers, and by $\R_+$ the set of nonnegative real numbers. We denote by $\Baninf$ the class of all (infinite-dimensional) Banach spaces, by $\Banfin$ the class of finite-dimensional normed spaces, and we let $\Ban = \Baninf \cup \Banfin$. Given a Banach space $X$, we denote by $\Subinf(X)$ the set of (infinite-dimensional, closed) subspaces of $X$, by $\Subfin(X)$ the set of finite-dimensional subspaces of $X$, and we let $\Sub(X) = \Subfin(X) \cup \Subinf(X)$. For $Y, Z \in \Sub(X)$, we will say that $Y$ is \emph{almost contained} in $Z$, and write $Y \subseteq^* Z$, when a finite-codimensional subspace of $Y$ is contained in $Z$.

\smallskip

When writing about a Banach space $X$, we will in general assume that it comes with a fixed norm, that we will usually denote by $\|\cdot\|$. The unit sphere of $X$ for this norm will be denoted by $S_X$, and if necessary we will denote by $\delta_{\|\cdot\|}$ the distance induced by this norm. For $x\in X$ and $r \geqslant 0$, we denote by $\Ball(x, r)$ the open ball centered at $x$ with radius $r$ (which is just the empty set when $r=0$).

\smallskip

Given two finite- or infinite-dimensional Banach spaces $X$ and $Y$, the space of  continuous linear operators  from $X$ to $Y$ will be denoted by $\cL(X, Y)$, or simply by $\cL(X)$ when $X = Y$. It will be equipped by the operator norm coming from the norms of $X$ and $Y$, and this norm will also be usually denoted by $\|\cdot\|$. For $C \geqslant 1$, a \textit{$C$-isomorphism} between $X$ and $Y$ is an isomorphism $T \colon X \to Y$ such that $\|T\|\cdot\|T^{-1}\| \leqslant C$. The \textit{Banach-Mazur distance} between $X$ and $Y$, denoted by $d_{BM}(X, Y)$ is the infimum of the $C \geqslant 1$ such that there exists a $C$-isomorphism between $X$ and $Y$ (if $X$ and $Y$ are not isomorphic, then $d_{BM}(X, Y) = \infty$). A space will be called \emph{Hilbertian} if it is at finite Banach-Mazur distance to a Hilbert space, and \textit{$\ell_2$-saturated} if every subspace of $X$ has a Hilbertian subspace. A \emph{$C$-isomorphic embedding} from $X$ into $Y$ is an embedding which is a $C$-isomorphism onto its image. We write $X \sqsubseteq Y$ if $X$ isomorphically embeds into $Y$, and $X \sqsubseteq_C Y$ if $X$ $C$-isomorphically embeds into $Y$.

\smallskip

Two families $(x_i)_{i \in I}$ and $(y_i)_{i \in I}$ of elements of a Banach space $X$ are said to be \textit{$C$-equivalent}, for $C \geqslant 1$, if for every family $(a_i)_{i \in I}$ of reals numbers with finite support, we have: $$\frac{1}{C}\cdot\left\|\sum_{i \in I}a_i y_i\right\| \leqslant \left\|\sum_{i \in I}a_i x_i\right\| \leqslant C\cdot\left\|\sum_{i \in I}a_i y_i\right\|.$$ In this case, there is a unique $C^2$-isomorphism $T \colon \overline{\spa(x_i \mid i \in I)} \to \overline{\spa(y_i \mid i \in I)}$ such that for every $i$, we have $T(x_i) = y_i$. The families $(x_i)$ and $(y_i)$ are simply said to be \textit{equivalent} if they are $C$-equivalent for some $C \geqslant 1$.

\smallskip

In this paper, we will often use the notion of \emph{finite-dimensional decomposition (FDD)}. Recall that an FDD of a space $X$ is a sequence $(F_n)_{n \in \N}$ of nonzero finite-dimensional subspaces of $X$ such that every $x \in X$ can be written in a unique way as $\sum_{n = 0}^\infty x_n$, where $\forall n \in \N \; x_n \in F_n$. In this case there exists a constant $C$ such that for all $x \in X$ and all $n \in \N$, we have $\left\|\sum_{i < n} x_i\right\| \leqslant C \|x\|$. The smallest such $C$ is called the \emph{constant} of the FDD $(F_n)$. A sequence of finite-dimensional subspaces which is an FDD of the closed subspace it generates will simply be called an \emph{FDD}, without more precision. An \emph{unconditional finite-dimensional decomposition (UFDD)} is an FDD $(F_n)_{n \in \N}$ such that for every sequence $(x_n)_{n \in \N}$ with $x_n \in F_n$ for all $n$, if $\sum_{n=0}^\infty x_n$ converges, then this convergence is unconditional. In this case, there is a constant $K$ such that for all such sequences $(x_n)$, and for every sequence of signs $(\varepsilon_n)_{n \in \N} \in \{-1, 1\}^\N$, we have $\left\|\sum_{i < n} x_n\right\| \leqslant K \left\|\sum_{i < n} \varepsilon_n x_n\right\|$. The smallest such $K$ is called the \emph{unconditional constant} of the UFDD $(F_n)$.

\smallskip

Fix $(F_n)_{n \in \N}$ an FDD of a space $X$. For $x = \sum_{n = 0}^\infty x_n \in X$, the \emph{support} of $x$ is $\supp(x) = \{n \in \N \mid x_n \neq 0\}$. For $A \subseteq X$, we let $\supp(A) = \bigcup_{x \in A} \supp(x)$. A \emph{blocking} of $(F_n)$ is a sequence $(G_n)_{n \in \N}$ of finite-dimensional subspaces of $X$ for which there exists a partition of $\N$ into nonempty successive intervals $I_0 < I_1 < \dots$ such that for every $n$, $G_n = \bigoplus_{i \in I_n} F_i$. A \textit{block-FDD} of $(F_n)$ is a sequence $(G_n)_{n \in \N}$ of nonzero finite-dimensional subspaces of $X$ such that $\supp(G_0) < \supp(G_1) < \ldots$ (here, for two nonempty sets of integers $A$ and $B$, we write $A < B$ for $\forall i \in A \; \forall j \in B \; i < j$). A blocking is a particular case of block-FDD. A block-FDD of $(F_n)$ is itself an FDD, and its constant is less than or equal to the constant of $(F_n)$; moreover, if $(F_n)$ is a UFDD, then a block-FDD of $(F_n)$ is also a UFDD, and its unconditional constant is less than or equal to this of $(F_n)$. A \textit{block-sequence} of $(F_n)$ is a sequence $(x_n)_{n \in \N}$ of vectors of $X$ such that $(\R x_n)_{n \in \N}$ is a block-FDD of $(F_n)$. Such a sequence is a basic sequence, with constant less than or equal to the constant of the FDD $(F_n)$.

\smallskip

If $(F_i)_{i \in I}$ is a family of finite-dimensional subspaces of a Banach space $X$, we will let $[F_i \mid i \in I] = \overline{\sum_{i \in I}F_i}$. This notation will often (but not only) be used in the case where $(F_i)$ is a (finite or infinite) subsequence of an FDD. 

\smallskip

For $C \geqslant 1$, a \textit{$C$-bounded minimal system} in a Banach space $X$ is a family $(x_i)_{i \in I}$ of nonzero elements of $X$ such that for every family $(a_i)_{i \in I}$ of real numbers with finite support and for every $i_0 \in I$, we have $\|a_{i_0}x_{i_0}\| \leqslant C \cdot \left\|\sum_{i \in I}a_i x_i\right\|$. Every separable Banach space contains a countable bounded minimal system whose closed span is the whole space; several more precise results by Terenzi show that such a system can be chosen to have properties that are very close to those of a Schauder basis (see for example \cite{Terenzi98}).
A normalized, $1$-bounded minimal system is called an \textit{Auerbach system}; by Auerbach's lemma (\cite{Biorthogonal}, Theorem 1.16), every finite-dimensional normed space has an Auerbach basis (that is, a basis which is an Auerbach system). A basic sequence with constant $\leqslant C$ is a $2C$-bounded minimal system. But there are other interesting examples. For instance, let $(F_n)_{n \in \N}$ be an FDD with constant $C$. Let, for $n \in \N$, $d_n = \sum_{m < n} \dim(F_m)$, and let $(e_i)_{d_n \leqslant i < d_{n + 1}}$ be an Auerbach basis of $F_n$. Then the sequence $(e_i)_{i \in \N}$ is a $2C$-bounded minimal system.

\smallskip

Given two families $(x_i)_{i \in I}$ and $(y_i)_{i \in I}$ that are $K$-equivalent, if $(x_i)$ is a $C$-bounded minimal system, then $(y_i)$ is a $CK^2$-bounded minimal system. We will also often use the following small perturbation principle for bounded minimal systems:

\begin{lem}

For every $C \geqslant 1$ and every $\varepsilon > 0$, there exists $\delta > 0$ satisfying the following property: if $(x_i)_{i \in I}$ is a $C$-bounded minimal system in a Banach space $X$, if $(y_i)_{i \in I}$ is a family of elements of the same space, and if: $$\sum_{i \in I}\frac{\|x_i - y_i\|}{\|x_i\|} \leqslant \delta,$$ then $(x_i)$ and $(y_i)$ are $(1 + \varepsilon)$-equivalent.

\end{lem}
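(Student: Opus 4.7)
This is a classical small perturbation principle and the proof is essentially a telescoping estimate combined with a continuity (or reverse triangle) argument. The plan is to fix $C \geqslant 1$ and $\varepsilon > 0$, to take $\delta$ depending on these two parameters in a way to be determined below, and to show directly that both inequalities defining $(1 + \varepsilon)$-equivalence hold for the two families.

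First I would use the defining property of a $C$-bounded minimal system in its most useful form: for every family $(a_i)_{i \in I}$ of scalars with finite support and every index $i \in I$, one has
$$|a_i| \cdot \|x_i\| \leqslant C \left\|\sum_{j \in I} a_j x_j\right\|,$$
so $|a_i| \leqslant C \|x_i\|^{-1} \left\|\sum_{j \in I} a_j x_j\right\|$. Combining this with the triangle inequality gives
$$\left\|\sum_{i \in I} a_i (x_i - y_i)\right\| \leqslant \sum_{i \in I} |a_i| \cdot \|x_i - y_i\| \leqslant C \left\|\sum_{j \in I} a_j x_j\right\| \cdot \sum_{i \in I} \frac{\|x_i - y_i\|}{\|x_i\|} \leqslant C\delta \left\|\sum_{j \in I} a_j x_j\right\|.$$

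By the reverse and direct triangle inequalities this yields
$$(1 - C\delta)\left\|\sum_{i \in I} a_i x_i\right\| \leqslant \left\|\sum_{i \in I} a_i y_i\right\| \leqslant (1 + C\delta)\left\|\sum_{i \in I} a_i x_i\right\|.$$
To turn this into $(1 + \varepsilon)$-equivalence it suffices to choose $\delta > 0$ small enough that $1 + C\delta \leqslant 1 + \varepsilon$ and $(1 - C\delta)^{-1} \leqslant 1 + \varepsilon$. The second condition is the more restrictive; taking for instance $\delta = \tfrac{\varepsilon}{2C(1 + \varepsilon)}$ (which is positive and depends only on $C$ and $\varepsilon$) works. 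No step here is really a serious obstacle; the only subtle point is to remember to express $|a_i|$ via the minimal-system constant rather than via the norm $\left\|\sum a_j y_j\right\|$, because we have an \emph{a priori} bound only on the $x_i$-side.
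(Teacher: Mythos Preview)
Your argument is correct and is precisely the standard small-perturbation computation the paper has in mind; the paper itself does not write out a proof but simply declares it ``routine'' and refers to the analogous result for basic sequences (e.g.\ \cite{kalton}, Theorem 1.3.9), noting that the same proof works for bounded minimal systems. Your write-up is exactly that proof.
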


The proof is routine. This is a classical result for basic sequences,  see for example \cite{kalton}, Theorem 1.3.9, 
 and the proof is exactly the same for bounded minimal systems.

\bigskip\bigskip

\section{Gowers spaces}\label{sec:GowersSpaces}

In this section, we present the formalism of Gowers spaces. This formalism will be our main tool to prove dichotomies. It has been developed by de Rancourt in \cite{RancourtRamseyI}, as a generalisation of Gowers' Ramsey-type theory in Banach spaces developed in \cite{GowersRamsey}. The proofs of all the results presented in this section can be found in \cite{RancourtRamseyI}.

\bigskip

\subsection{Gowers spaces}

For a set $\Pi$, denote by $\Pi^{< \N}$ the set of all finite sequences of elements of $\Pi$. A sequence of length $n$ will usually be denoted by $s = (s_0, \ldots, s_{n-1})$, and the unique sequence of length $0$ will be denoted by $\varnothing$. Let $\Seq(\Pi) = \Pi^{< \N} \setminus \{\varnothing\}$. For $s \in \Pi^{< \N}$ and $x \in \Pi$, the concatenation of $s$ and $x$ will be denoted by $s \concat x$.


\begin{defin}\label{DefGowersSpaces}

A \emph{Gowers space} is a quintuple $\G = (\P, \Pi, \leqslant, \leqslant^*, \vartriangleleft\nolinebreak)$, where $\P$ is a nonempty set (the set of \emph{subspaces}), $\Pi$ is an at most countable nonempty set (the set of \emph{points}), $\leqslant$ and $\leqslant^*$ are two quasiorders on $\P$ (i.e. reflexive and transitive binary relations), and $\vartriangleleft \; \subseteq \Seq(\Pi) \times \P$ is a binary relation, satisfying the following properties:

\begin{enumerate}

\item for every $p, q \in \P$, if $p \leqslant q$, then $p \leqslant^* q$;

\item for every $p, q \in \P$, if $p \leqslant^* q$, then there exists $r \in \P$ such that $r \leqslant p$, $r\leqslant q$ and $p \leqslant^* r$;

\item for every $\leqslant$-decreasing sequence $(p_i)_{i \in \N}$ of elements of $\P$, there exists $p^* \in \P$ such that for all $i \in \N$, we have $p^* \leqslant^* p_i$;

\item for every $p \in \P$ and $s \in\Pi^{<\N}$, there exists $x \in \Pi$ such that $s \concat x \vartriangleleft p$;

\item for every $s \in \Seq(\Pi)$ and every $p, q \in \P$, if $s \vartriangleleft p$ and $p\leqslant q$, then $s \vartriangleleft q$.

\end{enumerate}

We say that $p, q \in \P$ are \emph{compatible} if there exists $r \in \P$ such that $r \leqslant p$ and $r \leqslant q$. To save writing, we will often write $p \lessapprox q$ when $p \leqslant q$ and $q \leqslant^* p$.

\end{defin}

The prototypical example of a Gowers space is the following. Let $K$ be an at most countable field. The \emph{Rosendal space} over $K$ is $\Ros_K = (\P, \Pi, \subseteq, \subseteq^*\nolinebreak, \vartriangleleft)$, where:
\begin{itemize}
    \item $\Pi$ is a countably infinite-dimensional $K$-vector space;
    \item $\P$ is the set of all infinite-dimensional subspaces of $\Pi$;
    \item $\subseteq$ is the usual inclusion relation between subspaces;
    \item $\subseteq^*$ is the almost inclusion, defined by $Y \subseteq^* Z$ iff $Z$ contains a finite-codimensional subspace of $Y$;
    \item $(x_0, \ldots, x_n) \vartriangleleft Y$ iff $x_n \in Y$.
\end{itemize}
Here, we have that $Z \lessapprox Y$ iff $Z$ is a finite-codimensional subspace of $Y$, and $Y$ and $Z$ are compatible iff $Y \cap Z$ is infinite-dimensional.

\smallskip

In the case of the Rosendal space, the fact that $s \vartriangleleft p$ actually only depends on $p$ and on the last term of $s$. This is the case in most usual examples of Gowers spaces; spaces satisfying this property will be called \emph{forgetful Gowers spaces}. In these spaces, we will allow ourselves to view $\vartriangleleft$ as a binary relation on $\Pi \times \P$. However, in the proof of \prettyref{thm:2ndDichoto}, we will use a Gowers space which is not forgetful.

\smallskip

In the rest of this subsection, we fix a Gowers space $\G = (\P, \Pi, \leqslant, \leqslant^*, \vartriangleleft)$. To every $p \in \P$, we associate the four following games:

\begin{defin}\label{defin:GowersGames}

Let $p \in \P$.

\begin{enumerate}

\item \emph{Gowers' game below $p$}, denoted by $G_p$, is defined in the following way:

\smallskip

\begin{tabular}{ccccccc}
\textbf{I} & $p_0$ & & $p_1$ & & $\hdots$ & \\
\textbf{II} & & $x_0$ & & $x_1$ & & $\hdots$ 
\end{tabular}

\smallskip

\noindent where the $x_i$'s are elements of $\Pi$, and the $p_i$'s are elements of $\P$. The rules are the following:

\begin{itemize}

\item for \textbf{I}: for all $i \in \N$, $p_i \leqslant p$;

\item for \textbf{II}: for all $i \in \N$, $(x_0, \ldots, x_i) \vartriangleleft p_i$.

\end{itemize}

\noindent The outcome of the game is the sequence $(x_i)_{i \in \N} \in \Pi^\N$.

\item The \emph{asymptotic game below $p$}, denoted by $F_p$, is defined in the same way as $G_p$, except that this time we moreover require that $p_i \lessapprox p$.

\item The \textit{adversarial Gowers' games} below $p$, denoted by $A_p$ and $B_p$, are obtained by mixing Gowers' game and the asymptotic game. The game $A_p$ is defined in the following way:

\smallskip

\begin{tabular}{cccccccc}
\textbf{I} & & $x_0,  q_0$ & & $x_1,  q_1$ & & $\hdots$ \\
\textbf{II} & $p_0$ & & $y_0,  p_1$ & & $y_1,  p_2 $ & & $\hdots$ 
\end{tabular}

\smallskip

\noindent where the $x_i$'s and the $y_i$'s are elements of $\Pi$, and the $p_i$'s and the $q_i$'s are elements of $\P$. The rules are the following:

\begin{itemize}

\item for \textbf{I}: for all $i \in \N$, $(x_0, \ldots, x_i) \vartriangleleft p_i$ and $q_i \lessapprox p$;

\item for \textbf{II}: for all $i \in \N$, $(y_0 \ldots, y_i) \vartriangleleft q_i$ and $p_i \leqslant p$.

\end{itemize}

\noindent The outcome of the game is the pair of sequences $((x_i)_{i \in \N}, (y_i)_{i \in \N}) \in \left(\Pi^\N\right)^2$.

\item The game $B_p$ is defined in the same way as $A_p$, except that this time we require $p_i \lessapprox p$, whereas we only require $q_i \leqslant p$.

\end{enumerate}

\end{defin}

In this paper, when dealing with games, we shall use a convention introduced by Rosendal: we associate an \emph{outcome} to the game, and define a winning condition in terms of the outcome belonging or not to a determined set. 
For example, saying that player \II{} has a strategy to reach a set $\X \subseteq \Pi^\N$ in the game $G_p$ means that she has a winning strategy in the game whose rules are those of $G_p$ and whose winning condition is the fact that the outcome belongs to $\X$.

\smallskip

We endow the set $\Pi$ with the discrete topology and the set $\Pi^\N$ with the product topology. The two main results about Gowers spaces, proved by de Rancourt in \cite{RancourtRamseyI}, are the following:

\begin{thm}[Abstract Rosendal's theorem, \cite{RancourtRamseyI}]\label{thm:AbstractRosendal}

Let $\X \subseteq \Pi^\N$ be analytic, and let $p\in \P$. Then there exists $q \leqslant p$ such that:
\begin{itemize}
    \item either player \I{} has a strategy to reach $\X^c$ in $F_q$;
    \item or player \II{} has a strategy to reach $\X$ in $G_q$.
\end{itemize}

\end{thm}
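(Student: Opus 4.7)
My plan is to establish the dichotomy in two stages: first for closed subsets $\X \subseteq \Pi^\N$ via a combinatorial-forcing argument on the poset $(\P, \leqslant)$, and then reduce the analytic case to the closed case by the standard Suslin unfolding trick.

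For the closed case, I would introduce the usual acceptance/rejection language. Call $s \in \Pi^{<\N}$ \emph{accepted by $q \in \P$} if player II has a strategy in $G_q$ producing some $(x_i)_{i \in \N}$ with $s \concat (x_i) \in \X$, and \emph{rejected by $q$} if no $r \leqslant q$ accepts $s$. Both properties are hereditary along $\leqslant$-descent thanks to axioms (1), (2) and (5). Enumerate $\Pi^{<\N} = \{s_n : n \in \N\}$ and build a $\leqslant$-decreasing sequence $(q_n)$ below $p$ so that, at stage $n$, either $s_n$ is accepted by some $r \leqslant q_n$ (in which case we stop and use the II-alternative, after a minor fusion), or $s_n$ is rejected by $q_n$ and we take $q_{n+1} = q_n$. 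Using axiom (3), let $q^* \leqslant^* q_n$ for all $n$; by preservation, every $s_n$ is rejected by $q^*$. If $\varnothing$ is already accepted by some $r \leqslant q^*$, the second alternative of the theorem holds, so assume $\varnothing$ is rejected by $q^*$.

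In this case I build a strategy for player I in $F_{q^*}$ maintaining the invariant that after stage $i$, the partial play $s = (x_0, \ldots, x_{i-1})$ is rejected by the current $p_i \lessapprox q^*$. At stage $i+1$, suppose for contradiction that no $p_{i+1} \leqslant p_i$ with $p_{i+1} \lessapprox q^*$ has the property that every II-legal move $x_i \vartriangleleft p_{i+1}$ yields an extension $s \concat x_i$ rejected by $p_{i+1}$. Then, using axiom (4) and a stagewise diagonalization, one manufactures a strategy for II in $G_{p_i}$ reaching $s \concat \X$, contradicting the rejection of $s$ by $p_i$. Hence I can always choose $p_{i+1}$ preserving the invariant. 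Closedness of $\X$ then forces the final outcome into $\X^c$: any limit play lying in $\X$ would have a finite prefix already accepted, contradicting the maintained rejection.

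For general analytic $\X$, write $\X = \pi[\widetilde\X]$ with $\widetilde\X \subseteq (\Pi \times \N)^\N$ closed and $\pi$ the projection onto the first coordinate. Form the \emph{unfolded} Gowers space $\widetilde\G$ sharing the same $(\P, \leqslant, \leqslant^*)$ but with point set $\widetilde\Pi = \Pi \times \N$ and with the relation $((x_0, k_0), \ldots, (x_i, k_i)) \vartriangleleft q$ in $\widetilde\G$ defined to hold iff $(x_0, \ldots, x_i) \vartriangleleft q$ in $\G$; all five axioms transfer immediately. Applying the closed case to $\widetilde\X$ in $\widetilde\G$ yields $q \leqslant p$ on which one of the two alternatives holds. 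Projecting II's $\widetilde G_q$-strategy to the $\Pi$-component gives a $G_q$-strategy reaching $\X$; conversely, I's $\widetilde F_q$-strategy reaching $\widetilde\X^c$ translates to an $F_q$-strategy reaching $\X^c$ because I can internally simulate arbitrary integer moves for II.

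The main obstacle is the preservation of rejection under the $\leqslant^*$-fusion step, and the corresponding delicate interplay between $\leqslant$-descent (used in the diagonalization) and $\leqslant^*$-descent (forced by axiom (3)). This is exactly the reason why I's alternative must be phrased using the asymptotic game $F_q$: the constraint $p_i \lessapprox q$ precisely matches what the fusion step provides, and would fail if one demanded the stronger restriction defining $G_q$.
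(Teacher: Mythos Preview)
The paper does not give its own proof of this statement; it is quoted from \cite{RancourtRamseyI}, so there is nothing in the present paper to compare your argument against. Your two-stage plan (closed case by combinatorial forcing, then Suslin unfolding) is indeed the standard route, but as written it has a genuine gap.

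The problematic step is the $\I$-direction of the unfolding. You assert that if $\I$ has a strategy $\sigma$ in $\widetilde F_q$ reaching $\widetilde\X^c$, then ``$\I$ can internally simulate arbitrary integer moves for $\II$'' to obtain a strategy in $F_q$ reaching $\X^c$. This does not work: $\sigma$'s $i$-th move may depend on $k_0,\ldots,k_{i-1}$, and in $F_q$ player $\I$ never sees any $k_j$. To land in $\X^c=(\pi[\widetilde\X])^c$ one must guarantee that for the sequence $(x_i)$ actually produced, \emph{every} choice of $(k_i)$ gives $((x_i,k_i))_i\notin\widetilde\X$; simulating one fixed $(k_i)$ only rules out that single witness. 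Nor can $\I$ merge the countably many $\sigma$-responses $p_i^{(k_0,\ldots,k_{i-1})}$ into a single legal move: nothing in axioms (1)--(5) produces a common $\leqslant$-lower bound for an infinite family of conditions $\lessapprox q$ that remains $\lessapprox q$. The argument in \cite{RancourtRamseyI} does not transfer strategies on a fixed $q$; it shows instead that the class of sets satisfying the dichotomy hereditarily below every $p$ is closed under the Suslin operation, and this closure interleaves a further $\leqslant$-descent and fusion with the tree of the Suslin scheme.

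A smaller point: in your closed case, ``we stop and use the $\II$-alternative'' when some $s_n$ is accepted by $r\leqslant q_n$ is incorrect for $s_n\neq\varnothing$, since acceptance of $s_n$ concerns plays \emph{starting from} $s_n$, not from the empty position. The usual fix is to set $q_{n+1}=r$ and continue, so that after fusion every finite sequence is decided; the $\I$-strategy in $F_{q^*}$ is then built from this decided structure rather than from the blanket assumption that every $s_n$ is rejected.
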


\begin{thm}[Adversarial Ramsey principle, \cite{RancourtRamseyI}]\label{thm:AdvRamsey}

Let $\X \subseteq \left(\Pi^\N\right)^2$ be Borel, and let $p\in \P$. Then there exists $q \leqslant p$ such that:
\begin{itemize}
    \item either player \I{} has a strategy to reach $\X$ in $A_q$;
    \item or player \II{} has a strategy to reach $\X^c$ in $B_q$.
\end{itemize}

\end{thm}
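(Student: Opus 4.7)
The plan is to prove Theorem \ref{thm:AdvRamsey} by induction on the Borel class of $\mathcal{X}$, using the Abstract Rosendal theorem (\prettyref{thm:AbstractRosendal}) both as the base case and as the main iterative tool. The overall scheme is to reduce the two-coordinate adversarial games to one-coordinate Gowers/asymptotic games by an unfolding that projects $\mathcal{X}$ onto one axis at a time, exploiting the fact that $A_p$ (respectively $B_p$) contains an $F$-type move schedule for one player and a $G$-type schedule for the other.

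For the base case of a closed $\mathcal{X}$, I would fix $p \in \P$ and, for each potential sequence $(y_i)_{i \in \N} \in \Pi^\N$ of player \II's moves in $A_p$, consider the section $\mathcal{X}_{(y_i)} = \{(x_i) \in \Pi^\N \mid ((x_i),(y_i)) \in \mathcal{X}\}$. A first application of \prettyref{thm:AbstractRosendal} to a suitable analytic set encoding ``\I{} can reach $\mathcal{X}$ against \II's responses'' would produce a refinement where either \I{} has an $F_q$-strategy forcing $\mathcal{X}^c$ (to be reinterpreted as a \II-strategy in $B_q$ avoiding $\mathcal{X}$), or \II{} has a $G_q$-strategy reaching $\mathcal{X}$ (to be reinterpreted as an \I-strategy in $A_q$). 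The closedness assumption is used to ensure that the relevant ``victory'' set is actually analytic, via a classical argument on finite initial segments.

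For the inductive step with $\mathcal{X} = \bigcup_n \mathcal{X}_n$ (the intersection case being symmetric), I would use a fusion argument. Starting from $p_0 = p$, recursively apply the inductive hypothesis to $\mathcal{X}_n$ on the current subspace to obtain a $\leqslant$-decreasing sequence $(p_n)_{n \in \N}$ together with the associated strategies; then axiom (3) of a Gowers space produces $p^* \in \P$ with $p^* \leqslant^* p_n$ for all $n$. A final application of \prettyref{thm:AbstractRosendal} to a coded union of the pieces then yields the desired $q \leqslant p^*$ on which a unified winning strategy in $A_q$ or $B_q$ can be assembled by switching between the inductive strategies according to where the play lands.

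The main obstacle is the asymmetry between $A_p$ and $B_p$: in $A_p$, player \I{} is constrained by $q_i \lessapprox p$, while in $B_p$ it is player \II{} who carries the $\lessapprox$-constraint on $p_i$. Consequently, strategies do not transfer symmetrically when we refine the ambient subspace, and one cannot treat one game as a simple dual of the other. Overcoming this requires keeping precise track of which player's subspaces become almost-equal to the ambient one at each stage of the fusion, and at the base case it forces us to handle both the $F_q$-alternative and the $G_q$-alternative of \prettyref{thm:AbstractRosendal} and convert each into the correct kind of adversarial strategy; this asymmetric conversion is where I expect the delicate combinatorial work to lie.
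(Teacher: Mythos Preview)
The paper does not contain a proof of \prettyref{thm:AdvRamsey}. It is stated there as a result of de Rancourt, with the proof delegated entirely to the reference \cite{RancourtRamseyI}; the present paper only \emph{uses} the theorem (via \prettyref{lem:AdvRamseyCor}) and never reproves it. So there is nothing here to compare your proposal against.

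That said, your sketch has a genuine gap that is worth flagging. Your base case proposes, ``for each potential sequence $(y_i)_{i\in\N}$ of player \II's moves in $A_p$, consider the section $\mathcal{X}_{(y_i)}$ and apply \prettyref{thm:AbstractRosendal}.'' But in $A_p$ the sequence $(y_i)$ is not a parameter you can freeze: each $y_i$ is chosen by \II{} \emph{after} seeing $x_0,\ldots,x_i$ and $q_0,\ldots,q_i$, so there is no single section to which the one-coordinate theorem applies. Likewise, the claimed conversion ``an $F_q$-strategy for \I{} forcing $\mathcal{X}^c$ becomes a \II-strategy in $B_q$'' does not go through as stated, because in $B_q$ player \II{} must also supply the point-moves $y_i$ satisfying $(y_0,\ldots,y_i)\vartriangleleft q_i$ against \emph{arbitrary} $q_i\leqslant p$ chosen by \I{}, and nothing in an $F_q$-strategy tells \II{} how to do that. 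The actual argument in \cite{RancourtRamseyI} does proceed by an induction/fusion on Borel complexity, but the key combinatorial step is a genuinely two-sided ``pigeonhole'' at the level of open sets in the adversarial games themselves, not a reduction to the single-sequence games $F_q$ and $G_q$; the asymmetry you identify is handled there by working directly with $A_q$ and $B_q$ throughout.
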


\begin{rem}

The definition of the games $A_p$ and $B_p$ we give here is slightly different than the original definition given in \cite{RancourtRamseyI}. This is to save notation in the rest of the paper, and in particular in the proof of \prettyref{thm:2ndDichoto}, which will be quite technical. The version of \prettyref{thm:AdvRamsey} we state above is thus slightly weaker than the original one.

\end{rem}

\smallskip

\prettyref{thm:AbstractRosendal} has been stated and proved by Rosendal in \cite{RosendalGowers} in the special case of the Rosendal space, as a discrete version of Gowers' Ramsey-type theorem in Banach spaces. \prettyref{thm:AdvRamsey} has been proved by Rosendal for $\mathbf{\Sigma}_0^3$ and $\mathbf{\Pi}_0^3$ subsets, in the case of the Rosendal space, in \cite{Rosendaladverse}, where he also conjectured the result for Borel sets, which has been proved by de Rancourt in \cite{RancourtRamseyI}.

\bigskip

\subsection{Approximate Gowers spaces}\label{subsec:ApproxGowersSpaces}

Approximate Gowers spaces are a version of Gowers spaces where the set of points is not anymore a countable set, but a Polish metric space. This formalism is more convenient to obtain approximate Ramsey-type theorems in Banach spaces, for example.

\smallskip

In this section and in the rest of this paper, we use the following notation: if $(\Pi, \delta)$ is a metric space, if $\mathcal{X} \subseteq \Pi^\N$ and if $\Delta = (\Delta_n)_{n \in \N}$ is a sequence of positive real numbers, then we let $(\X)_{\Delta} = \{(x_n)_{n \in \N} \in \Pi^\N \mid \exists (y_n)_{n \in \N} \in \X \; \forall n \in \N \; \delta(x_n, y_n) \leqslant \Delta_n\}$.

\begin{defin}

An \emph{approximate Gowers space} is a sextuple $\G = (\P, \Pi, \delta, \leqslant, \leqslant^*, \vartriangleleft)$, where $\P$ is a nonempty set, $\Pi$ is a nonempty Polish space, $\delta$ is a compatible distance on $\Pi$, $\leqslant$ and $\leqslant^*$ are two quasiorders on $\P$, and $\vartriangleleft \;\subseteq \Pi \times \P$ is a binary relation, satisfying the same axioms 1. -- 3. as in the definition of a Gowers' space and satisfying moreover the two following axioms:

\begin{enumerate}

\setcounter{enumi}{3}

\item for every $p \in \P$, there exists $x \in \Pi$ such that $x \vartriangleleft p$;

\item for every $x \in \Pi$ and every $p, q \in \P$, if $x \vartriangleleft p$ and $p\leqslant q$, then $x \vartriangleleft q$.

\end{enumerate}

\noindent The relation $\lessapprox$ and the compatibility relation on $\P$ are defined in the same way as for a Gowers space.

\end{defin}

With this definition, approximate Gowers spaces are always forgetful, that is, the relation $\vartriangleleft$ is defined as a subset of $\Pi \times \P$ and not as a subset of $\Seq(\Pi) \times \P$ (this technical restriction seems to be needed to prove approximate versions of Theorems \ref{thm:AbstractRosendal} and \ref{thm:AdvRamsey}). In all cases we will encounter in this paper, $\vartriangleleft$ will actually be the membership relation.

\smallskip

The prototypical example of an approximate Gowers space is the following. Let $X$ be a separable Banach space. The \emph{canonical approximate Gowers space over $X$} is\linebreak $\G_E = (\P, S_X, \delta_{\|\cdot\|}, \subseteq, \subseteq^*, \in)$, where:
\begin{itemize}
    \item $\P$ is the set of all (infinite-dimensional) subspaces of $X$;
    \item $S_X$ is the unit sphere of $X$;
    \item $\delta_{\|\cdot\|}$ is the distance on $S_X$ induced by the norm of $X$;
    \item $\subseteq$ is the usual inclusion relation between subspaces;
    \item $\subseteq^*$ is the almost inclusion between subspaces, as defined in \prettyref{subsec:DefNot};
    \item $\in$ is the membership relation between points and subspaces.
\end{itemize}
Here, we have that $Z \lessapprox Y$ iff $Z$ is a finite-codimensional subspace of $Y$, and $Y$ and $Z$ are compatible iff $Y \cap Z$ is infinite-dimensional.

\smallskip

In the context of approximate Gowers spaces, de Rancourt proved in \cite{RancourtRamseyI} an approximate version of \prettyref{thm:AdvRamsey}, but we will not use it in this paper. However, we will introduce an approximate version of \prettyref{thm:AbstractRosendal}. In the rest of this subsection, we fix an approximate Gowers space $\G = (\P, \Pi, \delta, \leqslant, \leqslant^*, \vartriangleleft)$. In this space, Gowers' game $G_p$ is defined in the same way as in Gowers spaces (\prettyref{defin:GowersGames}), apart from the fact that the rule $(x_0, \ldots, x_i) \vartriangleleft p_i$ is obviously replaced with $x_i \vartriangleleft p_i$. We will also define a strengthening of the asymptotic game. Recall that a subset of $\Pi$ is said to be \emph{relatively compact} if its closure in $\Pi$ is compact. In what follows, for $K \subseteq \Pi$ and $p \in \P$, we abusively write $K \vartriangleleft p$ to say that the set $\{x \in K \mid x \vartriangleleft p\}$ is dense in $K$.

\begin{defin}

A \emph{system of relatively compact sets} for the approximate Gowers space $\G$ is a set $\mathcal{K}$ of relatively compact subsets of $\Pi$, equipped with an associative binary operation $\oplus$, satisfying the following property: for every $p \in \P$, and for every $K, L \in \mathcal{K}$, if $K \vartriangleleft p$ and $L \vartriangleleft p$, then $K \oplus L \vartriangleleft p$.

\smallskip

If $(\mathcal{K}, \oplus)$ is a system of relatively compact sets for $\G$ and if $(K_n)_{n \in \N}$ is a sequence of elements of $\mathcal{K}$, then:

\begin{itemize}

\item for $A \subseteq \N$ finite, denote by $\bigoplus_{n \in A} K_n$ the sum $K_{n_1} \oplus \ldots \oplus K_{n_k}$, where $n_1, \ldots, n_k$ are the elements of $A$ taken in increasing order;

\item a \emph{block-sequence} of $(K_n)$ is,  by definition, a sequence $(x_i)_{i \in \N} \in \Pi^\N$ for which there exists an increasing sequence of nonempty sets of integers $A_0 < A_1 < A_2 < \ldots$ such that for every $i \in \N$, we have $x_i \in \bigoplus_{n \in A_i} K_n$.

\end{itemize}

\noindent Denote by $\bs((K_n)_{n \in \N})$ the set of all block-sequences of $(K_n)$.

\end{defin}

In the canonical approximate Gowers space $\G_X$ over a separable Banach space $X$, we can define a natural system of relatively compact sets, $(\mathcal{K}_X, \oplus_X)$, as follows: the elements of $\mathcal{K}_X$ are the unit spheres of finite-dimensional subspaces of $X$ and the operation $\oplus_X$ on $\mathcal{K}_X$ is defined by $S_F \oplus_X S_G = S_{F + G}$. Observe that, given $(F_n)_{n \in \N}$ an FDD of a subspace of $X$, the block-sequences of $(S_{F_n})_{n \in \N}$ in the sense given by the latter definition are exactly the normalized block-sequences of $(F_n)$ in the Banach-theoretic sense.

\begin{defin}

Let $(\mathcal{K}, \oplus)$ be a system of relatively compact sets for $\mathcal{G}$, and $p \in \P$. The \emph{strong asymptotic game below $p$}, denoted by $SF_p$, is defined as follows:

\smallskip

\begin{tabular}{ccccccc}
\textbf{I} & $p_0$ & & $p_1$ & & $\hdots$ & \\
\textbf{II} & & $K_0$ & & $K_1$ & & $\hdots$ 
\end{tabular}

\smallskip

\noindent where the $K_n$'s are elements of $\mathcal{K}$, and the $p_n$'s are elements of $\P$. The rules are the following:

\begin{itemize}

\item for \textbf{I}: for all $n \in \N$, $p_n \lessapprox p$;

\item for \textbf{II}: for all $n \in \N$, $K_n \vartriangleleft p_n$.

\end{itemize}

\noindent The outcome of the game is the sequence $(K_n)_{n \in \N} \in \mathcal{K}^\N$.

\end{defin}

We endow $\Pi^\N$ with the product topology. The following result, proved by de Rancourt in \cite{RancourtRamseyI}, is the approximate version of \prettyref{thm:AbstractRosendal}.

\begin{thm}[Abstract Gowers' theorem, \cite{RancourtRamseyI}]\label{thm:AbstractGowers}

Let $(\mathcal{K}, \oplus)$ be a system of relatively compact sets for $\G$. Let $\X \subseteq \Pi^\N$ be analytic, let $p \in \P$ and let $\Delta$ be a sequence of positive real numbers. Then there exists $q \leqslant p$ such that:

\begin{itemize}

\item either player \I{} has a strategy in $SF_q$ to build a sequence $(K_n)_{n \in \N}$ such that \linebreak $\bs((K_n)_{n \in \N}) \subseteq \X^c$;

\item or player \II{} has a strategy in $G_q$ to reach $(\X)_\Delta$.

\end{itemize}

\end{thm}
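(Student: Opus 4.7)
The plan is to reduce this approximate statement to the discrete Abstract Rosendal theorem (\prettyref{thm:AbstractRosendal}) by building a companion Gowers space on a countable dense subset of $\Pi$ and translating strategies back and forth.

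First, I would fix a countable dense subset $D \subseteq \Pi$ and positive reals $\delta_n < \Delta_n/3$. From $\G$ I construct a discrete Gowers space $\G' = (\P, D, \leqslant, \leqslant^*, \vartriangleleft')$ by declaring $(x_0, \ldots, x_n) \vartriangleleft' p$ iff there exists $y \in \Pi$ with $y \vartriangleleft p$ and $\delta(y, x_n) < \delta_n$; density of $D$ together with the axioms of $\G$ guarantees $\G'$ is a Gowers space. Setting $\X' = (\X)_\delta \cap D^\N$, which is analytic in $D^\N$ with its discrete topology, \prettyref{thm:AbstractRosendal} applied to $\G'$ and $\X'$ yields some $q \leqslant p$ for which one of two alternatives holds in $\G'$.

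If player \II{} has a strategy in Gowers' game below $q$ in $\G'$ reaching $\X'$, I lift it to $G_q$ in $\G$: each discrete response $x_n$ is replaced by a genuine $y_n \vartriangleleft p_n$ within $\delta_n$ of $x_n$, as directly provided by the definition of $\vartriangleleft'$. Since $(x_n) \in \X'$ is $(\delta_n)$-close to some $(w_n) \in \X$, the triangle inequality yields $(y_n) \in (\X)_{2\delta} \subseteq (\X)_\Delta$, as desired.

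The harder direction, when \I{} has a discrete strategy $\sigma$ in the asymptotic game below $q$ in $\G'$ reaching $(\X')^c$, requires converting $\sigma$ into an $SF_q$-strategy whose outcome $(K_n)$ has every block-sequence in $\X^c$. This is where the system $(\mathcal{K}, \oplus)$ is essential: whenever \II{} plays $K_n \vartriangleleft p_n \leqslant q$, upward-monotonicity of $\vartriangleleft$ gives $K_n \vartriangleleft q$, and the defining property of $(\mathcal{K}, \oplus)$ then yields $\bigoplus_{n \in A} K_n \vartriangleleft q$ for every finite $A$, with this sum again relatively compact. The density condition embedded in $K \vartriangleleft p$ means every block-vector $x_i \in \bigoplus_{n \in A_i} K_n$ can be approximated arbitrarily well by some $x \vartriangleleft p'_i$ for $p'_i \lessapprox q$, and hence by some $v_i \in D$ with $(v_0, \ldots, v_i) \vartriangleleft' p'_i$. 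The idea is to have \I{} simulate $\sigma$ in parallel against all such discrete approximations, refining its $SF_q$-plays $p_n \lessapprox q$ via axioms (2) and (3) of Gowers spaces so that the refinements remain consistent with $\sigma$'s responses in every parallel simulation. Since $\sigma$ forces every such discrete sequence into $(\X')^c$, each block-sequence of $(K_n)$ is $\delta$-close termwise to a discrete sequence outside $(\X)_\delta$, and therefore lies in $\X^c$.

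The main obstacle is precisely this last diagonalization: \I{} must ensure that a single sequence of plays $p_n \lessapprox q$ in $SF_q$ simultaneously simulates $\sigma$ against every possible block-structure $A_0 < A_1 < \cdots$ and every choice of net-point within the compact sums $\bigoplus_{n \in A_i} K_n$. Compactness yields only finitely many approximations at each stage, but the space of block-structures is combinatorially rich, so the bookkeeping must interlace finite-net refinements with the $P$-like fusion axiom (3) to diagonalize over all this data within one infinite play of $SF_q$. This is the technical heart of the proof.
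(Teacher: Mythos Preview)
The paper does not prove this theorem; it is quoted from \cite{RancourtRamseyI} and used as a black box, so there is no proof in the paper to compare against.

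That said, your approach---discretize $\Pi$ via a countable dense $D$, apply the Abstract Rosendal theorem (\prettyref{thm:AbstractRosendal}) in the resulting Gowers space, and translate strategies---is the standard route and is essentially how the result is proved in \cite{RancourtRamseyI}. Your treatment of the easy direction (lifting \II's strategy from $\G'$ to $\G$) is correct.

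In the hard direction your outline is right, but one sentence is imprecise in a way that hides the key point. You write that every block-vector $x_i \in \bigoplus_{n \in A_i} K_n$ ``can be approximated arbitrarily well by some $x \vartriangleleft p'_i$ for $p'_i \lessapprox q$''. What is actually needed is an approximant $y \vartriangleleft q_i$ where $q_i = \sigma(v_0,\ldots,v_{i-1})$ is the \emph{specific} move dictated by the discrete strategy; knowing only $\bigoplus K_n \vartriangleleft q$ does not give this. The whole burden of the construction is that \I{} arranges, at stage $n$ of $SF_q$, to play a common $\leqslant$-lower bound (still $\lessapprox q$) of $\sigma(s)$ over \emph{all} finite partial simulations $s = (v_0,\ldots,v_{i-1})$ determined by $K_0,\ldots,K_{n-1}$ (all block-patterns $A_0 < \cdots < A_{i-1} \subseteq \{0,\ldots,n-1\}$ and all net points $v_j$ in the finite net of $\bigoplus_{m\in A_j} K_m$). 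This guarantees $p_m \leqslant q_i$ for every $m \in A_i$, hence $K_m \vartriangleleft q_i$, hence $\bigoplus_{m\in A_i} K_m \vartriangleleft q_i$, and then density gives the required approximant. Since at each stage there are only finitely many such $s$, iterated use of axiom~(2) produces the common refinement; axiom~(3) is not needed here, contrary to what your last paragraph suggests. With this correction, your sketch goes through.
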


From this abstract result, we can easily recover the original Ramsey-type theorem proved by Gowers in \cite{GowersRamsey}, and used in the same article to deduce his first dichotomy (Theorem \ref{thm:Gowers1stDichoto}) along with another dichotomy:

\begin{thm}[Gowers]\label{thm:GowersThm}

Let $X$ be a separable Banach space, $\X \subseteq \left(S_X\right)^\N$ be analytic and $\Delta$ be a sequence of positive real numbers. Then there exists a subspace $Y$ of $X$ such that:

\begin{itemize}

\item either $Y$ has a basis $(y_n)_{n \in \N}$ such that all normalized block-sequences of $(y_n)$ belong to $\X^c$;

\item or player \II{} has a strategy in $G_Y$ to reach $(\X)_\Delta$.

\end{itemize}

\end{thm}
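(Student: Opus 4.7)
The plan is to deduce the theorem from \prettyref{thm:AbstractGowers} applied to the canonical approximate Gowers space $\G_X = (\Subinf(X), S_X, \delta_{\|\cdot\|}, \subseteq, \subseteq^*, \in)$ together with its natural system of relatively compact sets $(\mathcal{K}_X, \oplus_X)$, where $\mathcal{K}_X$ consists of the unit spheres $S_F$ of finite-dimensional subspaces $F \subseteq X$ and $S_F \oplus_X S_G = S_{F+G}$. Starting from $p = X$ with the given $\X$ and $\Delta$, the abstract theorem produces a subspace $Y \subseteq X$ satisfying one of two alternatives. The second alternative -- that player \II{} has a strategy in $G_Y$ to reach $(\X)_\Delta$ -- is literally the second alternative of the claim, so nothing more is required there.

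In the remaining alternative, player \I{} has a strategy $\sigma$ in $SF_Y$ guaranteeing that the outcome $(K_n)_{n \in \N}$ satisfies $\bs((K_n)) \subseteq \X^c$. I would then simulate a run of $SF_Y$ against $\sigma$, playing as \II{} only one-dimensional spheres $K_n = S_{\R y_n} = \{y_n, -y_n\}$. Concretely, at stage $n$ the strategy $\sigma$ returns a finite-codimensional subspace $p_n$ of $Y$, and I want to pick $y_n \in S_{p_n}$ so that the sequence $(y_n)_{n \in \N}$ is Schauder-basic with uniformly bounded constant; then $Z := [y_n \mid n \in \N]$ will be a subspace of $X$ admitting $(y_n)$ as a Schauder basis.

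This is exactly what Mazur's classical construction of basic sequences achieves: given $y_0, \ldots, y_{n-1}$ basic with constant at most some fixed $C$, there exists a finite-codimensional subspace $W_n \subseteq Y$ such that appending any unit vector taken from $W_n$ preserves basicness with constant $\leqslant C$. Since $W_n \cap p_n$ remains finite-codimensional in $Y$, hence infinite-dimensional, I may choose a unit vector $y_n$ inside it. The move $K_n = \{y_n, -y_n\} \vartriangleleft p_n$ is valid because $K_n \subseteq p_n$ trivially.

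To conclude, let $(x_i)_{i \in \N}$ be any normalized block-sequence of the basis $(y_n)$ of $Z$, and let $J_i \subseteq \N$ denote the (finite) support of $x_i$ in $(y_n)$, so that $J_0 < J_1 < \cdots$ strictly and $x_i \in S_{[y_n \mid n \in J_i]} = \bigoplus_{n \in J_i} K_n$. Thus $(x_i)$ is a block-sequence of $(K_n)$ in the sense of \prettyref{subsec:ApproxGowersSpaces}, and therefore lies in $\X^c$ by the defining property of $\sigma$. This realizes the first alternative of the claim, with $Z$ playing the role of the desired subspace. The main subtlety I anticipate is arranging the Mazur perturbation step so that the basic-sequence constraint coexists with the constraint $y_n \in p_n$ imposed by \I{}; once this simultaneous choice is made, the rest is a direct translation between abstract block-sequences of one-dimensional spheres and the Banach-space notion of block-sequence of a Schauder basis.
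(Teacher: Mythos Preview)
Your proposal is correct and follows essentially the same route as the paper: apply \prettyref{thm:AbstractGowers} in $\G_X$ with the system $(\mathcal{K}_X,\oplus_X)$, and in the first alternative have player~\II{} play only one-dimensional spheres to produce a basic sequence whose normalized block-sequences lie in $\X^c$. The only cosmetic difference is where the Mazur step is inserted: the paper absorbs it into player~\I{}'s strategy (replacing each $p_n$ by a smaller finite-codimensional subspace on which the projection $[y_i\mid i<n]\oplus p_n\to[y_i\mid i<n]$ has norm at most~$2$), whereas you leave $\sigma$ untouched and have player~\II{} pick $y_n$ inside $W_n\cap p_n$. Both are valid, since shrinking~\I{}'s moves only helps~\I{}, and restricting~\II{}'s choice to a smaller finite-codimensional subspace is always legal.
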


In the statement of this theorem, $G_Y$ denotes the Gowers' game relative to the canonical approximate Gowers space $\G_X$. The original statement proved by Gowers is a bit different in its formulation, however both are easily equivalent. As an illustration of the formalism of approximate Gowers spaces, we now prove \prettyref{thm:GowersThm}.

\begin{proof}[Proof of \prettyref{thm:GowersThm}]

Work in the canonical approximate Gowers space $\G_X$, with the system of relatively compact sets $(\mathcal{K}_X, \oplus_X)$ defined above. Apply \prettyref{thm:AbstractGowers} to $\X$, $p = X$, and $\Delta$. Then either we get a subspace $Y \subseteq X$ such that player \II{} has a strategy in $G_Y$ to reach $(\X)_\Delta$, and we are done, or we get a subspace $Y \subseteq X$ such that player \I{} has a strategy $\tau$ in $SF_Y$ to build a sequence $(K_n)_{n \in \N}$ with $\bs((K_n)_{n \in \N}) \subseteq \X^c$. We can assume that the strategy $\tau$ is such that for every run of the game $SF_Y$:

\smallskip

\begin{tabular}{ccccccc}
\textbf{I} & $Y_0$ & & $Y_1$ & & $\hdots$ & \\
\textbf{II} & & $S_{F_0}$ & & $S_{F_1}$ & & $\hdots$ 
\end{tabular}

\smallskip

\noindent played according to $\tau$, the natural projection $[F_i \mid i < n] \oplus Y_n \to [F_i \mid i < n]$ has norm at most 2. Now consider any run of the game where \I{} plays according to $\tau$ and \II{} plays unit spheres of subspaces of dimension $1$: $S_{\R y_0}, S_{\R y_1}, \ldots$. Then by construction, $(y_n)_{n \in \N}$ is a basic sequence with constant at most $2$, and because \I{} played according to $\tau$, all normalized block-sequences of $(y_n)$ belong to $\X^c$.

\end{proof}

The main goal of next section is to investigate conditions on families $\H$ of subspaces of $X$ for which a local version of \prettyref{thm:GowersThm} can be proved, that is, a version of \prettyref{thm:GowersThm} where we can ensure that the subspace $Y$ given by the theorem is in $\H$. Such a result will be proved in \prettyref{sec:FirstDichotomy}.

\bigskip\bigskip

\section{D-families: definition and examples}\label{sec:DFamilies}

In this section, we introduce the notion of a D-family: these families will be those for which we will be able to prove local Banach-space dichotomies. The ``D'' in the name of D-families both refers to the possibility of proving such dichotomies, and to the fundamental property that one can diagonalize among such families (see \prettyref{lem:Diag} below). We will then give sufficient conditions for being a D-family, and examples.

\bigskip

\subsection{Definition and first properties} \label{EllentuckTop}

As seen in the previous section, the main ingredient to prove dichotomies of a Ramsey-theoretic nature in a given family of subspaces is the possibility to diagonalize among elements of this family. Inspired by Lemma \ref{ppp}, we will define D-families as families of subspaces that are $G_\delta$ for a certain topology. This will ensure, on one hand, that a diagonalisation property similar to this in the definition of a $P^+$-coideal will be satisfied by these families, and on the other hand that they have a good behaviour relative to FDD's.

\smallskip

Fix $X$ a Banach space. For $F \in \Subfin(X)$ and $Y \in \Sub(X)$ such that $F \subseteq Y$, let  $[F, Y] := \{Z \in \Sub(X) \mid F \subseteq Z \subseteq Y\}$; and for $\varepsilon > 0$, let $[F, Y]_\varepsilon^X$ be the set of $Z \in \Sub(X)$ for which there exist $Z' \in [F, Y]$ and an isomorphism $T \colon Z' \to Z$ with $\|T - \Id_{Z'}\| < \varepsilon$ (this latter set will simply be denoted by $[F, Y]_\varepsilon$ when there is no ambiguity on the ambient space $X$). To avoid any misunderstanding, let us note that  this notation $[F,Y]$ should not be confused with the one used to denote the closed linear span of a sequence of vectors or of finite-dimensional subspaces.

\begin{lem}\label{lem:Ellentuck}

The sets $[F, Y]_\varepsilon$, for $\varepsilon > 0$, $F \in \Subfin(X)$ and $Y \in \Sub(X)$ such that $F \subseteq Y$, form a basis for a topology on $\Sub(X)$. Given $Y \in \Sub(X)$, a basis of neighborhoods of $Y$ for this topology is given by the $[F, Y]_\varepsilon$'s, for $\varepsilon > 0$ and $F \subseteq Y$.

\end{lem}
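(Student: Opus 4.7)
The plan is to verify the two standard basis axioms on $\Sub(X)$: (a) every $Z \in \Sub(X)$ belongs to some set of the prescribed form; (b) given $Z \in [F_1, Y_1]_{\varepsilon_1} \cap [F_2, Y_2]_{\varepsilon_2}$, there exists a basis element containing $Z$ and contained in this intersection. I will arrange (b) so that the produced basis element is always of the shape $[F, Z]_\varepsilon$ with $F \subseteq Z$; this immediately yields the second assertion of the lemma, namely that the $[F, Y]_\varepsilon$ with $F \subseteq Y$ form a neighborhood basis at $Y$.

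Axiom (a) is immediate: for any $Z$, take $F = \{0\}$, $Y = Z$, any $\varepsilon > 0$, and witness $Z \in [\{0\}, Z]_\varepsilon$ via $Z' = Z$ and $T = \Id_Z$. For axiom (b), unpack each hypothesis to obtain $Z'_i$ with $F_i \subseteq Z'_i \subseteq Y_i$ and isomorphisms $T_i \colon Z'_i \to Z$ with $\eta_i := \|T_i - \Id_{Z'_i}\| < \varepsilon_i$. I would then set
\[
F := T_1(F_1) + T_2(F_2) \subseteq Z, \qquad \varepsilon := \tfrac{1}{2}\min\!\left\{\frac{\varepsilon_1-\eta_1}{\|T_1\|},\, \frac{\varepsilon_2-\eta_2}{\|T_2\|}\right\} > 0,
\]
and claim that $Z \in [F, Z]_\varepsilon \subseteq [F_1, Y_1]_{\varepsilon_1} \cap [F_2, Y_2]_{\varepsilon_2}$. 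The membership $Z \in [F, Z]_\varepsilon$ is trivial via the identity.

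For the inclusion, pick any $W \in [F, Z]_\varepsilon$, witnessed by some $W'$ with $F \subseteq W' \subseteq Z$ and an isomorphism $S \colon W' \to W$ with $\|S - \Id_{W'}\| < \varepsilon$. Define $\widetilde{W}_i := T_i^{-1}(W') \subseteq Z'_i$. Since $T_i(F_i) \subseteq F \subseteq W'$, one has $F_i \subseteq \widetilde{W}_i$, so $\widetilde{W}_i \in [F_i, Y_i]$. The composition $S \circ T_i|_{\widetilde{W}_i} \colon \widetilde{W}_i \to W$ is an isomorphism, and the triangle-inequality decomposition
\[
S \circ T_i|_{\widetilde{W}_i} - \Id_{\widetilde{W}_i} = (S - \Id_{W'}) \circ T_i|_{\widetilde{W}_i} + (T_i - \Id_{Z'_i})|_{\widetilde{W}_i}
\]
gives $\|S \circ T_i|_{\widetilde{W}_i} - \Id_{\widetilde{W}_i}\| \leq \varepsilon\|T_i\| + \eta_i < \varepsilon_i$ by the choice of $\varepsilon$. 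Hence $W \in [F_i, Y_i]_{\varepsilon_i}$, for both $i = 1, 2$.

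The main conceptual obstacle is really picking $F$ and $Y$: in the exact (non-approximate) case one would simply take $F = F_1 + F_2$, $Y = Y_1 \cap Y_2$, but here this is not even well-defined since $F_i$ need not lie in $Z$ nor $Z$ in $Y_i$. The trick is to transport the $F_i$ \emph{inside} $Z$ via the isomorphism $T_i$, so that the produced $F$ automatically satisfies $F \subseteq Z$, and to take $Y = Z$ itself as the outer bound, so that any perturbation $W'$ of $Z$ can be pulled back through $T_i^{-1}$ to a subspace of $Y_i$ lying above $F_i$.
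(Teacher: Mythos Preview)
Your proof is correct and follows essentially the same approach as the paper's: transport the $F_i$ into $Z$ via the witnessing isomorphisms $T_i$, take $F = \sum_i T_i(F_i)$ and outer space $Z$ itself, then pull back any $W' \subseteq Z$ through $T_i^{-1}$ and compose. The only cosmetic differences are that the paper treats a finite intersection $\bigcap_{i=1}^n$ rather than just two sets, and bounds $\|T_i\|$ by $1+\varepsilon_i$ rather than using $\|T_i\|$ directly in the choice of $\varepsilon$.
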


\begin{proof}

What we have to show is that given $\varepsilon_i > 0$, $Y_i \in \Sub(X)$, and $F_i \in \Subfin(X)$ such that $F_i \subseteq Y_i$ for $1 \leqslant i \leqslant n$, and given $Z \in \bigcap_{i = 1}^n [F_i, Y_i]_{\varepsilon_i}$, there exist $\varepsilon > 0$ and a finite-dimensional subspace $F \subseteq Z$ such that $[F, Z]_\varepsilon \subseteq \bigcap_{i = 1}^n [F_i, Y_i]_{\varepsilon_i}$. For each $i$, fix $Z_i \in [F_i, Y_i]$ and $T_i \colon Z_i \to Z$ an isomorphism such that $\|T_i - \Id_{Z_i}\| < \varepsilon_i$. Fix $\varepsilon > 0$ such that for every $i$, $\|T_i - \Id_{Z_i}\| + \varepsilon(1 + \varepsilon_i) < \varepsilon_i$, and let $F = \sum_{i = 1}^n T_i(F_i)$. Then we have $F \subseteq Z$. Now let $W \in [F, Z]_\varepsilon$, and fix $1 \leqslant i \leqslant n$; we show that $W \in [F_i, Y_i]_{\varepsilon_i}$. Fix $W' \in [F, Z]$ and $T \colon W' \to W$ an isomorphism such that $\|T - \Id_{W'}\| < \varepsilon$. Then $T \circ (T_i)_{\restriction T_i^{-1}(W')}$ is an isomorphism from $T_i^{-1}(W')$ to $W$, and we have $T_i^{-1}(W') \in [F_i, Y_i]$. Moreover,
\begin{eqnarray*}
\left\| T \circ (T_i)_{\restriction T_i^{-1}(W')} - \Id_{T_i^{-1}(W')} \right\| & \leqslant & \left\| \left(T - \Id_{W'}\right) \circ (T_i)_{\restriction T_i^{-1}(W')} \right\| + \left\| T_i - \Id_{Z_i} \right\| \\ & \leqslant & \varepsilon (1 + \varepsilon_i) + \left\| T_i - \Id_{Z_i} \right\| \\ & < & \varepsilon_i,
\end{eqnarray*}
concluding the proof.

\end{proof}

The topology on $\Sub(X)$ defined in \prettyref{lem:Ellentuck} will be called the \emph{Ellentuck topology}. It does not depend on the choice of the equivalent norm on $X$. This name was given because of the similarity between this topology and other topologies that arise in the context of Ramsey spaces, and that are also called \emph{Ellentuck}. See \cite{todorcevicorange} for more details. 

\begin{defin}\label{defin:D-family}

A \textit{D-family} of subspaces of $X$ is a family $\mathcal{H} \subseteq \Subinf(X)$ satisfying the two following properties:
\begin{enumerate}
    \item $\H$ is stable under finite-dimensional modifications, i.e. for every $Y \in \Subinf(X)$ and every $F \in \Subfin(X)$, we have $Y \in \mathcal{H}$ if and only if $Y + F \in \mathcal{H}$;
    \item $\mathcal{H}$, seen as a subset of $\Sub(X)$, is $G_\delta$ for the Ellentuck topology.
\end{enumerate}

\end{defin}

We now prove a few properties of D-families. In what follows, we fix $\H$ a D-family of subspaces of $X$.

\begin{defin}

Let $Y \in \Sub(X)$. The \textit{restriction} of $\H$ to $Y$ is the set $\H_{\restriction Y} = \H \cap \Sub(Y)$.

\end{defin}

\begin{lem}
Let $Y \in \Subinf(X)$. The Ellentuck topology on $\Sub(Y)$ coincides with the topology induced on $\Sub(Y)$ by the Ellentuck topology on $\Sub(X)$. In particular, $\H_{\restriction Y}$ is a D-family of subspaces of $Y$.
\end{lem}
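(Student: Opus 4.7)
The plan is to verify the coincidence of topologies first, after which the D-family claim will follow by a routine check.

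First, I would note that by \prettyref{lem:Ellentuck}, a basis of neighborhoods of any $Z \in \Sub(Y)$ in the Ellentuck topology on $\Sub(Y)$ consists of the sets $[F, Z]_\varepsilon^Y$ with $F \in \Subfin(Z)$ and $\varepsilon > 0$; similarly, a basis of neighborhoods of $Z$ in the topology induced on $\Sub(Y)$ by $\Sub(X)$ consists of $[F, Z]_\varepsilon^X \cap \Sub(Y)$ for the same range of $F$ and $\varepsilon$. It thus suffices to show that
\[ [F, Z]_\varepsilon^X \cap \Sub(Y) = [F, Z]_\varepsilon^Y \]
whenever $F \in \Subfin(Z)$, $Z \subseteq Y$ and $\varepsilon > 0$.

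Unfolding the definitions, both sides consist of those $W$ with $W \subseteq Y$ for which there exist $W' \subseteq Z$ with $F \subseteq W'$ and an isomorphism $T \colon W' \to W$ satisfying $\|T - \Id_{W'}\| < \varepsilon$. Since $Z \subseteq Y \subseteq X$, the condition $F \subseteq W' \subseteq Z$ on the inner subspace is the same whether $W'$ is regarded as a subspace of $Y$ or of $X$, and the norm of $T - \Id_{W'}$, being computed from values $T(w') - w'$ that lie in $W + W' \subseteq Y$, is intrinsic and does not depend on the ambient space. Hence the two sets coincide, and the two topologies on $\Sub(Y)$ are equal.

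From this, the second assertion is immediate. Stability of $\H_{\restriction Y}$ under finite-dimensional modifications by subspaces of $Y$ is inherited from the corresponding stability of $\H$, since $\Subfin(Y) \subseteq \Subfin(X)$ and any $Z + G$ for $G \in \Subfin(Y)$ still lies in $\Subinf(Y)$. For the $G_\delta$ property, write $\H = \bigcap_{n \in \N} U_n$ with each $U_n$ open in the Ellentuck topology on $\Sub(X)$; then
\[ \H_{\restriction Y} = \bigcap_{n \in \N}\bigl(U_n \cap \Sub(Y)\bigr), \]
and by the equality of topologies just established, each $U_n \cap \Sub(Y)$ is open in the Ellentuck topology on $\Sub(Y)$. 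No step is genuinely difficult; the only point requiring care is the careful bookkeeping of the ambient space in the definition of $[F, Z]_\varepsilon$ and the verification that this choice is immaterial.
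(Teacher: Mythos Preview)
Your proof is correct and follows essentially the same approach as the paper: you establish the key identity $[F, Z]_\varepsilon^Y = [F, Z]_\varepsilon^X \cap \Sub(Y)$ for basic neighborhoods and deduce both the coincidence of topologies and the $G_\delta$ property of $\H_{\restriction Y}$. The only difference is that you explicitly verify stability under finite-dimensional modifications, which the paper leaves implicit.
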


\begin{proof}

Observe that for every $\varepsilon > 0$, every $Z \in \Sub(Y)$ and every finite-dimensional subspace $F \subseteq Z$, we have $[F, Z]_\varepsilon^Y = [F, Z]_\varepsilon^X \cap \Sub(Y)$. The left-hand-side of this equality is the general form of a basic neighborhood of $Z$ in the Ellentuck topology on $\Sub(Y)$, and the right-hand-side is the general form of a basic neighborhood of $Z$ in the topology induced on $\Sub(Y)$ by the Ellentuck topology on $\Sub(X)$. Thus, these topologies coincide.

\smallskip

Therefore, since $\H$ is $G_\delta$ for the Ellentuck topology on $\Sub(X)$, its intersection with $\Sub(Y)$ is $G_\delta$ for the Ellentuck topology on $\Sub(Y)$, proving the second part.

\end{proof}

\begin{lem}\label{lem:Diag}
Let $(Y_n)_{n \in \N}$ be a decreasing family of elements of $\H$. Then there exists $Y_\infty \in \H$ such that for every $n \in \N$, $Y_\infty \subseteq^* Y_n$.
\end{lem}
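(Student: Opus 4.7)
The plan is to exploit the $G_\delta$ description of $\H$: write $\H=\bigcap_{k\in\N}U_k$ with each $U_k$ open in the Ellentuck topology, WLOG with $U_0\supseteq U_1\supseteq\cdots$. The goal is then to construct a normalized basic sequence $(y_n)_{n\in\N}$ of basic constant $\leqslant 2$ such that $y_n\in Y_n$ for every $n$ and such that $Y_\infty:=[y_n\mid n\in\N]$ lies in every $U_k$. Granting this, $Y_\infty\in\bigcap_k U_k=\H$; and the basic-sequence decomposition $Y_\infty=[y_0,\ldots,y_{n-1}]\oplus\overline{[y_k\mid k\geqslant n]}$, together with the monotonicity of $(Y_n)$ giving $\overline{[y_k\mid k\geqslant n]}\subseteq Y_n$, yields $Y_\infty\subseteq^* Y_n$, as required.

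Construction by fusion. I schedule the openness conditions one at a time, building indices $0=m_0<m_1<\cdots$ and the $y_n$'s in batches. At stage $k$, let $F_k:=[y_0,\ldots,y_{m_k-1}]$; by closure of $\H$ under finite-dimensional modifications, $Y_{m_k}+F_k\in\H\subseteq U_k$, so openness of $U_k$ produces a basic Ellentuck neighborhood $[G_k,Y_{m_k}+F_k]_{\eta_k}\subseteq U_k$ with $G_k\supseteq F_k$ finite-dimensional and $\eta_k<2^{-k}$. Since $G_k\subseteq Y_{m_k}+F_k$, I pick a finite-dimensional $H_k\subseteq Y_{m_k}$ with $G_k\subseteq F_k+H_k$ (decompose a basis of $G_k$ as $y+f$ with $y\in Y_{m_k}$, $f\in F_k$, and take $H_k$ to be the span of the $y$'s). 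Choosing an Auerbach basis $(h_i^k)_{1\leqslant i\leqslant d_k}$ of $H_k$, set $m_{k+1}:=m_k+d_k$ and pick $y_{m_k+i-1}$ to be a tiny perturbation of $h_i^k$ inside $Y_{m_k}$. Using Mazur's classical argument inside the infinite-dimensional $Y_{m_k}$ combined with the small perturbation lemma for bounded minimal systems recalled at the end of Subsection~1.5, the perturbations may be chosen so small that (a) the concatenated sequence $(y_n)_{n\leqslant m_{k+1}-1}$ remains normalized, basic, with basic constant $\leqslant 2$, and (b) the natural linear map $S_k\colon F_k+H_k\to F_{k+1}$ fixing $F_k$ and sending $h_i^k\mapsto y_{m_k+i-1}$ is an isomorphism with $\|S_k-\Id\|<\eta_k/4$.

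Verification. Set $T_k:=\overline{[y_n\mid n\geqslant m_{k+1}]}$; since $y_n\in Y_n\subseteq Y_{m_k}$ for $n\geqslant m_{k+1}>m_k$, one has $T_k\subseteq Y_{m_k}$. Let $Z_k:=F_k+H_k+T_k$. Then $G_k\subseteq F_k+H_k\subseteq Z_k\subseteq Y_{m_k}+F_k$, so $Z_k\in[F_k+H_k,Y_{m_k}+F_k]\subseteq[G_k,Y_{m_k}+F_k]$. The basic-sequence structure of $(y_n)$ provides the topological direct sum $Y_\infty=F_{k+1}\oplus T_k$ with projection of norm $\leqslant 2$; by closeness of $S_k$ to the identity and the standard argument for near-identity perturbations, $(F_k+H_k)\cap T_k=\{0\}$ and the direct sum $Z_k=(F_k+H_k)\oplus T_k$ is also topological with comparable projection norm. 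Extending $S_k$ by $\Id_{T_k}$ therefore defines an isomorphism $\tilde S_k\colon Z_k\to Y_\infty$ with $\|\tilde S_k-\Id\|\leqslant 2\,\|S_k-\Id\|<\eta_k$. Thus $Y_\infty\in[G_k,Y_{m_k}+F_k]_{\eta_k}\subseteq U_k$ for every $k$, so $Y_\infty\in\H$, concluding the proof.

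The main technical delicacy lies in the joint calibration of perturbation tolerances: one must fix in advance a single summable sequence of parameters that simultaneously controls (i) preservation of the global basic-sequence constant under the successive concatenations of perturbed Auerbach bases (this is what the combination of Mazur's lemma with the small perturbation lemma delivers), and (ii) the stage-$k$ Ellentuck constraint $\|S_k-\Id\|<\eta_k/4$ imposed by the basic neighborhood chosen at that stage. Once this bookkeeping is set, the construction runs without further surprises.
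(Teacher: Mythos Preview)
Your overall strategy—trap $Y_\infty$ inside each $U_k$ via an Ellentuck neighborhood around $Y_{m_k}+F_k$—is the right one, but claim (a) in the inductive step is not justified and in general fails. The subspace $H_k\subseteq Y_{m_k}$ is forced on you by the neighborhood $[G_k,Y_{m_k}+F_k]_{\eta_k}$; you cannot position it at will. Nothing prevents $F_k\cap H_k\neq\{0\}$ (in which case your map $S_k$ is not even well-defined), and even when the sum is direct the projection $F_k\oplus H_k\to F_k$ may have arbitrarily large norm. Mazur's lemma lets you \emph{choose} a new vector inside a suitable finite-codimensional subspace, but here the direction of $H_k$ is already fixed and a small perturbation cannot move it there. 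On top of this, an Auerbach basis of $H_k$ only has basic constant bounded by $\dim H_k$, not by $2$. So the uniform basic constant—on which your estimate $\|\tilde S_k-\Id\|\leqslant 2\,\|S_k-\Id\|$ depends—is simply not available from your construction.

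The paper's proof shows that the basic sequence and the $\varepsilon$-bookkeeping are both unnecessary. Since $[F,Y]\subseteq[F,Y]_\varepsilon$ for every $\varepsilon>0$, once you have found $[G_k,Y_{m_k}+F_k]_{\eta_k}\subseteq U_k$ with $G_k\supseteq F_k$, you automatically have the \emph{exact} inclusion $[G_k,Y_{m_k}+F_k]\subseteq U_k$. So just set $F_{k+1}:=G_k$ (enlarged inside $Y_{m_k}+F_k$ if needed to make the dimensions strictly increase) and let $Y_\infty:=\overline{\bigcup_k F_k}$. An easy induction gives $F_{k+1}\subseteq Y_\infty\subseteq Y_{m_k}+F_k$ for every $k$, so $Y_\infty\in[G_k,Y_{m_k}+F_k]\subseteq U_k$ and $Y_\infty\subseteq^* Y_{m_k}$; since the $Y_n$ are decreasing this yields $Y_\infty\subseteq^* Y_n$ for all $n$. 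No isomorphism close to the identity, no Schauder basis, no perturbation calculus.
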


\begin{proof}

Let $(\mathcal{U}_n)_{n \in \N}$ be a decreasing family of Ellentuck-open subsets of $\Sub(X)$ such that $\H = \bigcap_{n \in \N} \mathcal{U}_n$. We define inductively an increasing sequence $(F_n)_{n \in \N}$ of finite-dimensional subspaces of $X$ in the following way. Let $F_0 = \{0\}$. The space $F_n$ being defined, by axiom 1. in the definition of a D-family, the subspace $Y_n + F_n$ is in $\H$, so in $\U_n$; thus there exists a finite-dimensional subspace $F_{n+1} \subseteq Y_n + F_n$ such that $[F_{n+1}, Y_n + F_n] \subseteq \U_n$. We can even assume that $F_n \subseteq F_{n+1}$ and that $\dim(F_{n+1}) \geqslant n+1$. This achieves the construction.

\smallskip

Now let $Y_\infty = \overline{\bigcup_{n \in \N} F_n}$. By construction, for every $n \in \N$ we have $Y_\infty \subseteq F_n + Y_n$, so $Y_\infty \subseteq^* Y_n$. This also implies that $Y_\infty \in [F_{n+1}, Y_n + F_n] \subseteq \U_n$, so finally $Y_\infty \in \H$.

\end{proof}

\begin{cor}\label{cor:DefLocalGowersSpace}

$\G_\H = (\H, S_X, \delta_{\|\cdot\|}, \subseteq, \subseteq^*, \in)$ is an approximate Gowers space.

\end{cor}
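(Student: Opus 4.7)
The plan is to verify, one by one, the five axioms in the definition of an approximate Gowers space for the data $(\H, S_X, \delta_{\|\cdot\|}, \subseteq, \subseteq^*, \in)$. Three of them are essentially immediate: axiom 1 is the trivial implication $Y \subseteq Z \Rightarrow Y \subseteq^* Z$; axiom 4 holds because any infinite-dimensional subspace of $X$ contains unit vectors; and axiom 5 is the obvious monotonicity of the membership relation under inclusion. So the actual content of the proof lies in axioms 2 and 3.

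For axiom 2, suppose $Y, Z \in \H$ with $Y \subseteq^* Z$. By definition of $\subseteq^*$ there is a finite-codimensional subspace $W$ of $Y$ with $W \subseteq Z$. I will take this $W$ as the witness required by axiom 2: clearly $W \subseteq Y$, $W \subseteq Z$, and since $W$ is finite-codimensional in $Y$ one has $Y \subseteq^* W$. The only thing to check is that $W \in \H$. Write $Y = W \oplus F$ for some finite-dimensional complement $F \subseteq Y$; then $W + F = Y \in \H$, and applying the first axiom in the definition of a D-family (closure under finite-dimensional modifications) with $W$ in place of the ``$Y$'' of \prettyref{defin:D-family} yields $W \in \H$.

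For axiom 3, I will invoke \prettyref{lem:Diag} directly: given any $\subseteq$-decreasing sequence $(Y_n)_{n \in \N}$ of elements of $\H$, that lemma produces $Y_\infty \in \H$ with $Y_\infty \subseteq^* Y_n$ for every $n$, which is exactly what is required.

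I do not foresee any real obstacle here; the main work has already been done in establishing that the Ellentuck topology is well-defined on $\Sub(X)$ and in proving the diagonalization \prettyref{lem:Diag}, and \prettyref{cor:DefLocalGowersSpace} is essentially a bookkeeping corollary that packages those facts together with the trivial monotonicity properties of $\subseteq$, $\subseteq^*$ and $\in$ on subspaces.
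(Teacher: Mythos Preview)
Your proof is correct and is exactly the argument the paper has in mind: the corollary is stated without proof precisely because axioms 1, 4, 5 are trivial, axiom 2 follows from stability of $\H$ under finite-dimensional modifications, and axiom 3 is \prettyref{lem:Diag}. There is nothing to add.
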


\begin{defin}

An \textit{$\H$-good FDD} is an FDD $(F_n)_{n \in \N}$ of a subspace of $X$ such that for every infinite $A \subseteq \N$, the subspace $[F_n \mid n \in A]$ is in $\H$.

\end{defin}

This terminology is motivated by the fact that we want to prove dichotomies where the outcome space is in the family $\H$ (so for instance, non-Hilbertian). In our dichotomies, good FDD's will play a similar role as the basis $(y_n)_{n \in \N}$ in the statement of \prettyref{thm:GowersThm} does.

\begin{lem}\label{lem:GoodBlocking}

Let $(F_n)_{n \in \N}$ be an FDD of a subspace of $X$. Suppose that $[F_n \mid n \in \N] \in \H$. Then there exists a blocking $(G_n)_{n \in \N}$ of $(F_n)$ which is $\H$-good.

\end{lem}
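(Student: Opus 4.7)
The strategy is a fusion/diagonalization argument in the spirit of the proof of \prettyref{lem:Diag}, using the $G_\delta$ structure of $\H$ as the engine. Write $\H = \bigcap_{m \in \N} \U_m$ with each $\U_m$ Ellentuck-open and, WLOG, $(\U_m)$ decreasing; let $Y := [F_n \mid n \in \N]$. I will build the partition $\N = \bigsqcup_k I_k$ with $I_k = [n_k, n_{k+1} - 1]$ and $n_0 = 0$, setting $G_k := [F_n \mid n \in I_k]$, so that for every infinite $A \subseteq \N$ and every $m$, the subspace $[G_n \mid n \in A]$ ends up inside $\U_m$.

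At stage $k$, with $I_0 < \dots < I_{k-1}$ already chosen and $n_k$ determined, I run through all pairs $(m, s)$ with $m \leq k$ and $s \subseteq \{0, \dots, k-1\}$ (finitely many) and prepare a ``promise'' for each. For such a pair, consider $W_{s,k} := [G_n \mid n \in s] + [F_n \mid n \geq n_k]$. Since $Y \in \H$ and finite-dimensional stability works in both directions, first passing from $Y$ to $[F_n \mid n \geq n_k]$ and then adding the finite-dimensional piece $[G_n \mid n \in s]$ shows $W_{s,k} \in \H \subseteq \U_m$. Ellentuck openness of $\U_m$ then yields a finite-dimensional $E^{m,s}_k$ with $[G_n \mid n \in s] \subseteq E^{m,s}_k \subseteq W_{s,k}$ and some $\varepsilon^{m,s}_k > 0$ such that $[E^{m,s}_k, W_{s,k}]_{\varepsilon^{m,s}_k} \subseteq \U_m$ (enlarging $E^{m,s}_k$ to absorb $[G_n \mid n \in s]$ is harmless). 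Since $E^{m,s}_k$ is finite-dimensional and $W_{s,k} = [G_n \mid n \in s] + \overline{\bigcup_N [F_n \mid n_k \leq n < N]}$, I can fix $N^{m,s}_k \geq n_k$ large enough and a finite-dimensional $\tilde E^{m,s}_k \subseteq [G_n \mid n \in s] + [F_n \mid n_k \leq n < N^{m,s}_k]$ carrying an isomorphism $T^{m,s}_k$ from $E^{m,s}_k$ that fixes $[G_n \mid n \in s]$ and is as close to the identity as desired. I then set $n_{k+1} := \max_{m \leq k,\, s \subseteq \{0,\dots,k-1\}} N^{m,s}_k$, ensuring in particular that $\tilde E^{m,s}_k \subseteq [G_n \mid n \in s] + G_k$ for every pair handled at this stage.

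For the verification, fix any infinite $A \subseteq \N$ and any $m \in \N$. Let $k := \min(A \cap [m, \infty))$ and $s := A \cap [0, k)$; then the pair $(m, s)$ was treated at stage $k$. Since $k \in A$, we have $\tilde E^{m,s}_k \subseteq [G_n \mid n \in s] + G_k \subseteq [G_n \mid n \in A]$; on the other hand $[G_n \mid n \in A] \subseteq [G_n \mid n \in s] + [F_n \mid n \geq n_k] = W_{s,k}$. A small-perturbation argument, combining the local isomorphism $T^{m,s}_k$ on $\tilde E^{m,s}_k$ with the identity on an FDD-compatible complement inside $[G_n \mid n \in A]$, then exhibits $[G_n \mid n \in A]$ as a member of $[E^{m,s}_k, W_{s,k}]_{\varepsilon^{m,s}_k} \subseteq \U_m$. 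Doing this for every $m$ gives $[G_n \mid n \in A] \in \H$, so $(G_n)$ is $\H$-good.

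The main obstacle is the last step: upgrading the two inclusions $\tilde E^{m,s}_k \subseteq [G_n \mid n \in A] \subseteq W_{s,k}$ into the statement that $[G_n \mid n \in A]$ itself lies in the Ellentuck ball $[E^{m,s}_k, W_{s,k}]_{\varepsilon^{m,s}_k}$. This requires producing an isomorphism between $[G_n \mid n \in A]$ and some $Z$ with $E^{m,s}_k \subseteq Z \subseteq W_{s,k}$ that is globally $\varepsilon^{m,s}_k$-close to the identity. The remedy is to choose $\varepsilon^{m,s}_k$ and the approximation of $E^{m,s}_k$ by $\tilde E^{m,s}_k$ small compared to the norms of the natural FDD-projections associated with $(F_n)$ onto the relevant subspaces; since only finitely many pairs $(m,s)$ are in play at stage $k$, these free parameters can be tuned simultaneously, and the bounded projections coming from the FDD $(F_n)$ provide the global norm control needed to conclude.
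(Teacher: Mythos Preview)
Your proposal is correct and follows essentially the same strategy as the paper: exploit the $G_\delta$ structure $\H = \bigcap_m \U_m$, at stage $k$ run over all subsets $s \subseteq \{0,\dots,k-1\}$ to secure Ellentuck neighborhoods inside the relevant $\U_m$, and use a small perturbation to push the finite-dimensional witness into a finitely supported block. The only organizational difference is that the paper packages your ``local isomorphism plus identity on an FDD-compatible complement'' into a single global isomorphism $T_n\colon Y_0\to Y_0$ with $\|T_n-\Id\|<\varepsilon_n$ constructed \emph{during} the inductive step; this makes the verification a one-liner (apply $T_n^{-1}$ to $[G_m\mid m\in A]$) and sidesteps the bookkeeping you flag in your last paragraph, but it is the same argument.
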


\begin{proof}

Let $(\mathcal{U}_n)_{n \in \N}$ be a decreasing family of Ellentuck-open subsets of $\Sub(X)$ such that $\H = \bigcap_{n \in \N} \mathcal{U}_n$. Let, for every $k \in \N$, $Y_k = [F_l \mid l \geqslant k]$.  We build $(G_n)$ by induction as follows. Suppose that the $G_m$'s have been built for $m < n$, and let $k_n = (\max \, \supp (G_{n - 1})) + 1$ if $n \geqslant 1$, $k_n = 0$ otherwise. By axiom 1. in the definition of a D-family, for every $A \subseteq \{0, \ldots, n-1\}$, we have that $[G_m \mid m \in A] \oplus Y_{k_n} \in \U_n$, so there exists a finite-dimensional subspace $K_n^A \subseteq [G_m \mid m \in A] \oplus Y_{k_n}$ and $\varepsilon_n^A > 0$ such that $\Big[K_n^A, \; [G_m \mid m \in A] \oplus Y_{k_n}\Big]_{\varepsilon_n^A} \subseteq \U_n$. We can assume that $K_n^A = [G_m \mid m \in A] \oplus H_n^A$ for some finite-dimensional subspace $H_n^A \subseteq Y_{k_n}$. Now let $H_n$ be the finite-dimensional subspace of $Y_{k_n}$ generated by all the $H_n^A$'s, $A \subseteq \{0, \ldots, n-1\}$, and let $\varepsilon_n = \min\{\varepsilon_n^A \mid A \subseteq \{0, \ldots, n-1\}\}$. We have that for every $A \subseteq \{0, \ldots, n-1\}$, $\Big[[G_m \mid m \in A] \oplus H_n, \; [G_m \mid m \in A] \oplus Y_{k_n}\Big]_{\varepsilon_n} \subseteq \U_n$. Now consider an isomorphism $T_n \colon Y_0 \to Y_0$ such that
\begin{itemize}
    \item $T_n$ is equal to the identity on $[F_k \mid k < k_n]$;
    \item $T_n(Y_{k_n}) = Y_{k_n}$;
    \item $T_n(H_n) \subseteq [F_k \mid k_n \leqslant k < k_{n + 1}]$ for some $k_{n + 1} > k_n$;
    \item $\|T_n - \Id_{Y_0}\| < \varepsilon_n$.
\end{itemize}
We let $G_n = [F_k \mid k_n \leqslant k < k_{n + 1}]$, and this achieves the construction.

\smallskip

It is clear that $(G_n)$ is a blocking of $(F_n)$. We show that it is $\H$-good. Let $A \subseteq \N$ be infinite and $n \in A$, we show that $[G_m \mid m \in A] \in \mathcal{U}_n$, which is enough to conclude. We know that $T_n$ fixes $[G_m \mid m \in A, \, m < n]$, and we have $T_n(H_n) \subseteq G_n$, thus $(T_n)^{-1}([G_m \mid m \in A])$ contains $[G_m \mid m \in A, \, m < n] \oplus H_n$. Moreover, $(T_n)^{-1}$ stabilizes $Y_{k_n}$, which contains the $G_m$'s for $m \geqslant n$, so $(T_n)^{-1}([G_m \mid m \in A])$ is contained in $[G_m \mid m \in A, \, m < n] \oplus Y_{k_n}$. Hence, we have:
$$(T_n)^{-1}([G_m \mid m \in A]) \in \Big[[G_m \mid m \in A, \, m < n] \oplus H_n, \; [G_m \mid m \in A, \, m < n] \oplus Y_{k_n}\Big],$$
and since $\|T_n - \Id_{Y_0}\| < \varepsilon_n$, we finally get:
$$[G_m \mid m \in A] \in \Big[[G_m \mid m \in A, \, m < n] \oplus H_n, \; [G_m \mid m \in A, \, m < n] \oplus Y_{k_n}\Big]_{\varepsilon_n} \subseteq \mathcal{U}_n,$$
as wanted.

\end{proof}

\begin{lem}\label{lem:GoodFDD}

For every $Y \in \H$ and every $\varepsilon > 0$, there exists a subspace of $Y$ having an $\H$-good FDD $(F_n)_{n \in \N}$ with constant at most $1 + \varepsilon$.

\end{lem}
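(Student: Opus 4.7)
By \prettyref{lem:GoodBlocking}, it suffices to exhibit a subspace $Z \in \H$ of $Y$ equipped with an FDD $(F_n)_{n \in \N}$ of constant at most $1 + \varepsilon$; the $\H$-good FDD required by the statement is then obtained as a blocking of $(F_n)$, whose constant is bounded by that of $(F_n)$.

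Write $\H = \bigcap_{k \in \N} \U_k$ with $(\U_k)$ a decreasing sequence of Ellentuck-open subsets of $\Sub(X)$, and fix a positive sequence $(\eta_k)$ with $\prod_k (1 + \eta_k) \leq 1 + \varepsilon$. I would construct $(F_n)$ together with a nested sequence $V_0 \supseteq V_1 \supseteq \ldots$ of finite-codimensional subspaces of $Y$ by induction on $k$. At step $k$, set $E_k := F_0 \oplus \cdots \oplus F_{k-1}$ and use Mazur's lemma to choose $V_k \subseteq V_{k-1}$ satisfying $\|e\| \leq (1 + \eta_k)\|e + v\|$ for all $e \in E_k$ and $v \in V_k$. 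Since $V_k$ is finite-codimensional in $Y \in \H$ and $\H$ is stable under finite-dimensional modifications, $V_k \in \H$, and likewise $E_k + V_k \in \H \subseteq \U_k$; openness then yields a finite-dimensional $K_k \subseteq E_k + V_k$ and $\delta_k > 0$ with $[K_k, E_k + V_k]_{\delta_k} \subseteq \U_k$. After enlarging $K_k$ so as to contain $E_k$ and so that $K_k \cap V_k$ is nonzero (both of which only shrink the Ellentuck neighborhood and hence preserve the inclusion in $\U_k$), I set $F_k := K_k \cap V_k$; the Mazur inequality applied to the pair $(E_k, V_k)$ forces the sum $E_k + V_k$ to be direct, and one checks that $K_k = E_k \oplus F_k$.

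Two facts then conclude the plan. First, since $F_k \subseteq V_k$ and $V_k$ is Mazur for $E_k$ with constant $1 + \eta_k$, extending the FDD by the full block $F_k$ multiplies its constant by at most $1 + \eta_k$: this is a one-line computation in which, given $x = \sum_{i \leq k} x_i$ with $x_i \in F_i$ and $k' \leq k$, one uses the old FDD constant to bound $\|\sum_{i < k'} x_i\|$ by a multiple of $\|\sum_{i < k} x_i\| = \|e\|$ with $e \in E_k$, and then applies Mazur to the pair $(e, x_k)$ to bound $\|e\|$ by $(1 + \eta_k)\|x\|$. Hence $(F_n)$ is an FDD of $Z := [F_n \mid n \in \N]$ of constant at most $\prod_k (1 + \eta_k) \leq 1 + \varepsilon$. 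Second, the nesting $V_{k+1} \subseteq V_k$ forces $F_j \subseteq V_k$ for every $j \geq k$, whence $Z \subseteq E_k + V_k$; combined with $K_k = E_k \oplus F_k \subseteq Z$, this gives $Z \in [K_k, E_k + V_k] \subseteq [K_k, E_k + V_k]_{\delta_k} \subseteq \U_k$ for every $k$, hence $Z \in \H$. The essential point that makes this construction work directly, without perturbation bookkeeping, is the first verification above: for an FDD one can absorb an entire finite-dimensional block lying in a single Mazur subspace at the cost of only one Mazur factor, thereby circumventing the iterated-Mazur complications that would arise in the analogous basic-sequence construction, where the individual vectors of the target block need not lie in the successive iterated Mazur subspaces.
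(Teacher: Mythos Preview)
Your proposal is correct and follows essentially the same route as the paper's proof: build an FDD inside $Y$ whose span lies in $\H$, then invoke \prettyref{lem:GoodBlocking}. The paper's construction is marginally leaner in two respects. First, it chooses each finite-codimensional $Y_n$ so that the projection $[F_i \mid i < n] \oplus Y_n \to [F_i \mid i < n]$ has norm at most $1 + \varepsilon$ directly; since $F_j \subseteq Y_n$ for all $j \geqslant n$, every initial projection of the resulting FDD has norm at most $1 + \varepsilon$, and no product $\prod_k(1+\eta_k)$ is needed (your inductive bound is valid but coarser than the situation warrants). Second, the paper does not carry the perturbation parameter $\delta_k$: since the limit space $Z$ lands in the exact interval $[K_k, E_k + V_k]$ rather than merely in its $\delta_k$-thickening, the $\varepsilon$-version of the Ellentuck basic set is unnecessary here. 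These are cosmetic simplifications; the skeleton of your argument matches the paper's.
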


\begin{proof}

By \prettyref{lem:GoodBlocking}, it is enough to build the FDD $(F_n)$ in such a way that $[F_n \mid n \in \N] \in \H$; passing to a blocking, we can turn it into an $\H$-good FDD having the same constant. Let $(\mathcal{U}_n)_{n \in \N}$ be a decreasing family of Ellentuck-open subsets of $\Sub(X)$ such that $\H = \bigcap_{n \in \N} \mathcal{U}_n$. We build the FDD $(F_n)_{n \in \N}$ by induction on $n$. Suppose that $F_0, \ldots, F_{n-1}$ have been built. Let $Y_n$ be a finite-codimensional subspace of $Y$, with $Y_n \cap [F_i \mid i < n] = \{0\}$, and such that the natural projection from $[F_i \mid i < n] \oplus Y_n$ onto  $[F_i \mid i < n]$ has norm at most $1 + \varepsilon$. If $n \geqslant 1$, we can even assume that $Y_n \subseteq Y_{n - 1}$. We have that $[F_i \mid i < n] \oplus Y_n \in \U_n$, so we can find a finite-dimensional subspace $F_n \subseteq Y_n$ such that $\Big[[F_i \mid i \leqslant n], \; [F_i \mid i < n] \oplus Y_n\Big] \subseteq \U_n$. This achieves the construction.

\smallskip

By construction, for every $n \in \N$, we have $[F_i \mid i \geqslant n] \subseteq Y_n$, so the natural projection from $[F_i \mid i < n] \oplus [F_i \mid i \geqslant n] $ onto $[F_i \mid i < n]$ has norm at most $1 + \varepsilon$. This shows that $(F_n)$ is an FDD with constant at most $1 + \varepsilon$. Moreover, for every $n \in \N$, we have $[F_i \mid i \in \N] \in \Big[[F_i \mid i \leqslant n], \; [F_i \mid i < n] \oplus Y_n\Big] \subseteq \U_n$, so $[F_i \mid i \in \N] \in \H$.

\end{proof}

The next lemma is an $\H$-good version of Bessaga--Pe\l czy\'nski's selection principle.

\begin{lem}\label{lem:BessagaPelczynski}

Let $Y$ be a subspace of $X$ having an FDD $(F_n)_{n \in \N}$, and let $U \in \H$ be such that $U \subseteq Y$. Then there exists a subspace $Z$ of $Y$ spanned by an $\H$-good block-FDD $(G_n)_{n \in \N}$ of $(F_n)$, such that $Z$ isomorphically embeds into $U$.

\end{lem}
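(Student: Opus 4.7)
The plan is to reduce the statement to \prettyref{lem:GoodBlocking} and then produce the required block-FDD by a Bessaga--Pe\l czy\'nski-style construction diagonalized against the $G_\delta$ decomposition of $\H$. For the reduction, it suffices to build a block-FDD $(H_n)_{n\in\N}$ of $(F_n)$ whose span $Z_0:=[H_n\mid n\in\N]$ lies in $\H$ and isomorphically embeds into $U$. Indeed, \prettyref{lem:GoodBlocking} applied to $(H_n)$ viewed as an FDD of $Z_0$ yields an $\H$-good blocking $(G_n)$ of $(H_n)$; this $(G_n)$ is automatically a block-FDD of $(F_n)$, and its span is contained in $Z_0$, so still embeds into $U$.

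For the construction, I would first apply \prettyref{lem:GoodFDD} inside $U$ to extract a subspace $U'\subseteq U$ with an $\H$-good FDD $(E_k)_{k\in\N}$, and fix Auerbach bases $(e_k^j)_{1\leqslant j\leqslant d_k}$ of each $E_k$, so that the concatenation $(e_k^j)_{k,j}$ is a bounded minimal system in $U'$ (see the end of \prettyref{subsec:DefNot}). Write $\H=\bigcap_{n\in\N}\mathcal{U}_n$ as a decreasing intersection of Ellentuck-open sets. The plan is to build, inductively on $l$, an increasing sequence $k_0<k_1<\ldots$, successive supports $I_0<I_1<\ldots$ in $\N$, and block-vectors $(w_l^j)_{1\leqslant j\leqslant d_{k_l}}$ of $(F_n)$ supported in $I_l$, with $\|e_{k_l}^j-w_l^j\|<\rho_l$ for thresholds $\rho_l>0$ determined stage by stage. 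The key diagonalization at stage $l$ goes as follows: with $K_l:=[w_m^j\mid m<l,j]$ already fixed, the space $V_l:=K_l+[E_k\mid k>k_{l-1}]$ lies in $\H$ because $[E_k\mid k>k_{l-1}]$ is a tail of an $\H$-good FDD and $\H$ is stable under finite-dimensional modifications; in particular $V_l\in\bigcap_{n\leqslant l}\mathcal{U}_n$, and since this intersection is open, there exists $\delta_l>0$ with $[K_l,V_l]_{\delta_l}\subseteq\bigcap_{n\leqslant l}\mathcal{U}_n$. I would then use classical Bessaga--Pe\l czy\'nski selection for the FDD $(F_n)$ to pick $k_l$ large enough that a basis of $E_{k_l}$ is well-approximated by a block of $(F_n)$ with support $I_l>I_{l-1}$, and extract such $w_l^j$ with $\|e_{k_l}^j-w_l^j\|<\rho_l$, where $\rho_l$ is chosen so that $\sum_{m\geqslant l'}d_{k_m}\rho_m$ remains below the thresholds of every earlier stage $l'\leqslant l$ (e.g.\ $\rho_l<2^{-l}\min_{m\leqslant l}\delta_m/(Cd_{k_l})$, with $C$ the bounded minimal system constant of the $(e_k^j)$).

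Finally, setting $H_l:=[w_l^j\mid j]$ produces a block-FDD of $(F_n)$. By the small-perturbation principle for bounded minimal systems, $Z_0=[w_l^j\mid l,j]$ is isomorphic to $[e_{k_l}^j\mid l,j]=[E_{k_l}\mid l\in\N]\subseteq U'$, so $Z_0\sqsubseteq U$. To verify $Z_0\in\H$, fix $l$ and set $Z_l':=K_l+[e_{k_m}^j\mid m\geqslant l,j]$, which lies in $[K_l,V_l]$. The linear map $T_l\colon Z_l'\to Z_0$ equal to the identity on $K_l$ and sending $e_{k_m}^j\mapsto w_m^j$ for $m\geqslant l$ is a well-defined isomorphism whose deviation from $\Id_{Z_l'}$ is controlled by $\sum_{m\geqslant l}d_{k_m}\rho_m<\delta_l$; hence $Z_0\in[K_l,V_l]_{\delta_l}\subseteq\mathcal{U}_l$. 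Intersecting over $l$ yields $Z_0\in\H$, and the reduction from the first paragraph finishes the proof.

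The main obstacle is the careful bookkeeping of the constants $\rho_l$: each $\rho_l$ depends on $\delta_l$ and on $d_{k_l}$, while $k_l$ itself is selected only after $\delta_l$ (to push the support of $E_{k_l}$ sufficiently far along $(F_n)$), and the tail sums $\sum_{m\geqslant l'}d_{k_m}\rho_m$ must meet the threshold $\delta_{l'}$ coming from every earlier stage $l'$. A further subtle point is that for $T_l$ to be a well-defined isomorphism, the mixed family $(w_m^j)_{m<l,j}\cup(e_{k_m}^j)_{m\geqslant l,j}$ must be a bounded minimal system for every $l$; this follows from the small-perturbation principle applied to the basic sequence $(w_m^j)_{m,j}$ (which is block with respect to $(F_n)$), provided the $\rho_m$ decay fast enough, which is automatically the case with the above choice.
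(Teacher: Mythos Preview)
Your overall strategy is close in spirit to the paper's (which proves the more general \prettyref{lem:GoodBlockFDD} and obtains \prettyref{lem:BessagaPelczynski} as the special case $Y_k=Y$, $(F_n^k)=(F_n)$, $U_k=U$), but there is a genuine gap at the Bessaga--Pe\l czy\'nski step. You invoke \prettyref{lem:GoodFDD} as a black box to obtain an $\H$-good FDD $(E_k)$ of some $U'\subseteq U$, and then assert that for $k_l$ large enough, a basis of $E_{k_l}$ is well-approximated by a block of $(F_n)$ with support beyond $I_{l-1}$. Nothing in \prettyref{lem:GoodFDD} guarantees this: the summands $E_k$ need not be asymptotically supported on tails of $(F_n)$. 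Concretely, take $Y=\ell_1$ with its canonical basis $(f_n)$ (so $F_n=\R f_n$), set $e_k=f_0+f_{k+1}$ and $E_k=\R e_k$. A direct computation shows $(e_k)$ is a monotone basic sequence, so $(E_k)$ is an $\H$-good FDD of its closed span for $\H=\Subinf(\ell_1)$, with constant $1$; yet the $(F_n)$-projection onto $F_0$ has norm exactly $1/2$ on every $E_k$, and in fact on every tail $[E_k\mid k\geqslant m]$. Thus no $E_{k_l}$ is approximable by a block of $(F_n)$ avoiding $F_0$. Classical Bessaga--Pe\l czy\'nski selects individual vectors in $U'$ with small initial projection, not whole pre-chosen summands $E_{k_l}$.

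The remedy is to drop the black-box use of \prettyref{lem:GoodFDD} and interleave the two constructions: at stage $l$, first pass to the finite-codimensional subspace $U\cap[F_n\mid n>\max I_{l-1}]$ of $U$, choose the finite-dimensional piece $H_l'$ \emph{there} so that the relevant Ellentuck basic set around $K_l+(U\cap[F_n\mid n>\max I_{l-1}])$ lies in $\mathcal{U}_l$, and only then approximate $H_l'$ by a block $H_l$---which is now automatic by truncation, since $H_l'$ already sits inside a tail of $(F_n)$. This is exactly the order of operations in the paper's proof of \prettyref{lem:GoodBlockFDD}; the rest of your argument (the map $T_l$, the verification $Z_0\in\H$, and the final appeal to \prettyref{lem:GoodBlocking}) then goes through essentially as you wrote it.
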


\prettyref{lem:BessagaPelczynski} was stated in this form for greater clarity, but for several applications in this paper, we will need  a more general and more precise version of it,  stated and proven below. This can be seen as an amalgamation of \prettyref{lem:BessagaPelczynski} and \prettyref{lem:Diag}.

\begin{lem}\label{lem:GoodBlockFDD}

Let $(Y_k)_{k \in \N}$ be a family of subspaces of $X$ such that for every $k \in \N$, $Y_k$ has an FDD $(F_n^k)_{n \in \N}$. Assume that for every $k \in \N$, $(F_n^{k+1})_{n \in \N}$ is a block-FDD of $(F_n^k)_{n \in \N}$. Let $(U_k)_{k \in \N}$ be a decreasing family of elements of $\H$ and let $(\Delta_k)_{k \in \N}$ be a sequence of positive real numbers. Assume that for every $k \in \N$, we have $U_k \subseteq Y_k$.

\smallskip

Then there exist a subspace $Z \subseteq X$ generated by an $\H$-good FDD $(G_k)_{k \in \N}$, such that for every $k \in \N$, $(G_l)_{l \geqslant k}$ is a block-FDD of $(F_n^k)_{n \in \N}$; and there exists an isomorphic embedding $T \colon Z \to X$ such that for every $k \in \N$, we have $T(G_k) \subseteq U_k$ and $\left\|(T - \Id_Z)_{\restriction [G_l \mid l \geqslant k]}\right\| \leqslant \Delta_k$, and such that the FDD $(T(G_k))_{k \in \N}$ of $T(Z)$ is $\H$-good.

\smallskip

Moreover, if we are given, for every $k \in \N$, a subset $D_k \subseteq S_X$ such that, for every finite $A \subseteq \N$, $D_k \cap [F_n^k \mid n \in A]$ is dense in the unit sphere of $[F_n^k \mid n \in A]$, then we can ensure that for every $k \in \N$, the space $G_k$ has a basis made of elements of $D_k$.

\end{lem}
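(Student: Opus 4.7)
The plan is to construct $(G_k, T_{\restriction G_k})$ and indices $m_k < M_k$ by induction on $k$, arranging that $G_k$ is a block of $(F_n^k)$ supported in $[m_k, M_k)$ with $m_k$ past the $(F_n^k)$-support of $G_0 \oplus \cdots \oplus G_{k-1}$, and that $T_{\restriction G_k}$ is a near-identity isomorphism of $G_k$ onto $T(G_k) \subseteq U_k$. The proof combines the Ellentuck diagonalization used in \prettyref{lem:GoodBlocking} with a Bessaga--Pe\l czy\'nski-style block approximation. The $\H$-goodness of $(T(G_k))$ will come directly from the Ellentuck analysis; the $\H$-goodness of $(G_k)$ will be transferred back from it via the near-identity isomorphism $T$.

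Fix $\H = \bigcap_j \U_j$ with the $\U_j$ Ellentuck-open and decreasing, and let $C$ be a common bound on the FDD-constants of the $(F_n^k)$ (the constant of $(F_n^0)$ works, since each $(F_n^{k+1})$ is a block-FDD of its predecessor). The key observation is that for each $k$, the subspace $U_k' := U_k \cap [F_n^k \mid n \geq m_k]$ is in $\H$: it is the kernel of the finite-rank projection $P_{<m_k}$ restricted to $U_k$, hence a finite-codimensional subspace of $U_k \in \H$, and $\H$ is closed under finite-dimensional modifications. We shall in fact arrange $T(G_l) \subseteq U_l' \subseteq U_l$ for every $l$, which is stronger than the lemma's requirement. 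At stage $k$, for each $A \subseteq \{0, \ldots, k-1\}$ the space $[T(G_l) \mid l \in A] \oplus U_k'$ is in $\H \subseteq \U_k$, so Ellentuck-openness combined over the finitely many such $A$ yields a common finite-dimensional witness $K_k \subseteq U_k'$ and a tolerance $\eta_k > 0$ such that
\[
\Big[\,[T(G_l) \mid l \in A] \oplus K_k,\; [T(G_l) \mid l \in A] \oplus U_k'\,\Big]_{\eta_k} \subseteq \U_k
\]
for every such $A$.

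Bessaga--Pe\l czy\'nski step. Pick an Auerbach basis of $K_k$ inside $U_k'$; since these vectors sit in $[F_n^k \mid n \geq m_k]$, they have no ``head'' in the $(F_n^k)$-expansion, so truncating beyond some large index $M_k$ yields block vectors in $[F_n^k \mid n \in [m_k, M_k)]$ arbitrarily close to the originals. If the $D_k$-clause is active, further perturb each truncation to lie in $D_k$, using density of $D_k$ in the unit sphere of $[F_n^k \mid n \in [m_k, M_k)]$. Let $G_k$ be the span of the resulting vectors and define $T_{\restriction G_k}$ by sending each of them back to the corresponding original in $K_k \subseteq U_k'$. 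Setting $\epsilon_k := \|(T - \Id)_{\restriction G_k}\|$, we can make $\epsilon_k$ as small as we wish. Choose the sequence $(\epsilon_k)$ by standard inductive bookkeeping so that $2C \sum_{l \geq k}\epsilon_l$ is bounded by a small fraction of both $\Delta_k$ and (retroactively, at each stage) $\eta_k$; then in the limit $T$ extends to an isomorphic embedding $T \colon Z \to X$ on $Z := [G_k \mid k \in \N]$ satisfying $\|(T - \Id)_{\restriction [G_l \mid l \geq k]}\| \leq \Delta_k$ and $\|(T^{-1} - \Id)_{\restriction [T(G_l) \mid l \in A]}\| < \eta_k$ for every infinite $A \ni k$.

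Verification of $\H$-goodness. For $(T(G_k))$: any infinite $A \ni k$ satisfies $[T(G_l) \mid l \in A] \supseteq [T(G_l) \mid l \in A, l<k] \oplus K_k$ and $[T(G_l) \mid l \in A] \subseteq [T(G_l) \mid l \in A, l<k] \oplus U_k'$ (since $T(G_l) \subseteq U_l' \subseteq U_k'$ for $l \geq k$, which is why we insisted on the stronger $U_l'$-containment), so it literally sits in the Ellentuck neighborhood above and hence in $\U_k$. For $(G_k)$: the same Ellentuck neighborhood also contains $[G_l \mid l \in A] = T^{-1}([T(G_l) \mid l \in A])$ via the near-identity isomorphism witness $T^{-1}$ (using the control $\|(T^{-1} - \Id)_{\restriction [T(G_l) \mid l \in A]}\| < \eta_k$), so $[G_l \mid l \in A] \in \U_k$ too. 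Intersecting over $k \in A$ in each case yields membership in $\H$. The main obstacle is the simultaneous coordination, at each stage, of five constraints on $(G_k, T_{\restriction G_k})$---block-structure in $(F_n^k)$, capturing the Ellentuck witness $K_k$ inside $T(G_k)$, $T$-image in $U_k$, near-identity bound, and $D_k$-density---and is resolved by the observation that $U_k' \in \H$: a single Bessaga--Pe\l czy\'nski truncation of an Auerbach basis of $K_k$ inside $U_k'$ simultaneously produces the block $G_k$, defines the near-identity isomorphism $T_{\restriction G_k}$ with $T(G_k) \supseteq K_k$, and leaves room for the $D_k$-perturbation.
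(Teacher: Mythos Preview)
Your overall plan matches the paper's: build $H_k = T(G_k) \subseteq U_k'$ as an Ellentuck witness, then approximate by a finitely supported $G_k$ and let $T_{\restriction G_k}$ be the near-identity isomorphism. The verification that $(T(G_k))$ is $\H$-good is fine. However, the verification that $(G_k)$ is $\H$-good has a genuine gap.

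You claim that $[G_l \mid l \in A]$ lies in the Ellentuck ball
\[
\Big[\,[T(G_l) \mid l \in A,\, l<k] \oplus K_k,\; [T(G_l) \mid l \in A,\, l<k] \oplus U_k'\,\Big]_{\eta_k}
\]
with witness $Z' = [T(G_l)\mid l\in A]$ and isomorphism $T^{-1}$, which requires $\|(T^{-1}-\Id)_{\restriction [T(G_l)\mid l\in A]}\| < \eta_k$. But this norm is controlled by $2C'\sum_{l\in A}\epsilon_l$, and for a general $A \ni k$ this sum includes the terms $\epsilon_l$ with $l<k$. Those were fixed at earlier stages, \emph{before} $\eta_k$ was determined; and $\eta_k$ is forced on you by the Ellentuck-openness of $\U_k$ at the point $[T(G_l)\mid l<k]\oplus U_k'$, with no lower bound available. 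So the ``retroactive'' bookkeeping you invoke cannot make $\sum_{l<k}\epsilon_l$ small relative to $\eta_k$; only the tail $\sum_{l\geq k}\epsilon_l$ can be so controlled.

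The paper resolves this by recording, at stage $k$, a \emph{second} family of Ellentuck neighborhoods, anchored this time on the already-built $G_l$'s rather than on the $T(G_l)$'s:
\[
\Big[\,[G_l \mid l \in A,\, l<k] \oplus H_k,\; [G_l \mid l \in A,\, l<k] \oplus U_k'\,\Big]_{\varepsilon_k} \subseteq \U_k,
\]
and then verifies $[G_l\mid l\in A]\in\U_k$ using not the global $T^{-1}$ but a mixed operator $S_k$ which equals the identity on $[G_l\mid l<k]$ and equals $T^{-1}$ only on the tail $[T(G_l)\mid l\geq k]$. Now $\|S_k-\Id\|$ depends only on $\epsilon_l$ for $l\geq k$, all chosen after $\varepsilon_k$, so the inductive bookkeeping goes through. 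The cost is that one must check the projection onto the tail in the hybrid space $[G_l\mid l<k]\oplus [T(G_l)\mid l\geq k]$ is uniformly bounded, which follows from the block structure relative to $(F_n^0)$. This extra neighborhood-plus-mixed-operator step is the missing ingredient in your argument.
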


\begin{proof}

Without loss of generality, we can assume that $\Delta_0 \leqslant \frac{1}{2}$ and that for every $k \in \N$, $\Delta_{k+1} \leqslant \frac{\Delta_k}{2}$. Let $(\mathcal{U}_k)_{k \in \N}$ be a decreasing family of Ellentuck-open subsets of $\Sub(X)$ such that $\H = \bigcap_{k \in \N} \mathcal{U}_k$. Let $C$ be the constant of the FDD $(F_n^0)$. We build by induction on $k$ the FDD $(G_k)$, along with an FDD $(H_k)_{k \in \N}$ of a subspace of $X$, such that for all $k \in \N$, $\dim(G_k) = \dim(H_k)$. We also build, at the same time, a sequence of isomorphisms $T_k \colon G_k \rightarrow H_k$; the embedding $T$ will be defined as the unique bounded linear mapping on $Z$ extending all the $T_k$'s.

\smallskip

Suppose that the $G_l$'s, the $H_l$'s and the $T_l$'s are built for $l < k$. Let $r_k \in \N$ be defined as follows: if $k = 0$, then $r_k = 0$, and otherwise, $r_k$ is such that $\supp \left(G_{k-1}\right) < \supp \left(F_{r_k}^k\right)$, where the supports are taken with respect to the FDD $(F_n^{k-1})_{n \in \N}$. Let $Y_k' = [F_n^k \mid n \geqslant r_k]$, and let $U_k'$ be a finite-codimensional subspace of $U_k$ such that $U_k' \subseteq Y_k'$ and $U_k' \cap [H_l \mid l < k] = \{0\}$; if $k \geqslant 1$, we moreover suppose that $U_k' \subseteq U_{k - 1}'$. For every $A \subseteq \{0, \ldots, k-1\}$, the subspaces $[G_l \mid l \in A] \oplus U_k'$ and $[H_l \mid l \in A] \oplus U_k'$ are in $\H$, so as in the proof of \prettyref{lem:GoodBlocking}, we can find $\varepsilon_k > 0$ and a nonzero finite-dimensional subspace $H_k \subseteq U_k'$ such that for all $A$, both basic open sets: $$\Big[[G_l \mid l \in A] \oplus H_k, [G_l \mid l \in A] \oplus U_k'\Big]_{\varepsilon_k}$$ and $$\Big[[H_l \mid l \in A] \oplus H_k, [H_l \mid l \in A] \oplus U_k'\Big]_{\varepsilon_k}$$ are contained in $\U_k$. We can even assume that for all $l < k$, $\varepsilon_k \leqslant \frac{\varepsilon_l}{2^{k - l}}$. Since $H_k \subseteq Y_k' = [F_n^k \mid n \geqslant r_k]$, we can find a finite-dimensional subspace $G_k \subseteq Y_k'$ having finite support relative to the FDD $(F_n^k)_{n \geqslant r_k}$ of $Y_k'$, and a linear mapping $T_k \colon G_k \to H_k$ such that $\|T_k - \Id_{G_k}\| \leqslant \frac{\Delta_k}{4C}$ and $\|T_k^{-1} - \Id_{H_k}\| \leqslant \frac{\varepsilon_k}{24C(C+1)}$. We can even ensure that $G_k$ has a basis made of elements of $D_k$. This finishes the induction.


\smallskip

As wanted, for every $k \in \N$, $(G_l)_{l \geqslant k}$ is a block-FDD of $(F_n^k)_{n \in \N}$. In particular $(G_k)_{k \in \N}$ is a block-FDD of $(F_n^0)$ and hence has constant at most $C$. Let $Z = [G_k \mid k \in \N]$ and $\widetilde{Z} = \bigoplus_{k \in \N} G_k$, a dense vector subspace of $Z$. Define $\widetilde{T} \colon \widetilde{Z} \to X$ as the unique linear mapping extending all the $T_k$'s on their domains. For every eventually null sequence $(x_l)_{l \in \N}$ with $\forall l \in \N \; x_l \in G_l$, and for every $k \in \N$, we have:
\begin{eqnarray*}
\left\|(\widetilde{T}-\Id_{\widetilde{Z}})\left(\sum_{l=k}^\infty x_l\right)\right\| & \leqslant & \sum_{l = k}^\infty \|(T_l - \Id_{G_l})(x_l)\|\\
& \leqslant & \sum_{l=k}^\infty \frac{\Delta_l}{4C}\|x_l\|\\
& \leqslant & \sum_{l=k}^\infty \frac{\Delta_k}{2^{l - k + 2}C}\|x_l\|\\
& \leqslant & \sum_{l=k}^\infty \frac{\Delta_k}{2^{l - k + 2}C}\cdot 2C\left\|\sum_{l=k}^\infty x_l\right\|\\
&\leqslant & \Delta_k \left\|\sum_{l=k}^\infty x_l\right\|.
\end{eqnarray*}
This shows that $\left\|(\widetilde{T}-\Id_{\widetilde{Z}})_{\restriction \bigoplus_{l \geqslant k} G_l}\right\| \leqslant \Delta_k$. In particular, $\widetilde{T}$ is a bounded operator on $\widetilde{Z}$, so it extends to a bounded operator $T \colon Z \to X$ still satisfying $\left\|(T - \Id_Z)_{\restriction [G_l \mid l \geqslant k]}\right\| \leqslant \Delta_k$ for every $k \in \N$. In particular, since $\Delta_0 \leqslant \frac{1}{2}$, the latter inequality shows that $T$ is an isomorphic embedding, with $\|T\|\leqslant \frac{3}{2}$ and $\|T^{-1}\|\leqslant 2$. In particular, $(H_k)_{k \in \N}$ is an FDD of a subspace of $X$, with constant at most $3C$.

\smallskip

It remains to show that the FDD's $(G_k)$ and $(H_k)$ are $\H$-good. For $(H_k)$, the proof is similar as in \prettyref{lem:GoodBlocking}: given $A \subseteq \N$ infinite and $k \in A$, we have: $$[H_l \mid l \in A, \, l \geqslant k] \subseteq U_k',$$ so:
$$[H_l \mid l \in A] \in \Big[[H_l \mid l \in A, \, l < k] \oplus H_k, [H_l \mid l \in A, \, l < k] \oplus U_k'\Big],$$
and by construction, the set on the right hand side is contained in $\mathcal{U}_k$, which concludes. %

\smallskip

For $(G_k)$, we need one more estimate. Let, for all $k$, $K_k = [G_l \mid l < k]$,  $V_k = [H_l \mid l \geqslant k]$, and $W_k = K_k \oplus V_k$. Define $S_k \colon W_k \to Z$ as the unique operator which is equal to the identity on $K_k$, and to $T^{-1}$ on $V_k$. For all $l \geqslant k$, we have: $$\|T_l^{-1} - \Id_{H_l}\| \leqslant \frac{\varepsilon_l}{24C(C+1)} \leqslant \frac{\varepsilon_k}{2^{l - k}\cdot 24C(C+1)}.$$ Thus, knowing that $(H_l)_{l \geqslant k}$ is an FDD of $V_k$ with constant at most $3C$, and using the same proof as for $T$, we can show that $$\|\left(T^{-1}\right)_{\restriction V_k} - \Id_{V_k}\| \leqslant \sum_{l \geqslant k} \frac{\varepsilon_k}{2^{l - k} \cdot24C(C+1)} \cdot 6C = \frac{\varepsilon_k}{2(C+1)}.$$ Now recall that, by construction, $\supp(K_k) < \supp(Y_k')$, where the supports are taken with respect to the FDD $(F_n^0)$; and that $V_k \subseteq Y_k'$. Since the FDD $(F_n^0)$ has constant $C$, the natural projection $W_k \to V_k$ has norm at most $C + 1$. Since $S_k - \Id_{W_k}$ is the composition of this projection and $\left(T^{-1}\right)_{\restriction V_k} - \Id_{V_k}$, we deduce that $\|S_k - \Id_{W_k}\| \leqslant \frac{\varepsilon_k}{2}$.

\smallskip

We are now ready to prove that the FDD $(G_k)$ is $\H$-good. Let $A$ be an infinite subset of $\N$, and let $k \in A$, we want to prove that $[G_l \mid l \in A] \in \U_k$. We have:
$$[G_l \mid l\in A] = S_k\Big([G_l \mid l \in A, l < k] \oplus H_k \oplus [H_l \mid l \in A, l > k]\Big),$$
and:
$$[G_l \mid l \in A, l < k] \oplus H_k \oplus [H_l \mid l \in A, l > k] \in \Big[[G_l \mid l \in A, l < k] \oplus H_k, \; [G_l \mid l \in A, l < k] \oplus U_k'\Big],$$
so using the fact that $\|S_k - \Id_{W_k}\| < \varepsilon_k$, we get that:
$$[G_l \mid l\in A] \in \Big[[G_l \mid l \in A, l < k] \oplus H_k, \; [G_l \mid l \in A, l < k] \oplus U_k'\Big]_{\varepsilon_k}.$$
And we know, by construction, that this latter basic open set is contained in $\U_k$, as wanted.



\end{proof}

In the rest of this section, we introduce sufficient conditions for being a D-family, which will be convenient for applications.

\bigskip

\subsection{Wijsman and slice topologies}

Let $X$ a Banach space with a fixed norm $\|\cdot\|$. For $N$ an equivalent norm on $X$, for $Y \in \Sub(X)$ and for $x \in X$, we denote by $N_{X/Y}(x)$ the norm of the class of $x$ in the quotient $X/Y$, when this quotient is equipped with the corresponding quotient norm. Thus, we have an injective mapping:
$$\begin{array}{rccl}
\varphi_N \colon & \Sub(X) & \to & \R^X \\
     & Y & \mapsto & N_{X/Y}
\end{array}.$$
The \emph{Wijsman topology} associated to $N$ on $\Sub(X)$ is the topology obtained by pulling back through $\varphi_N$ the product topology on $\R^X$. For this topology, a net $(Y_\lambda)$ of elements of $\Sub(X)$ is converging to $Y \in \Sub(X)$ if and only if for every $x \in X$, $N_{X/Y_\lambda}(x) \to N_{X/Y}(x)$. 
In general, this topology depends on the choice of the equivalent norm $N$ (see \cite{Beer}, Section 2.4).

\smallskip

The \emph{slice topology} on $\Sub(X)$ is the topology generated by sets of the form $\{Y \in \Sub(X) \mid Y \cap U \neq \varnothing\}$ and $\{Y \in \Sub(X) \mid \delta_{\|\cdot\|}(Y, C) > 0\}$, where $U$ ranges over open subsets of $X$, $C$ ranges over nonempty bounded closed convex subsets of $X$, and $\delta_{\|\cdot\|}(Y, C) = \inf_{x \in C, y \in Y} \|x - y\|$. The name \textit{slice topology} comes from the fact that in the previous definition, it is actually enough to make $C$ range over slices of closed balls, that are, nonempty sets of the form $\{x \in X \mid \|x\| \leqslant r, \, x^*(x) \geqslant a\}$, where $r > 0$, $x^* \in X^*$, and $a \in \R$ (see \cite{Beer}, Lemma 2.4.4). It is easy to see that the slice topology on $\Sub(X)$ only depends on the isomorphic structure of $X$, but not of the norm.

\smallskip

The main properties of the Wijsman and the slice topologies can be found in \cite{Beer}. We reproduce some useful ones below.

\begin{thm}[see \cite{Beer}]

\alaligne

\begin{enumerate}
    \item If $X$ is separable, then all the Wijsman topologies on $\Sub(X)$ associated to equivalent norms are Polish.
    \item If $X$ is separable and has separable dual, then the slice topology on $\Sub(X)$ is Polish.
    \item The slice topology on $\Sub(X)$ is the coarsest topology refining all the Wijsman topologies associated to equivalent norms on $X$.
    \item \emph{(Hess' theorem)} If $X$ is separable, then the Borel $\sigma$-algebra associated with any Wijsman topology on $\Sub(X)$ coincides with the Effros Borel structure on this set.
    \item If $X$ is separable and has separable dual, then the Borel $\sigma$-algebra associated with the slice topology on $\Sub(X)$ coincides with the Effros Borel structure on this set.
\end{enumerate}

\end{thm}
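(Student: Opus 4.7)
The plan is to prove the five items in a coupled way: items 1 and 4 are the technical core, item 3 is a comparison of topologies, and items 2 and 5 will follow by combining these.

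For item 1, I fix a countable dense $D = \{x_n\} \subseteq X$ and use that each function $N_{X/Y}$ is $N$-Lipschitz, hence determined by its restriction to $D$. The map $\varphi_N \colon \Sub(X) \to \R^D$, $Y \mapsto (N_{X/Y}(x_n))_n$, is then a homeomorphism onto its image when the target carries the product topology, and this topology is metrized by $d(Y,Z) = \sum_n 2^{-n} \min(1, |N_{X/Y}(x_n) - N_{X/Z}(x_n)|)$. Polishness reduces to showing that the image of $\varphi_N$ is $G_\delta$ in $\R^D$: I characterize the functions $f \in \R^D$ that arise as $N_{X/Y}|_D$ by a countable list of conditions (1-Lipschitzness for $N$, subadditivity, positive homogeneity along rational scalars) applied to pairs and triples from $D$, each of which cuts out a closed subset of $\R^D$. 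Then $Y$ is recovered as the zero set of the canonical Lipschitz extension of $f$.

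For item 4, Hess' theorem, I observe that the Wijsman topology is generated by the subbasic sets $\{Y : N_{X/Y}(x) < r\}$ and $\{Y : N_{X/Y}(x) > r\}$, with $x$ in a countable dense set and $r \in \Q_+$. The first is exactly $\{Y : Y \cap B(x,r) \neq \emptyset\}$ and thus Effros-Borel; the second is the complement of $\bigcap_m \{Y : Y \cap B(x, r + 1/m) \neq \emptyset\}$, again Effros-Borel. Conversely the Effros generator $\{Y : Y \cap U \neq \emptyset\}$ for open $U$ decomposes as a countable union of sets of the first form over rational balls inside $U$, so it is Wijsman-Borel.

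For item 3, I would show both containments of topologies. Sets $\{Y : Y \cap U \neq \emptyset\}$ are already Wijsman-open for \emph{every} equivalent norm by the above description. For $\{Y : \delta_{\|\cdot\|}(Y, C) > 0\}$ with $C$ a slice $\{x : \|x\| \leqslant r,\ x^*(x) \geqslant a\}$, the heart of the argument (following Beer) is to build an equivalent norm $N$ tailored to $(x^*, r, a)$ so that this slice-open set becomes a basic Wijsman-open set for $N$ — essentially, $N$ must convert the ``one-sided" slice condition into a quotient-distance condition at a single point. Conversely, the Wijsman subbasic set $\{Y : N_{X/Y}(x) > r\}$ equals $\{Y : \delta_N(Y, \overline{B}_N(x, r)) > 0\}$ and is thus slice-open, and $\{Y : N_{X/Y}(x) < r\}$ is obviously slice-open. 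For items 2 and 5, when $X^*$ is separable one uses a countable dense subset of $S_{X^*}$ to produce countably many equivalent norms $(N_k)$ whose Wijsman topologies already generate the slice topology; the countable supremum of Polish topologies on the same underlying set (with matching Borel structures by item 4) is Polish and has the same Borel structure, which by item 4 equals the Effros Borel structure.

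The main obstacle is the reverse inclusion in item 3, namely writing the slice-defined set $\{Y : \delta(Y, C) > 0\}$ as a Wijsman-open condition for some norm. The difficulty is that Wijsman convergence is pointwise convergence of quotient distances, which is intrinsically symmetric in ``$<$'' and ``$>$'', whereas the slice topology records the strictly positive separation from a convex set — one must really manufacture the norm so that the single scalar $N_{X/Y}(x_0)$ for a well-chosen $x_0$ encodes this separation.
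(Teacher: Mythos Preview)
The paper does not prove this theorem: it is stated as a background result with the reference ``see \cite{Beer}'' and no proof is given in the paper itself. So there is no proof in the paper to compare your proposal against.

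As a standalone sketch, your outline is broadly on the right track for items 1, 4, and the easy direction of 3. A few remarks: in item 1, the conditions you list actually carve out a \emph{closed} subset of $\R^D$ (which is in particular $G_\delta$), but you should be explicit that the recovered zero set is a closed \emph{linear subspace} and that this recovery map is inverse to $\varphi_N$. For item 3, you have correctly located the genuine difficulty --- manufacturing an equivalent norm $N$ so that positive distance from a given slice becomes a single quotient-distance inequality --- but you have not actually carried this out, and this is the nontrivial step; Beer's argument uses a specific renorming depending on the slice data. For items 2 and 5, your plan to use countably many norms indexed by a dense set in $S_{X^*}$ is the right idea, but the assertion that the supremum of countably many Polish topologies on the same Borel space is again Polish requires an argument (e.g.\ via the product embedding and showing the diagonal is $G_\delta$). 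In short, the proposal is a reasonable roadmap rather than a proof, with the hard renorming step in item 3 left entirely unexecuted.
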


These topologies are easier to manipulate than the Ellentuck topology. However, we have the following result:

\begin{prop}

The Ellentuck topology on $\Sub(X)$ is finer than the slice topology. In particular, it is finer than all the Wijsman topologies associated to equivalent norms.

\end{prop}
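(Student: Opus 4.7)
The plan is to verify that every subbasic open set of the slice topology is Ellentuck-open. Recall from the construction that the slice topology has a subbasis consisting of sets of the form $\mathcal{O}_U = \{Y \in \Sub(X) \mid Y \cap U \neq \varnothing\}$ for $U \subseteq X$ open, together with sets of the form $\mathcal{V}_C = \{Y \in \Sub(X) \mid \delta_{\|\cdot\|}(Y, C) > 0\}$ for $C$ nonempty, bounded, closed and convex. The ``in particular'' clause will then follow immediately from part~3 of the theorem quoted just above the proposition.

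For $\mathcal{O}_U$, I would fix $Y_0 \in \mathcal{O}_U$ and pick a nonzero $y \in Y_0 \cap U$ (the case where the only element is $0$ being trivial, since then $\mathcal{O}_U = \Sub(X)$). Choose $\eta > 0$ with $\Ball(y, \eta) \subseteq U$, set $F = \R y$ and take $\varepsilon$ with $\varepsilon \|y\| < \eta$. For any $Z \in [F, Y_0]_\varepsilon$, witnessed by some $Z' \in [F, Y_0]$ and an isomorphism $T \colon Z' \to Z$ with $\|T - \Id_{Z'}\| < \varepsilon$, the vector $T(y)$ lies in $Z$ and satisfies $\|T(y) - y\| < \eta$, so $T(y) \in U$ and hence $Z \in \mathcal{O}_U$.

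The more delicate case is $\mathcal{V}_C$. Fix $Y_0$ with $d := \delta_{\|\cdot\|}(Y_0, C) > 0$ and choose $R > 0$ with $C \subseteq \Ball(0, R)$. Take $F = \{0\}$ and pick $\varepsilon$ small enough (say, $\varepsilon < d/(d + 2R)$). For $Z \in [F, Y_0]_\varepsilon$ witnessed by $Z' \subseteq Y_0$ and $T \colon Z' \to Z$ with $\|T - \Id_{Z'}\| < \varepsilon$, and for arbitrary $z = T(z') \in Z$ and $c \in C$, I split into two cases according to $\|z'\|$. If $\|z'\| \leqslant d/(2\varepsilon)$, then the triangle inequality gives $\|z - c\| \geqslant \|z' - c\| - \varepsilon \|z'\| \geqslant d - d/2 = d/2$. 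If instead $\|z'\| > d/(2\varepsilon)$, then $\|z\| \geqslant (1-\varepsilon) \|z'\| > (1-\varepsilon) d/(2\varepsilon)$, and the choice of $\varepsilon$ ensures $\|z - c\| \geqslant \|z\| - R$ is bounded below by a positive constant independent of $z$ and $c$. Taking the minimum of the two bounds gives $\delta_{\|\cdot\|}(Z, C) > 0$, hence $[F, Y_0]_\varepsilon \subseteq \mathcal{V}_C$.

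The step I expect to be the main obstacle is the analysis of $\mathcal{V}_C$: one cannot control the norms of elements of $Z'$ uniformly, so a straightforward perturbation estimate $\|z - z'\| \leqslant \varepsilon \|z'\|$ is not enough on its own. The key observation is that the boundedness of $C$ is precisely what allows the ``large $\|z'\|$'' regime to be handled via $\|z-c\| \geqslant \|z\| - R$, while the ``small $\|z'\|$'' regime is handled by the standard perturbation bound; both features of the slice topology subbasis must be used together.
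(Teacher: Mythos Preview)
Your proof is correct and follows essentially the same approach as the paper's: for $\mathcal{O}_U$ you use the basic neighborhood $[\R y, Y_0]_\varepsilon$, and for $\mathcal{V}_C$ you use $[\{0\}, Y_0]_\varepsilon$ together with a two-regime argument splitting on the norm of $z'$, exactly as the paper does (with slightly different but equivalent choices of constants).
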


\begin{proof}

Fix $U$ a nonempty open subset of $X$. We show that $\mathcal{U} = \{Y \in \Sub(X) \mid Y \cap U \neq \varnothing\}$ is Ellentuck-open. For this, consider $Y \in \mathcal{U}$. We fix $x_0 \in U \cap Y$ and $\varepsilon > 0$ such that $\Ball(x_0, \varepsilon\|x_0\|) \subseteq U$; we show that $[\R x_0, Y]_\varepsilon \subseteq \mathcal{U}$. Let $Z \in [\R x_0, Y]_\varepsilon$, and fix $Z' \in [\R x_0, Y]$ and $T \colon Z' \to Z$ an isomorphism with $\|T - \Id_{Z'}\| < \varepsilon$. If $x_0 \neq 0$, then $\|T(x_0) - x_0\| < \varepsilon \|x_0\|$, so by the choice of $\varepsilon$, we have $T(x_0) \in U \cap Z$; this conclusion remains true if $x_0 = 0$. This shows that $Z \in \mathcal{U}$.

\smallskip

Now fix $C$ a nonempty bounded closed convex subset of $X$. We show that $\mathcal{V} = \{Y \in \Sub(X) \mid \delta_{\|\cdot\|}(Y, C) > 0\}$ is Ellentuck-open. For this, we fix $Y \in \mathcal{V}$ and we show that $[\{0\}, Y]_\varepsilon \subseteq \mathcal{V}$, for small enough $\varepsilon$. More precisely, let $\eta = \delta_{\|\cdot\|}(Y, C)$ and let $R \geqslant 1$ such that $C \subseteq \Ball(0, R - \eta)$. Then we take $\varepsilon = \frac{\eta}{4R}$ (so in particular, $\varepsilon \leqslant \frac{1}{2}$). Let $Z \in [\{0\}, Y]_\varepsilon$, and fix $Z' \in [\{0\}, Y]$ and $T \colon Z' \to Z$ an isomorphism with $\|T - \Id_{Z'}\| < \varepsilon$. We pick $x \in Z'$ and we show that $\delta_{\|\cdot\|}(T(x), C) \geqslant \frac{\eta}{2}$, which is enough to conclude. If $\|x\| > 2R$, then $\|T(x)\| \geqslant \|x\| - \|T(x) - x\| \geqslant \frac{\|x\|}{2} > R$, so $\delta_{\|\cdot\|}(T(x), C) \geqslant \eta$. If $\|x\| \leqslant 2R$, then $\|T(x) - x\| \leqslant \frac{\eta}{2}$. And since $x \in Y$, we have $\delta_{\|\cdot\|}(x, C) \geqslant \eta$, so $\delta_{\|\cdot\|}(T(x), C) \geqslant \frac{\eta}{2}$.

\end{proof}

\begin{cor}

Consider $\mathcal{H} \subseteq \Subinf(X)$ satisfying the two following properties:
\begin{enumerate}
    \item For every $Y \in \Subinf(X)$ and every $F \in \Subfin(X)$, we have $Y \in \mathcal{H}$ if and only if $Y + F \in \mathcal{H}$;
    \item $\mathcal{H}$, seen as a subset of $\Sub(X)$, is $G_\delta$ for one of the Wijsman topologies, or for the slice topology.
\end{enumerate}
Then $\H$ is a D-family of subspaces of $X$.

\end{cor}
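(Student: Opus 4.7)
The plan is to observe that the corollary is essentially immediate from the preceding proposition comparing the Ellentuck topology with the Wijsman and slice topologies, combined with the definition of D-family itself. So this is not really a theorem requiring any clever construction; it is a bookkeeping step that reconciles the two items in \prettyref{defin:D-family} with the two items in the statement.

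First, I would note that item 1 in the hypothesis is literally identical to item 1 in the definition of a D-family, so nothing needs to be done there. The only point requiring an argument is item 2, namely passing from being $G_\delta$ in a Wijsman topology (associated to some equivalent norm $N$) or in the slice topology, to being $G_\delta$ in the Ellentuck topology.

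For this, I would invoke the preceding proposition, which says that the Ellentuck topology on $\Sub(X)$ is finer than the slice topology, and hence also finer than every Wijsman topology associated to an equivalent norm (since the slice topology is the coarsest topology refining all such Wijsman topologies). The key elementary observation is then the following: if $\tau_1, \tau_2$ are two topologies on a set $S$ with $\tau_1 \subseteq \tau_2$, then every $G_\delta$ set for $\tau_1$ is $G_\delta$ for $\tau_2$, simply because each $\tau_1$-open set is $\tau_2$-open, so a countable intersection of $\tau_1$-open sets is a countable intersection of $\tau_2$-open sets. Applying this with $\tau_1$ being the slice topology (or any Wijsman topology) and $\tau_2$ being the Ellentuck topology yields that $\H$ is $G_\delta$ for the Ellentuck topology, which is exactly item 2 of \prettyref{defin:D-family}.

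There is no real obstacle here: the content of the result is entirely carried by the proposition on the comparison of topologies, which has already been proved in the excerpt. The corollary only packages that comparison into a convenient criterion for verifying that a given family is a D-family, and this is useful because the Wijsman and slice topologies are much easier to handle in concrete examples (for instance, thanks to Hess' theorem, the Borel structure coincides with the Effros one, so $G_\delta$-ness can often be checked directly from natural Borel descriptions of $\H$).
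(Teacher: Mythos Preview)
Your proposal is correct and matches the paper's approach exactly: the paper states the corollary without proof, treating it as immediate from the preceding proposition that the Ellentuck topology refines the slice (and hence every Wijsman) topology, together with the trivial observation that a $G_\delta$ set for a coarser topology remains $G_\delta$ for any finer one.
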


\bigskip

\subsection{Degrees}

Degrees will be our main way of defining D-families throughout this paper. 
A degree allows one to define a notion of largeness on the class of all Banach spaces, and this notion gives rise to a D-family when restricted to the set of subspaces of some fixed Banach space.

\smallskip

We define an \textit{approximation pair} as a pair $(X, F)$ where $X$ is a (finite- or infinite-dimensional) Banach space, and $F$ is a finite-dimensional subspace of $X$. We denote by $\AP$ the class of approximation pairs. If $(X, F), (Y, G) \in \AP$, a \emph{morphism} from $(X, F)$ to $(Y, G)$ is a pair $\varphi = (S, T)$, where $S \colon G \longrightarrow F$ and $T \colon X \longrightarrow Y$ are operators that make the following diagram commute:
$$\begin{tikzcd}
F \arrow[hook]{r}{\iota}
& X \arrow{d}{T} \\
G \arrow[hook]{r}{\iota} \arrow{u}{S}
& Y
\end{tikzcd}$$
where the $\iota$'s stand for the inclusions. The \textit{norm} of the morphism $\varphi$ is defined as $\| \varphi \| = \| S \| \cdot \| T \|$ if $G \neq \{0\}$, and $\|\varphi\| = 1$ if $G=\{0\}$.

\begin{defin}\label{defin:degree}

A \textit{degree} is a mapping $d \colon \AP \longrightarrow \R_+$ for which there exists $K_d \colon [1, \infty) \times \R_+ \longrightarrow \R_+$ such that:
\begin{itemize}
    \item $K_d$ is non-decreasing in both of its variables;
    \item for all $t \in \R_+$, $\lim_{s \rightarrow 1} K_d(s, t) = t$;
\end{itemize}
and for every $(X, F), (Y, G) \in \AP$ and for every morphism $\varphi \colon (X, F) \longrightarrow (Y, G)$, we have $d(Y,G) \leqslant K_d(\| \varphi \|, d(X,F))$.

\smallskip
\end{defin}

\begin{defin}\label{defin:d-small}
Given a degree $d$, we say that a Banach space $X$ is \textit{$d$-small} if \linebreak $\sup_{F \in \Subfin(X)} d(X, F) < \infty$, and that $X$ is \textit{$d$-large} otherwise.
\end{defin}

For most degrees we will consider, the value of $d(X, F)$ will actually only depend on $F$. Degrees satisfying this property will be called \emph{internal degrees}, and $d(X, F)$ will simply be denoted by $d(F)$. To verify that $d \colon \Banfin \to \R_+$ is an internal degree, it is enough to find $K_d \colon [1, \infty) \times \R_+ \longrightarrow \R_+$ as above, such that for every embedding $S \colon G \to F$ between two finite-dimensional spaces, we have $d(G) \leqslant K_d(\|S\| \cdot \|S^{-1}\|, d(F))$ (where $S^{-1}$ is defined on $S(G)$, and with the convention that $\|S\| \cdot \|S^{-1}\| = 1$ when $G = \{0\}$).

\begin{exs}\label{exs:Degree}

\alaligne

\begin{enumerate}

\item Let $d(F) = \dim(F)$. Then $d$ is an internal degree, witnessed by $K_d(s, t) = t$. A space is $d$-small if and only if it is finite-dimensional.

\item Let $d(F) = d_{BM}(F, \ell_2^{\dim(F)})$. Then $d$ is an internal degree, witnessed by $K_d(s, t) = st$. A space is $d$-small if and only if it is Hilbertian (a consequence of Kwapien's theorem \cite{kwapien}).

\item Fix $1 \leqslant p \leqslant 2 \leqslant q \leqslant \infty$. Let $d(F)$ be the type-$p$ constant (resp. the cotype-$q$ constant) of $F$. Then $d$ is an internal degree, witnessed by $K_d(s, t) = st$. A space is $d$-small if and only if it has type $p$ (resp. cotype $q$). If $X$ is $d$-small, then $\sup_{F \in \Subfin(X)} d(F)$ is the type-$p$ constant (resp. the cotype-$q$ constant) of $X$.

\item Fix $1 \leqslant p \leqslant \infty$. For $(X, F) \in \AP$, define $d(X, F)$ as the infimum of the $M$'s for which the canonical inclusion of $F$ into $X$ $M$-factorizes through some $\ell_p^n$, meaning there exist $n \in \N$ and operators $U \colon F \to \ell_p^n$ and $V \colon \ell_p^n \to X$ with $\| U \| \cdot \| V \| = M$, making the following diagram commute:
$$\begin{tikzcd}
& \ell_p^n \arrow{dr}{V} & \\
F \arrow[hook]{rr}{\iota} \arrow{ur}{U} & & X
\end{tikzcd}$$
Then $d$ is degree, witnessed by $K_d(s, t) = st$. By \cite{LindenstraussRosenthal}, Theorem 4.3 (and the classical fact that $\ell_2^n$'s are uniformly complemented in $L_p, 1<p<\infty$), we have that:
\begin{itemize}
    \item if $1 < p < \infty$, a space is $d$-small if and only if it is either an $\mathcal{L}_p$-space, or a Hilbertian space;
    \item if $p = 1$ or $p = \infty$, a space is $d$-small if and only if it is an $\mathcal{L}_p$-space.
\end{itemize}

\item For $(X, F) \in \AP$, define $d(X, F)$ as the infimum of the $M$'s for which there exist a space $Z$ with a $1$-unconditional basis and operators $U \colon F \to Z$ and $V \colon Z \to X$ with $\| U \| \cdot \| V \| = M$, making the following diagram commute:
$$\begin{tikzcd}
& Z \arrow{dr}{V} & \\
F \arrow[hook]{rr}{\iota} \arrow{ur}{U} & & X
\end{tikzcd}$$
Then $d$ is a degree, witnessed by $K_d(s, t) = st$. A space is $d$-small if and only if it has Gordon-Lewis local unconditional structure (GL-lust) \cite{GordonLewis}.
If $X$ is $d$-small, then $\sup_{F \in \Subfin(X)} d(X, F)$ is the
GL-lust constant of $X$.

\item For $(X,F) \in \AP$, define $d(X,F)$ as the infimum of the $K$'s such that $F$ is $K$-complemented in $X$. Then $d$ is not a degree. To see this pick $F=G \subset X \subset Y$ in such a way that $F$ is $1$-complemented in $X$ but not $n$-complemented in $Y$, so that $d(X,F) =1$ and $d(Y,G) \geqslant n$; and take $S=\Id_F$, $T$ the canonical inclusion of $X$ into $Y$. 

This may be surprising, in view of the fact that a space is Hilbertian if and only if there exists $K \geq 1$ such that all its finite-dimensional subspaces are $K$-complemented in it (see \cite{kalton}, Theorem 12.42), i.e., Hilbertian spaces would be characterized as the $d$-small spaces if $d$ were a degree.
\end{enumerate}

\end{exs}

\begin{rem}
The notion of degree is closely related to the classical Pietsch's theory of operator ideals  \cite{Pietsch}.
Recall (\cite{Pietsch} Chapter 6) that a \textit{quasi-norm} on an operator ideal $\mathfrak{A}$ is a mapping $A \colon \mathfrak{A} \to \R_+$ such that: 
\begin{enumerate}
 \item $A(\Id_X) = 1$ whenever $X$ is $1$-dimensional,
 \item there exists a constant $K \geqslant 1$ such that $A(S+T) \leqslant K(A(S) + A(T))$ whenever $S,T$ belong to $\mathfrak{A}$ and $S+T$ makes sense,
 \item
  $A(U \circ T \circ S) \leqslant \|U\| \cdot A(T) \cdot \|S\|$ whenever $T$ belongs to $\mathfrak{A}$ and $U \circ T \circ S$ makes sense.
  \end{enumerate}
Hence, to every quasi-normed operator ideal $(\mathfrak{A}, A)$, we can associate a degree $d_A$ defined by $d_A(X, F) = A(\iota_{F, X})$ for every approximation pair $(X, F)$, where $\iota_{F, X}$ denotes the inclusion map of $F$ into $X$. It is interesting to observe that all examples of degrees given in \prettyref{exs:Degree} come in this fashion:
\begin{enumerate}
\item The dimension is the degree associated to the ideal of \textit{nuclear operators} (see \cite{Pietsch}, 6.3.1); this is a consequence of Auerbach's lemma.
\item The internal degree $d(F) = d_{BM}(F, \ell_2^{\dim(F)})$ is the degree associated to the ideal of \textit{Hilbert operators} (see \cite{Pietsch}, 6.6.1).
\item The type $p$ and cotype $q$ constants are the degrees associated to the ideals of \textit{type $p$} and \textit{cotype $q$ operators}, normed with the type $p$ and cotype $q$ constants, respectively (see \cite{PisierFactorization}, Section 3.a). 
\item The degree defined in 4. of \prettyref{exs:Degree} is the degree associated to the ideal of \textit{discretely $p$-factorable} operators (see \cite{Pietsch}, 19.3.11).
\item The degree defined in 5. of \prettyref{exs:Degree} is the degree associated to the ideal of \textit{$\sigma$-nuclear operators} (see \cite{Pietsch}, 23.2.1); this is a consequence of Theorem 23.2.5 in \cite{Pietsch}.
\end{enumerate}
We warn the reader that given a quasi-normed operator ideal $(\mathfrak{A}, A)$, the class of $d_A$-small spaces does in general not coincide with the space ideal associated to $\mathfrak{A}$ (see \cite{Pietsch}, 2.1.2). This is for instance not the case for examples 4 and 5 above.

\smallskip

We will not further develop the link between degrees and quasi-normed operator ideals in this paper, as this would be of limited practical use. Indeed, exhibiting a degree adapted to a given situation is in general much easier than exhibiting a quasi-normed operator ideal, as shown by the examples above.
\end{rem}

We can also define a notion of asymptotic smallness:

\begin{defin}
Let $d$ be a degree. A Banach space $X$ is said to be \textit{asymptotically $d$-small} if there exists a constant $K$ such that for every $n \in \N$, there exists a finite-codimensional subspace $X_n \subseteq X$ such that every $n$-dimensional subspace $F \subseteq X_n$ satisfies $d(X, F) \leqslant K$.
\end{defin}

When $d(X, F) = d_{BM}(F, \ell_2^{\dim(F)})$, asymptotically $d$-small Banach spaces are exactly asymptotically Hilbertian Banach spaces (see \prettyref{defin:AsympHilbert}).

\smallskip

If $d$ is an internal degree, then a subspace of a $d$-small space is itself $d$-small, and a subspace of an asymptotically $d$-small space is itself asymptotically $d$-small. This is not true in general; for example, $L_p([0, 1])$ is an $\mathcal{L}_p$-space, and for $1 \leqslant p \neq 2 \leqslant \infty$, the only non-Hilbertian subspaces of $L_p([0, 1])$ which are $\mathcal{L}_p$ are the complemented ones \cite{LindenstraussRosenthal}. Similarly, the property of having Gordon-Lewis local unconditional structure is not stable under passing to subspaces; consider $\ell_p$ spaces for $1 \leqslant p \neq 2 < \infty$, a consequence of, e.g., \cite{KomorowskiTomczak}.

\begin{rem}\label{rem:BasicRemarkDegrees}
A useful property of degrees is the fact that for $F \subseteq G \subseteq Y \subseteq X$, where the spaces $F$ and $G$ are finite-dimensional and the spaces $X$ and $Y$ are arbitrary, we have $d(X, F) \leqslant d(Y, G)$. To see this, just consider the morphism $(\Id_F, \Id_Y)$ from $(Y, G)$ to $(X, F)$.
\end{rem}

In the rest of this subsection, we fix a degree $d$.

\begin{lem}\label{lem:DegFinDim}

For every $n \in \N$, there exists a constant $C_d(n)$ such that for every $(X, F) \in \AP$ with $\dim(F) = n$, we have $d(X, F) \leqslant C_d(n)$. In particular, every finite-dimensional space is $d$-small.

\end{lem}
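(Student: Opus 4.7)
The plan is to factor the inclusion $\iota_{F,X} \colon F \hookrightarrow X$ through a fixed $n$-dimensional ``universal'' space, namely $\ell_\infty^n$, by means of an Auerbach basis. Using the defining inequality of the degree on the resulting morphism will then yield the sought bound, with a constant depending only on $n$.

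More precisely, I would fix $n \geqslant 1$ and let $(X, F)$ be an approximation pair with $\dim(F) = n$. Apply Auerbach's lemma to obtain a basis $(e_1, \ldots, e_n)$ of $F$ with biorthogonal functionals $(e_1^*, \ldots, e_n^*) \subseteq F^*$ satisfying $\|e_i\| = \|e_i^*\| = 1$ for every $i$. Define
$$S \colon F \longrightarrow \ell_\infty^n, \quad f \longmapsto (e_1^*(f), \ldots, e_n^*(f)), \qquad T \colon \ell_\infty^n \longrightarrow X, \quad (x_i)_{i=1}^n \longmapsto \sum_{i=1}^n x_i e_i.$$
Then $T \circ S (f) = \sum_i e_i^*(f) e_i = f$ for every $f \in F$, which is precisely the commutativity condition making $\varphi := (S, T)$ a morphism from the approximation pair $(\ell_\infty^n, \ell_\infty^n)$ to $(X, F)$. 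Simple norm estimates give $\|S\| \leqslant 1$ (since $\|S(f)\|_\infty \leqslant \max_i \|e_i^*\| \cdot \|f\| = \|f\|$) and $\|T\| \leqslant n$ (since $\|\sum_i x_i e_i\| \leqslant \sum_i |x_i| \leqslant n\|(x_i)\|_\infty$), hence $\|\varphi\| \leqslant n$.

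Applying the defining property of $d$ to $\varphi$ then gives
$$d(X, F) \leqslant K_d\bigl(\|\varphi\|, d(\ell_\infty^n, \ell_\infty^n)\bigr) \leqslant K_d\bigl(n, d(\ell_\infty^n, \ell_\infty^n)\bigr),$$
using that $K_d$ is non-decreasing in its first variable. Thus setting $C_d(n) := K_d\bigl(n, d(\ell_\infty^n, \ell_\infty^n)\bigr)$ works for $n \geqslant 1$; the edge case $n = 0$ is handled separately by letting $C_d(0) := d(X_0, \{0\})$ for any fixed $X_0$ and using the convention $\|\varphi\| = 1$ with the zero morphism from $(X_0, \{0\})$ to $(X, \{0\})$.

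For the ``in particular'' clause, if $X$ is finite-dimensional with $\dim(X) = n$, then every $F \in \Subfin(X)$ has dimension at most $n$, so $\sup_{F \in \Subfin(X)} d(X, F) \leqslant \max_{0 \leqslant k \leqslant n} C_d(k) < \infty$, and $X$ is $d$-small. No serious obstacle is expected: the only point requiring mild care is the verification that the pair $(S, T)$ actually satisfies the commutativity condition demanded in the definition of a morphism, which is immediate from the biorthogonality relations.
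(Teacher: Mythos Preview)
Your proof is correct and follows essentially the same approach as the paper's: both factor the inclusion through a fixed $n$-dimensional space via an Auerbach basis, obtaining a morphism of norm at most $n$ and then applying the defining inequality of a degree. The only cosmetic difference is that the paper uses $\ell_1^n$ (taking an $n$-isomorphism $T\colon \ell_1^n \to F$ and the morphism $(T^{-1}, T)$), whereas you use the dual picture through $\ell_\infty^n$; the resulting constant $C_d(n)$ is the same up to replacing $d(\ell_1^n, \ell_1^n)$ by $d(\ell_\infty^n, \ell_\infty^n)$.
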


\begin{proof}

Let $(X, F) \in \AP$ with $\dim(F) = n$. If $n \geq 1$, then let $T : \ell_1^n \longrightarrow F$ be an 
$n$-isomorphism. Then $(T^{-1}, T)$ is a morphism from the pair $(\ell_1^n, \ell_1^n)$ to the pair $(X, F)$, with norm at most 
$n$. So, letting $C_d(n) = 
K_d(n, d(\ell_1^n, \ell_1^n))$, it follows that $d(X, F) \leqslant C_d(n)$. The proof of the case $n=0$ is similar, replacing $\ell_1^n$ by $\{0\}$.

\end{proof}

\begin{lem}

The properties of being $d$-small, $d$-large, and asymptotically $d$-small are invariant under isomorphism.

\end{lem}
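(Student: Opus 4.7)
The plan is to show that an isomorphism $U : X \to Y$ induces morphisms of approximation pairs allowing us to compare degrees of subspaces on both sides, and the invariance properties will follow directly from the defining inequality of degrees together with the continuity property $\lim_{s \to 1} K_d(s, t) = t$ (which in fact we only need as monotonicity plus finiteness of $K_d$ at isolated points).

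More precisely, fix an isomorphism $U : X \to Y$ and set $C = \|U\| \cdot \|U^{-1}\|$. For any $G \in \Subfin(Y)$, let $F = U^{-1}(G) \in \Subfin(X)$, and consider the pair of operators $T = U : X \to Y$ and $S = (U_{\restriction F})^{-1} : G \to F$. For $g \in G$, we have $T(S(g)) = U(U^{-1}(g)) = g$, so the diagram in the definition of a morphism commutes and $(S, T) : (X, F) \to (Y, G)$ is a morphism of approximation pairs of norm at most $C$ (and of norm $1$ when $G = \{0\}$). Applying the defining inequality of degrees yields
\[
d(Y, G) \leqslant K_d\!\left(C, d(X, F)\right).
\]

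Invariance of $d$-smallness now follows immediately: if $X$ is $d$-small with $M = \sup_{F \in \Subfin(X)} d(X, F) < \infty$, the monotonicity of $K_d$ in its second variable gives $d(Y, G) \leqslant K_d(C, M)$ for every $G \in \Subfin(Y)$, so $Y$ is $d$-small. Swapping the roles of $X$ and $Y$ (using $U^{-1}$) gives the converse. Invariance of $d$-largeness is the contrapositive.

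For asymptotic $d$-smallness, suppose $X$ witnesses the property with constant $K$ and finite-codimensional subspaces $(X_n)_{n \in \N}$. Set $Y_n := U(X_n)$, which is finite-codimensional in $Y$. Given any $n$-dimensional $G \subseteq Y_n$, the subspace $F := U^{-1}(G) \subseteq X_n$ is $n$-dimensional, and the morphism constructed above yields $d(Y, G) \leqslant K_d(C, d(X, F)) \leqslant K_d(C, K)$, using monotonicity of $K_d$. Hence $Y$ is asymptotically $d$-small with constant $K_d(C, K)$, and by symmetry the property transfers in both directions. There is no serious obstacle here; the only thing to notice is that the hypothesis $\lim_{s \to 1} K_d(s, t) = t$ is not used (only monotonicity and finiteness), which reflects the fact that isomorphism does not require near-isometric control.
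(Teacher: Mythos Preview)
Your proof is correct and follows essentially the same approach as the paper: both construct, for each finite-dimensional $G \subseteq Y$, the morphism $((U^{-1})_{\restriction G}, U)$ from $(X, U^{-1}(G))$ to $(Y, G)$ and apply the defining inequality for degrees together with monotonicity of $K_d$. Your additional remark that the continuity condition $\lim_{s\to 1}K_d(s,t)=t$ is not needed here is accurate and worth noting.
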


\begin{proof}

Let $X$ and $Y$ be two Banach spaces, and $T \colon X \to Y$ be an isomorphism. First suppose that $X$ is $d$-small, and let $K = \sup_{F \in \Subfin(X)} d(X, F)$. Let $G \in \Subfin(Y)$. Then $((T^{-1})_{\restriction G}, T)$ is a morphism from the pair $(X, T^{-1}(G))$ to the pair $(Y, G)$, so $d(Y, G) \leqslant K_d(\|(T^{-1})_{\restriction G}\| \cdot \|T\|, d(X, T^{-1}(G))) \leqslant K_d(\|T^{-1}\|\cdot\|T\|, K)$. This bound does not depend on $G$, so $Y$ is $d$-small.

\smallskip

Now suppose that $X$ is asymptotically $d$-small and fix a constant $K$ witnessing it. We show that $Y$ is asymptotically $d$-small, witnessed by the constant $L = K_d(\|T^{-1}\|\cdot\|T\|, K)$. Let $n \in \N$. There exists a finite-codimensional subspace $X_n \subseteq X$ such that for every $n$-dimensional subspace $F \subseteq X_n$, we have $d(X, F) \leqslant K$. Let $Y_n = T(X_n)$, and consider $G \subseteq Y_n$ an $n$-dimensional subspace. Then $((T^{-1})_{\restriction G}, T)$ is a morphism from the pair $(X, T^{-1}(G))$ to the pair $(Y, G)$, so $d(Y, G) \leqslant K_d(\|T^{-1}\| \cdot \|T\|, d(X, T^{-1}(G)))$. Since $T^{-1}(G) \subseteq X_n$, we have $d(X, T^{-1}(G)) \leqslant K$, so $d(Y, G) \leqslant L$, as wanted.

\end{proof}

\begin{lem}\label{lem:DegComplemented}

\alaligne
\begin{enumerate}

\item A complemented subspace of a $d$-small space is $d$-small.

\item A complemented subspace of an asymptotically $d$-small space is asymptotically $d$-small.

\end{enumerate}

\end{lem}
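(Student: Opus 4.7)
My plan for both parts hinges on the observation that a complementing projection gives rise to a natural morphism of approximation pairs. Fix a Banach space $X$, a complemented subspace $Y \subseteq X$, and a projection $P \colon X \to Y$ onto $Y$. For every finite-dimensional $G \subseteq Y$, I claim $(\Id_G, P)$ is a morphism from $(X, G)$ to $(Y, G)$: indeed, since $G \subseteq Y$, we have $P \circ \iota_{G \to X} = \iota_{G \to Y} = \iota_{G \to Y} \circ \Id_G$, so the defining square commutes. The norm of this morphism is $\|\Id_G\| \cdot \|P\| = \|P\|$, so the defining inequality of a degree yields
\[
    d(Y, G) \;\leqslant\; K_d\bigl(\|P\|,\, d(X, G)\bigr).
\]
This is the only estimate I will need; the two parts of the lemma will both be immediate corollaries.

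For part (1), if $X$ is $d$-small with $M := \sup_{F \in \Subfin(X)} d(X, F) < \infty$, then for every $G \in \Subfin(Y)$ (viewed also as a subspace of $X$) the bound above gives $d(Y, G) \leqslant K_d(\|P\|, M)$. Since the right-hand side is independent of $G$ and $K_d$ is finite-valued, this shows $\sup_{G \in \Subfin(Y)} d(Y, G) < \infty$, so $Y$ is $d$-small.

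For part (2), suppose $X$ is asymptotically $d$-small, witnessed by a constant $K$ and finite-codimensional subspaces $X_n \subseteq X$. I set $Y_n := Y \cap X_n$ and take $L := K_d(\|P\|, K)$. The key small verification is that $Y_n$ has finite codimension in $Y$: this follows from the fact that the natural linear map $Y / (Y \cap X_n) \to X / X_n$ is injective, hence $\dim(Y / Y_n) \leqslant \dim(X / X_n) < \infty$. Now for any $n$-dimensional $G \subseteq Y_n$, we have $G \subseteq X_n$, so $d(X, G) \leqslant K$ by the hypothesis on $X$, and the morphism estimate above yields $d(Y, G) \leqslant L$. This is exactly the asymptotic $d$-smallness of $Y$ with constant $L$.

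There is no real obstacle; the only point requiring care is the finite-codimensionality of $Y_n$ in $Y$, which is handled by the short quotient argument above. Everything else reduces to constructing the morphism $(\Id_G, P)$ and reading off the degree inequality, exactly as in the isomorphism-invariance proof immediately preceding this lemma.
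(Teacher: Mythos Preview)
Your proof is correct and follows essentially the same approach as the paper: both use the morphism $(\Id_G, P)$ from $(X,G)$ to $(Y,G)$ to obtain $d(Y,G)\leqslant K_d(\|P\|, d(X,G))$, and then read off both parts directly, with $Y_n = Y\cap X_n$ in part (2). The only difference is that you spell out the (routine) verification that $Y_n$ has finite codimension in $Y$, which the paper leaves implicit.
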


\begin{proof}

Fix $X$ a Banach space, $Y$ a complemented subspace of $X$, and $P \colon X \to Y$ a projection.

\begin{enumerate}

\item  Suppose $X$ is $d$-small, and let $K = \sup_{F \in \Subfin(X)} d(X, F)$. Let $F \subseteq Y$ be a finite-dimensional subspace. Then $(\Id_F, P)$ is a morphism from the pair $(X, F)$ to the pair $(Y, F)$, so we have $d(Y, F) \leqslant K_d(\|P\|, d(X, F)) \leqslant K_d(\|P\|, K)$. Hence $Y$ is $d$-small.

\item Suppose $X$ is asymptotically $d$-small, witnessed by a constant $K$. We show that $Y$ is asymptotically $d$-small, witnessed by the constant $K_d(\|P\|, K)$. Let $n \in \N$, and fix a finite-codimensional subspace $X_n \subseteq X$ such that for every $n$-dimensional subspace $F \subseteq X_n$, we have $d(X, F) \leqslant K$. Let $Y_n = X_n \cap Y$. For an $n$-dimensional subspace $F \subseteq Y_n$, $(\Id_F, P)$ is a morphism from the pair $(X, F)$ to the pair $(Y, F)$, so we have $d(Y, F) \leqslant K_d(\|P\|, d(X, F)) \leqslant K_d(\|P\|, K)$, as wanted.

\end{enumerate}

\end{proof}

\begin{lem}\label{lem:DegFinCodim}

Let $X$ be a Banach space, and $Y$ be a finite-codimensional subspace of $X$. Then:

\begin{enumerate}

\item $X$ is $d$-small iff $Y$ is $d$-small;

\item $X$ is asymptotically $d$-small iff $Y$ is asymptotically $d$-small;

\end{enumerate}

\end{lem}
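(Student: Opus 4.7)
The plan is to write $X = Y \oplus W$ with $W$ a finite-dimensional complement (which exists since $Y$ has finite codimension), set $n := \dim W$, and let $P \colon X \to Y$ be the associated projection. The two $\Rightarrow$ implications are immediate from \prettyref{lem:DegComplemented}, since $Y$ is complemented in $X$. For the $\Leftarrow$ direction of (2), if $(Y_m)_m$ witnesses the asymptotic $d$-smallness of $Y$, then $X_m := Y_m$ is still finite-codimensional in $X$, and for any $m$-dimensional $F \subseteq X_m \subseteq Y$ the norm-$1$ morphism $(\Id_F, \iota_{Y,X}) \colon (Y, F) \to (X, F)$ yields $d(X, F) \leqslant d(Y, F) \leqslant K_Y$.

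The substance is in (1) $\Leftarrow$. Given any finite-dimensional $F \subseteq X$, I would first pass to $\tilde F := F + W$: by \prettyref{rem:BasicRemarkDegrees} we have $d(X, F) \leqslant d(X, \tilde F)$, and one checks $\tilde F = G' \oplus W$ with $G' := P(\tilde F) = \tilde F \cap Y$. The goal is then to build a finite-dimensional $G \subseteq Y$ together with a morphism $(Y, G) \to (X, \tilde F)$ whose norm is bounded by a constant $M = M(n, \|P\|)$ independent of $F$; this gives the uniform bound $d(X, F) \leqslant K_d(M, K_Y)$ needed for $d$-smallness of $X$.

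To construct $G$, I would pick an Auerbach basis $\bar w_1, \ldots, \bar w_n$ of some $n$-dimensional subspace of the infinite-dimensional quotient $Y/G'$, lift each $\bar w_i$ to $w_i \in Y$ with $\|w_i\| \leqslant 2$, and set $W_0 := [w_1, \ldots, w_n]$ and $G := G' \oplus W_0$. The key estimate is that for $w_0 = \sum_i a_i w_i \in W_0$, the Auerbach property in $Y/G'$ forces $|a_i| \leqslant \|\sum_j a_j \bar w_j\|_{Y/G'} = \mathrm{dist}(w_0, G')$; combined with the lift bounds, this yields $\mathrm{dist}(w_0, G') \geqslant \|w_0\|/(2n)$, hence uniform bounds on the direct-sum projections $G \to G'$ and $G \to W_0$ depending only on $n$. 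Choosing an isomorphism $\phi \colon W \to W_0$ that sends an Auerbach basis of $W$ to the $w_i$'s (so $\|\phi\|$ and $\|\phi^{-1}\|$ are controlled by $n$), I would set $S(g' + w) := g' + \phi(w)$ and define $T$ on $G$ by $T(g' + w_0) := g' + \phi^{-1}(w_0) \in G' + W \subseteq X$; then $T \circ S = \iota_{\tilde F, X}$. Since $T|_G - \iota_{G, Y}$ takes values in the finite-dimensional space $W + W_0$, I would extend it to all of $Y$ via coordinate-wise Hahn-Banach along an Auerbach basis of $W + W_0$, obtaining a bounded $T \colon Y \to X$ as required.

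The main obstacle is obtaining the projection bounds on $G = G' \oplus W_0$ that are independent of $\dim G'$, which is unbounded as $F$ varies. Without controlling the ``angle'' between $G'$ and $W_0$ uniformly, the operator $T$ would have unbounded norm; the Auerbach-lift-from-the-quotient device precisely circumvents this by turning the geometric control into a uniform estimate depending only on $n$, via the Auerbach dual functionals of $\bar w_1, \ldots, \bar w_n$ in $(Y/G')^*$.
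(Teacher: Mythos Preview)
Your argument is correct. The $\Rightarrow$ directions and part (2) $\Leftarrow$ match the paper exactly. For (1) $\Leftarrow$ you take a genuinely different route.

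The paper's proof of (1) $\Leftarrow$ is shorter but imports an external fact: by Lemma 3 in \cite{FerencziRosendalOnTheNumber}, any two $k$-codimensional subspaces of an infinite-dimensional space are $A(k)$-isomorphic for a universal constant $A(k)$. Given $F \subseteq X$, one simply chooses a $k$-codimensional $Z \supseteq F$, takes an $A(k)$-isomorphism $T \colon Z \to Y$, and observes that $(T|_F, T^{-1})$ is a morphism from $(Y, T(F))$ to $(X, F)$ of norm at most $A(k)$. Your construction instead builds the needed morphism by hand: you enlarge $F$ to $\tilde F = G' \oplus W$, manufacture a copy $W_0$ of $W$ inside $Y$ with controlled angle to $G'$ via Auerbach lifts from $Y/G'$, and extend the resulting partial map by Hahn--Banach. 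This is more laborious but entirely self-contained; in effect you are reproving, in the precise form you need, the special case of the cited lemma. Both approaches yield a uniform bound $d(X,F) \leqslant K_d(M, \sup_{G} d(Y,G))$ with $M$ depending only on $k = \dim W$ (and, in your version, on $\|P\|$, which itself can be bounded in terms of $k$).
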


\begin{proof}

Since $Y$ is complemented in $X$, we know by \prettyref{lem:DegComplemented} that if $X$ is $d$-small (resp. asymptotically $d$-small), then so is $Y$. So in both cases, we just have one direction to show.

\begin{enumerate}

\item Suppose that $Y$ is $d$-small, and let $K = \sup_{G \in \Subfin(Y)} d(Y, G)$. By \prettyref{lem:DegFinDim}, we can suppose that $X$ is infinite-dimensional. We denote by $k$ the codimension of $Y$ in $X$. Recall that by Lemma 3 in \cite{FerencziRosendalOnTheNumber}, every $k$-codimensional subspace of $X$ is $A(k)$-isomorphic to $Y$, where the constant $A(k)$ only depends on $k$.

\smallskip

Let $F \subseteq X$ be finite-dimensional; we want to bound $d(X, F)$. Find $Z \subseteq X$ a subspace with codimension $k$ containing $F$. Let $T \colon Z \to Y$ be an $A(k)$-isomorphism. We have $d(Y, T(F)) \leqslant K$. Moreover, $(T_{\restriction F}, T^{-1})$ is a morphism from the pair $(Y, T(F))$ to the pair $(X, F)$, so $d(X, F) \leqslant K_d(\|T\|\cdot\|T^{-1}\|, K) \leqslant K_d(A(k), K)$, as wanted.

\item Suppose that $Y$ is asymptotically $d$-small. Then there exist a constant $K$ and finite-codimensional subspaces $Y_n \subseteq Y$ for all $n$, such that for every $n$-dimensional subspace $F \subseteq Y_n$, we have $d(Y, F) \leqslant K$. In particular, for such an $F$, we also have $d(X, F) \leqslant K$, showing that $X$ is asymptotically $d$-small.

\end{enumerate}

\end{proof}

\begin{prop}\label{prop:DegDFamily}

Let $X$ be a Banach space, and $\H$ be the set of subspaces of $X$ that are $d$-large. Then $\H$ is a D-family.

\end{prop}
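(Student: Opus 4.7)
The plan is to verify the two defining properties of a D-family. Property~1 (stability under finite-dimensional modifications) is immediate from Lemma~\ref{lem:DegFinCodim}: given $Y \in \Subinf(X)$ and $F \in \Subfin(X)$, the inclusion $Y \hookrightarrow Y + F$ is finite-codimensional, so $Y$ is $d$-small iff $Y + F$ is, i.e.\ $Y \in \H$ iff $Y + F \in \H$.

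For property~2, I would write $\H = \bigcap_{K \in \N} \mathcal{U}_K$, where
$$\mathcal{U}_K = \{Y \in \Sub(X) \mid \exists F \in \Subfin(Y),\ d(Y, F) > K\}.$$
The inclusion $\H \subseteq \bigcap_K \mathcal{U}_K$ is by definition of $d$-large, and conversely, if $Y \in \bigcap_K \mathcal{U}_K$ then $\sup_F d(Y, F) = \infty$, which by Lemma~\ref{lem:DegFinDim} forces $Y$ to be infinite-dimensional, hence $d$-large. It thus suffices to prove each $\mathcal{U}_K$ is Ellentuck-open. Fix $Y \in \mathcal{U}_K$ together with a witness $F \in \Subfin(Y)$ satisfying $d(Y, F) > K$; I would then look for $\varepsilon > 0$ such that $[F, Y]_\varepsilon \subseteq \mathcal{U}_K$. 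For any $Z \in [F, Y]_\varepsilon$, fix $Z'$ with $F \subseteq Z' \subseteq Y$ and $T \colon Z' \to Z$ an isomorphism with $\|T - \Id_{Z'}\| < \varepsilon$, and take $T(F) \subseteq Z$ as the candidate witness for $Z \in \mathcal{U}_K$.

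The key point is to use a morphism in the correct direction: the pair $(T_{\restriction F}, T^{-1})$ is a morphism $(Z, T(F)) \to (Z', F)$, since $T^{-1} \circ T_{\restriction F} = \Id_F$ makes the required diagram commute. The degree axiom therefore yields
$$d(Z', F) \leqslant K_d\bigl(\|T_{\restriction F}\| \cdot \|T^{-1}\|,\ d(Z, T(F))\bigr),$$
and combining this with Remark~\ref{rem:BasicRemarkDegrees} (which gives $d(Y, F) \leqslant d(Z', F)$ since $F \subseteq Z' \subseteq Y$) will bound $d(Z, T(F))$ below in terms of $d(Y, F)$. The main obstacle, although essentially routine given the degree axioms, is choosing $\varepsilon$ correctly. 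For $\varepsilon < 1$, the standard perturbation estimates give $\|T_{\restriction F}\| \leqslant 1 + \varepsilon$ and $\|T^{-1}\| \leqslant 1/(1-\varepsilon)$, so $\|T_{\restriction F}\| \cdot \|T^{-1}\| \leqslant (1+\varepsilon)/(1-\varepsilon) \to 1$ as $\varepsilon \to 0$. Since $\lim_{s \to 1} K_d(s, K) = K < d(Y, F)$ and $K_d$ is non-decreasing in its first variable, I can pick $s_0 > 1$ with $K_d(s_0, K) < d(Y, F)$ and then $\varepsilon > 0$ small enough that $(1+\varepsilon)/(1-\varepsilon) \leqslant s_0$. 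If $d(Z, T(F))$ were at most $K$, monotonicity of $K_d$ in its second variable would produce the contradiction $d(Y, F) \leqslant K_d(s_0, K) < d(Y, F)$; hence $d(Z, T(F)) > K$, witnessing $Z \in \mathcal{U}_K$ and completing the proof.
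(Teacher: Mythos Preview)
Your proof is correct and follows essentially the same approach as the paper's: write $\H$ as the intersection of the sets $\mathcal{U}_K$, and for each $Y \in \mathcal{U}_K$ witnessed by $F$, use the morphism $(T_{\restriction F}, T^{-1})$ together with the continuity condition $\lim_{s\to 1} K_d(s,K)=K$ to show $[F,Y]_\varepsilon \subseteq \mathcal{U}_K$ for small $\varepsilon$. The only cosmetic difference is that you land the morphism in $(Z',F)$ and then invoke Remark~\ref{rem:BasicRemarkDegrees} to pass to $(Y,F)$, whereas the paper views $T^{-1}$ directly as a map into $Y$ and skips that step.
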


\begin{proof}

\prettyref{lem:DegFinDim} shows that $\H$ contains only infinite-dimensional subspaces. The stability of $\H$ under finite-dimensional modifications comes from \prettyref{lem:DegFinCodim}. Now we need to prove that $\H$ is Ellentuck-$G_\delta$. For every $n \in \N$, let $\U_n = \{Y \in \Sub(X) \mid \exists F \in \Subfin(Y) \; d(Y, F) > n\}$, so that $\H = \bigcap_{n \in \N} \U_n$. We show that all the $\U_n$'s are open.

\smallskip

Fix $n \in \N$ and $Y \in \U_n$. Let $F \in \Subfin(Y)$ be such that $d(Y, F) > n$. We know that $\lim_{s \rightarrow 1} K_d(s, n) = n$, so there exists $\varepsilon \in (0, 1)$ such that $K_d(\frac{1 + \varepsilon}{1 - \varepsilon}, n) < d(Y, F)$. We show that $[F, Y]_\varepsilon \subseteq \U_n$. Let $Z \in [F, Y]_\varepsilon$, and let $Z' \in [F, Y]$ and $T \colon Z' \to Z$ be an isomorphism such that $\|T - \Id_{Z'}\| < \varepsilon$. Then $\|T\| \leqslant 1 + \varepsilon$ and $\|T^{-1}\|\leqslant \frac{1}{1 - \varepsilon}$. So $\left(T_{\restriction F}, T^{-1}\right)$ is a morphism of norm at most $\frac{1 + \varepsilon}{1 - \varepsilon}$ from the approximation pair $(Z, T(F))$ to the pair $(Y,F)$. Thus, $d(Y, F) \leqslant K_d(\frac{1+\varepsilon}{1 - \varepsilon}, d(Z, T(F)))$. If we had $d(Z, T(F)) \leqslant n$, we would have $d(Y, F) \leqslant K_d(\frac{1+\varepsilon}{1 - \varepsilon}, n)$, contradicting the choice of $\varepsilon$. So $d(Z, T(F)) > n$, witnessing that $Z \in \mathcal{U}_n$.
\end{proof}

\begin{cor} Given a sequence $(d_n)_{n \in \N}$ of degrees and a space $X$, the family of subspaces of $X$ that are large for all the $d_n$'s is a D-family, and in the same way, for fixed $N \in \N$, the family of subspaces of $X$ that are large for at least one $d_n$, $n \leqslant N$, is also a D-family. \end{cor}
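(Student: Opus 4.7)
The plan is to reduce the statement directly to \prettyref{prop:DegDFamily} together with two elementary closure properties of the class of D-families, namely that it is closed under countable intersections and under finite unions. Let me sketch these closure properties first, since the corollary falls out immediately from them.

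For each $n \in \N$, let $\H_n$ denote the family of $d_n$-large subspaces of $X$. By \prettyref{prop:DegDFamily}, each $\H_n$ is a D-family. Recall that being a D-family amounts to two conditions: stability under finite-dimensional modifications, and being $G_\delta$ in the Ellentuck topology. Both conditions pass through countable intersections trivially: a subspace $Y$ belongs to $\bigcap_n \H_n$ iff $Y + F$ does for every finite-dimensional $F$ (using the equivalence given by each $\H_n$ separately); and a countable intersection of $G_\delta$ subsets of $\Sub(X)$ is again $G_\delta$, being a countable intersection of countably many open sets. This yields the first claim of the corollary, namely that $\bigcap_n \H_n$ — the set of subspaces that are $d_n$-large for every $n$ — is a D-family.

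For the second claim, fix $N \in \N$ and consider $\bigcup_{n \leqslant N} \H_n$. Stability under finite-dimensional modifications is again immediate, since $Y \in \bigcup_{n \leqslant N} \H_n$ means $Y \in \H_n$ for some $n \leqslant N$, and for this $n$ we have $Y \in \H_n \Leftrightarrow Y + F \in \H_n$. For the $G_\delta$ condition, write each $\H_n$ (with $n \leqslant N$) as $\bigcap_{k \in \N} \U_n^k$ with each $\U_n^k$ Ellentuck-open. Then
\[
\bigcup_{n \leqslant N} \H_n = \bigcup_{n \leqslant N} \bigcap_{k \in \N} \U_n^k = \bigcap_{f \colon \{0,\dots,N\} \to \N} \; \bigcup_{n \leqslant N} \U_n^{f(n)},
\]
which is a countable intersection (since the indexing set $\N^{N+1}$ is countable) of finite unions of Ellentuck-open sets, hence $G_\delta$.

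I do not expect any genuine obstacle: the content is purely the stability of the defining conditions of a D-family under the relevant set-theoretic operations, and the only point worth stating explicitly is that a finite union of $G_\delta$ sets remains $G_\delta$, which is the reason the second claim is restricted to finite $N$ rather than allowing a countable union.
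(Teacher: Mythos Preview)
Your proof is correct and follows the same approach as the paper, which simply observes that the class of $G_\delta$ subsets of a topological space is closed under countable intersections and finite unions, and invokes \prettyref{prop:DegDFamily}. Your version is more detailed (explicitly verifying stability under finite-dimensional modifications and writing out the distributive-law computation for finite unions of $G_\delta$ sets), but the content is identical.
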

  
\begin{proof} Since the class of $G_\delta$ subsets of a topological space is closed under countable intersections and under finite unions, this is a consequence of \prettyref{prop:DegDFamily}. \end{proof}

For instance, for $2 < q_0 \leqslant \infty$ fixed, the family of subspaces of $X$ that do not have any cotype $q < q_0$ is a D-family.

\smallskip

If $d$ is a degree and $X$ a Banach space, the D-family defined in \prettyref{prop:DegDFamily} will be denoted by $\H_d^X$, or by $\H_d$ when there is no ambiguity. An $\H_d$-good FDD will simply be called \emph{$d$-good}. In the case of families defined by a degree, we have a useful strengthening of the notion of good FDD's:

\begin{defin}

An FDD $(F_n)_{n \in \N}$ of a Banach space $X$ is \textit{$d$-better} if $d(X, F_n) \xrightarrow[n \to \infty]{} \infty$.
\end{defin}

This implies that $(F_n)$ is a $d$-good FDD. Indeed, if $A \subseteq \N$ is infinite, then for every $n \in A$, we have $d([F_m \mid m \in A], F_n) \geqslant d(X, F_n)$. Below we prove a weak converse to this; this can be seen as the $d$-better version of \prettyref{lem:GoodBlocking}.

\begin{lem}\label{lem:VeryGoodBlocking}

Let $(F_n)_{n \in \N}$ be an FDD of a $d$-large Banach space $X$. Then there exists a blocking $(G_n)_{n \in \N}$ of $(F_n)$ which is a $d$-better FDD.

\end{lem}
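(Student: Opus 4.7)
My plan is to build the blocking inductively on $m$, with $G_m := [F_i \mid N_m \leq i < N_{m+1}]$ and $d(X, G_m) \geq m$, starting from $N_0 = 0$. By \prettyref{rem:BasicRemarkDegrees} it suffices to find, at stage $m$, a finite-dimensional $F' \subseteq [F_i \mid N_m \leq i < N]$ with $d(X, F') \geq m$ for some $N > N_m$: then $G_m$ and $N_{m+1} := N$ are determined.

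A preliminary observation is that $\sup \{d(X, F) \mid F \in \Subfin(Y_m)\} = \infty$, where $Y_m := [F_i \mid i \geq N_m]$. Indeed, $Y_m$ is $d$-large by \prettyref{lem:DegFinCodim}, being finite-codimensional in $X$; and the tail projection $P_{Y_m} \colon X \to Y_m$ has norm bounded by $C+1$, where $C$ is the constant of the FDD $(F_n)$. So $(\Id_F, P_{Y_m})$ is a morphism $(X, F) \to (Y_m, F)$ of norm at most $C+1$ for each $F \in \Subfin(Y_m)$. The inequality $d(Y_m, F) \leq K_d(C+1, d(X, F))$ then shows that unboundedness of $d(Y_m, \cdot)$ forces unboundedness of $d(X, \cdot)$ over $\Subfin(Y_m)$.

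To convert a large-degree $F \subseteq Y_m$ into a block with large degree, I would truncate. Fix an Auerbach basis $(x_j)_{j=1}^k$ of $F$, write $x_j = \sum_{i \geq N_m} x_{j,i}$ along the FDD, and let $x_j^{(N)} := \sum_{N_m \leq i < N} x_{j,i}$. For $N$ large, the vectors $x_j^{(N)}$ can be made arbitrarily close to the $x_j$, and the small perturbation principle yields that they form a basis of a subspace $F' \subseteq [F_i \mid N_m \leq i < N]$ of dimension $k$ with a $(1+\varepsilon)$-isomorphism $S \colon F \to F'$ sending $x_j$ to $x_j^{(N)}$. Extending $S^{-1} \colon F' \to F$ to the whole of $X$ by $T(x) := x + (S^{-1} - \iota_{F', X})(P_{F'}(x))$, where $P_{F'}$ is any projection onto $F'$ (of norm at most $\sqrt{k}$ by Kadets--Snobar), gives a pair $(S, T)$ that is a morphism $(X, F') \to (X, F)$: direct computation using that the basis is Auerbach yields $\|T - \Id_X\| \leq k^{3/2}(1+\varepsilon)\eta$ when the perturbation magnitude is $\eta$, so that the morphism norm is at most $(1+\varepsilon)(1 + k^{3/2}(1+\varepsilon)\eta)$, which is as close to $1$ as wished provided $\varepsilon$ and $\eta$ are small.

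The main obstacle is the interdependence of these constants, which dictates the order of choices. Given the target $m$, I would first use $\lim_{s \to 1^+} K_d(s, m) = m$ to pick $\delta > 0$ with $K_d(1+\delta, m) \leq m+1$; then find $F \subseteq Y_m$ with $d(X, F) \geq m+2$, which fixes $k = \dim F$ and the Kadets--Snobar bound $\sqrt{k}$; only then choose $\varepsilon$ and the perturbation magnitude $\eta$ small enough that the morphism constructed above has norm at most $1+\delta$; and finally choose $N$ large enough that truncating at $N$ gives perturbations of magnitude $\leq \eta$. The inequality $d(X, F) \leq K_d(1+\delta, d(X, F'))$ then forces $d(X, F') > m$: otherwise $d(X, F) \leq K_d(1+\delta, m) \leq m+1$, contradicting $d(X, F) \geq m+2$. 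Setting $N_{m+1} := N$, one has $d(X, G_m) \geq d(X, F') > m$ by \prettyref{rem:BasicRemarkDegrees}, and the induction continues.
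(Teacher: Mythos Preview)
Your proof is correct but proceeds differently from the paper's. The paper avoids both Kadets--Snobar and the continuity of $K_d$ at $s=1$: given the tail space $X_{k_n} = [F_i \mid i \geqslant k_n]$, it picks $H_n \subseteq X_{k_n}$ with $d(X_{k_n}, H_n) \geqslant n$, and then uses a global automorphism $T_n$ of $X_{k_n}$ with $\|T_n\|\cdot\|T_n^{-1}\| \leqslant 2$ that pushes $H_n$ into a finitely supported block $G_n = [F_i \mid k_n \leqslant i < k_{n+1}]$. The pair $((T_n)_{\restriction H_n},\, T_n^{-1}\circ P_{k_n})$ is then a morphism from $(X, G_n)$ to $(X_{k_n}, H_n)$ of norm at most $2(1+C)$, yielding $n \leqslant K_d(2(1+C), d(X, G_n))$; monotonicity of $K_d$ alone forces $d(X, G_n) \to \infty$. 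By contrast, you truncate an Auerbach basis of a large-degree $F \subseteq Y_m$, extend the resulting isomorphism to all of $X$ via a Kadets--Snobar projection, and exploit $\lim_{s\to 1}K_d(s,m)=m$ to obtain the sharper conclusion $d(X, G_m) > m$ directly. The paper's route is shorter and uses fewer tools; yours is more quantitative and keeps the ambient space $X$ fixed on both sides of the morphism, which is conceptually pleasant.
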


\begin{proof}

Let $C$ be the constant of the FDD $(F_n)$. For each $k \in \N$, let $X_k = [F_l \mid l \geqslant k]$ and let $P_k \colon X \to X_k$ the projection associated to the FDD. We build $(G_n)$ by induction as follows. Suppose that the $G_m$'s have been built for $m < n$, and let $k_n = (\max \, \supp (G_{n - 1})) + 1$ if $n \geqslant 1$, $k_n = 0$ otherwise. The space $X_{k_n}$ is $d$-large, so there exists $H_n \in \Subfin(X_{k_n})$ such that $d(X_{k_n}, H_n) \geqslant n$. Now consider an isomorphism $T_n \colon X_{k_n} \to X_{k_n}$ such that $\|T_n\|\cdot\|T_n^{-1}\| \leqslant 2$ and such that $T_n(H_n) \subseteq [F_k \mid k_n \leqslant k < k_{n + 1}]$ for some $k_{n + 1}$. Let $G_n = [F_k \mid k_n \leqslant k < k_{n + 1}]$. This finishes the construction of $(G_n)$.

\smallskip

To prove that $(G_n)$ is $d$-better, fix $n$ and consider the morphism $((T_n)_{\restriction H_n}, T_n^{-1}\circ P_{k_n})$ from the pair $(X, G_n)$ to the pair $(X_{k_n}, H_n)$. It has norm at most $2(1 + C)$, so $n \leqslant d(X_{k_n}, H_n) \leqslant K_d(2(1 + C), d(X, G_n))$. In particular, for every constant $K$, as soon as $n > K_d(2(1+C), K)$, we have $d(X, G_n) > K$. This shows that $d(X, G_n) \xrightarrow[n \to \infty]{} \infty$.

\end{proof}

As an illustration, note that if $d(F)$ is the dimension of $F$, then any FDD is $d$-good, while a  $d$-better FDD is an FDD where the dimensions of the summands tend to infinity.

\bigskip\bigskip

\section{The first  dichotomy}\label{sec:FirstDichotomy}

In this section, we generalize Gowers' Ramsey-type \prettyref{thm:GowersThm} to D-families. As an application, we prove our first dichotomy (\prettyref{thm:FirstDichoto}), a local version of Gowers' first dichotomy (\prettyref{thm:Gowers1stDichoto}).

\bigskip

\subsection{A local version of Gowers' Ramsey-type theorem}

In this subsection, we fix a Banach space $X$, and a D-family $\H$ of subspaces of $X$. We work in the approximate Gowers space $\G_\H = (\H, S_X, \delta_{\|\cdot\|}, \subseteq, \subseteq^*, \in)$ defined in last section (see \prettyref{cor:DefLocalGowersSpace}). Each time we will mention Gowers' game or the asymptotic game, we will be referring to the games relative to this space. Note that Gowers' game relative to this space is in general different from the original game defined by Gowers. For $Y \in \H$, the game $G_Y$ has the following form:

\smallskip

\begin{tabular}{ccccccc}
\textbf{I} & $Y_0$ & & $Y_1$ & & $\hdots$ & \\
\textbf{II} & & $y_0$ & & $y_1$ & & $\hdots$ 
\end{tabular}

\smallskip

\noindent where the $y_n$'s are elements of $S_X$, and the $Y_n$'s are elements of $\H$, with the constraint that for all $n \in \N$, $Y_n \subseteq Y$ and $y_n \in Y_n$. The outcome is, as usual, the sequence $(y_n)_{n \in \N} \in \left(S_X\right)^\N$.

\smallskip

Our local version of Gowers' \prettyref{thm:GowersThm} is the following:

\begin{thm}\label{thm:LocalGowersThm}

Let $\X \subseteq \left(S_X\right)^\N$ be analytic, let $Y\in \H$, let $\Delta$ be a sequence of positive real numbers and let $\varepsilon > 0$. Then there exists $Z \in \H_{\restriction Y}$ such that:

\begin{itemize}
    \item either $Z$ has an $\H$-good FDD $(G_n)_{n \in \N}$, with constant at most $1+\varepsilon$, and all of whose normalized block-sequences belong to $\X^c$;
    \item or player \II{} has a strategy in $G_Z$ to reach $(\X)_\Delta$.
\end{itemize}

\noindent Moreover:

\begin{itemize}
    \item if $\H = \H_d$ for some degree $d$, then we can even assume that the FDD $(G_n)$ is $d$-better;
    \item if $Y$ comes with a fixed FDD $(F_n)_{n \in \N}$, then we can also assume that $(G_n)$ is a block-FDD of $(F_n)$.
\end{itemize}

\end{thm}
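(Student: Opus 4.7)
The plan is to apply the Abstract Gowers Theorem (\prettyref{thm:AbstractGowers}) in the approximate Gowers space $\G_\H$ (\prettyref{cor:DefLocalGowersSpace}) with the natural system of relatively compact sets $(\mathcal{K}_X, \oplus_X)$ from \prettyref{subsec:ApproxGowersSpaces}, applied to $\X$, $p = Y$, and $\Delta$. This yields $Z_0 \in \H$ with $Z_0 \subseteq Y$ such that either Player \II{} has a strategy in $G_{Z_0}$ to reach $(\X)_\Delta$---in which case $Z := Z_0$ immediately delivers the second alternative---or Player \I{} has a strategy $\tau$ in $SF_{Z_0}$ producing sequences $(K_n)$ with $\mathrm{bs}((K_n)) \subseteq \X^c$. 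All remaining work is in the latter case.

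There I would play Player \II{} against $\tau$, responding to each \I{}-move $Y_n$ with $K_n = S_{F_n}$ for a finite-dimensional $F_n$, chosen so that $(F_n)$ simultaneously forms an FDD with constant at most $1+\varepsilon$ and has span $Z' := [F_n \mid n \in \N]$ belonging to $\H$. Writing $\H = \bigcap_n \U_n$ with the $\U_n$'s decreasing and Ellentuck-open, at stage $n$ I would first shrink to a finite-codimensional $Y_n' \subseteq Y_n$ contained in all previous $Y_k'$ and almost orthogonal to $[F_0, \ldots, F_{n-1}]$ (the standard perturbation controlling the FDD constant). Since $Y_n' \in \H$, the $G_\delta$ character of $\H$ provides $\varepsilon_n > 0$ and a finite-dimensional $E_n \subseteq Y_n'$ with
$$\Big[[F_0, \ldots, F_{n-1}] + E_n, \; [F_0, \ldots, F_{n-1}] + Y_n'\Big]_{\varepsilon_n} \subseteq \U_n,$$
and I set $F_n := E_n$, legal in $SF_{Z_0}$ since $F_n \subseteq Y_n$. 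The nesting of the $Y_k'$ forces $[F_m \mid m \geq n] \subseteq Y_n'$, so $Z'$ lies in the displayed basic neighborhood for every $n$, hence in $\bigcap_n \U_n = \H$.

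Every normalized block-sequence of $(F_n)$ is a block-sequence of $(K_n) = (S_{F_n})$ in the sense of $(\mathcal{K}_X, \oplus_X)$, hence in $\X^c$ by the choice of $\tau$. Applying \prettyref{lem:GoodBlocking} to $(F_n)$ inside $Z'$ yields an $\H$-good blocking $(G_n)$, which inherits the FDD constant and preserves the block-sequence property; thus $Z := Z'$ together with $(G_n)$ delivers the first alternative. For the moreover clauses: when $\H = \H_d$ I would apply \prettyref{lem:VeryGoodBlocking} to further block $(G_n)$ into a $d$-better FDD while preserving the other properties; when $Y$ comes with a fixed FDD $(F_n^0)$, I would refine the inductive choice of $E_n$ via a Bessaga--Pe\l czy\'nski argument inside $Y_n'$ (in the spirit of \prettyref{lem:GoodBlockFDD}) to force each $E_n$ to be a small perturbation of a block-subspace of $(F_n^0)$ supported strictly above the previous supports, then absorb all perturbations into a single near-identity embedding at the end.

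The main difficulty is the simultaneous control of three competing constraints on each $F_n$: legality in $SF_{Z_0}$, FDD-basicity with small constant, and diagonalization toward $\H$. The crucial trick is to keep the witness subspaces $Y_n'$ nested, which traps the tail $[F_m \mid m \geq n]$ inside the basic Ellentuck neighborhood designed to lie within $\U_n$; this is what lets one prove the span is in $\H$ during the $SF$-play and defer genuine $\H$-goodness to a separate blocking step via \prettyref{lem:GoodBlocking}.
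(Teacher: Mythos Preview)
Your treatment of the general case and the degree case is correct and essentially identical to the paper's: apply \prettyref{thm:AbstractGowers} in $\G_\H$, and in the $SF$-alternative let player \II{} play spheres of finite-dimensional pieces chosen to simultaneously witness the Ellentuck $G_\delta$ condition and control the FDD constant, then invoke \prettyref{lem:GoodBlocking} (and \prettyref{lem:VeryGoodBlocking} for degrees). The only cosmetic difference is that the paper folds the ``almost orthogonal'' shrinking into player \I{}'s strategy $\tau$ rather than letting \II{} pass to $Y_n'$, but this is immaterial.

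For the FDD refinement, however, the paper does \emph{not} try to do everything in one pass during the $SF$-game. Instead it first applies the already-proved general case with $\X$ replaced by $\X' = (\X)_{\Delta/2}$ and $\Delta$ by $\Delta/2$, obtaining an $\H$-good FDD $(K_n)$ of some $U \in \H_{\restriction Y}$ whose normalized block-sequences avoid $(\X)_{\Delta/2}$; then it applies \prettyref{lem:GoodBlockFDD} as a black box to produce an $\H$-good block-FDD $(G_n)$ of $(F_n)$ together with a near-identity embedding $T$ into $U$ with $T(G_n) \subseteq [K_m \mid m \geqslant n]$, and after a small further perturbation makes $(T(G_n))$ an honest block-FDD of $(K_n)$. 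The point of the preliminary fattening $\X \rightsquigarrow (\X)_{\Delta/2}$ is precisely the step your sketch glosses over: a normalized block-sequence $(x_i)$ of $(G_n)$ is only $\Delta'$-close to a normalized block-sequence $(y_i)$ of $(K_n)$, so from $(y_i) \in (\X')^c$ one can only conclude $(x_i) \in \big((\X')^c\big)_{\Delta'} \subseteq \X^c$, not $(x_i) \in (\X')^c$. Without this room your ``absorb all perturbations into a single near-identity embedding'' does not by itself transfer membership in $\X^c$. Your one-pass plan can be made to work, but it amounts to reproving \prettyref{lem:GoodBlockFDD} inside the $SF$-game while also carrying the $\Delta/2$ slack; the paper's two-step approach is both cleaner and modular, reusing the general case verbatim.
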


Gowers' \prettyref{thm:GowersThm} is just the special case of the last theorem when $\H = \Subinf(X)$ (which is a D-family, the family of $d$-large subspaces of $X$ for the internal degree $d(F) = \dim(F)$).

\begin{proof}[Proof of \prettyref{thm:LocalGowersThm}]

We start with the general case; the ``moreover'' part will be dealt with separately at the end of the proof. We proceed in the same way as in the proof of \prettyref{thm:GowersThm}: we apply the abstract Gowers' \prettyref{thm:AbstractGowers} to the approximate Gowers space $\G_\H$, endowed with the system of relatively compact sets $(\mathcal{K}_X, \oplus_X)$ defined in \prettyref{subsec:ApproxGowersSpaces} (recall that the elements of $\mathcal{K}_X$ are the unit spheres of finite-dimensional subspaces of $X$, and that $S_F \oplus_{X} S_G = S_{F + G}$). In case we get $Z \in \H_{\restriction Y}$ such that player \II{} has a strategy in $G_Z$ to reach $(\X)_\Delta$, we are done. So we now suppose that there exists $U \in \H_{\restriction Y}$ such that player \I{} has a strategy $\tau$ in $SF_U$ to build a sequence $(K_n)_{n \in \N}$ with $\bs((K_n)_{n \in \N}) \subseteq \X^c$. We can assume that the strategy $\tau$ is such that for every run of the game $SF_U$:

\smallskip

\begin{tabular}{ccccccc}
\textbf{I} & $U_0$ & & $U_1$ & & $\hdots$ & \\
\textbf{II} & & $S_{G_0}$ & & $S_{G_1}$ & & $\hdots$ 
\end{tabular}

\smallskip

\noindent played according to $\tau$, we have $[G_i \mid i < n] \cap U_n = \{0\}$ and the natural projection $[G_i \mid i < n] \oplus U_n \to [G_i \mid i < n]$ has norm at most $1 + \varepsilon$. We build an FDD $(G_n)_{n\in\N}$ of a subspace of $U$, with constant at most $1 + \varepsilon$, such that $[G_n \mid n \in\N] \in\H$, and all of whose normalized block-sequences belong to $\X^c$; by \prettyref{lem:GoodBlocking}, this will be enough to conclude. 

\smallskip

Let $(\U_n)_{n \in \N}$ be a decreasing sequence of Ellentuck-open subsets of $\Sub(X)$ such that $\bigcap_{n \in \N} \U_n = \H$. We describe a run $(U_0, S_{G_0}, U_1, S_{G_1}, \ldots)$ of the game $SF_U$ where \I{} plays according to $\tau$, by describing the moves of \II{}. Suppose that $U_0, S_{G_0}, \ldots, U_{n - 1}, S_{G_{n - 1}}$ have just been played. According to the strategy $\tau$, player \I{} plays $U_n$, a finite-codimensional subspace of $U$. Since $U \in \H$, we have $[G_i \mid i < n] \oplus U_n\in \H \subseteq \U_n$. So we can find a finite-dimensional subspace $G_n \subseteq U_n$ such that $\Big[[G_i \mid i \leqslant n], [G_i \mid i < n] \oplus U_n\Big] \subseteq \U_n$. We make \II{} play $S_{G_n}$, finishing the construction.

\smallskip

Exactly in the same way as in the proof of \prettyref{lem:GoodFDD}, we can prove that $(G_n)_{n \in \N}$ is an FDD of a subspace $Z \in \H_{\restriction U}$, with constant at most $1 + \varepsilon$. Since the game $SF_U$ has been played according to $\tau$, we have that $\bs((S_{G_n})_{n \in \N}) \subseteq \X^c$, which exactly means that every normalized block-sequence of the FDD $(G_n)$ is in $\X^c$.

\smallskip

In the case where $\H = \H_d$ for some degree $d$, then by \prettyref{lem:VeryGoodBlocking}, we can replace the FDD $(G_n)$ with one of its blocking which is $d$-better, and this blocking will still satisfy the conclusion of the theorem.

\smallskip

We now prove the refinement of the theorem in the case where $Y$ has a fixed FDD $(F_n)_{n \in \N}$. Without loss of generality, we assume that the sequence $\Delta = (\Delta_n)_{n \in \N}$ is decreasing, that $\Delta_0 < 1$, and that $\left(1 + \frac{\varepsilon}{2}\right)\left(\frac{1 + \Delta_0}{1 - \Delta_0}\right) \leqslant 1 + \varepsilon$. The general case applied to $\X' = (\X)_{\frac{\Delta}{2}}$ (which is still analytic) and to the sequence $\Delta' = \frac{\Delta}{2}$ gives a $U \in \H_{\restriction Y}$ such that either player \II{} has a strategy in $G_U$ to reach $(\X')_{\Delta'}$, or $U$ has an $\H$-good FDD $(K_n)_{n \in \N}$ with constant at most $1 + \frac{\varepsilon}{2}$ all of whose normalized block-sequences belong to $(\X')^c$. In the first case, we are done, since $(\X')_{\Delta'} \subseteq \X_\Delta$. In the second case, we apply \prettyref{lem:GoodBlockFDD} to $Y_k = Y$, $(F_n^k)_{n \in \N} = (F_n)_{n \in \N}$, $U_k = [K_n \mid n \geqslant k]$ for every $k \in \N$. This gives us a $Z \in \H_{\restriction Y}$ spanned by an $\H$-good block-FDD $(G_n)_{n \in \N}$ of $(F_n)$, and an isomorphic embedding $T \colon Z \to U$ such that for every $n \in \N$, $\left\|(T - \Id_Z)_{\restriction [G_k \mid k \geqslant n]}\right\| < \frac{\Delta_n'}{2}$ and $T(G_n) \subseteq U_n$. Modifying $T$ if necessary, we can even assume that for every $n$, $T(G_n)$ has finite support on the FDD $(K_n)$ (of course, doing such a modification does not necessarily preserve the fact that the FDD $(T(G_n))_{n \in \N}$ is $\H$-good, but this fact will not matter in this proof). Since $T(G_n) \subseteq U_n$ for every $n$, we have that $\lim_{n \to \infty} \min \supp(G_n) = \infty$ (the supports being taken with respect to the FDD $(K_n)$). Thus, extracting a subsequence if necessary, we can assume that $(T(G_n))$ is a block-FDD of $(K_n)$.

\smallskip

We now prove that the FDD $(G_n)$ is as wanted. Recall that $\|T - \Id_Z\| \leqslant \Delta_0$, so $\|T\|\leqslant 1 + \Delta_0$ and $\|T^{-1}\| \leqslant \frac{1}{1 - \Delta_0}$. Since $(T(G_n))$ is a block-FDD of $(K_n)$, which has constant at most $1 + \frac{\varepsilon}{2}$, we deduce that $(T(G_n))$ as well has constant at most $1 + \frac{\varepsilon}{2}$. Hence, $(G_n)$ has constant at most $\left(1 + \frac{\varepsilon}{2}\right)\left(\frac{1 + \Delta_0}{1 - \Delta_0}\right) \leqslant 1 + \varepsilon$, as wanted. Now let $(x_i)_{i \in \N}$ be a normalized block-sequence of $(G_n)$ ; we prove that $(x_i) \in \X^c$. For every $i$, $x_i \in [G_n \mid n \geqslant i]$ so $\|T(x_i) - x_i\| \leqslant \frac{\Delta_i'}{2}$. Hence, letting $y_i = \frac{T(x_i)}{\|T(x_i)\|}$, we have $\|x_i - y_i\| \leqslant \Delta_i'$. Observe that $(y_i)_{i \in \N}$ is a normalized block-sequence of $(T(G_n))$, so of $(K_n)$; hence, it is in $(\X')^c$. Thus, $(x_i)$ is in $\left((\X')^c\right)_{\Delta'}$, which is contained in $\X^c$. This finishes the proof.

\end{proof}

\begin{rem}
The essential difference between \prettyref{thm:LocalGowersThm} and Smythe's local version of Gowers' Ramsey-type theorem proved in \cite{Smythe} is the fact that, in Smythe's theorem, the original Gowers' game appears: player \I{} is allowed to play whatever subspace he wants, not only elements of $\H$. The cost is that the conditions on the family $\H$ are much more restrictive in Smythe's theorem than in our theorem. Thus, it is not clear at all that Smythe's theorem could apply for the families we shall consider (for instance, the family of non-Hilbertian subspaces of a given Banach space).
\end{rem}

\bigskip

\subsection{The dichotomy}

We now want to prove a local version of Gowers' first dichotomy (\prettyref{thm:Gowers1stDichoto}), that is, a similar dichotomy where we moreover ensure that the subspace given as a result will be in a fixed D-family. To do this, we will need local versions of the two possible conclusions. In particular, we will need a weakening of the notion of HI space.

\begin{defin}

Let $X$ be a Banach space and let $\H$ be a D-family of subspaces of $X$. We say that a subspace of $X$ is \emph{$\H$-decomposable} if it is equal to the direct sum of two elements of $\H$. The space $X$ is \emph{hereditarily $\H$-indecomposable}, or \emph{$\H$-HI}, if $X \in \H$ and $X$ contains no $\H$-decomposable subspace.
    
\smallskip

If $d$ is a degree, we call a space $X$ \emph{hereditarily $d$-indecomposable}, or \emph{$d$-HI}, if it is hereditarily $\H_d^X$-indecomposable. In other words, if it is $d$-large and if no subspace of it is a direct sum of two $d$-large subspaces.
\end{defin}

In the case where $\H = \Subinf(X)$, i.e. where $d$ is the dimension, we recover the notion of HI spaces.

\begin{thm}[The first dichotomy]\label{thm:FirstDichoto}

Let $X$ be a Banach space, and let $\H$ be a D-family of subspaces of $X$, containing $X$. Then there exists $Y \in \H$ such that:

\begin{itemize}
    \item either $Y$ has an $\H$-good UFDD;
    \item or $Y$ is hereditarily $\H_{\restriction Y}$-indecomposable.
\end{itemize}

\noindent Moreover, if $X$ comes with a fixed FDD, then in the first case, the UFDD of $Y$ can be taken as a block-FDD of the FDD of $X$.

\end{thm}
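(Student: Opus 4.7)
The plan is to iterate \prettyref{thm:LocalGowersThm} along an increasing family of analytic sets measuring unconditionality and then diagonalize. For each integer $K \geqslant 2$, let $\X_K \subseteq (S_X)^\N$ denote the closed (hence analytic) set of normalized $K$-unconditional basic sequences. I would first use \prettyref{lem:GoodFDD} to obtain an $\H$-good FDD $(F_n^1)$ of a subspace $Y_1 \in \H$---taken as a block-FDD of the prescribed FDD of $X$ in the moreover case. Then, recursively for $K = 2, 3, \ldots$, I would apply \prettyref{thm:LocalGowersThm} inside $Y_{K-1}$ with its FDD $(F_n^{K-1})$, the set $\X_K$, and a sufficiently small tolerance $\Delta_K$. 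Each application returns $Y_K \in \H_{\restriction Y_{K-1}}$ together with one of two alternatives: either $(a_K)$ an $\H$-good block-FDD $(F_n^K)$ of $(F_n^{K-1})$ admitting no $K$-unconditional normalized block-sequence, or $(b_K)$ a strategy $\sigma_K$ for player \II{} in $G_{Y_K}$ reaching $(\X_K)_{\Delta_K}$.

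Case A, if $(a_K)$ holds for every $K$: I would diagonalize via \prettyref{lem:GoodBlockFDD} applied with $(Y_k, (F_n^k)_n, U_k = Y_k)$, obtaining $Y \in \H$ with an $\H$-good FDD $(G_l)$ such that, for every $k$, $(G_l)_{l \geqslant k}$ is a block-FDD of $(F_n^k)$. To prove $Y$ is $\H_{\restriction Y}$-HI, I would assume a decomposition $W = A \oplus B \subseteq Y$ with $A, B \in \H$ and projection norm $M$, then use Bessaga--Pe\l czy\'nski block perturbation against $(G_l)$ (possibly preceded by an auxiliary refinement to render the extracted sequences $(a_n), (b_n)$ uniformly basic via further applications of \prettyref{thm:LocalGowersThm} inside $A$ and $B$) to interleave them into a normalized block-sequence $(c_n)$ of $(G_l)$ that is $K$-unconditional for some $K = K(M)$. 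Since $Y \subseteq^* Y_K$, a tail of $(c_n)$ lies in $Y_K$ and forms a $K$-unconditional normalized block-sequence of $(F_n^K)$, contradicting $(a_K)$.

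Case B, if $(b_K)$ holds for some $K$: I would construct an $\H$-good UFDD inside $Y_K$ by playing $G_{Y_K}$ against $\sigma_K$ while choosing player \I{}'s moves with care. Let $(\U_n)$ be a decreasing sequence of Ellentuck-open subsets of $\Sub(X)$ with $\H = \bigcap_n \U_n$. I would organize the game into macro-steps producing UFDD blocks $G_0, G_1, \ldots$; at macro-step $n$, after $G_0, \ldots, G_{n-1}$ are fixed, I would extract, for each partial trace $s \subseteq \{0, \ldots, n-1\}$ of a potential infinite $A \subseteq \N$, a finite-dimensional Ellentuck witness $H_n^{(s)} \subseteq [F_k^{K-1} : k \geqslant N]$ and a tolerance $\varepsilon_n^{(s)} > 0$ from the openness of $\U_n$ around $[G_i : i \in s] + [F_k^{K-1} : k \geqslant N]$. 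Player \I{} would play a subspace containing every such witness, and several consecutive rounds of $G_{Y_K}$ under $\sigma_K$ would be grouped into the block $G_n$, so that $G_n$ both absorbs the witnesses and (via a Bessaga--Pe\l czy\'nski perturbation ensuring the outcome vectors are blocks of $(F_n^{K-1})$) contributes to a $(K+\varepsilon)$-unconditional block-sequence. The resulting block-FDD $(G_n)$ is an $\H$-good UFDD; in the moreover case, it is automatically a block-FDD of the given FDD of $X$ since $(F_n^1)$ already was.

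The hard part is Case B: coordinating player \II{}'s strategy $\sigma_K$ with the Ellentuck-open diagonalization so as to simultaneously secure $(K+\varepsilon)$-unconditionality and $\H$-goodness of every tail subspace span $[G_n : n \in A]$. This demands the double-indexed bookkeeping of finite partial traces $s$ sketched above, extending the techniques of \prettyref{lem:GoodFDD} and \prettyref{lem:GoodBlockFDD} into a game-theoretic setting. Case A, while requiring the Gowers--Maurey-style interleaving and a technical refinement step to guarantee uniform basicity of $(a_n)$ and $(b_n)$, is otherwise a standard adaptation of the classical first dichotomy argument to the local framework.
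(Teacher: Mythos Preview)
Your approach inverts the two alternatives relative to the paper's proof, and this inversion creates a genuine gap in Case B. The paper applies \prettyref{thm:LocalGowersThm} \emph{once}, to the single $G_\delta$ set $\X$ of sequences $(x_i)$ for which, for every $N$, some coefficient choice makes $\left\|\sum_{i\text{ even}} a_i x_i\right\| > N\left\|\sum_i a_i x_i\right\|$. The point of this choice is that the UFDD conclusion then comes from the \emph{first} alternative of \prettyref{thm:LocalGowersThm}, where the FDD $(F_n)$ is already $\H$-good by construction: absence of block-sequences in $\X$ gives, for each such sequence, a finite bound $N$ on the even projection, and since any infinite--coinfinite $A\subseteq\N$ arises as the set of even indices of a suitable blocking, this suffices (via Cauchy's criterion) for $(F_n)$ to be a UFDD. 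The $\H$-HI conclusion comes from the \emph{second} alternative: given $U,V\in\H_{\restriction Y}$, player \I{} alternates between finite-codimensional subspaces of $U$ and of $V$ (which remain in $\H$ by axiom~1 of \prettyref{defin:D-family}), and the resulting badly conditional outcome violates any direct-sum bound.

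By taking $\X_K$ to be the \emph{good} ($K$-unconditional) sequences, you are forced in Case B to extract the $\H$-good UFDD from player \II{}'s strategy $\sigma_K$, and this cannot be done. In $G_{Y_K}$, player \II{} outputs single vectors $y_i\in Y_i$ determined by $\sigma_K$; player \I{} controls only the ambient subspaces $Y_i\in\H$, not which points of $Y_i$ get selected. There is therefore no mechanism by which the block $G_n=[y_{i_n},\ldots,y_{i_{n+1}-1}]$ can be made to ``absorb'' a prescribed Ellentuck witness $H_n^{(s)}$, nor even a guarantee that $[y_i:i\in\N]\in\H$ --- for instance, when $\H$ is the family of non-Hilbertian subspaces, a normalized sequence picked one vector at a time from non-Hilbertian subspaces may perfectly well span a Hilbertian space. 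Your bookkeeping scheme specifies which open sets $\U_n$ to target but provides no device for forcing $G_n\supseteq H_n^{(s)}$; Gowers' game offers none. (Case A has a separate, smaller issue: a direct sum $A\oplus B$ with projection norm $M$ only controls the even/odd split of an interleaved sequence, not all sign changes, so it does not produce a fully $K$-unconditional block-sequence contradicting $(a_K)$. Redefining $\X_K$ via the even projection alone would repair Case A but would correspondingly weaken what $\sigma_K$ delivers in Case B, so the fix does not propagate.)
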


This is a true dichotomy in the sense that both classes it defines are, in some way, hereditary with respect to $\H$ (every block-FDD of a UFDD is a UFDD, and if $Y$ is hereditarily $\H_{\restriction Y}$-indecomposable, then every subspace $Z\in \H_{\restriction Y}$ is hereditarily $\H_{\restriction Z}$-indecomposable); and these classes are very obviously disjoint, since a subspace $Y$ with an $\H$-good UFDD has continuum-many decompositions as a direct sum of elements of $\H$.

We spell out the version of the dichotomy when the D-family is induced by a degree $d$, taking into account \prettyref{lem:VeryGoodBlocking}.

\begin{thm}[The first dichotomy for degrees]\label{thm:FirstDichotofordegrees}

Let $X$ be a Banach space, and let $d$ be a degree such that $X$ is $d$-large. Then there exists $Y \subseteq X$ a $d$-large subspace which either has a $d$-better UFDD, or
     is hereditarily $d$-indecomposable.


\end{thm}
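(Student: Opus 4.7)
The plan is to derive \prettyref{thm:FirstDichotofordegrees} from \prettyref{thm:FirstDichoto} by specializing to the D-family $\H = \H_d^X$, namely the family of $d$-large subspaces of $X$. By \prettyref{prop:DegDFamily} this is indeed a D-family, and by hypothesis $X \in \H$, so \prettyref{thm:FirstDichoto} applies and yields a subspace $Y \in \H$ (that is, a $d$-large subspace of $X$) satisfying one of the following two alternatives.

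If the second alternative of \prettyref{thm:FirstDichoto} holds, then $Y$ is hereditarily $\H_{\restriction Y}$-indecomposable. But $\H_{\restriction Y} = \H_d^X \cap \Sub(Y) = \H_d^Y$, since $d$-largeness is an absolute property of a Banach space (independent of the ambient space). Hence $Y$ is hereditarily $d$-indecomposable in the sense of the definition preceding \prettyref{thm:FirstDichoto}, and we are in the second alternative of \prettyref{thm:FirstDichotofordegrees}.

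Suppose instead that the first alternative holds: $Y$ has an $\H$-good UFDD, say $(G_n)_{n \in \N}$. By definition this is a $d$-good UFDD, but it need not be $d$-better. To upgrade it, apply \prettyref{lem:VeryGoodBlocking} to the FDD $(G_n)$ of the $d$-large space $Y$: this produces a blocking $(G_n')_{n \in \N}$ of $(G_n)$ which is $d$-better. Since $(G_n)$ is a UFDD, every blocking of $(G_n)$ is still a UFDD (with unconditional constant at most that of $(G_n)$). Thus $(G_n')$ is a $d$-better UFDD, and letting $Y' = [G_n' \mid n \in \N]$ we obtain a $d$-large subspace $Y' \subseteq Y$ with a $d$-better UFDD, placing us in the first alternative of \prettyref{thm:FirstDichotofordegrees}.

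There is no real obstacle here: the whole content of the theorem has already been done in the proof of \prettyref{thm:FirstDichoto} and in the block-upgrading \prettyref{lem:VeryGoodBlocking}. The only verifications needed are the two bookkeeping observations above, namely that $\H_{\restriction Y}^X$-HI coincides with $d$-HI, and that blockings preserve the UFDD property while the lemma simultaneously forces the $d$-better condition.
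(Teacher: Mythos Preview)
Your proposal is correct and matches the paper's own approach: the paper simply states that \prettyref{thm:FirstDichotofordegrees} is the specialization of \prettyref{thm:FirstDichoto} to $\H = \H_d$, ``taking into account \prettyref{lem:VeryGoodBlocking}'', and gives no separate argument. One cosmetic remark: since $(G_n')$ is a \emph{blocking} of the UFDD $(G_n)$ of $Y$, you actually have $Y' = [G_n' \mid n \in \N] = Y$, so there is no need to pass to a further subspace.
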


\begin{proof}[Proof of \prettyref{thm:FirstDichoto}]

 We fix $\Delta=(\Delta_i)_{i \in \N}$ a sequence of positive real numbers satisfying the following property: for every normalized basic sequence $(x_i)_{i \in \N}$ in $X$ with constant at most $2$, and for every normalized sequence $(y_i)_{i \in \N}$ in $X$ such that $\forall i \in \N \; \|x_i - y_i\| \leqslant \Delta_i$, the sequences $(x_i)$ and $(y_i)$ are $2$-equivalent.
 Let $\X$ be the set of sequences $(x_i)_{i \in \N} \in (S_X)^\N$ satisfying the following property: for every $N \in \N$, there exists an eventually null sequence $(a_i)_{i \in \N} \in \R^\N$ such that $\left\|\sum_{i\text{ even}} a_{i}x_{i} \right\| > N \left\|\sum_{i \in \N} a_ix_i \right\|$. The set $\X$ is a $G_\delta$ subset of $(S_X)^\N$. We apply \prettyref{thm:LocalGowersThm} to $X$, to the set $\X$, to the sequence $\Delta$, and to $\varepsilon = 1$.


\smallskip

\paragraph{First case:}\emph{There exists $Y \in \H$ with an $\H$-good FDD $(F_n)_{n \in \N}$ such that no normalized block-sequence of $(F_n)$ belongs to $\X$. Moreover, if $X$ comes with a fixed FDD, then $(F_n)$ is a block-FDD of the FDD of $X$.}

\smallskip

We then show that $(F_n)$ is a UFDD. Let, for every $n$, $y_n \in F_n$ and let $A \subseteq \N$ be infinite and coinfinite, and suppose that $\sum_{n\in\N} y_n$ converges; we show that $\sum_{n \in A}y_n$ converges. Without loss of generality, we can assume that $0\in A$. Consider a sequence $0 = n_0 < n_1 < n_2 < ...$ of integers such that $A = \bigcup_{i\text{ even}} \llbracket n_i, n_{i+1}-1 \rrbracket$. For every $i \in \N$, consider $x_i \in [F_n \mid n_i \leqslant n < n_{i+1}]$ with $\|x_i\| = 1$ and $a_i \in \R$ such that $\sum_{n_i \leqslant n < n_{i+1}} y_n = a_i x_i$. Then $(x_i)_{i \in \N}$ is a normalized block-sequence of $(F_n)$, so does not belong to $\X$. Hence, there exists $N\in\N$ such that for every $k \leqslant l$, we have: $$\left\|\sum_{\substack{k \leqslant i < l\\ i\text{ even}}} a_{i}x_{i} \right\| \leqslant N \left\|\sum_{k \leqslant i < l} a_ix_i \right\|.$$

\smallskip

We show that $\sum_{n \in A}y_n$ converges using Cauchy criterion. Fix $\varepsilon > 0$; there is $n_\varepsilon \in \N$ such that for every $q\geqslant p \geqslant n_\varepsilon$, we have $\left\|\sum_{p \leqslant n < q} y_n\right\| \leqslant \varepsilon$; we can moreover assume that $n_\varepsilon$ is one of the $n_k$'s. Fix $q \geqslant p \geqslant n_\varepsilon$. Fixing $k$ and $l$ such that $n_{k-1} \leqslant p < n_k$ and $n_l \leqslant q < n_{l+1}$, we have:
\begin{eqnarray*}
\left\|\sum_{\substack{p \leqslant n < q\\ n \in A}} y_n\right\| & \leqslant & \left\|\sum_{p \leqslant n < n_k} y_n \right\| + \left\|\sum_{\substack{n_k \leqslant n < n_l\\ n \in A}} y_n\right\| + \left\|\sum_{n_l \leqslant n < q} y_n \right\|\\
&\leqslant& 2\varepsilon + 
\left\|\sum_{\substack{k \leqslant i < l\\ i\text{ even}}} a_{i}x_{i} \right\|\\
& \leqslant& 2 \varepsilon +N \left\|\sum_{k \leqslant i < l} a_ix_i \right\|\\
&=&2 \varepsilon + N\left\|\sum_{n_k \leqslant n < n_l} y_n \right\|\\
&\leqslant & (N+2)\varepsilon,
\end{eqnarray*}
concluding this first case.


\smallskip

\paragraph{Second case:}\emph{There exists $Y \in \H$ such that player \II{} has a strategy in $G_Y$ to reach $(\X)_\Delta$.}

\smallskip


We then show that $Y$ is hereditarily $\H_{\restriction Y}$-indecomposable. Fix $U, V \in \H_{\restriction Y}$ and $N \in \N$. We will build $u \in U$ and $v \in V$ such that $\|u\| > \frac{N}{4} \|u + v\|$, which will be enough to conclude. Consider a run of the game $G_Y$:

\smallskip

\begin{tabular}{ccccccccccc}
\textbf{I} & $Z_0 \cap U$ & & $Z_1 \cap V$ & & $Z_2 \cap U$ & & $Z_3 \cap V$ & & $\hdots$ & \\
\textbf{II} & & $z_0$ & & $z_1$ & & $z_2$ & & $z_3$ & & $\hdots$ 
\end{tabular}

\smallskip

\noindent where \II{} plays using a strategy to reach $(\X)_\Delta$, and where \I{} plays as follows:
\begin{itemize}
    \item if $n$ is even, \I{} plays $Z_n \cap U$ where $Z_n$ is a finite-codimensional subspace of $Y$ such that the natural projection $[z_i \mid i < n] \oplus Z_n \to [z_i \mid i < n]$ has norm at most $2$, and such that $Z_n \subseteq Z_{n-1}$ if $n\geqslant 1$;
    \item if $n$ is odd, \I{} plays $Z_n \cap V$ for $Z_n$ exactly as previously.
\end{itemize}
At the end of the game, player \II{} has built a normalized basic sequence $(z_n)_{n \in \N}$ with constant at most $2$, which is in $(\X)_\Delta$, and such that for $n$ even, $z_n \in U$, and for $n$ odd, $z_n \in V$.

\smallskip

Now choose a sequence $(z_n')_{n \in \N} \in \X$ such that for every $n \in \N$, $\|z'_n - z_n\| \leqslant \Delta_n$. Choose $(a_n)_{n \in \N} \in \R^\N$ eventually null such that $\left\|\sum_{n\text{ even}} a_{n}z_{n}' \right\| > N \left\|\sum_{n \in \N} a_nz_n' \right\|$. By the choice of $\Delta$, the sequences $(z_n)$ and $(z_n')$ are $2$-equivalent, so we have:$$\left\|\sum_{n\text{ even}} a_{n}z_{n} \right\| \geqslant \frac{1}{2} \left\|\sum_{n\text{ even}} a_{n}z_{n}' \right\| > \frac{N}{2} \left\|\sum_{n \in \N} a_nz_n' \right\| \geqslant \frac{N}{4} \left\|\sum_{n \in \N} a_nz_n \right\|.$$ Thus, $u = \sum_{n\text{ even}} a_{n}z_{n}$ and $v = \sum_{n\text{ odd}} a_{n}z_{n}$ satisfy the wanted property.
\end{proof}

\bigskip\bigskip

\section{The second  dichotomy}

In this section, we prove our second dichotomy, a local version of Ferenczi--Rosendal's dichotomy between minimal and tight spaces (\prettyref{thm:MinimalTight}). We also discuss some consequences, in particular concerning ergodicity.

\bigskip

\subsection{The statement of the dichotomy}

In this subsection, we state our second dichotomy. As for the first one, we first need to provide appropriate local versions of the notions of minimality and of tightness. In the whole section, we fix a Banach space $X$, a D-family $\H$ of subspaces of $X$, and a degree $d$.

\begin{defin}

We say that $X$ is \emph{$\H$-minimal} if $X \in \H$ and if $X$ isomorphically embeds into every element of $\H$. If $\H$ is induced by the degree $d$, then we say that $X$ is $d$-minimal.

\end{defin}

So $X$ is $d$-minimal if it is $d$-large and embeds into any of its $d$-large subspaces - again note that $d$-minimality is not a notion of ``smallness".
In particular, if $d$ is the dimension (or equivalently if $\H = \Subinf(X)$), we recover the usual notion of minimality. Also observe that if $X$ is $\H$-minimal, then it is separable; this is for instance a consequence of \prettyref{lem:GoodFDD}.

\begin{defin}

Let $(F_n)_{n \in \N}$ be an FDD of a subspace of $X$.

\begin{enumerate}
  \item The FDD $(F_n)$ is said to be \textit{$\H$-tight}  if every $Y \in \H$  is tight in $(F_n)$.
    
    \item The space $X$ is said to be \textit{$\H$-tight} if $X \in \H$ and if $X$ has an $\H$-tight FDD.
    
\end{enumerate}

If $\H$ is induced by the degree $d$, then we say that the FDD $(F_n)$ is $d$-tight, and that the space $X$ is $d$-tight.

\end{defin}



    


So $X$ is $d$-tight if it is $d$-large and has an FDD in which every $d$-large Banach space is tight. When $d(F)$ is the dimension of $F$, we recover the usual notion of tight FDD.

\smallskip

Note that
$\H$-minimality is a hereditary notion in the sense that if $X$ is $\H$-minimal, then every $Y \in \H$ is $\H_{\restriction Y}$-minimal. The notion of $\H$-tightness is also hereditary in the following sense:

\begin{lem}\label{lem:HereditaryTightness}

Let $(F_n)_{n \in \N}$ be an FDD of a subspace of $X$.

\begin{enumerate}

\item If a Banach space $Y$ is tight in $(F_n)$, then it is also tight in all of its block-FDD's.

\item If $(F_n)$ is $\H$-tight, then all of its block-FDD's are $\H$-tight. In particular, if the FDD $(F_n)$ witnesses that $X$ is $\H$-tight, then every $Y\in\H$ generated by a block-FDD of $(F_n)$ is $\H_{\restriction Y}$-tight.

\end{enumerate}

\end{lem}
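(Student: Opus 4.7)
My plan is to prove part 1 by directly transferring the witnessing sequence of intervals from $(F_n)$ to any block-FDD $(G_n)$, and then to derive part 2 and its consequence as formal corollaries. The core idea is that, since each $G_n$ has finite support in $(F_n)$, the sets $N_i := \{n \in \N : \supp(G_n) \cap I_i \neq \varnothing\}$ are finite intervals of $\N$, and after a suitable extraction these will produce the required intervals $J_0 < J_1 < \ldots$ witnessing tightness of $Y$ in $(G_n)$.

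Concretely, for part 1, I would first dispose of the trivial case in which $Y$ does not isomorphically embed into $[G_n \mid n \in \N]$, where any infinite sequence of intervals of $\N$ works vacuously. Assuming $Y \sqsubseteq [G_n \mid n \in \N]$, I fix intervals $I_0 < I_1 < \ldots$ witnessing tightness of $Y$ in $(F_n)$ and consider the $N_i$ as above; using that the $\supp(G_n)$ are successive finite sets and each $I_i$ is an interval, each $N_i$ is a finite interval of $\N$ (possibly empty). I would then show that infinitely many $N_i$ are nonempty, by contradiction: otherwise a tail of the $I_j$'s misses $\bigcup_n \supp(G_n)$, so $[G_n \mid n \in \N]$, and in particular $Y$, would embed into $[F_m \mid m \notin \bigcup_{j \geqslant j_0} I_j]$ for some $j_0$, contradicting tightness of $Y$ in $(F_n)$. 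Next, I would inductively extract a subsequence $(k_i)$ such that the $N_{k_i}$ are nonempty with $N_{k_0} < N_{k_1} < \ldots$ in the strict order of intervals of $\N$; this is possible because each $N_{k_i}$ is finite and $\min I_j \to \infty$, so at each step one can take $k_{i+1}$ large enough that $\min I_{k_{i+1}}$ exceeds $\max \{\max \supp(G_n) : n \in N_{k_i}\}$. Setting $J_i := N_{k_i}$, the key observation is that for any infinite $B \subseteq \N$ and any $n \notin \bigcup_{i \in B} J_i$, $\supp(G_n)$ misses all $I_{k_i}$ with $i \in B$, so
$$[G_n \mid n \notin \bigcup_{i \in B} J_i] \subseteq [F_m \mid m \notin \bigcup_{i \in B} I_{k_i}],$$
and tightness of $Y$ in $(F_n)$ applied to the infinite set $\{k_i : i \in B\}$ yields the required non-embedding.

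Part 2 follows immediately: if $(F_n)$ is $\H$-tight, then every $Y \in \H$ is tight in $(F_n)$, hence tight in every block-FDD $(G_n)$ by part 1, so $(G_n)$ is $\H$-tight. For the final assertion, if $Y \in \H$ is generated by a block-FDD $(G_n)$ of $(F_n)$, then any $Z \in \H_{\restriction Y} \subseteq \H$ is tight in $(G_n)$ by the above, so the FDD $(G_n)$ witnesses $\H_{\restriction Y}$-tightness of $Y$. The only real subtlety, and the reason for the extraction step, is that consecutive $N_i$'s may share a $G_n$ whose support straddles both $I_i$ and $I_{i+1}$; the inductive extraction ensures the $J_i$'s are strictly successive as intervals of $\N$, as required by Definition \ref{defin:TightFDD}. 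I do not anticipate any genuine obstacle beyond this bookkeeping.
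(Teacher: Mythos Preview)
Your proof is correct and follows essentially the same approach as the paper's: both define the sets $\{n : \supp(G_n)\cap I_i \neq \varnothing\}$, pass to a subsequence of the $I_i$'s so that these sets become nonempty and strictly successive, and then use the inclusion $[G_n \mid n \notin \bigcup_{i\in B} J_i]\subseteq [F_m \mid m \notin \bigcup_{i\in B} I_{k_i}]$ to transfer tightness. The only cosmetic difference is the order of the reductions: the paper first thins the $I_i$'s so that each $\supp(G_m)$ meets at most one of them (which automatically makes the resulting $J_i$'s disjoint), whereas you first show infinitely many $N_i$ are nonempty and then extract to force $N_{k_0}<N_{k_1}<\ldots$ directly.
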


\begin{proof}

We only prove 1., since 2. is an immediate consequence. Let $(G_m)_{m \in \N}$ be a block-FDD of $(F_n)$, and let $I_0 < I_1 < I_2 < \ldots$ be a sequence of nonempty successive intervals witnessing the tightness of $Y$ in $(F_n)$. Observe that every infinite subsequence of $(I_i)$ still witnesses the tightness of $Y$ in $(F_n)$. Thus, without loss of generality, we can assume that for every $m \in \N$, there is at most one $i \in \N$ such that $I_i \cap \supp(G_m) \neq \varnothing$. If there are infinitely many $I_i$'s that intersect no set of the form $\supp(G_m)$, then by tightness, $Y \nsqsubseteq \left[G_m \mid m \in \N\right]$ so $Y$ is tight in $(G_m)$. Otherwise, passing again to a subsequence if necessary, we can assume that for every $i \in \N$, $I_i$ intersects at least one of the $\supp(G_m)$'s. We let, for every $i \in \N$, $J_i = \{m \in \N \mid I_i \cap \supp(G_m) \neq \varnothing\}$. Then the $J_i$'s are nonempty intervals and satisfy $J_0 < J_1 < J_2 < \ldots$; moreover, by construction, for every infinite $A \subseteq \N$ we have $\left[G_m \mid m \notin \bigcup_{i \in A}J_i\right] \subseteq \left[F_n \mid n \notin \bigcup_{i \in A}I_i\right]$, so $Y \nsqsubseteq \left[G_m \mid m \notin \bigcup_{i \in A}J_i\right]$. This shows that $Y$ is tight in $(G_m)$.

\end{proof}


\begin{cor}\label{cor:GoodTightFDD}
If $X$ is $\H$-tight (resp. $d$-tight), then it has an FDD which is $\H$-tight and $\H$-good (resp. $d$-tight and $d$-better).
\end{cor}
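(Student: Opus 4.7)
The plan is to apply one of the blocking results from Section 3 to an FDD witnessing that $X$ is $\H$-tight (respectively $d$-tight), and then invoke part 2 of \prettyref{lem:HereditaryTightness} to transfer the tightness property to the resulting blocking. The key observation is that a blocking is in particular a block-FDD, so the two existing ingredients fit together directly.

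More precisely, for the $\H$-version, let $(F_n)_{n \in \N}$ be an $\H$-tight FDD of $X$ as provided by the hypothesis that $X$ is $\H$-tight. Since $X = [F_n \mid n \in \N] \in \H$, \prettyref{lem:GoodBlocking} yields a blocking $(G_n)_{n \in \N}$ of $(F_n)$ which is $\H$-good. Because $(G_n)$ is a block-FDD of the $\H$-tight FDD $(F_n)$, \prettyref{lem:HereditaryTightness}(2) implies that $(G_n)$ is itself $\H$-tight. Thus $(G_n)$ is an FDD of $X$ that is simultaneously $\H$-good and $\H$-tight, as required.

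For the $d$-version, let $(F_n)$ be a $d$-tight FDD of $X$. Since $X$ is $d$-large (as $X \in \H_d$), \prettyref{lem:VeryGoodBlocking} produces a blocking $(G_n)$ of $(F_n)$ which is $d$-better; recall that every $d$-better FDD is in particular $d$-good. Applying \prettyref{lem:HereditaryTightness}(2) to the D-family $\H_d$, we see that $(G_n)$ inherits $d$-tightness from $(F_n)$, so $(G_n)$ is both $d$-tight and $d$-better.

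I do not anticipate any serious obstacle here: the corollary is essentially a bookkeeping assembly of \prettyref{lem:GoodBlocking}, \prettyref{lem:VeryGoodBlocking}, and the hereditariness of $\H$-tightness under block-FDDs established in \prettyref{lem:HereditaryTightness}. The only small point to be careful about is to use a \emph{blocking} rather than an arbitrary block-FDD when invoking the improvement lemmas, so that the new FDD still spans the whole of $X$ (and hence provides an FDD of $X$ itself, not merely of some subspace).
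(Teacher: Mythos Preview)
Your proof is correct and follows essentially the same approach as the paper: start from an $\H$-tight (resp.\ $d$-tight) FDD, apply \prettyref{lem:GoodBlocking} (resp.\ \prettyref{lem:VeryGoodBlocking}) to obtain an $\H$-good (resp.\ $d$-better) blocking, and then use \prettyref{lem:HereditaryTightness} to retain tightness. Your remark about needing a blocking (rather than an arbitrary block-FDD) so that the resulting FDD still spans all of $X$ is a nice clarification that the paper leaves implicit.
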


\begin{proof}
In the case of a D-family, starting from any $\H$-tight FDD $(F_n)_{n \in \N}$ of $X$, we can find a blocking $(G_n)_{n \in \N}$ of this FDD which is $\H$-good, using \prettyref{lem:GoodBlocking}. The FDD $(G_n)$ is still $\H$-tight, by \prettyref{lem:HereditaryTightness}. In the case of a degree, the proof is the same, using this time \prettyref{lem:VeryGoodBlocking} to pass to a better blocking.
\end{proof}


\begin{thm}[The second dichotomy]\label{thm:2ndDichoto}

Suppose that $X \in \H$. Then $X$ has a subspace $Y \in \H$ such that:

\begin{itemize}
    \item either $Y$ is $\H_{\restriction Y}$-minimal;
    
    \item or $Y$ is $\H_{\restriction Y}$-tight.
\end{itemize}

\noindent Moreover, if $X$ comes with a fixed FDD, then in the second case, the $\H$-tight FDD of $Y$ can be taken as a block-FDD of the FDD of $X$.

\end{thm}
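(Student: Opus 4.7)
The plan is to combine the formalism of Gowers spaces with the adversarial Ramsey principle (\prettyref{thm:AdvRamsey}), following the template of Rosendal's proof of a variant of the classical minimal/tight dichotomy \cite{RosendalAlphaMinimal}. First I would reduce, via \prettyref{lem:GoodFDD}, to the case where $X \in \H$ is equipped with an $\H$-good FDD $(F_n)_{n \in \N}$ with constant close to $1$; when $X$ comes with a fixed FDD, \prettyref{lem:GoodBlocking} permits the same reduction while preserving the block structure. Henceforth all subspaces considered will be block-generated with respect to $(F_n)$, which is harmless by \prettyref{lem:BessagaPelczynski} and \prettyref{lem:HereditaryTightness}.

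Next I would set up a Gowers space $\G$, which will be \emph{non-forgetful} (as the remark preceding \prettyref{thm:AdvRamsey} warned). The subspaces will be enriched versions of $\H$-good block-generated subspaces of $X$; the points will be normalized rational block-vectors, possibly equipped with tags recording the step of construction. The relation $\vartriangleleft$ will depend on the whole finite history of moves, forcing the played vectors to form coherent block sequences, which is precisely what makes the space non-forgetful and enables the coding of embedding attempts. The setup must be carefully tailored so that runs of the games $A_p$ and $B_p$ correspond to attempts to build, or to block, an isomorphic embedding between two block-generated subspaces.

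The core step is to apply \prettyref{thm:AdvRamsey} to the Borel set $\X \subseteq (\Pi^\N)^2$ consisting of pairs $((x_i), (y_i))$ for which the linear map extending $x_i \mapsto y_i$ is, up to a universal constant $K$, an isomorphic embedding of $[x_i \mid i \in \N]$ into $[y_i \mid i \in \N]$. This condition is Borel since it reduces to a countable family of inequalities on finite rational linear combinations. The principle then yields $q \leqslant p$ (with $p$ coding our starting $X$) such that either \I{} reaches $\X$ in $A_q$, or \II{} reaches $\X^c$ in $B_q$. In the first case, I would extract a subspace $Y \in \H_{\restriction q}$ from $q$ and use \I's strategy, combined with an exhaustion of $\H_{\restriction Y}$ obtained from \prettyref{lem:BessagaPelczynski}, to produce inside any given $Z \in \H_{\restriction Y}$ a block-sequence $K$-equivalent to a fixed basis of $Y$; this yields $\H_{\restriction Y}$-minimality. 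In the second case, \II's strategy is converted, by a diagonalization against a countable dense family of candidate embeddings from elements of $\H_{\restriction Y}$, into a blocking $(G_n)$ of $(F_n)$ together with, for each $Z \in \H_{\restriction Y}$, a sequence of intervals $I_0^Z < I_1^Z < \dots$ witnessing the tightness of $Z$ in $(G_n)$.

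The main obstacle is engineering the Gowers space and the Borel set $\X$ so that the two alternatives of \prettyref{thm:AdvRamsey} translate precisely into the geometric alternatives of the dichotomy. The asymmetry between $A_q$ and $B_q$ is essential: in $A_q$, \I{} must commit to finite-codimensional subspaces $q_i \lessapprox p$, which corresponds to reaching every ``large'' region of the ambient space and yields the universal embedding property underlying minimality; in $B_q$, \II{} is constrained by $p_i \lessapprox p$ while \I{} may play arbitrary $q_i \leqslant p$, and these arbitrary $q_i$'s correspond, in the second horn of the dichotomy, to arbitrary target elements of $\H_{\restriction Y}$ that \II{} must block via suitable intervals.
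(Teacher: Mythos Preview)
Your setup (reduction to an $\H$-good FDD, a non-forgetful Gowers space over rational block data, application of \prettyref{thm:AdvRamsey}) matches the paper's, but the way you propose to extract the dichotomy from the adversarial Ramsey principle does not work, and this is precisely where the real content of the proof lies.

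You want the two alternatives of \prettyref{thm:AdvRamsey} to translate \emph{directly} into $\H$-minimality and $\H$-tightness. They do not. Consider your first case: player \I{} has a strategy in $A_q$ to force $[x_i]\hookrightarrow[y_i]$. In $A_q$, \I{} chooses the $x_i$'s inside \II's arbitrary subspaces $p_i\leqslant q$, while \II{} chooses the $y_i$'s inside \I's finite-codimensional $q_i\lessapprox q$. To deduce minimality you would need, for each $Z\in\H_{\restriction Y}$, an embedding of a \emph{fixed} space into $Z$; but neither player controls a fixed target sequence here. If you let \II{} play $p_i\subseteq Z$, then $[x_i]\subseteq Z$ and the embedding goes the wrong way; if you try to make $(y_i)$ be a fixed basis of $Y$, \II{} cannot do this because \I{} dictates the finite-codimensional constraints $q_i$. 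The analogous obstruction appears in the second alternative when you try to read off tightness.

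In the paper, the adversarial Ramsey principle is not the dichotomy itself but a \emph{stabilization device}. One first fixes $\mathscr{F}$ as in your outline; the heart of the argument is then a further combinatorial dichotomy (the paper's \prettyref{prop:2ndDichotoCombi}) stated in terms of the \emph{simpler} games $F_\mathscr{F}$ and $G_\mathscr{F}$, against a \emph{fixed} $\mathscr{F}$-correct sequence $(u_i)$: either for every such $(u_i)$ player \I{} wins $F_\mathscr{F}$ by producing an inequivalent sequence (this yields tightness), or for some $(u_i)$ player \II{} wins $G_\mathscr{F}$ by producing an equivalent sequence (this yields minimality). The passage from the $A$/$B$ outcome to this $F$/$G$ statement is a three-step ``game-splicing'' argument: starting from \II's strategy $\tau$ in $F_\mathscr{F}$ (which exists by determinacy when the first alternative fails), one manufactures a strategy for \II{} in $A_\mathscr{F}$; the adversarial stabilization then transfers this to a strategy $\sigma$ for \II{} in $B_\mathscr{F}$; finally one plays $F_\mathscr{F}$, $B_\mathscr{F}$ and $G_\mathscr{F}$ simultaneously, feeding $\tau$ and $\sigma$ into each other, to obtain \II's strategy in $G_\mathscr{F}$ matching the fixed $(u_i)$. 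It is exactly this splicing that resolves the ``who controls the fixed sequence'' problem that blocks your direct approach. Your proposal is missing this entire layer.
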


This is a true dichotomy: indeed, as we already saw, the notions of $\H$-minimality and $\H$-tightness are hereditary in a certain sense, and obviously an $\H$-tight space cannot be $\H$-minimal.

\smallskip

It is worth spelling out the version of the second dichotomy for degrees:

\begin{thm}[The second dichotomy for degrees]\label{thm:2ndDichotoForDegrees}

Suppose that $X$ is $d$-large. Then $X$ has a $d$-large subspace $Y$ which is either $d$-minimal or $d$-tight.


\end{thm}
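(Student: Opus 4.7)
The plan is to obtain this statement as an immediate specialization of \prettyref{thm:2ndDichoto}, the second dichotomy for general D-families. First, I would set $\H = \H_d^X$, the family of $d$-large subspaces of $X$. By \prettyref{prop:DegDFamily}, $\H$ is a D-family of subspaces of $X$, and by hypothesis $X \in \H$. Thus \prettyref{thm:2ndDichoto} applies and yields a subspace $Y \in \H$ such that either $Y$ is $\H_{\restriction Y}$-minimal or $Y$ is $\H_{\restriction Y}$-tight.

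The key point to verify is then that $\H_{\restriction Y}$ coincides with $\H_d^Y$. By definition, $\H_{\restriction Y} = \H_d^X \cap \Sub(Y)$ consists of those $Z \in \Sub(Y)$ that are $d$-large. The crucial observation is that being $d$-large is an \emph{intrinsic} property of the Banach space $Z$: it asserts that $\sup_{F \in \Subfin(Z)} d(Z, F) = \infty$, which makes no reference to any ambient space. Consequently $\H_{\restriction Y} = \H_d^Y$, and the conclusion $Y$ is $\H_{\restriction Y}$-minimal (respectively $\H_{\restriction Y}$-tight) translates directly, by unwinding the definitions from the start of the section, into $Y$ being $d$-minimal (respectively $d$-tight).

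Since this is purely a translation of the general D-family statement into the language of degrees, there is no genuine obstacle: the substantive content lies entirely in the proof of \prettyref{thm:2ndDichoto}, which is where the formalism of Gowers spaces and adversarial Ramsey theory is actually used. The only point that could conceivably require comment is the identification $\H_{\restriction Y} = \H_d^Y$, but this is transparent from the definition of a degree and of $d$-largeness.
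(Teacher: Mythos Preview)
Your proposal is correct and matches the paper's approach exactly: the paper presents \prettyref{thm:2ndDichotoForDegrees} simply as the specialization of \prettyref{thm:2ndDichoto} to $\H = \H_d^X$, without even writing out a separate proof. Your observation that $\H_{\restriction Y} = \H_d^Y$ because $d$-largeness is intrinsic to the space is the only point needing verification, and it is indeed transparent.
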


In the case where $d(F) = \dim(F)$, we get back \prettyref{thm:MinimalTight}.

\smallskip

The rest of this section is organized as follows. In \prettyref{subsec:Proof2ndDichoto}, we prove \prettyref{thm:2ndDichoto}. Then, in \prettyref{subsec:MinimalTight}, we study the properties of $\H$-minimal and $\H$-tight spaces, and we deduce some consequences of \prettyref{thm:2ndDichoto}.

\bigskip

\subsection{The proof of \prettyref{thm:2ndDichoto}}\label{subsec:Proof2ndDichoto}

This proof is inspired by the proof by Rosendal of a variant of the minimal/tight dichotomy \cite{RosendalAlphaMinimal}. This dichotomy will again be proved using combinatorial methods, however its proof is quite delicate and thus, cannot be done in the formalism of approximate Gowers spaces. We will, instead, use the formalism of Gowers spaces, and work with countable vector spaces instead of Banach spaces.

\smallskip

In the general case, we can reduce to the case where $X$ has an $\H$-good FDD, using \prettyref{lem:GoodFDD}. In the case where $X$ already comes with a fixed FDD, then we can assume that this FDD is $\H$-good, using \prettyref{lem:GoodBlocking}. So, in what follows, we will consider that $X$ comes with a fixed $\H$-good FDD $(E_n)_{n \in \N}$, and we will prove that either $X$ has a subspace $Y$ which is $\H_{\restriction Y}$-minimal, or that $(E_n)$ has an $\H$-tight block-FDD.

\smallskip

Let $C$ be the constant of the FDD $(E_n)$. For every $n \in \N$, let $d_n = \sum_{i < n} \dim(E_n)$ and fix $(e_i)_{d_n \leqslant i < d_{n + 1}}$ a normalized basis of $E_n$. Let $K$ be a countable subfield of $\R$ having the following property: for every eventually null sequence $(x_i)_{i \in \N} \in K^\N$, we have $\left\|\sum_{i \in \N} x_ie_i\right\| \in K$. Such a field can be built in the following way: we fix $K_0 = \mathbb{Q}$, for every $n \in \N$, we let $K_{n+1}$ be the subfield of $\R$ generated by $K_n$ and by all reals of the form $\left\|\sum_{i \in \N} x_ie_i\right\|$, where $(x_i)_{i \in \N}$ is an eventually null sequence of elements of $K_n$, and finally we let $K = \bigcup_{n \in \N} K_n$. In the rest of this subsection, vector spaces on $K$ will be denoted by capital script roman letters, and closed $\R$-vector subspaces of $E$ (of finite or infinite dimension) will be denoted by block roman letters. Let $\mathscr{X}$ be the $K$-vector subspace of $X$ generated by all the $e_i$'s. For $\mathscr{Y}$ a (finite- or infinite-dimensional) $K$-vector subspace of $\mathscr{X}$, we let $\overline{\mathscr{Y}}$ be its closure in $X$. This is an $\R$-vector subspace of $X$, and we have $\overline{\mathscr{X}} = X$. Also let $S_\mathscr{Y}$ be the set of normalized vectors of $\mathscr{Y}$.  Since, for $x \in \mathscr{Y} \setminus \{0\}$, we have $\frac{x}{\|x\|} \in \mathscr{Y}$, we deduce that $S_\mathscr{Y}$ is dense in $S_{\overline{\mathscr{Y}}}$.

\begin{lem}

Let $\mathscr{Y}$ be a $K$-vector subspace of $\mathscr{X}$. Then $\overline{\mathscr{Y}}$ is $\R$-finite-dimensional if and only if $\mathscr{Y}$ is $K$-finite-dimensional, and in this case, their dimensions are equal.

\end{lem}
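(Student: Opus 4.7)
The plan is to argue via a common finite subset of the $e_i$'s containing all ``relevant coordinates'' of any given $K$-finite-dimensional piece of $\mathscr{Y}$, which reduces the claim to the elementary fact that $K$-rank equals $\R$-rank for matrices with entries in $K \subseteq \R$.

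First, suppose $\mathscr{Y}$ is $K$-finite-dimensional with $K$-basis $y_1, \ldots, y_n$. Each $y_j \in \mathscr{X}$ is a $K$-linear combination of only finitely many $e_i$'s, so there exist indices $i_1 < \ldots < i_N$ such that $y_1, \ldots, y_n \in \spa_K(e_{i_1}, \ldots, e_{i_N})$. Identifying this $K$-vector space with $K^N$ via the basis $(e_{i_k})$, the $y_j$'s correspond to vectors in $K^N$ that are $K$-linearly independent; since the rank of a matrix with entries in $K$ does not change when we view it over any extension field, they are also $\R$-linearly independent when viewed in $\R^N \cong \spa_\R(e_{i_1}, \ldots, e_{i_N})$. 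Let $V = \spa_\R(y_1, \ldots, y_n)$. Since $V$ is $\R$-finite-dimensional it is closed in $X$, and $\mathscr{Y} = \spa_K(y_1, \ldots, y_n) \subseteq V$, hence $\overline{\mathscr{Y}} \subseteq V$. Conversely, each $y_j$ lies in $\mathscr{Y} \subseteq \overline{\mathscr{Y}}$ and $\overline{\mathscr{Y}}$ is an $\R$-vector subspace of $X$, so $V \subseteq \overline{\mathscr{Y}}$. Therefore $\overline{\mathscr{Y}} = V$ is $\R$-finite-dimensional of $\R$-dimension $n$.

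For the converse, we argue contrapositively: assume $\mathscr{Y}$ is $K$-infinite-dimensional and pick any sequence $y_1, y_2, \ldots$ of $K$-linearly independent vectors in $\mathscr{Y}$. For each $n$, the argument of the previous paragraph, applied to $\spa_K(y_1, \ldots, y_n)$, yields that $y_1, \ldots, y_n$ are $\R$-linearly independent vectors of $\overline{\mathscr{Y}}$. Hence $\overline{\mathscr{Y}}$ contains arbitrarily large $\R$-linearly independent finite subsets, so is $\R$-infinite-dimensional. Combining both implications yields the equivalence, and the dimension equality was established along the way.

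The only delicate point is the step that $K$-linear independence inside $\mathscr{X}$ forces $\R$-linear independence; this is false for arbitrary families of real vectors (for instance $1$ and $\sqrt{2}$ in $\R$ over $\mathbb{Q}$), but it holds here because the family $(e_i)$ is itself $\R$-linearly independent, which lets us reduce to the classical equality of ranks over $K$ and over $\R$ for a matrix with entries in $K$.
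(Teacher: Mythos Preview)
Your proof is correct and follows essentially the same approach as the paper's: both reduce the question to the fact that a matrix with entries in $K$ has the same rank over $K$ as over $\R$ (the paper phrases this via a nonzero $k\times k$ minor, you via ``rank is preserved under field extension''). Your presentation is slightly more explicit in identifying $\overline{\mathscr{Y}}$ with $\spa_\R(y_1,\ldots,y_n)$ via double inclusion, but the substance is the same.
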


\begin{proof}

Let $(f_0, \ldots, f_{k-1})$ be a $K$-free family in $\mathscr{Y}$. Let $N \in \N$ be such that all the $f_l$'s are in $\spa_K(e_0, \ldots e_{N-1})$, and let $M$ be the matrix of the family $(f_1, \ldots f_k)$ in the family $(e_0, \ldots, e_{N-1})$. Then, on the field $K$, the matrix $M$ has at least one $k \times k$ nonzero minor. But the determinant does not depend on the field, so this is also true on $\R$. Hence, the family $(f_0, \ldots, f_{k-1})$ is $\R$-free. We deduce that $\dim_\R(\overline{\mathscr{Y}}) \geqslant \dim_K(\mathscr{Y})$.

\smallskip

Conversely, if $(f_0, \ldots, f_{k-1})$ is a $K$-generating family in $\mathscr{Y}$, then this is a $\R$-generating family in $\spa_\R(\mathscr{Y})$, which is equal to $\overline{\mathscr{Y}}$ since it is finite-dimensional. So $\dim_\R(\overline{\mathscr{Y}}) \leqslant \dim_K(\mathscr{Y})$.

\end{proof}

All along this subsection, we will use the following notation: if $(\mathscr{U}_i)_{i \in I}$ is a sequence of finite-dimensional vector subspaces of $\mathscr{X}$, we let $[\mathscr{U}_i \mid i \in I]$ be the $K$-vector subspace of $\mathscr{X}$ spanned by the $\mathscr{U}_i$'s. For every $n \in \N$, we let $\mathscr{E}_n$ be the $K$-vector subspace of $E_n$ generated by the $e_i$'s for $d_n \leqslant i < d_{n + 1}$, and we let $\mathscr{E} = (\mathscr{E}_n)_{n \in \N}$. Obviously we have $\overline{\mathscr{E}_n} = E_n$ and $\mathscr{X} = [\mathscr{E}_n \mid n \in \N]$. For $(\mathscr{F}_n)_{n \in \N}$ a sequence of nonzero finite-dimensional $K$-vector subspaces of $\mathscr{X}$ whose sum is a direct sum, we define a \textit{block-FDD} of $(\mathscr{F}_n)$ as a sequence $(\mathscr{G}_m)_{m \in \N}$ of nonzero finite-dimensional $K$-vector subspaces of $\mathscr{X}$ for which there exists a sequence $A_0 < A_1 < \ldots$ of finite sets of integers such that for every $m$, we have $\mathscr{G}_m \subseteq \oplus_{n \in A_m} \mathscr{F}_n$. In what follows, we will only consider block-FDD's of $\mathscr{E}$. A block-FDD $(\mathscr{F}_m)_{m \in \N}$ of $\mathscr{E}$ will often be denoted by the letter $\mathscr{F}$; thus, when we speak about a block-FDD $\mathscr{F}$ without further explanation, it will be supposed that its terms are denoted by $\mathscr{F}_m$, and we will also let $[\mathscr{F}] = [\mathscr{F}_m \mid m \in \N]$. Observe that if $\mathscr{F}$ is a block-FDD of $\mathscr{E}$, then $\left(\overline{\mathscr{F}_m}\right)_{m \in \N}$ is a block-FDD of $(E_n)$. So we will say that $\mathscr{F}$ is \textit{good} if and only if $\left(\overline{\mathscr{F}_m}\right)$ is an $\H$-good block-FDD of $(E_n)$. If $\mathscr{F}$ is a block-FDD of $\mathscr{E}$ and $m_0 \in \N$, we will denote by $\mathscr{F}^{(m_0)}$ the block-FDD $(\mathscr{F}_{m + m_0})_{m \in \N}$. If $\mathscr{F}$ is good, then $\mathscr{F}^{(m_0)}$ is also good.

\smallskip

We now define the Gowers space in which we will work. We let $\mathbb{P}$ be the set of good block-FDD's of $\mathscr{E}$. If $\mathscr{F}, \mathscr{G} \in \mathbb{P}$, we let $\mathscr{F} \leqslant \mathscr{G}$ if $\mathscr{F}$ is a block-FDD of $\mathscr{G}$. We let $\mathscr{F} \leqslant^* \mathscr{G}$ if there exists $m \in \N$ such that $\mathscr{F}^{(m)} \leqslant \mathscr{G}$. We let $\Pi$ be the set of pairs $(\mathscr{U}, x)$ where $\mathscr{U}$ is a nonzero finite-dimensional subspace of $\mathscr{X}$ and $x$ is an element of $\mathscr{X}$. For $\mathscr{F} \in \mathbb{P}$ and a sequence $(\mathscr{U}_0, x_0, \ldots, \mathscr{U}_k, x_k) \in \Seq(\Pi)$, we say that $(\mathscr{U}_0, x_0, \ldots, \mathscr{U}_k, x_k) \vartriangleleft \mathscr{F}$ if $\mathscr{U}_k  \subseteq [\mathscr{F}]$ and $x_k \in [\mathscr{U}_l\mid l \leqslant k]$.

\begin{lem}

$\G = (\mathbb{P}, \Pi, \leqslant, \leqslant^*, \vartriangleleft)$ is a Gowers space.

\end{lem}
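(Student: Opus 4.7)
The plan is to verify each of the five axioms of \prettyref{DefGowersSpaces} in turn, with axioms 1, 4, 5, and 2 being essentially formal and axiom 3 (diagonalization) being the only substantial point. For axiom 1, the shift $m=0$ shows $\leqslant\,\Rightarrow\,\leqslant^*$. For axiom 5, if $\mathscr{F} \leqslant \mathscr{G}$ then $[\mathscr{F}] \subseteq [\mathscr{G}]$, which immediately preserves the condition $\mathscr{U}_k \subseteq [\mathscr{F}]$ inside $\vartriangleleft$. For axiom 4, appending $(\mathscr{F}_0, 0)$ to any $s \in \Pi^{<\N}$ produces an admissible extension. For axiom 2, given $\mathscr{F} \leqslant^* \mathscr{G}$ witnessed by $\mathscr{F}^{(m)} \leqslant \mathscr{G}$, I would take $r := \mathscr{F}^{(m)}$: it is a block-FDD of $\mathscr{F}$ (its terms are terms of $\mathscr{F}$) and of $\mathscr{G}$ by hypothesis, satisfies $\mathscr{F} \leqslant^* r$ via the same shift $m$, and lies in $\mathbb{P}$ because any infinite sub-sequence of the tail $\bigl(\overline{\mathscr{F}_{m+i}}\bigr)_{i \in \N}$ is also an infinite sub-sequence of the $\H$-good FDD $(\overline{\mathscr{F}_n})_{n \in \N}$, hence spans an element of $\H$.

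The main point is axiom 3. Given a $\leqslant$-decreasing sequence $(\mathscr{F}^i)_{i \in \N}$ in $\mathbb{P}$, the strategy is to apply \prettyref{lem:GoodBlockFDD} to the Banach-space FDDs $(F_n^k) := (\overline{\mathscr{F}^k_n})_{n \in \N}$, with $Y_k = U_k := [\overline{\mathscr{F}^k_n} \mid n \in \N]$. All hypotheses of the lemma follow immediately from the choice of the $\mathscr{F}^i$'s in $\mathbb{P}$ and the decreasingness of the sequence: each $U_k$ is in $\H$ by $\H$-goodness of $\mathscr{F}^k$, the $U_k$ are decreasing, and $(F_n^{k+1})$ is a block-FDD of $(F_n^k)$. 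To ensure the output is controlled by the $K$-rational structure, I will invoke the ``moreover'' clause of the lemma with $D_k$ equal to the set of normalized elements of the countable $K$-vector space $[\mathscr{F}^k] = \bigoplus_n \mathscr{F}^k_n$. This choice is admissible: the field $K$ was constructed to contain the norm of every element of $\mathscr{X}$, so $D_k \subseteq \mathscr{X} \cap S_X$; and for any finite $A \subseteq \N$, the intersection $D_k \cap [F_n^k \mid n \in A]$ is the unit sphere of the $K$-subspace $\bigoplus_{n \in A}\mathscr{F}^k_n$, which is dense in $S_{[F_n^k \mid n \in A]}$.

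The lemma then yields an $\H$-good FDD $(G_l)_{l \in \N}$ such that each $G_l$ admits a basis $B_l \subseteq D_l \subseteq [\mathscr{F}^l]$, and such that $(G_l)_{l \geqslant k}$ is a block-FDD of $(F_n^k)_n$ for every $k$. Setting $\mathscr{F}^*_l$ to be the $K$-span of $B_l$, one has $\overline{\mathscr{F}^*_l} = G_l$ and $\mathscr{F}^*_l \subseteq [\mathscr{F}^l]$. To conclude that $\mathscr{F}^* := (\mathscr{F}^*_l)_{l \in \N}$ is a block-FDD of $\mathscr{E}$ and that $\mathscr{F}^{*,(k)} \leqslant \mathscr{F}^k$ for every $k$, I will use a single observation: for any $x \in [\mathscr{F}^k] = \bigoplus_n \mathscr{F}^k_n$, its decomposition along the $\R$-FDD $(\overline{\mathscr{F}^k_n})_n$ coincides with its decomposition along the $K$-direct sum $(\mathscr{F}^k_n)_n$, so the $\R$-FDD support of $G_l$ on $(F_n^k)$ equals the $K$-support of $\mathscr{F}^*_l$ on $(\mathscr{F}^k_n)$, and in particular $\mathscr{F}^*_l \subseteq \bigoplus_{n \in J_l^k}\mathscr{F}^k_n$ with $J_0^k < J_1^k < \cdots$ (for $l \geqslant k$). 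The block structure over $\mathscr{E}$ follows by specializing to $k=0$ and using that $(F_n^0)$ is itself a block-FDD of $(\overline{\mathscr{E}_n})$. The main subtlety I anticipate is precisely this matching of $K$- and $\R$-decompositions, and it is exactly what motivated the closure property built into $K$. Finally, $\mathscr{F}^* \in \mathbb{P}$ is automatic since $\overline{\mathscr{F}^*_l} = G_l$ and $(G_l)_l$ is $\H$-good.
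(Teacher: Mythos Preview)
Your proof is correct and follows essentially the same approach as the paper: for axiom~3 you invoke \prettyref{lem:GoodBlockFDD} with $Y_k = U_k = \overline{[\mathscr{F}^k]}$, $F_n^k = \overline{\mathscr{F}^k_n}$, and $D_k = S_{[\mathscr{F}^k]}$, exactly as the paper does. Your treatment is in fact more explicit than the paper's on two points it handles tersely: the verification of axioms 1, 2, 4, 5, and the passage from the $\R$-block structure of $(G_l)_{l\geqslant k}$ over $(\overline{\mathscr{F}^k_n})_n$ to the $K$-block structure of $(\mathscr{F}^*_l)_{l\geqslant k}$ over $(\mathscr{F}^k_n)_n$ via the coincidence of $K$- and $\R$-supports.
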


\begin{proof}

The only nontrivial thing to verify is that the diagonalization axiom is satisfied. So let $(\mathscr{F}^k)_{k \in \N}$ be a $\leqslant$-decreasing sequence of elements of $\mathbb{P}$. We apply \prettyref{lem:GoodBlockFDD} to $U_k = Y_k = \overline{[\mathscr{F}^k]}$, to $F_n^k = \overline{\mathscr{F}_n^k}$, and to $D_k = S_{[\mathscr{F}^k]}$. We get an $\H$-good FDD $(G_n)_{n \in \N}$ of a subspace of $X$ such that for every $k \in \N$, $(G_{n + k})_{n \in \N}$ is a block-FDD of $(F_n^k)_{n \in \N}$, and such that $G_k$ has a basis in $D_k$. This last condition shows that $G_k$ can be written as $\overline{\mathscr{G}_k}$, where $\mathscr{G}_k$ is a finite-dimensional subspace of $[\mathscr{F}^k]$. Since $(G_{n + k})_{n \in \N}$ is a block-FDD of $(\overline{\mathscr{F}_n^k})_{n \in \N}$, and since all of the $\mathscr{G}_{n + k}$'s, for $n \in \N$, are vector subspaces of $[\mathscr{F}^k]$, we deduce that $(\mathscr{G}_{n + k})_{n \in \N}$ is a block-FDD of $\mathscr{F}^k$. Thus, $\mathscr{G} = (\mathscr{G}_n)_{n \in \N}$ is in $\mathbb{P}$, and we have, for every $k \in \N$, $\mathscr{G} \leqslant^* \mathscr{F}^k$, as wanted.




\end{proof}

From now, we work in the Gowers space $\G$. The asymptotic game $F_\mathscr{F}$, Gowers' game $G_\mathscr{F}$, or the adversarial Gowers' games $A_\mathscr{F}$ and $B_\mathscr{F}$, will always be considered with respect to this space. To save writing, we will make the following abuse of notation: in a play of $F_\mathscr{F}$ or $G_\mathscr{F}$ played as follows:

\smallskip

\begin{tabular}{ccccccc}
\textbf{I} & $\mathscr{F}^0$ & & $\mathscr{F}^1$ & & $\hdots$ & \\
\textbf{II} & & $\mathscr{U}_0, \, x_0$ & & $\mathscr{U}_1, \, x_1$ & & $\hdots$ 
\end{tabular}

\smallskip

\noindent we will consider that the outcome of the game is the sequence $(x_0, x_1, \ldots)$ (according to the definition given in \prettyref{sec:GowersSpaces}, this should be $(\mathscr{U}_0, x_0, \mathscr{U}_1, x_1, \ldots)$). Similarly, in a play of $A_\mathscr{F}$ or $B_\mathscr{F}$ played as follows:

\smallskip

\begin{tabular}{cccccccc}
\textbf{I} & & $\mathscr{U}_0, \, x_0, \,  \mathscr{G}^0$ & & $\mathscr{U}_1, \, x_1, \,  \mathscr{G}^1$ & & $\hdots$ \\
\textbf{II} & $\mathscr{F}^0$ & & $\mathscr{V}_0, \, y_0, \,  \mathscr{F}^1$ & & $\mathscr{V}_1, \, y_1, \,  \mathscr{F}^2$ & & $\hdots$ 
\end{tabular}

\smallskip

\noindent we will consider that the outcome of the game is the pair of sequences $((x_0, x_1, \ldots), \linebreak (y_0, y_1, \ldots))$. Hence, for instance saying that player \II{} has a strategy in $B_\mathscr{F}$ to produce two equivalent sequences means that player \II{} has a strategy to ensure that the sequences $(x_i)_{i \in \N}$ and $(y_i)_{i \in \N}$ produced during the game are equivalent, for the usual notion of equivalence between sequences in a Banach space.

\smallskip

Observe that in this Gowers space, for $\mathscr{F}, \mathscr{G} \in \mathbb{P}$, if $\mathscr{F} \lessapprox \mathscr{G}$, then there exist $m, n \in \N$ such that $\mathscr{F}^{(m)} = \mathscr{G}^{(n)}$.

\smallskip

\begin{lem}\label{lem:AdvRamseyCor}

There exists $\mathscr{F} \in \mathbb{P}$ having the following property: either player \I{} has a strategy in $A_\mathscr{F}$ to produce two inequivalent sequences, or player \II{} has a strategy in $B_\mathscr{F}$ to produce two equivalent sequences.

\end{lem}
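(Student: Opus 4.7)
The plan is to obtain this lemma as a direct application of the Adversarial Ramsey principle (\prettyref{thm:AdvRamsey}) in the Gowers space $\G = (\mathbb{P}, \Pi, \leqslant, \leqslant^*, \vartriangleleft)$ just constructed. The set I would work with is
$$\X = \bigl\{\bigl((\mathscr{U}_i, x_i)_{i \in \N},\, (\mathscr{V}_i, y_i)_{i \in \N}\bigr) \in (\Pi^\N)^2 \,\bigm|\, (x_i)_{i \in \N} \text{ and } (y_i)_{i \in \N} \text{ are inequivalent in } X\bigr\}.$$
Applying \prettyref{thm:AdvRamsey} to $\X$ and to the initial element $\mathscr{E} \in \mathbb{P}$ produces $\mathscr{F} \leqslant \mathscr{E}$ such that either player \I{} has a strategy in $A_{\mathscr{F}}$ to reach $\X$, or player \II{} has a strategy in $B_{\mathscr{F}}$ to reach $\X^c$; by our abuse of notation regarding outcomes, this is exactly the conclusion of the lemma.

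The only real work is to verify that $\X$ is Borel, so that \prettyref{thm:AdvRamsey} applies. Two sequences $(x_i)_{i \in \N}, (y_i)_{i \in \N}$ in $\mathscr{X}$ are equivalent (as sequences in $X$) if and only if there exists $C \geqslant 1$ such that for every finitely supported sequence $(a_i)_{i \in \N}$ of real scalars,
$$\frac{1}{C}\left\|\sum_{i \in \N} a_i y_i\right\| \;\leqslant\; \left\|\sum_{i \in \N} a_i x_i\right\| \;\leqslant\; C\left\|\sum_{i \in \N} a_i y_i\right\|.$$
By continuity of the norm and density of $K$ in $\R$, one may restrict the scalars to lie in $K$ without changing this condition. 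For each fixed integer $C$ and fixed finitely supported $(a_i) \in K^{<\N}$, the above double inequality defines a closed subset of $(\Pi^\N)^2$ (since it depends continuously on finitely many coordinates). Thus the set of equivalent pairs is $F_\sigma$, and $\X$ is $G_\delta$ and in particular Borel, as required.

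I would then just invoke \prettyref{thm:AdvRamsey}: the alternative ``\I{} has a strategy in $A_{\mathscr{F}}$ to reach $\X$'' is, by unwinding the definitions and the abuse of notation for the outcome, precisely ``\I{} has a strategy in $A_{\mathscr{F}}$ to produce two inequivalent sequences'', and symmetrically for the \II{}-side with $B_{\mathscr{F}}$ and $\X^c$. No further estimates are needed. The only mildly subtle point, already handled above, is checking Borelness; there is no combinatorial or analytic obstacle to overcome, as all the difficulty has been pushed into the abstract principle \prettyref{thm:AdvRamsey}. The real work in this section will of course lie in exploiting this lemma afterwards to prove \prettyref{thm:2ndDichoto}.
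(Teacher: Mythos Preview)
Your proposal is correct and follows essentially the same approach as the paper: the paper's proof simply observes that the set of equivalent pairs is $F_\sigma$ in $(\mathscr{X}^\N)^2$ (for the product of the discrete topologies) and then invokes \prettyref{thm:AdvRamsey}. Your argument is just a more detailed version of the same one-line deduction.
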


\begin{proof}

The set of pairs $((x_i)_{i \in \N}, (y_i)_{i \in \N}) \in \left(\mathscr{X}^\N\right)^2$ that are equivalent is an $F_\sigma$ subset of $\left(\mathscr{X}^\N\right)^2$ for the product of the discrete topologies on $\mathscr{X}$. Thus, this result is a direct consequence of \prettyref{thm:AdvRamsey}.

\end{proof}

We now fix an $\mathscr{F} \in \mathbb{P}$ as given by \prettyref{lem:AdvRamseyCor}. We say that a sequence $(u_i)_{i \in \N} \in \mathscr{X}^\N$ is \textit{$\mathscr{F}$-correct} if there exist $\mathscr{G} \leqslant \mathscr{F}$ and a partition of $\N$ into nonempty successive intervals $I_0 < I_1 < \ldots$ such that for every $m \in \N$, the finite sequence $(u_i)_{i \in I_m}$ is a basis of $\mathscr{G}_m$. The next proposition contains the combinatorial content of \prettyref{thm:2ndDichoto}.

\begin{prop}\label{prop:2ndDichotoCombi}

At least one of the following statements is satisfied:

\begin{enumerate}

\item For every $\mathscr{F}$-correct sequence $(u_i)_{i \in \N}$, player \I{} has a strategy in $F_\mathscr{F}$ to build a sequence $(x_i)_{i \in \N}$ that is not equivalent to $(u_i)$;

\item There exists an $\mathscr{F}$-correct sequence $(u_i)_{i \in \N}$ such that player \II{} has a strategy in $G_\mathscr{F}$ to build a sequence $(x_i)_{i \in \N}$ that is equivalent to $(u_i)$.

\end{enumerate}

\end{prop}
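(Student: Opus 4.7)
The plan is to apply Lemma \ref{lem:AdvRamseyCor} to $\mathscr{F}$ and handle its two possible outcomes separately, showing that the first case yields Statement 1 and the second case yields Statement 2.

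For the first case, suppose $\tau$ is a strategy for player I in $A_\mathscr{F}$ producing inequivalent pairs. I would fix an arbitrary $\mathscr{F}$-correct sequence $(u_i)$, witnessed by some $\mathscr{G} \leqslant \mathscr{F}$ and intervals $I_0 < I_1 < \ldots$, and build a strategy for I in $F_\mathscr{F}$ by running a simulated play of $A_\mathscr{F}$. The idea is to identify the simulated $A_\mathscr{F}$ with the real $F_\mathscr{F}$ play as follows: at each step, I in $F_\mathscr{F}$ plays the asymptotic move $\mathscr{F}^i := \mathscr{G}^i_A$ prescribed by $\tau$; II's response $(\mathscr{U}_i, x_i)$ in $F_\mathscr{F}$ is used to drive the construction of II's moves $(\mathscr{V}_i^A, y_i^A)$ in the simulated $A_\mathscr{F}$, arranged so that $y_i^A = u_i$ (setting $\mathscr{V}_i^A$ to contain $u_i$, which is feasible since $\mathscr{G}^i_A \lessapprox \mathscr{F}$ means $[\mathscr{G}^i_A]$ cofinitely contains $[\mathscr{F}]$ and $u_i$ has finite $\mathscr{F}$-support; one may need to pass to a subsequence or delay indexing to ensure $u_i \in [\mathscr{G}^i_A]$). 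The inequivalence guarantee of $\tau$ then gives that $(x_i^A)$, which is forced via a matching of vector moves to coincide with the sequence $(x_i)$ actually played by II in $F_\mathscr{F}$, is inequivalent to $(y_i^A) = (u_i)$. The delicate part here is the double encoding ensuring that II's free choice in $F_\mathscr{F}$ is compatible with $\tau$'s deterministic vector moves; this can be handled by having I restrict II through careful choice of $\mathscr{F}^i$ to a subspace aligned with $\tau$'s outputs.

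For the second case, suppose $\sigma$ is a strategy for II in $B_\mathscr{F}$ producing equivalent pairs. I would simultaneously construct an $\mathscr{F}$-correct sequence $(u_i)$ and a strategy for II in $G_\mathscr{F}$ by simulating a play of $B_\mathscr{F}$ where $\sigma$ plays II and a fabricated I plays $(u_i)$ as its vector sequence. The real I's moves $\mathscr{F}^i$ in $G_\mathscr{F}$ are transmitted as simulated I's big moves $\mathscr{G}^i_B := \mathscr{F}^i$ (which is allowed since $\mathscr{F}^i \leqslant \mathscr{F}$). Simulated I then plays $(\mathscr{U}_i^B, x_i^B) = (Ku_i, u_i)$, requiring $u_i \in [\mathscr{F}^i_B]$, where $\mathscr{F}^i_B \lessapprox \mathscr{F}$ is $\sigma$'s asymptotic move. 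The corresponding $\sigma$-response $(\mathscr{V}_i^B, y_i^B)$ becomes II's actual move $(\mathscr{U}_i, x_i)$ in $G_\mathscr{F}$; the constraints $\mathscr{V}_i^B \subseteq [\mathscr{G}^i_B] = [\mathscr{F}^i]$ and $y_i^B \in [\mathscr{V}_l^B \mid l \leqslant i]$ in $B_\mathscr{F}$ translate directly into the $G_\mathscr{F}$-validity of II's move. Then $\sigma$'s guarantee gives that $(x_i) = (y_i^B)$ is equivalent to $(u_i) = (x_i^B)$.

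The main obstacle will be the construction of a single $\mathscr{F}$-correct sequence $(u_i)$ in the second case that is ``universally playable,'' i.e., satisfies $u_i \in [\mathscr{F}^i_B]$ regardless of the real I's moves in $G_\mathscr{F}$ that ultimately determine $\sigma$'s asymptotic moves. I expect this to be handled by a preliminary diagonal construction: using the diagonalization axiom (axiom 3 of Gowers spaces), one can find a single $\mathscr{F}' \leqslant \mathscr{F}$ that refines all possible $\mathscr{F}^i_B$-tails arising from $\sigma$ in the sense of $\lessapprox$, and then one builds $(u_i)$ as an $\mathscr{F}$-correct sequence adapted to $\mathscr{F}'$ (invoking Lemma \ref{lem:GoodBlockFDD} to ensure $\mathscr{F}$-correctness along the way). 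Alternatively, one may argue that the failure of both statements would lead to a contradiction with Lemma \ref{lem:AdvRamseyCor} by reconstructing a witness to the opposing case, but the direct construction via game simulation is the cleaner route.
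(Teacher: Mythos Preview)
Your proposal has genuine gaps in both cases, and the paper's proof takes a fundamentally different (indirect) route that avoids precisely the obstacles you flag.

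In your Case 1, you want to turn I's strategy $\tau$ in $A_\mathscr{F}$ into a strategy for I in $F_\mathscr{F}$. But in $A_\mathscr{F}$, I's moves include \emph{specific vectors} $x_i^A$ dictated by $\tau$, whereas in $F_\mathscr{F}$ I only plays subspaces $\lessapprox \mathscr{F}$, i.e.\ tails of $\mathscr{F}$, and II is then free to choose any vector in that tail. There is no way to force II's free vector choice to coincide with (or even be equivalent to) $\tau$'s prescribed vector: a tail of $\mathscr{F}$ does not pin down a single point. Your suggested fix, ``restrict II through careful choice of $\mathscr{F}^i$,'' cannot work, because $\mathscr{F}^i \lessapprox \mathscr{F}$ gives you nothing finer than a tail.

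In your Case 2, the difficulty you yourself identify is fatal. The sequence $(u_i)$ must be fixed \emph{before} the $G_\mathscr{F}$ play begins, yet in the simulated $B_\mathscr{F}$ you need $u_i$ (as I's vector) to satisfy $\mathscr{U}_i \subseteq [p_i]$ where $p_i \lessapprox \mathscr{F}$ is $\sigma$'s response, and $\sigma$'s response depends on the entire history, which in turn depends on the real I's arbitrary moves in $G_\mathscr{F}$. There are uncountably many such histories, so the diagonalisation axiom (which handles only countable decreasing sequences) does not produce a single $(u_i)$ adapted to all of them.

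The paper's proof does not attempt either direct implication. Instead it assumes Statement~1 fails, so there is a specific $\mathscr{F}$-correct $(u_i)$ for which I has \emph{no} strategy in $F_\mathscr{F}$; by Borel determinacy of that game (the winning set is $F_\sigma$), II has a strategy $\tau$ in $F_\mathscr{F}$ to build a sequence equivalent to $(u_i)$. This strategy $\tau$ is then used \emph{twice}. First, it yields a strategy for II in $A_\mathscr{F}$ to produce equivalent pairs: II plays $\mathscr{G}$ as her subspace, so I's vectors $x_i$ lie in $[\mathscr{G}]$ and decompose as $x_i=\sum a_i^k u_k$; II runs $\tau$ in an auxiliary $F_\mathscr{F}$ (fed I's asymptotic moves from $A_\mathscr{F}$) to get $z_k$'s equivalent to $(u_k)$, and replies with $y_i=\sum a_i^k z_k$. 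This rules out the first alternative of \prettyref{lem:AdvRamseyCor}, so II has a strategy $\sigma$ in $B_\mathscr{F}$ for equivalent pairs. Second, in the $G_\mathscr{F}$ strategy, II runs $\sigma$ in $B_\mathscr{F}$ and $\tau$ in $F_\mathscr{F}$ in parallel: $\sigma$'s asymptotic moves are fed to the auxiliary $F_\mathscr{F}$, and $\tau$'s outputs $(x_i)$ are the vectors played as I in $B_\mathscr{F}$---these automatically satisfy the constraint $x_i \vartriangleleft p_i$ because that is exactly the rule $\tau$ obeys in $F_\mathscr{F}$. Then $\sigma$'s output $(y_i)$ is II's move in $G_\mathscr{F}$, and one has $(y_i)\sim(x_i)\sim(u_i)$. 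The key missing idea in your proposal is this use of $\tau$ as a bridge that simultaneously certifies equivalence to $(u_i)$ and respects whatever asymptotic constraints the adversarial game imposes.
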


\begin{proof}

We assume that 1. is not satisfied and we prove 2. For the rest of the proof, we fix an $\mathscr{F}$-correct sequence $(u_i)_{i \in \N}$ such that player \I{} has no strategy in $F_\mathscr{F}$ to build a sequence that is not equivalent to $(u_i)$. By the determinacy of this game (for the fundamentals on the theory of determinacy, see \cite{Kechris}, Section 20), player \II{} has a strategy $\tau$ in $F_\mathscr{F}$ to build a sequence which is equivalent to $(u_i)$. By correctness of the sequence $(u_i)$, we can also fix $\mathscr{G} \leqslant \mathscr{F}$ and a partition of $\N$ in nonempty successive intervals $I_0 < I_1 < \ldots$ such that for every $m \in \N$, $(u_i)_{i \in I_m}$ is a basis of $\mathscr{G}_m$.

\smallskip

\paragraph{First step:} \textit{Player \II{} has a strategy in $A_\mathscr{F}$ to build two equivalent sequences.}

\smallskip

We describe this strategy on a play $(\mathscr{G}, \mathscr{U}_0, x_0, \mathscr{F}^0, \mathscr{V}_0, y_0, \mathscr{G}, \ldots)$ of $A_\mathscr{F}$, in which the FDD's played by \II{} will always be equal to $\mathscr{G}$. This game will be played at the same time as an auxiliary play $(\mathscr{H}^0, \mathscr{W}_0, z_0, \mathscr{H}^1, \mathscr{W}_1, z_1, \ldots)$ of $F_\mathscr{F}$ during which player \II{} always plays according to her strategy $\tau$. Actually, the $\mathscr{U}_i$'s played by \I{} in $A_\mathscr{F}$ will not matter at all in this proof, so we will omit them in the notation; the only thing to observe is that for every $i \in \N$, we will necessarily have $x_i \in [\mathscr{G}]$. At the same time as the games are played, a sequence of integers $0 = k_0 < k_1 < \ldots$ will be constructed. The idea is that the turn $i$ of the game $A_\mathscr{F}$ will be played at the same time as the turns $k_i, k_i + 1, \ldots, k_{i + 1} - 1$ of the game $F_\mathscr{F}$. Suppose that we are just before the turn $i$ of the game $A_\mathscr{F}$, so the $x_j$'s, the $\mathscr{F}^j$'s, the $\mathscr{V}_j$'s, and the $y_j$'s have been defined for all $j < i$. Also suppose that the integers $k_j$ have been defined for all $j \leqslant i$, and that we are just before the turn $k_i$ of the game $F_\mathscr{F}$, so the $\mathscr{H}^k$'s, the $\mathscr{W}_k$'s and the $z_k$'s have been played for all $k < k_i$. We represent on the diagram below the turn $i$ of the game $A_\mathscr{F}$, and the turns $k_i, \ldots, k_{i + 1} - 1$ of the game $F_\mathscr{F}$.

\bigskip

\begin{tabular}{ccccccccccc}
              & & \textbf{I}  &          & $\mathscr{F}^i$ &           & $\hdots$ &          & $\mathscr{F}^i$ &          & $\hdots$ \\
$F_\mathscr{F}$ & &             &          &               &           &          &          &               &          &          \\
              & & \textbf{II} & $\hdots$ &               & $\mathscr{W}_{k_i}, \; z_{k_i}$ &          & $\hdots$ &               & $\mathscr{W}_{k_{i + 1} - 1}, \; z_{k_{i + 1} - 1}$ &          \\
\end{tabular}

\bigskip\bigskip

\begin{tabular}{cccccccc}
              & & \textbf{I}  & $\hdots$ &                      & $x_i, \; \mathscr{F}^{i}$ &                                  & $\hdots$ \\
$A_\mathscr{F}$ & &             &          &                      &                       &                                  &       \\
              & & \textbf{II} &          & $\hdots, \; \mathscr{G}$ &                       & $\mathscr{V}_{i}, \; y_i, \; \hdots$ &       \\
\end{tabular}

\bigskip

We now describe how these turns are played. In $A_\mathscr{F}$, the strategy of player \II{} will first consist in playing $\mathscr{G}$. Then player \I{} answers with a vector $x_i \in [\mathscr{G}]$ and an FDD $\mathscr{F}^{i} \lessapprox \mathscr{F}$. Thus, $x_i$ can be decomposed on the basis $(u_k)_{k \in \N}$: we can find $k_{i + 1} \in \N$ and $(a_i^k)_{k < k_{i+1}} \in K^{k_{i + 1}}$ such that $x_i = \sum_{k < k_{i + 1}} a_i^ku_k$. Moreover, we can assume that $k_{i + 1} > k_i$.

\smallskip

Now, during the $k_{i + 1} - k_i$ following turns of the game $F_\mathscr{F}$, we will let player \I{} play $\mathscr{F}^i$ (so we have, for every $k_i \leqslant k < k_{i + 1}$, $\mathscr{H}^k = \mathscr{F}^i$). According to the strategy $\tau$, player \II{} will answer with $\mathscr{W}_{k_i}, \; z_{k_i}, \ldots, \mathscr{W}_{k_{i + 1} - 1}, \; z_{k_{i + 1} - 1}$. We now let $\mathscr{V}_{i} = \mathscr{W}_{k_i} + \ldots + \mathscr{W}_{k_{i + 1} - 1}$, and $y_i = \sum_{k < k_{i + 1}} a_i^kz_k$. Since all the $\mathscr{W}_{k}$'s, for $k_i \leqslant k < k_{i + 1}$ are finite-dimensional subspaces of $[\mathscr{F}^i]$, then $\mathscr{V}_{i}$ is itself a finite-dimensional subspace of $[\mathscr{F}^i]$. And since all the $z_k$'s, for $k_i \leqslant k < k_{i + 1}$, are elements of $\mathscr{W}_{0} + \ldots + \mathscr{W}_{k_{i + 1} - 1} = \mathscr{V}_{0} + \ldots + \mathscr{V}_{i}$, then $y_i$ is itself an element of $\mathscr{V}_{0} + \ldots + \mathscr{V}_{i}$. So we can let \II{} play $\mathscr{V}_{i}$ and $y_i$ in $A_\mathscr{F}$, what finishes the description of the strategy.

\smallskip

The fact that in $F_\mathscr{F}$, player \II{} always plays according to the strategy $\tau$, ensures that the sequences $(u_k)_{k \in \N}$ and $(z_k)_{k \in \N}$ are equivalent. Observe that the sequence $(x_i)_{i \in \N}$ is built from $(u_k)$ in exactly the same way that the sequence $(y_i)_{i \in \N}$ is built from $(z_k)$; so this ensures that $(x_i)_{i \in \N}$ and $(y_i)_{i \in \N}$ are equivalent, concluding this step of the proof.

\smallskip

\paragraph{Second step:} \textit{Player \II{} has a strategy $\sigma$ in $B_\mathscr{F}$ to build two equivalent sequences.}

\smallskip

Indeed, by the first step, \I{} has no strategy in $A_\mathscr{F}$ to build two inequivalent sequences; so the conclusion immediately follows from the choice of $\mathscr{F}$.

\smallskip

\paragraph{Third step:} \textit{Player \II{} has a strategy in $G_\mathscr{F}$ to build a sequence $(y_i)_{i \in \N}$ that is equivalent to $(u_i)$.}

\smallskip

This is the conclusion of the proof. We describe this strategy on a play of $G_\mathscr{F}$ that will be played simultaneously with a play of $B_\mathscr{F}$ where \II{} will play according to her strategy $\sigma$, and a play of $F_\mathscr{F}$ where \II{} will play according to her strategy $\tau$ (for a fixed $i \in \N$, the turns $i$ of all of these three games will be played at the same time). The moves of the players during the turn $i$ of the games are described in the diagram below.

\bigskip

\begin{tabular}{ccccccc}
              & & \textbf{I}  &          & $\mathscr{F}^{i}$ &                       & $\hdots$ \\
$F_\mathscr{F}$ & &             &          &               &                       &          \\
              & & \textbf{II} & $\hdots$ &               & $\mathscr{U}_{i}, \; x_i$ &          \\
\end{tabular}

\bigskip\bigskip

\begin{tabular}{cccccc}
              & & \textbf{I}  &                          & $\mathscr{U}_{i}, \; x_i, \; \mathscr{H}^{i}$ &          \\
$B_\mathscr{F}$ & &             &                          &                                       &          \\
              & & \textbf{II} & $\hdots, \; \mathscr{F}^{i}$ &                                       & $\mathscr{V}_{i}, \; y_i, \; \hdots$ \\
\end{tabular}

\bigskip\bigskip

\begin{tabular}{ccccccc}
              & & \textbf{I}  &          & $\mathscr{H}^{i}$ &                       & $\hdots$ \\
$G_\mathscr{F}$ & &             &          &               &                       &          \\
              & & \textbf{II} & $\hdots$ &               & $\mathscr{V}_{i}, \; y_i$ &          \\
\end{tabular}

\bigskip

We describe these moves more precisely. Suppose that in $G_\mathscr{F}$, player \I{} plays $\mathscr{H}^{i}$. We look at the move $\mathscr{F}^{i}$ made by \II{} in $B_\mathscr{F}$ according to her strategy $\sigma$, and we let \I{} copy this move in $F_\mathscr{F}$. In this game, according to her strategy $\tau$, player \II{} will answer with some $\mathscr{U}_{i}$ and $x_i$. Now, in $B_\mathscr{F}$, we can let \I{} answer with $\mathscr{U}_{i}$, $x_i$ and $\mathscr{H}^{i}$. In this game, according to her strategy $\sigma$, player \II{} answers with some $\mathscr{V}_{i}$ and some $y_i$. Then the strategy of player \II{} in $G_\mathscr{F}$ will consist in answering with $\mathscr{V}_{i}$ and $y_i$.

\smallskip

Let us verify that this strategy is as wanted. The outcome of the game $F_\mathscr{F}$ is the sequence $(x_i)_{i \in \N}$; the use of the strategy $\tau$ by \II{} ensures that this sequence is equivalent to $(u_i)$. The outcome of the game $B_\mathscr{F}$ is the pair of sequences $((x_i)_{i \in \N}, (y_i)_{i \in \N})$; the use by \II{} of her strategy $\sigma$ ensures that these two sequences are equivalent. We deduce that the sequences $(u_i)$ and $(y_i)$ are equivalent, concluding the proof.

\end{proof}

Now let, for every $m \in \N$, $F_m = \overline{\mathscr{F}_m}$. The sequence $(F_m)_{m \in \N}$ is an $\H$-good block-FDD of $(E_n)$ and we can let $Y = [F_m \mid m \in \N]$. By \prettyref{prop:2ndDichotoCombi}, \prettyref{thm:2ndDichoto} will be proved once we have proved the two following lemmas:

\begin{lem}\label{lem:CaracHMin}

Suppose that there exists an $\mathscr{F}$-correct sequence $(u_i)_{i \in \N}$ such that player \II{} has a strategy in $G_\mathscr{F}$ to build a sequence $(x_i)_{i \in \N}$ that is equivalent to $(u_i)$. Let $Z = [u_i \mid i \in \N]$. Then $Z$ is $\H_{\restriction Z}$-minimal.

\end{lem}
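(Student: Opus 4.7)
The plan is to exploit player \II{}'s strategy against a specific play of \I{} that implants, via a Bessaga--Pe\l{}czy\'nski-style argument, a good block-FDD of $\mathscr{F}$ isomorphically inside $W$. First I would check that $Z$ itself belongs to $\H$: since $(u_i)_{i \in I_m}$ is a basis of $\mathscr{G}_m$, one has $\spa_K(u_i \mid i \in \N) = [\mathscr{G}]$, and by $K$-density in $\R$ its $\R$-closure equals $\overline{[\mathscr{G}]}$; the hypothesis $\mathscr{G} \in \mathbb{P}$ means the block-FDD $(\overline{\mathscr{G}_m})$ of $(E_n)$ is $\H$-good, so $Z = \overline{[\mathscr{G}]} \in \H$. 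It remains, given an arbitrary $W \in \H_{\restriction Z}$, to exhibit an isomorphic embedding $Z \to W$.

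The key preparatory step is to apply \prettyref{lem:GoodBlockFDD} with $Y_k = \overline{[\mathscr{F}]}$ and FDD $(\overline{\mathscr{F}_n})_{n \in \N}$, with constant family $U_k = W$ (noting $W \subseteq Z \subseteq \overline{[\mathscr{F}]}$), and with $D_k = S_{\mathscr{X}}$; the density requirement is satisfied because $\bigoplus_{n \in A} \mathscr{F}_n$ is dense in $\bigoplus_{n \in A} \overline{\mathscr{F}_n}$ for every finite $A \subseteq \N$. The lemma returns an $\H$-good block-FDD $(H_k)_{k \in \N}$ of $(\overline{\mathscr{F}_n})$ each of whose terms has a basis in $S_{\mathscr{X}}$, together with an isomorphic embedding $T \colon \overline{[H_k \mid k \in \N]} \to X$ satisfying $T(H_k) \subseteq W$ for every $k$, so the whole image of $T$ lies in $W$. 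The $K$-rational basis condition forces each basis vector of $H_k$ into $\mathscr{X} \cap \bigoplus_{n \in \supp(H_k)} \overline{\mathscr{F}_n} = \bigoplus_{n \in \supp(H_k)} \mathscr{F}_n$, so that $H_k = \overline{\mathscr{H}_k}$ for some finite-dimensional $K$-subspace $\mathscr{H}_k \subseteq [\mathscr{F}]$, and $\mathscr{H} := (\mathscr{H}_k)_{k \in \N}$ is a good block-FDD of $\mathscr{F}$, i.e.\ an element of $\mathbb{P}$ with $\mathscr{H} \leqslant \mathscr{F}$.

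The conclusion then comes by playing a single run of $G_{\mathscr{F}}$ in which \II{} follows her given strategy while \I{} plays $\mathscr{F}^i = \mathscr{H}$ at every turn (a legal move since $\mathscr{H} \leqslant \mathscr{F}$). The rules force $\mathscr{U}_i \subseteq [\mathscr{H}]$ and hence $x_i \in [\mathscr{U}_l \mid l \leqslant i] \subseteq [\mathscr{H}] \subseteq \overline{[\mathscr{H}]}$; by hypothesis the outcome $(x_i)$ is equivalent to $(u_i)$. Applying $T$ produces a sequence $(T(x_i))$ lying in $W$ and, since $T$ is an isomorphism onto its image, equivalent to $(x_i)$ and thus to $(u_i)$; the assignment $u_i \mapsto T(x_i)$ therefore extends to an isomorphic embedding of $Z = [u_i \mid i \in \N]$ into $W$, as required. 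The main subtlety lies in the middle paragraph: one must remain inside the countable $K$-framework so that the block-FDD obtained from \prettyref{lem:GoodBlockFDD} is an honest element of $\mathbb{P}$ rather than merely of its $\R$-analogue, and this is exactly what the ``moreover'' clause of that lemma, applied with $D_k = S_{\mathscr{X}}$, is designed to deliver.
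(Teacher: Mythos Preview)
Your proof is correct and follows essentially the same strategy as the paper. The paper factors your middle paragraph out as a standalone lemma (\prettyref{lem:ReductionToGoodFDDs}, proved via \prettyref{lem:GoodBlockFDD} with $D_k = S_{[\mathscr{F}]}$ rather than $S_{\mathscr{X}}$, which makes the inclusion $\mathscr{H}_k \subseteq [\mathscr{F}]$ immediate), and then plays $G_{\mathscr{F}}$ against the constant move $\mathscr{H}$ and composes with the embedding exactly as you do.
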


\begin{lem}\label{lem:CaracHTight}

Suppose that for every $\mathscr{F}$-correct sequence $(u_i)_{i \in \N}$, player \I{} has a strategy in $F_\mathscr{F}$ to build a sequence $(x_i)_{i \in \N}$ that is not equivalent to $(u_i)$. Then the FDD $(F_i)_{i \in \N}$ is $\H$-tight.

\end{lem}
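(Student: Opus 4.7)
\textbf{Plan for Lemma \ref{lem:CaracHTight}.} I will argue by contraposition and determinacy. Assume $(F_i)_{i \in \N}$ is not $\H$-tight, so there exists $Z \in \H$ such that for every sequence of nonempty successive intervals $I_0 < I_1 < \ldots$, there exists an infinite $A \subseteq \N$ with $Z \sqsubseteq [F_n \mid n \notin \bigcup_{i \in A} I_i]$. I will construct an $\mathscr{F}$-correct sequence $(u_i)_{i \in \N}$ and a winning strategy for Player \II{} in $F_\mathscr{F}$ forcing the outcome $(x_i)$ to be equivalent (in the Banach-space sense) to $(u_i)$. Since the winning condition ``$(x_i)$ is equivalent to $(u_i)$'' is $F_\sigma$ in the outcome space, the game $F_\mathscr{F}$ is determined, and the existence of such a strategy contradicts the hypothesis that Player \I{} has a strategy to build an inequivalent sequence.

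\textbf{Reduction and choice of $(u_i)$.} Fix an embedding of $Z$ into $[F_i \mid i \in \N]$, and apply Bessaga--Pe\l czy\'nski (\prettyref{lem:BessagaPelczynski}), together with the refinement from \prettyref{lem:GoodBlockFDD} and its $D_k$-density clause, to produce an $\H$-good block-FDD $(\overline{\mathscr{H}_m})_{m \in \N}$ of $(F_i)$, where $\mathscr{H} \leqslant \mathscr{F}$, such that the subspace $Z' := \overline{[\mathscr{H}]}$ isomorphically embeds into $Z$. Since $Z' \sqsubseteq Z$ and $Z$ fails to be tight in $(F_i)$, the subspace $Z'$ also fails to be tight in $(F_i)$; replacing $Z$ by $Z'$ we may assume $Z$ itself is of this form. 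Then I take $(u_i)_{i \in \N}$ to enumerate successive bases of the $\mathscr{H}_m$'s, so $(u_i)$ is $\mathscr{F}$-correct (witnessed by $\mathscr{H}$ and the corresponding partition) and spans $Z$.

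\textbf{Strategy for \II{}.} In $F_\mathscr{F}$, Player \I{}'s move $\mathscr{F}^k \lessapprox \mathscr{F}$ restricts all subsequent $\mathscr{U}_k \subseteq [\mathscr{F}^k]$, which amounts to forcing supports (with respect to $\mathscr{F}$) beyond some threshold $n_k$ that depends on the moves played so far. Using the failure of tightness of $Z$ together with \prettyref{lem:Diag} (the $P^+$-like property of the D-family $\H$), I will construct, in parallel with the run of the game, a decreasing sequence $Z =: Z_0 \supseteq Z_1 \supseteq \ldots$ of elements of $\H$ and, at each turn $k$, an isomorphic embedding $T^k \colon Z_k \to \overline{[\mathscr{F}^k]}$ whose image lies in the span of a block-sequence of $(\mathscr{F}_n)_{n \geqslant n_k}$ consistent with previously chosen images. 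More precisely, using Bessaga--Pe\l czy\'nski on the $\mathscr{H}$-block structure of $Z$ and a rational approximation, each $T^k$ will send $u_k$ to a vector $v_k \in \mathscr{X}$ supported on $\mathscr{F}_n$'s with $n \geqslant n_k$, with $\|T^k - T^{k-1}\|$ controlled on the previously played vectors by a rapidly decreasing sequence of tolerances. Then \II{} sets $\mathscr{U}_k := K v_k + (\text{image of a block approximating }T^k \text{ on }\mathscr{H}_k)$ and plays $x_k := v_k$.

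\textbf{Equivalence and the main obstacle.} The equivalence of $(x_k)$ and $(u_k)$ follows from the small perturbation principle for bounded minimal systems stated at the end of Subsection~\ref{subsec:DefNot}, once the tolerances $\|T^k - T^{k-1}\|$ on initial segments are summable. The principal obstacle is precisely this coherence of the embeddings $T^k$ across turns: Player \II{} must commit to the image $v_k$ before knowing Player \I{}'s future tail restrictions, while the non-tightness of $Z$ only provides embeddings \emph{after} the full sequence of intervals has been specified. I plan to resolve this by a diagonal construction: at each turn $k$, apply non-tightness to a sequence of intervals of the form $I_j^{(k)} = [n_{k+j}', n_{k+j+1}')$, where the $n_j'$ are the anticipated minimum thresholds for \I{}'s hypothetical future moves (bounded by $n_{k+1}$ once \I{} plays), extract an infinite $A^{(k)}$ and an embedding whose restriction to the first $k{+}1$ basis vectors $u_0, \ldots, u_k$ agrees, within tolerance $2^{-k}$, with $T^{k-1}$ on $u_0, \ldots, u_{k-1}$. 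This is possible by \prettyref{lem:Diag} applied to the decreasing family of admissible embeddings in $\H$, combined with the usual perturbation lemma to glue finite-dimensional prescribed behavior with tail behavior. Once the $T^k$'s cohere in this sense, the limit strategy produces $(x_k)$ that is a small perturbation of an image of $(u_k)$ under a uniformly bounded isomorphism, hence equivalent to $(u_k)$, completing the contradiction.
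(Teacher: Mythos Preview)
Your reduction to $Z = \overline{[\mathscr{H}]}$ with $\mathscr{H} \leqslant \mathscr{F}$ and the choice of the $\mathscr{F}$-correct sequence $(u_i)$ are fine, and you are right that the game is determined. But the heart of the argument---constructing \II{}'s strategy in $F_\mathscr{F}$ from the non-tightness of $Z$---does not go through as written. Non-tightness says: for every \emph{fixed} sequence $I_0 < I_1 < \ldots$ there exist an infinite $A$ and an embedding of $Z$ into $[F_n \mid n \notin \bigcup_{i \in A} I_i]$. In the game, however, the intervals are determined \emph{adaptively} by \I{}: at turn $k$, \I{} may choose $n_k$ arbitrarily large depending on your previous moves, and there is no bound whatsoever on future $n_{k+1}, n_{k+2}, \ldots$. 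Your ``anticipated minimum thresholds $n_j'$'' therefore do not exist, and applying non-tightness at stage $k$ to any fixed sequence $(I_j^{(k)})_j$ produces an embedding $T^k$ that need not be compatible with the actual interval \I{} plays at stage $k+1$. The coherence step---forcing $T^k$ to agree with $T^{k-1}$ on $u_0,\ldots,u_{k-1}$ up to $2^{-k}$---requires not just \emph{some} embedding but one with prescribed finite-dimensional behaviour, and non-tightness gives no such control. A second, independent problem is that the embedding constant furnished by non-tightness is not uniform over choices of $(I_j)$, so even if you produced a coherent family $(T^k)$ the resulting sequence $(x_k)$ could fail to be equivalent to $(u_i)$.

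The paper avoids both obstacles by arguing in the direct direction rather than the contrapositive. It fixes $Z = \overline{[\mathscr{G}]}$ with $\mathscr{G} \leqslant \mathscr{F}$, fixes $K \geqslant 1$, and uses \I{}'s strategy $\tau$ itself to \emph{define} the intervals $I_0^K < I_1^K < \ldots$: since at any finite stage only finitely many finite plays are relevant (those with supports bounded by $n_k$ and vectors in finite nets), one can choose $n_{k+1}$ large enough to dominate $\tau$'s response to all of them. One then shows that a $K$-embedding of $Z$ avoiding infinitely many $I_i^K$ (including $I_0^K$) would let \II{} play, against $\tau$, an approximate image of $(u_i)$---contradicting that $\tau$ is winning for \I{}. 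A separate second step glues the sequences $(I_i^K)_i$ over all $K$ into a single tightness witness. The key asymmetry is that \I{}'s strategy has finite memory at each stage, which permits the counting argument; the non-tightness hypothesis you start from has no analogous finiteness.
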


We start with the following technical lemma:

\begin{lem}\label{lem:ReductionToGoodFDDs}

For every $U \in \H_{\restriction Y}$, there exists a $\mathscr{G} \leqslant \mathscr{F}$ such that $\overline{[\mathscr{G}]}$ isomorphically embeds into $U$.

\end{lem}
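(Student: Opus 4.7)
My plan is to reduce this to the refined Bessaga--Pełczyński-type selection principle provided by \prettyref{lem:GoodBlockFDD}, tuned so that the resulting block-FDD consists of subspaces of the countable-field scaffold $[\mathscr{F}_n \mid n \in \N]$, and hence defines a genuine element $\mathscr{G} \leqslant \mathscr{F}$ of $\mathbb{P}$.

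Concretely, I would apply \prettyref{lem:GoodBlockFDD} with the constant choices $Y_k = Y$, $F_n^k = F_n = \overline{\mathscr{F}_n}$, $U_k = U$ (decreasing by constancy, all in $\H$ and contained in $Y$), any sequence $\Delta_k > 0$, and the prescribed dense set
\[ D_k \;:=\; S_X \cap \bigoplus_{n \in \N}\mathscr{F}_n. \]
Since each $\mathscr{F}_n$ is dense in $F_n$, for every finite $A \subseteq \N$ the intersection $D_k \cap [F_n \mid n \in A] = S_X \cap \bigoplus_{n \in A} \mathscr{F}_n$ is dense in the unit sphere of $[F_n \mid n \in A]$, so the hypotheses of \prettyref{lem:GoodBlockFDD} are met. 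It yields an $\H$-good block-FDD $(G_k)_{k \in \N}$ of $(F_n)$ such that each $G_k$ admits a basis in $D_k$, together with an isomorphic embedding $T \colon [G_k \mid k \in \N] \to X$ satisfying $T(G_k) \subseteq U$ for every $k$, and in particular $T([G_k \mid k \in \N]) \subseteq U$.

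Let $\mathscr{G}_k$ be the $K$-linear span of the distinguished basis of $G_k$ given by this application; then $\mathscr{G}_k$ is a finite-dimensional $K$-subspace of $\bigoplus_{n \in \N}\mathscr{F}_n$ with $\overline{\mathscr{G}_k} = G_k$. To see that $\mathscr{G} := (\mathscr{G}_k)_{k \in \N}$ is a block-FDD of $\mathscr{F}$, fix successive finite sets $A_0 < A_1 < \ldots$ witnessing that $(G_k)$ is a block-FDD of $(F_n)$, so that $G_k \subseteq \bigoplus_{n \in A_k} F_n$. Any $g \in \mathscr{G}_k$ lies simultaneously in $G_k$ and in $\bigoplus_{n \in \N} \mathscr{F}_n$, so by uniqueness of decomposition in the FDD $(F_n)$ its $\mathscr{F}_n$-components vanish for $n \notin A_k$; this shows $\mathscr{G}_k \subseteq \bigoplus_{n \in A_k}\mathscr{F}_n$, and hence $\mathscr{G} \leqslant \mathscr{F}$. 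Moreover $\mathscr{G}$ is good, since $(\overline{\mathscr{G}_k}) = (G_k)$ is an $\H$-good block-FDD of $(F_n)$, which is itself a block-FDD of $(E_n)$, so $(G_k)$ is an $\H$-good block-FDD of $(E_n)$.

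Finally, $\overline{[\mathscr{G}]} = [G_k \mid k \in \N]$, and $T$ isomorphically embeds this space into $U$, which completes the proof. The argument is essentially routine bookkeeping; the only subtle point — and the reason the full power of \prettyref{lem:GoodBlockFDD}, rather than \prettyref{lem:BessagaPelczynski} alone, is needed — is the ability to prescribe the dense set $D_k$ in which the bases of the $G_k$'s live, which is exactly what lets us descend from the $\R$-subspace $Z$ produced by the selection principle to a $K$-subspace lying in $\mathbb{P}$.
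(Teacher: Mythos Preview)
Your proof is correct and follows essentially the same route as the paper: both apply \prettyref{lem:GoodBlockFDD} with the constant data $Y_k = Y$, $F_n^k = F_n$, $U_k = U$, and $D_k = S_{[\mathscr{F}]}$ (your $S_X \cap \bigoplus_n \mathscr{F}_n$ is exactly $S_{[\mathscr{F}]}$), and then observe that the resulting $G_k$'s, having bases in $D_k$, are closures of $K$-subspaces $\mathscr{G}_k$ forming a good block-FDD of $\mathscr{F}$. You supply more detail than the paper on why $\mathscr{G}_k \subseteq \bigoplus_{n \in A_k}\mathscr{F}_n$, but the argument is the same.
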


\begin{proof}
This is a consequence of \prettyref{lem:GoodBlockFDD}. Indeed, apply it to $Y_k = Y$, to $F_n^k = F_n$, to $U_k = U$, and to $D_k = S_{[\mathscr{F}]}$. Then \prettyref{lem:GoodBlockFDD} gives us a subspace $Z \subseteq Y$ generated by an $\H$-good block-FDD $(G_n)_{n \in \N}$ of $(F_n)$, such that $Z$ can be isomorphically embedded into $U$. Moreover, for every $n \in \N$, $G_n$ has a basis made of elements of $S_{[\mathscr{F}]}$, so $G_n = \overline{\mathscr{G}_n}$ for some finite-dimensional subspace $\mathscr{G}_n$ of $[\mathscr{F}]$. Hence, $\mathscr{G} = (\mathscr{G}_n)_{n \in \N}$ is a good block-FDD of $\mathscr{F}$, and $\overline{[\mathscr{G}]}$ isomorphically embeds into $U$, as wanted.
\end{proof}

\begin{proof}[Proof of \prettyref{lem:CaracHMin}]

By the definition of correctness, we have $Z \in \H$. We want to prove that $Z$ isomorphically embeds into every element of $\H_{\restriction Z}$; by \prettyref{lem:ReductionToGoodFDDs}, it is enough to prove that $Z$ isomorphically embeds into $\overline{[\mathscr{G}]}$ for every $\mathscr{G} \leqslant \mathscr{F}$. For this, consider a play of $G_\mathscr{F}$ where player \I{} always plays $\mathscr{G}$, and \II{} plays using her strategy. The outcome will be a sequence $(x_i)_{i \in \N}$ of elements of $\mathscr{G}$ which is equivalent to $(u_i)$. Thus the mapping $u_i \mapsto x_i$ uniquely extends to an isomorphic embedding $Z \to \overline{[\mathscr{G}]}$.

\end{proof}

\begin{proof}[Proof of \prettyref{lem:CaracHTight}]

By \prettyref{lem:ReductionToGoodFDDs}, it is enough to prove that every subspace of the form $\overline{[\mathscr{G}]}$, for $\mathscr{G} \leqslant \mathscr{F}$, is tight in $(F_n)$. So we fix such a $\mathscr{G}$ and we let $Z = \overline{[\mathscr{G}]}$.

\smallskip

\paragraph{First step:}\textit{ For every $K \geqslant 1$, there exists an infinite sequence of nonempty intervals of integers  $I_0^K < I_1^K < \ldots$ such that for every infinite $A \subseteq \N$ with $0 \in A$, we have $Z \nsqsubseteq_K [F_n \mid n \notin \bigcup_{k \in A} I_k^K]$.}

\smallskip

We let, for every $n \in \N$, $d_n = \sum_{m < n} \dim{\mathscr{G}_m}$, and we fix a normalized basis \linebreak $(u_i)_{d_n \leqslant i < d_{n + 1}}$ of $\mathscr{G}_n$ that is also a $2$-bounded minimal system (see \prettyref{subsec:DefNot}); this can be done by taking, first, an Auerbach basis of $\overline{\mathscr{G}_n}$, and then a small perturbation of it. The sequence $(u_i)_{i \in \N}$ we just built is $\mathscr{F}$-correct and is a  $4C$-bounded minimal system. Thus, we can fix a strategy $\tau$ for player \I{} in $F_\mathscr{F}$ to build a sequence $(x_i)_{i \in \N}$ that is not equivalent to $(u_i)$. In the game $F_\mathscr{F}$, we will consider that player \II{} is allowed to play against the rules, but immediately loses if she does; hence, we can consider that the strategy $\tau$ is a mapping defined on the whole set $\Pi^{< \N}$ of finite sequences of elements of $\Pi$. For every such sequence $s$, $\tau(s)$ is an element of $\mathbb{P}$ such that $\tau(s) \lessapprox \mathscr{F}$; hence without loss of generality, we can assume that $\tau(s) = \mathscr{F}^{(\ttau(s))}$, for some $\ttau(s) \in \N$. This defines a mapping $\ttau \colon \Pi^{< \N} \to \N$.

\smallskip

Let $R = \{x \in X \mid 1 \leqslant \|x\| \leqslant K\}$. Let $\delta > 0$ be having the following property: for every $4CK^2$-bounded minimal system $(x_i)_{i \in I}$ and for every family $(y_i)_{i \in I}$ in $X$, if: $$\sum_{i \in I}\frac{\|x_i - y_i\|}{\|x_i\|} \leqslant \delta,$$ then the families $(x_i)$ and $(y_i)$ are equivalent. For every finite-dimensional subspace $\mathscr{U}$ of $[\mathscr{F}]$ and for every $i \in \N$, we let $\mathfrak{N}_i(\mathscr{U})$ be a finite $(2^{-(i+2)}\delta)$-net in $\mathscr{U} \cap R$. Given $n \in \N$, we say that a sequence $(\mathscr{U}_0, x_0, \ldots \mathscr{U}_{i-1}, x_{i-1}) \in \Pi^{< \N}$ is \textit{$n$-small} if it satisfies the following properties:

\begin{itemize}
    
    \item there exists a sequence of successive nonempty intervals of integers $J_0 < \ldots < J_{i-1} < n$ such that for every $j < i$, $ \mathscr{U}_j = [\mathscr{F}_m \mid m \in J_j]$;
    
    \item for every $j < i$, we have $x_j \in \mathfrak{N}_j([\mathscr{U}_k \mid k \leqslant j])$.
    
\end{itemize}

\noindent For $n$ fixed, there are only finitely many $n$-small sequences. Hence we can define a sequence $(n_k)_{k \in \N}$ of integers in the following way: let $n_0 = 0$, and for $k \in \N$, choose $n_{k + 1} > n_k$ such that for every $n_k$-small sequence $s \in \Pi^{< \N}$, we have $n_{k+1} \geqslant \ttau(s)$. We now let, for every $k \in \N$, $I_k^K = \llbracket n_k, n_{k + 1} - 1\rrbracket$. We show that the sequence of intervals $I_0^K < I_1^K < \ldots$ is as wanted.

\smallskip

Suppose not. Then there exists an infinite $A \subseteq \N$ with $0 \in A$, and there exists an isomorphic embedding $T \colon Z \to [F_n \mid n \notin \bigcup_{k \in A} I_k^K]$ such that $\|T^{-1}\| = 1$ and $\|T\| \leqslant K$. In particular, the sequence $(T(u_i))_{i \in \N}$ is $K$-equivalent to $(u_i)$, so it is a $4CK^2$-bounded minimal system. We also have that, for every $i \in \N$, $1 \leqslant \|T(u_i)\| \leqslant K$. For every $i \in \N$, we fix $y_i \in [\mathscr{F}_n \mid n \notin \bigcup_{k \in A} I_k^K] \cap R$ such that $\|y_i - T(u_i)\| \leqslant 2^{-(i+2)}\delta$. Since $A$ is infinite, we can find $k_{i+1} \in A$ such that $\supp(y_i) < I_{k_{i+1}}^K$ (here, the support is taken with respect to the FDD $\mathscr{F}$). We can also let $k_0 = 0$; hence, we defined a sequence $(k_i)_{i \in \N}$ of elements of $A$. We can even assume that for every $i$, we have $k_{i + 1} \geqslant k_i + 2$. We let, for every $i \in \N$, $J_i = \llbracket n_{k_i + 1}, n_{k_{i+1}} - 1 \rrbracket$, and $\mathscr{U}_i = [\mathscr{F}_n \mid n \in J_i]$. Hence, we have a partition of $\N$ into an infinite sequence of nonempty successive intervals: $I_{k_0}^K < J_0 < I_{k_1}^K < J_1 < \ldots$. Since all the $k_i$'s are in $A$, we have that $[\mathscr{F}_n \mid n \notin \bigcup_{k \in A} I_k^K] \subseteq [\mathscr{F}_n \mid n \in \bigcup_{i \in \N}J_i]$; so all the $y_i$'s are in $[\mathscr{F}_n \mid n \in \bigcup_{i \in \N}J_i]$. Thus, for every $i \in \N$, we have $y_i \in [\mathscr{F}_n \mid n \in \bigcup_{j \leqslant i} J_j] = [\mathscr{U}_j \mid j \leqslant i]$. Hence, we can find $x_i \in \mathfrak{N}_i([\mathscr{U}_j \mid j \leqslant i])$ satisfying $\|x_i - y_i\| \leqslant 2^{-(i+2)}\delta$. In particular, $\|x_i - T(u_i)\| \leqslant 2^{-(i+1)}\delta$. So we have:
$$\sum_{i \in \N} \frac{\|x_i - T(u_i)\|}{\|T(u_i)\|} \leqslant \sum_{i \in \N} \|x_i - T(u_i)\| \leqslant \delta,$$
so since $(T(u_i))$ is a $4CK^2$-bounded minimal system, and by the choice of $\delta$, we deduce that the sequences $(T(u_i))$ and $(x_i)$ are equivalent. In particular, the sequences $(u_i)$ and $(x_i)$ are equivalent.

\smallskip

Towards a contradiction, we now prove that $(u_i)$ and $(x_i)$ are \textit{not} equivalent. For this, we first observe that for every $i \in \N$, the sequence $(\mathscr{U}_0, x_0, \ldots \mathscr{U}_{i-1}, x_{i-1})$ is $n_{k_i}$-small. Thus, letting $p_i = \ttau(\mathscr{U}_0, x_0, \ldots \mathscr{U}_{i-1}, x_{i-1})$, we deduce that $p_i \leqslant n_{k_i +1} = \min J_i$. In particular, $\mathscr{U}_i \subseteq [\mathscr{F}^{(p_i)}]$. Since, moreover, $x_i \in [\mathscr{U}_j \mid j \leqslant i]$, we deduce that in the following play of $F_\mathscr{F}$:

\smallskip

\begin{tabular}{ccccccc}
\textbf{I} & $\mathscr{F}^{(p_0)}$ & & $\mathscr{F}^{(p_1)}$ & & $\hdots$ & \\
\textbf{II} & & $\mathscr{U}_0, \, x_0$ & & $\mathscr{U}_1, \, x_1$ & & $\hdots$ 
\end{tabular}

\smallskip

\noindent player \II{} always respects the rules. Since, moreover, player \I{} plays according his strategy $\tau$, we deduce that he wins the game and that the outcome $(x_i)$ is not equivalent to $(u_i)$. This is a contradiction.

\smallskip

\paragraph{Second step:}\textit{$Z$ is tight in $(F_n)$.}

\smallskip

This is the conclusion of the proof. We keep the sequences of intervals $(I_i^K)_{i \in \N}$ built as a result of the previous step. %
We recall the following classical result: for every $d \in \N$, there exists a constant $c(d) \geqslant 1$ such that for every Banach space $U$ and for every two subspaces $V, W \subseteq U$ both having codimension $d$, $V$ and $W$ are $c(d)$-isomorphic (see \cite{FerencziRosendalOnTheNumber}, Lemma 3) -- incidentally, an upper bound $c(d) \leq 
4d(1+\sqrt{d})^2$ may be obtained, through the fact that any $d$-codimensional subspace is $(\sqrt{d}+1+\varepsilon$)-complemented for any $\varepsilon>0$ (a consequence of local reflexivity and the Kadets-Snobar theorem, Theorem 12.1.6 in \cite{kalton}) and John's result that all $d$-dimensional spaces are $\sqrt{d}$-isomorphic to $\ell_2^d$ (Theorem 12.1.4 in \cite{kalton}).

 We build a sequence $I_1 < I_2 < \ldots$ of nonempty successive intervals of integers in the following way. All the $I_l$'s, for $l < k$, being defined, we can choose $I_k$ such that:

\begin{itemize}

\item for every positive integer $N \leqslant k$, $I_k$ contains at least one interval of the sequence $(I_i^N)_{i \in \N}$;

\item $\max(I_k) \geqslant d_k + \max(\max(I_0^{N_k}), \min(I_k))$, where $d_k = \dim([F_n \mid n < \min(I_k)])$ and $N_k = \lceil kc(d_k) \rceil$.

\end{itemize}

\noindent We show that the sequence $(I_k)_{k \geqslant 1}$ witnesses the tightness of $Z$ in $(F_n)$.

\begin{claim}

For every infinite $A \subseteq \N \setminus \{0\}$ and for every $k_0 \in A$, we have:
$$\left[F_n \left|\,  n \notin \bigcup_{k \in A} I_k\right.\right] \sqsubseteq_{c(d_{k_0})} \left[F_n \left|\, n \notin I_0^{N_{k_0}} \cup \left(\bigcup_{\substack{k \in A \\ k > k_0}} I_k\right)\right.\right].$$

\end{claim}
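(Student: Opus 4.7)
My plan is to embed $Y_L := [F_n \mid n \notin \bigcup_{k \in A} I_k]$ into $Y_R := [F_n \mid n \notin I_0^{N_{k_0}} \cup \bigcup_{k \in A, k > k_0} I_k]$ by finding a common ambient Banach space $W'$ inside $X$ in which both $Y_L$ and a suitable subspace $\tilde V \subseteq Y_R$ appear as codimension-$d_1$ subspaces, for some integer $d_1 \leqslant d_{k_0}$. Applying the recalled codimension-isomorphism lemma inside $W'$ will produce a $c(d_1)$-isomorphism $T \colon Y_L \to \tilde V$, and composing $T$ with the inclusion $\tilde V \hookrightarrow Y_R$ will yield an embedding of distortion at most $c(d_{k_0})$, using the monotonicity of $c$ (clear from the explicit bound $c(d) \leqslant 4d(1+\sqrt{d})^2$).

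The key combinatorial step is the following. Write $L := \N \setminus \bigcup_{k \in A} I_k$, $R := \N \setminus (I_0^{N_{k_0}} \cup \bigcup_{k \in A, k > k_0} I_k)$, and set $J_{k_0} := I_{k_0} \cap (\max(I_0^{N_{k_0}}), \infty)$. The second condition built into the construction of $I_{k_0}$ immediately yields $\max(I_0^{N_{k_0}}) < \max(I_{k_0})$ and $|J_{k_0}| \geqslant d_{k_0}$. I will then check that $J_{k_0} \subseteq R$, so that $\dim [F_n \mid n \in J_{k_0}] \geqslant d_{k_0}$ lies inside $Y_R$; and, by a brief case analysis on whether $I_0^{N_{k_0}}$ is disjoint from $I_{k_0}$ or overlaps it, that $L \setminus R \subseteq I_0^{N_{k_0}} \setminus I_{k_0} \subseteq [0, \min(I_{k_0}) - 1]$. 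This last inclusion is what bounds $d_1 := \dim [F_n \mid n \in L \setminus R]$ by $d_{k_0}$, and is the real content of the claim.

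Once these bounds are in hand, the construction of $W'$ and $\tilde V$ is routine. I pick any $d_1$-dimensional subspace $V_0 \subseteq [F_n \mid n \in J_{k_0}]$ (possible since $\dim[F_n \mid n \in J_{k_0}] \geqslant d_{k_0} \geqslant d_1$). Since $L$ and $J_{k_0}$ are disjoint sets of FDD-indices, $Y_L \cap V_0 = \{0\}$; as $V_0$ is finite-dimensional, $W' := Y_L + V_0$ is closed and is the topological direct sum $Y_L \oplus V_0$, so that $Y_L$ has codimension $d_1$ in $W'$. Setting $\tilde V := [F_n \mid n \in L \cap R] + V_0$, both summands sit inside $Y_R$, hence $\tilde V \subseteq Y_R$. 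From the FDD-decomposition $W' = [F_n \mid n \in L \cap R] \oplus [F_n \mid n \in L \setminus R] \oplus V_0$, the subspace $\tilde V$ is the complement of $[F_n \mid n \in L \setminus R]$, and therefore also has codimension $d_1$ in $W'$. Applying the codimension-isomorphism lemma inside $W'$ concludes.

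The principal obstacle is the combinatorial bookkeeping in the second paragraph, and in particular the inclusion $L \setminus R \subseteq [0, \min(I_{k_0}) - 1]$: this is exactly what the second clause $\max(I_k) \geqslant d_k + \max(\max(I_0^{N_k}), \min(I_k))$ in the construction of the $I_k$'s is tailored to guarantee. Everything else, including the existence of the ``reservoir'' $J_{k_0}$ of dimension at least $d_{k_0}$ sitting inside $I_{k_0} \cap R$ and the application of the abstract codimension-isomorphism principle, is routine once that inclusion is established.
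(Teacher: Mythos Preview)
Your proof is correct and follows essentially the same idea as the paper's: both arguments swap a finite-dimensional ``bad'' piece of $Y_L$ for a piece of equal dimension sitting in the reservoir $I_{k_0}\cap(\max I_0^{N_{k_0}},\infty)$, and then invoke the codimension-isomorphism lemma inside the resulting ambient space. The only difference is cosmetic: the paper first enlarges $Y_L$ to $[F_n\mid n<\min I_{k_0}]\oplus[F_n\mid n\geqslant\min I_{k_0},\,n\in L]$ so that the piece being swapped has dimension exactly $d_{k_0}$, whereas you swap only $[F_n\mid n\in L\setminus R]$ of dimension $d_1\leqslant d_{k_0}$ and then invoke monotonicity of $c$. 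One small remark: the inequality $c(d)\leqslant 4d(1+\sqrt d)^2$ does not by itself make $c$ monotone, but since $c(d)$ is only specified up to ``there exists such a constant,'' you may simply take $c(d)=4d(1+\sqrt d)^2$ (or replace $c$ by $d\mapsto\max_{d'\leqslant d}c(d')$), so this is harmless.
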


\begin{proof}

Let $n_0 = \min I_{k_0}$, so that $d_{k_0} = \dim[F_n \mid n < n_0]$. It is enough to prove that:
$$[F_n \mid n < n_0] \oplus \left[F_n \left|\,  n \geqslant n_0, \; n \notin \bigcup_{k \in A} I_k\right.\right] \sqsubseteq_{c(d_{k_0})} \left[F_n \left|\, n \notin I_0^{N_{k_0}} \cup \left(\bigcup_{\substack{k \in A \\ k > k_0}} I_k\right)\right.\right].$$
Since $\max(I_{k_0}) \geqslant d_{k_0} + \max(\max(I_0^{N_{k_0}}), \min(I_{k_0}))$, then in particular: $$\dim[F_n \mid \max(\max(I_0^{N_{k_0}}), \min(I_{k_0})) < n \leqslant \max(I_{k_0})] \geqslant d_{k_0}.$$ So we can find a finite-dimensional subspace $H \subseteq [F_n \mid n \in I_{k_0}]$ with $I_0^{N_{k_0}} < \supp(H)$ and $\dim(H) = d_{k_0}$ (here, the supports are taken with respect to the FDD $(F_n)$). Since $k_0 \in A$, we have:
$$H \cap \left[F_n \left|\,  n \geqslant n_0, \; n \notin \bigcup_{k \in A} I_k\right.\right] = \{0\}.$$
Thus, both subspaces: $$[F_n \mid n < n_0] \oplus \left[F_n \left|\,  n \geqslant n_0, \; n \notin \bigcup_{k \in A} I_k\right.\right]$$ and $$H \oplus \left[F_n \left|\,  n \geqslant n_0, \; n \notin \bigcup_{k \in A} I_k\right.\right]$$ have codimension $d_{k_0}$ in: $$[F_n \mid n < n_0] \oplus H \oplus  \left[F_n \left|\,  n \geqslant n_0, \; n \notin \bigcup_{k \in A} I_k\right.\right],$$ so they are $c(d_{k_0})$-isomorphic. Hence, to conclude the proof, it is enough to see that: $$H \oplus \left[F_n \left|\,  n \geqslant n_0, \; n \notin \bigcup_{k \in A} I_k\right.\right] \subseteq \left[F_n \left|\, n \notin I_0^{N_{k_0}} \cup \left(\bigcup_{\substack{k \in A \\ k > k_0}} I_k\right)\right.\right].$$ The inclusion: $$H  \subseteq \left[F_n \left|\, n \notin I_0^{N_{k_0}} \cup \left(\bigcup_{\substack{k \in A \\ k > k_0}} I_k\right)\right.\right]$$ is a consequence of the fact that $\supp(H) \subseteq I_{k_0}$ and $I_0^{N_{k_0}} < \supp(H)$. And to prove the inclusion: $$\left[F_n \left|\,  n \geqslant n_0, \; n \notin \bigcup_{k \in A} I_k\right.\right] \subseteq \left[F_n \left|\, n \notin I_0^{N_{k_0}} \cup \left(\bigcup_{\substack{k \in A \\ k > k_0}} I_k\right)\right.\right],$$ it is enough to see that for all $n \geqslant n_0$, if $n \in I_0^{N_{k_0}}$, then $n \in \bigcup_{k \in A} I_k$. This is a consequence of the fact that $n_0 = \min(I_{k_0})$ and $\max(I_0^{N_{k_0}}) \leqslant \max(I_{k_0})$.

\end{proof}

We now conclude the proof of the lemma. Let $A \subseteq \N \setminus \{0\}$ be infinite and assume, towards a contradiction, that $Z \sqsubseteq \left[F_n \left| n \notin \bigcup_{k \in A} I_k\right.\right]$. Then we can choose $k_0 \in A$ such that $Z \sqsubseteq_{k_0} \left[F_n \left| n \notin \bigcup_{k \in A} I_k\right.\right]$. Using the claim and the fact that $k_0c(d_{k_0}) \leqslant N_{k_0}$, we get that: $$Z \sqsubseteq_{N_{k_0}} \left[F_n \left|\, n \notin I_0^{N_{k_0}} \cup \left(\bigcup_{\substack{k \in A \\ k > k_0}} I_k\right)\right.\right].$$ But by construction, $I_0^{N_{k_0}} \cup \left(\bigcup_{\substack{k \in A \\ k > k_0}} I_k\right)$ contains infinitely many intervals of the sequence $\left(I^{N_{k_0}}_i\right)_{i \in \N}$, including its initial term $I^{N_{k_0}}_0$. This contradicts the definition of the sequence $\left(I^{N_{k_0}}_i\right)_{i \in \N}$.

\end{proof}

\bigskip

\subsection{$\H$-minimal and $\H$-tight spaces}\label{subsec:MinimalTight}

In this section, we prove several properties of $\H$-minimal and $\H$-tight spaces. We deduce consequences of \prettyref{thm:2ndDichoto}. We start with studying $\H$-tight spaces.

\begin{defin}
We say that the D-family $\H$ is \textit{invariant under isomorphism} if for every $Y, Z \in \Subinf(X)$ such that $Y$ and $Z$ are isomorphic, we have $Y \in \H \Leftrightarrow Z \in \H$.
\end{defin}

\begin{thm}\label{thm:TNHImpliesErgodic}

\alaligne

\begin{enumerate}

\item Suppose that $X$ is $\H$-tight and that $\H$ is invariant under isomorphism. Then $X$ is ergodic.

\item Suppose that $X$ is $d$-tight. Then $X$ is ergodic.

\end{enumerate}

\end{thm}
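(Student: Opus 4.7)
The plan is to prove part 1 and then derive part 2 by applying it with $\H = \H_d$, using that $\H_d$ is invariant under isomorphism (a consequence of the iso-invariance of $d$-largeness established after \prettyref{lem:DegFinDim}). For part 1, I will construct a Borel reduction $f \colon \Delta \to \Sub(X)$ from $\Ezero$ to the isomorphism relation on $\Sub(X)$, thereby witnessing ergodicity of $X$.

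By \prettyref{cor:GoodTightFDD}, I may assume the $\H$-tight FDD $(F_n)_{n \in \N}$ of $X$ is simultaneously $\H$-good; fix intervals $(I_i)_{i \in \N}$ witnessing tightness of $X$ in $(F_n)$. Passing to a subsequence, which still witnesses tightness, and applying pigeonhole, I may further assume the block-dimension $d := \sum_{n \in I_i} \dim F_n$ is independent of $i$. For $\alpha \in \Delta$, set $A_\alpha := \{2i + \alpha(i) \mid i \in \N\}$ and $f(\alpha) := X_\alpha := [F_n \mid n \notin \bigcup_{i \in A_\alpha} I_i]$. The pairing trick is engineered so that $\alpha \Ezero \beta$ iff $A_\alpha \triangle A_\beta$ is finite, and in that case $|A_\alpha \setminus A_\beta| = |A_\beta \setminus A_\alpha|$ equals the number of differing coordinates. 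The map $f$ is Borel by \prettyref{lem:PNtoSub}, and each $X_\alpha \in \H$ by $\H$-goodness of $(F_n)$; the iso-invariance of $\H$ will enter by ensuring that any subspace of $X$ isomorphic to a member of $\H$ is itself in $\H$, so that $\H$-tightness can be invoked freely when passing to isomorphic copies.

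For the easy direction $\alpha \Ezero \beta \Rightarrow X_\alpha \cong X_\beta$: writing $k := |\{i : \alpha(i) \neq \beta(i)\}|$, the pairing combined with the constant $d$ forces both $X_\alpha$ and $X_\beta$ to be subspaces of $X_\alpha + X_\beta$ of the same finite codimension $kd$; \cite[Lemma~3]{FerencziRosendalOnTheNumber}, the fact that subspaces of equal finite codimension in a common Banach space are uniformly isomorphic (already invoked in the proof of \prettyref{lem:CaracHTight}), then yields $X_\alpha \cong X_\beta$.

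The main obstacle is the converse $\neg \alpha \Ezero \beta \Rightarrow X_\alpha \not\cong X_\beta$, which is where $\H$-tightness is used essentially. Supposing toward a contradiction an isomorphism $T \colon X_\alpha \to X_\beta$ when $A_\alpha \setminus A_\beta$ and $A_\beta \setminus A_\alpha$ are both infinite, I will consider the block-FDD subspace $W := [F_n \mid n \in \bigcup_{i \in A_\beta \setminus A_\alpha} I_i]$ of $X_\alpha$; by $\H$-goodness of $(F_n)$, $W \in \H$, and $T$ directly yields $W \sqsubseteq X_\beta$. Since $(F_n)$ is $\H$-tight, $W$ is tight in $(F_n)$ via some intervals $(J_k)_{k \in \N}$ of $\N$, and the technical heart of the argument will be to select these intervals so that $\bigcup_k J_k \subseteq \bigcup_{i \in A_\beta \setminus A_\alpha} I_i$: I intend to achieve this by applying \prettyref{lem:HereditaryTightness} to the block-FDD $(G_i)_{i \in \N}$ where $G_i := [F_n \mid n \in I_i]$, combined with the observation that any $J_k$ disjoint from the support of $W$ would trivially let $W$ embed into $[F_n \mid n \notin J_k]$ and hence cannot occur. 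Once such intervals are in hand, the inclusion $X_\beta = [F_n \mid n \notin \bigcup_{i \in A_\beta} I_i] \subseteq [F_n \mid n \notin \bigcup_k J_k]$ yields $W \sqsubseteq X_\beta \subseteq [F_n \mid n \notin \bigcup_k J_k]$, contradicting tightness of $W$. The hardest point will be to reconcile the ``intervals of $\N$'' in the definition of tightness with the fact that $A_\beta \setminus A_\alpha$ is not itself an interval in $\N$; this is expected to require a careful refinement of the blocking of $W$ so that the tightness intervals can be chosen as concatenations of those $I_i$ with $i \in A_\beta \setminus A_\alpha$.
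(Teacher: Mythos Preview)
Your easy direction (the reduction map, the pairing trick, the equal-codimension argument) is fine and is in the spirit of what the paper does. However, your hard direction has a genuine gap, and the paper takes a fundamentally different route precisely because the direct argument you sketch does not go through.

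The issue is your proposed contradiction from $W \sqsubseteq X_\beta$. You want to choose tightness intervals $(J_k)$ for $W$ in $(F_n)$ so that $\bigcup_k J_k \subseteq \bigcup_{i \in A_\beta} I_i$. But tightness gives you \emph{some} sequence of intervals, not intervals of your choosing, and there is no mechanism to force them inside a prescribed set. Your observation that each $J_k$ must meet $\supp(W)$ is correct, but meeting $\supp(W)$ is far from being contained in $A_\beta$. Passing to the block-FDD $(G_i)$ with $G_i = [F_n \mid n \in I_i]$ does not help: the tightness intervals $K_k$ for $W$ in $(G_i)$ are intervals of $\N$, while $A_\beta$ selects exactly one element from each pair $\{2i, 2i+1\}$, so any $K_k$ straddling such a pair cannot lie inside $A_\beta$. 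There is no a priori reason why infinitely many $K_k$ should be singletons, or otherwise fit inside $A_\beta$. More fundamentally, tightness of $W$ in $(F_n)$ is a \emph{meagerness} statement about the set of $C \subseteq \N$ with $W \sqsubseteq [F_n \mid n \in C]$; it does not preclude $W$ from embedding into one particular subspace such as $X_\beta$. So the implication $\neg(\alpha\,\Ezero\,\beta) \Rightarrow X_\alpha \not\cong X_\beta$ need not hold for your specific map $f$.

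The paper avoids this by \emph{not} attempting to build an explicit reduction. Instead it defines $X_A = [e_i \mid i \in A]$ for \emph{all} $A \subseteq \N$ (using a basis of each $F_n$), shows that the induced isomorphism relation $E$ on $\Part(\N)$ contains the relation $\mathbf{E}_0'$ (eventual equality with equal initial cardinalities), and then proves that $E$ is \emph{meager} by showing each $E$-class is meager: if $X_A$ has a subspace in $\H$, tightness makes the set of $B$ with $X_B \cong X_A$ meager; if not, $\H$-goodness and isomorphism-invariance of $\H$ do. Rosendal's criterion (\prettyref{prop:RosendalE0}) then yields $\Ezero \leqslant_B E$ abstractly. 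The point is that meagerness is exactly what tightness delivers, and Rosendal's result converts meagerness into a Borel reduction without ever having to verify $X_A \not\cong X_B$ for any particular pair.
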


An important consequence of \prettyref{thm:2ndDichoto} and \prettyref{thm:TNHImpliesErgodic} is the following:

\begin{cor}\label{cor:HMinErgodic}

\alaligne
\begin{enumerate}

\item Suppose that $X \in \H$, that $X$ is non-ergodic and that $\H$ is invariant under isomorphism. Then there exists $Y \in \H$ which is $\H_{\restriction Y}$-minimal.

\item Suppose that $X$ is $d$-large and non-ergodic. Then $X$ has a $d$-minimal subspace.

\end{enumerate}

\end{cor}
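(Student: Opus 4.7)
The plan is to combine the second dichotomy (\prettyref{thm:2ndDichoto}) with \prettyref{thm:TNHImpliesErgodic} in the standard way: apply the dichotomy to $X$, rule out the tight alternative using ergodicity, and read off the minimal alternative.

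More precisely, for part 1, I would start by applying \prettyref{thm:2ndDichoto} to the D-family $\H$ on $X$. Since $X \in \H$, the theorem produces $Y \in \H$ which is either $\H_{\restriction Y}$-minimal or $\H_{\restriction Y}$-tight. If $Y$ is $\H_{\restriction Y}$-minimal, we are done. Otherwise $Y$ is $\H_{\restriction Y}$-tight, and I want to apply \prettyref{thm:TNHImpliesErgodic}(1) to $Y$ together with the D-family $\H_{\restriction Y}$ on $Y$ to conclude that $Y$ is ergodic. The only point that needs a brief check is that $\H_{\restriction Y}$ is invariant under isomorphism as a D-family of subspaces of $Y$: if $Z_1, Z_2 \in \Subinf(Y)$ are isomorphic, then since $\H$ is invariant under isomorphism we have $Z_1 \in \H \Leftrightarrow Z_2 \in \H$, whence $Z_1 \in \H_{\restriction Y} \Leftrightarrow Z_2 \in \H_{\restriction Y}$. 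Thus $Y$ is ergodic; but ergodicity passes to superspaces (equivalently, subspaces of non-ergodic spaces are non-ergodic, as recalled after the definition of ergodicity), so $X$ would be ergodic, contradicting our assumption. This contradiction forces the minimal alternative.

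For part 2, I would simply specialize part 1 to the D-family $\H_d^X$ of $d$-large subspaces of $X$, which was shown to be a D-family in \prettyref{prop:DegDFamily}. The invariance of $\H_d^X$ under isomorphism follows from the fact, already established in Section 3, that $d$-largeness is isomorphism-invariant. Part 1 therefore yields $Y \in \H_d^X$ which is $(\H_d^X)_{\restriction Y}$-minimal, and since $(\H_d^X)_{\restriction Y}$ is exactly the family of $d$-large subspaces of $Y$ (i.e.\ $\H_d^Y$), this is precisely the statement that $Y$ is $d$-minimal. Alternatively, and even more directly, in the tight alternative one can apply \prettyref{thm:TNHImpliesErgodic}(2) to $Y$ without mentioning invariance at all, which is perhaps the cleanest route for part 2.

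There is no serious obstacle here: the content lies entirely in the two theorems being combined. The only subtlety worth writing down explicitly is the inheritance of the isomorphism-invariance of $\H$ to $\H_{\restriction Y}$ in part 1, together with the routine observation that the tight FDD of $Y$ provided by the dichotomy is exactly the hypothesis needed by \prettyref{thm:TNHImpliesErgodic}.
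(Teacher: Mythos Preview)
Your proposal is correct and follows exactly the approach the paper intends: the corollary is stated there simply as ``an important consequence of \prettyref{thm:2ndDichoto} and \prettyref{thm:TNHImpliesErgodic}'' with no further proof. Your write-up supplies the routine checks (inheritance of isomorphism-invariance to $\H_{\restriction Y}$, and $(\H_d^X)_{\restriction Y}=\H_d^Y$) that the paper leaves implicit.
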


To prove \prettyref{thm:TNHImpliesErgodic}, we will use a sufficient condition for the reducibility of $\Ezero$ proved by Rosendal in \cite{RosendalCaracErgodic} (Theorem 15). Let $\mathbf{E}_0'$ be the equivalence relation on $\Part(\N)$ (identified with the Cantor space) defined as follows: if $A, B \in \Part(\N)$, we say that $A \mathbf{E}_0' B$ if there exists $n \in \N$ such that $\left|A \cap \llbracket 0, n \rrbracket\right| = |B \cap \llbracket 0, n \rrbracket|$ and $A \setminus \llbracket 0, n \rrbracket = B \setminus \llbracket 0, n \rrbracket$. The result proved by Rosendal is the following:

\begin{prop}\label{prop:RosendalE0}

Let $E$ be a meager equivalence relation on $\Part(\N)$, with $\mathbf{E}_0' \subseteq E$. Then $\Ezero \leqslant_B E$.

\end{prop}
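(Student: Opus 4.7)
The plan is to construct a continuous (hence Borel) function $f \colon \Part(\N) \to \Part(\N)$ serving as a reduction from $\Ezero$ to $E$, where the domain is identified with $\{0,1\}^\N$ in the usual way so that $x \in \Part(\N)$ corresponds to the binary sequence $(x_i)_{i \in \N}$. The function will be produced by a fusion argument yielding pairwise disjoint, successive, nonempty finite blocks $F_0 < F_1 < \ldots$ of $\N$ together with subsets $A_i^0, A_i^1 \subseteq F_i$ satisfying $|A_i^0| = |A_i^1|$ for every $i$; then set $f(x) = \bigcup_{i \in \N} A_i^{x_i}$. This setup makes the easy direction immediate: if $x \Ezero y$ with $x_i = y_i$ for every $i \geqslant N$, then $f(x)$ and $f(y)$ coincide outside $\bigcup_{i < N} F_i$, while $|f(x) \cap \llbracket 0, n \rrbracket| = \sum_{i < N} |A_i^0| = |f(y) \cap \llbracket 0, n \rrbracket|$ for $n = \max F_{N-1}$; hence $f(x) \mathbf{E}_0' f(y)$, and so $f(x) \, E \, f(y)$ because $\mathbf{E}_0' \subseteq E$.

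For the reverse direction I would use meagerness of $E$ to write $E \subseteq \bigcup_n K_n$ with each $K_n$ closed nowhere dense in $\Part(\N)^2$, and then run the fusion so as to guarantee that for every $x \not\Ezero y$ and every $n$, the pair $(f(x), f(y))$ lies outside $K_n$. At stage $i$, having already fixed $F_0, \ldots, F_{i-1}$ and $(A_j^0, A_j^1)_{j<i}$, for each $s = (s_0, \ldots, s_i) \in \{0,1\}^{i+1}$ write $\sigma_i(s) = \bigcup_{j \leqslant i} A_j^{s_j}$ and let $U(\sigma_i(s))$ be the clopen set of $C \in \Part(\N)$ with $C \cap \llbracket 0, \max F_i \rrbracket = \sigma_i(s)$. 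The aim at stage $i$ is to choose $F_i$ and $(A_i^0, A_i^1)$ with $|A_i^0| = |A_i^1|$ so that for every $n \leqslant i$ and every pair $(s, t) \in \{0,1\}^{i+1} \times \{0,1\}^{i+1}$ with $s_i \neq t_i$, the product cylinder $U(\sigma_i(s)) \times U(\sigma_i(t))$ is disjoint from $K_n$. This is feasible because each $K_n^c$ is open dense in $\Part(\N)^2$: handling the finitely many triples $(n, s, t)$ in turn, each step amounts to shrinking the relevant product cylinder by committing more bits to $A_i^0$ and $A_i^1$, that is, enlarging $F_i$; such enlargements only refine the cylinders, so previously secured separation conditions are preserved.

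With the construction in hand, the reverse reduction is direct: given $x \not\Ezero y$ and any $n$, pick $i \geqslant n$ with $x_i \neq y_i$, set $s = (x_0, \ldots, x_i)$ and $t = (y_0, \ldots, y_i)$, and note that $f(x) \in U(\sigma_i(s))$ and $f(y) \in U(\sigma_i(t))$, so $(f(x), f(y)) \notin K_n$ by construction; as $n$ was arbitrary, $(f(x), f(y)) \notin E$. Continuity of $f$ is immediate from the block definition.

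The delicate point is maintaining, throughout the stage-$i$ refinements, the equal-cardinality constraint $|A_i^0| = |A_i^1|$ alongside all the separation conditions. Whenever the use of nowhere density of some $K_n$ forces extending the partial commitments to $A_i^0$ and $A_i^1$ by binary strings of possibly different Hamming weights, one further enlarges $F_i$ and appends matching bits to both partial commitments to restore equal sizes; because this padding only further refines the cylinders, prior separation conditions remain valid. After finitely many such refinements a valid triple $(F_i, A_i^0, A_i^1)$ is produced and the fusion proceeds to stage $i+1$.
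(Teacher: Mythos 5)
The paper does not actually prove this proposition: it is quoted from Rosendal (\cite{RosendalCaracErgodic}, Theorem 15), so there is no in-paper argument to compare yours against. Your proof is correct and complete: the block construction with $|A_i^0|=|A_i^1|$ makes the forward implication factor through $\mathbf{E}_0'\subseteq E$; the stage-$i$ fusion against the closed nowhere dense pieces $K_n$, restricted to pairs $(s,t)$ with $s_i\neq t_i$ (which is exactly what the reverse direction requires, and which keeps the two cylinders controlled by the independent commitments to $A_i^0$ and $A_i^1$), is the standard Mycielski-style argument; and the final padding step is harmless since it only refines the cylinders and hence preserves all previously secured disjointness conditions. This is essentially the argument underlying Rosendal's original proof.
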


To prove \prettyref{thm:TNHImpliesErgodic}, we will combine \prettyref{prop:RosendalE0} with ideas developed by Ferenczi and Godefroy in \cite{FerencziGodefroy}. In this paper, they prove that if $(e_i)_{i \in \N}$ is a basis and $X$ a Banach space, then $X$ is tight in $(e_i)$ if and only if the set of $A \subseteq \N$ such that $X \sqsubseteq [e_i \mid i \in A]$ is meager in $\Part(\N)$. This extends immediately to the case when $(e_i)$ is replaced by an FDD $(F_i)$.

\begin{proof}[Proof of \prettyref{thm:TNHImpliesErgodic}]

As usual, we only prove the result for D-families. By \prettyref{cor:GoodTightFDD}, we can find an $\H$-good, $\H$-tight FDD $(F_n)_{n \in \N}$ of $X$. We fix $(e_i)_{i \in \N}$ a sequence of elements of $X$, and a partition of $\N$ into nonempty successive intervals $J_0 < J_1 < \ldots$ such that for every $n \in \N$, $(e_i)_{i \in J_n}$ is a basis of $F_n$. For every infinite $A \subseteq \N$, we let $X_A = [e_i \mid i \in A]$, and we define the equivalence relation $E$ on $\Part(\N)$ by $A E B$ if and only if $X_A$ and $X_B$ are isomorphic. Since the mapping $A \mapsto X_A$ from $\Part(\N)$ to $\Sub(X)$ is Borel (see \prettyref{lem:PNtoSub}), it is enough to prove that $\Ezero \leqslant_B E$.

\smallskip

We have $\mathbf{E}_0' \subseteq E$: indeed, if $A \mathbf{E}_0' B$, then fixing $n \in\N$ witnessing it, we have $X_A = X_{A \setminus \llbracket 0, n \rrbracket} \oplus X_{A \cap \llbracket 0, n \rrbracket}$ and $X_B = X_{A \setminus \llbracket 0, n \rrbracket} \oplus X_{B \cap \llbracket 0, n \rrbracket}$, and moreover $\dim(X_{A \cap \llbracket 0, n \rrbracket}) = \dim(X_{B \cap \llbracket 0, n \rrbracket})$ is finite, so $X_A$ and $X_B$ are isomorphic. So, by \prettyref{prop:RosendalE0}, it is enough to prove that $E$ is meager. Since $E$ is analytic, it has the Baire property, so by Kuratowski--Ulam's theorem (see \cite{Kechris}, Theorem 8.41), it is enough to prove that for every $A \in \Part(\N)$, the $E$-equivalence class of $A$ is meager. We distinguish two cases.

\smallskip

\paragraph{First case:} \textit{$\H_{\restriction {X_A}} = \varnothing$.}

\smallskip

For all $N \in \N$, let $\mathcal{U}_N = \{B \in \Part(\N) \mid \exists n \geqslant N \; J_n \subseteq B\}$. This is a dense open subset of $\Part(\N)$, so $\mathcal{C}:=\bigcap_{N \in \N} \mathcal{U}_N$ is comeager in $\Part(\N)$. For $B \in \mathcal{C}$, the space $X_B$ contains infinitely many of the $F_n$'s. Since $(F_n)$ is an $\H$-good FDD, this implies that $\H_{\restriction {X_B}} \neq \varnothing$. Since $\H_{\restriction {X_A}} = \varnothing$ and since $\H$ is invariant under isomorphism, this implies that $X_A$ and $X_B$ are not isomorphic. Hence, the set of $B \in \Part(\N)$ such that $X_B$ is isomorphic to $X_A$ is meager in $\Part(\N)$.

\smallskip

\paragraph{Second case:} \textit{$\H_{\restriction {X_A}} \neq \varnothing$.}

\smallskip

In this case, $X_A$ has a subspace which is tight in $(F_n)$, so $X_A$ itself is tight in $(F_n)$. Let $I_0 < I_1 < \ldots$ be a sequence of intervals witnessing it. For all $k \in \N$, let $K_k = \bigcup_{n \in I_k} J_n$. We have that for every infinite $D \subseteq \N$, $X_A \nsqsubseteq \left[e_i \mid i \notin \bigcup_{k \in D} K_k\right]$. For all $N \in \N$, let $\mathcal{U}_N = \{B \in \Part(\N) \mid \exists k \geqslant N \; K_k \cap B = \varnothing\}$. This is a dense open subset of $\Part(\N)$, so $\mathcal{C}:=\bigcap_{N \in \N} \mathcal{U}_N$ is comeager in $\Part(\N)$. If $B \in \mathcal{C}$, then there exists an infinite $D \subseteq \N$ such that $X_B \subseteq \left[e_i \mid i \notin \bigcup_{k \in D} K_k\right]$. In particular, $X_B$ cannot be isomorphic to $X_A$. Hence, the set of $B \in \Part(\N)$ such that $X_B$ is isomorphic to $X_A$ is meager in $\Part(\N)$.

\end{proof}

We now study the properties of $\H$-minimal spaces.

\begin{defin}
We say that $X$ is \textit{uniformly $\H$-minimal} if $X \in \H$ and if there exists a constant $C$ such that $X$ $C$-isomorphically embeds into every element of $\H$. We say that $X$ is \textit{uniformly $d$-minimal} if it is uniformly $\mathcal{H}_d$-minimal.
\end{defin}

The statement of the following proposition was improved from a previous version of this paper thanks to an observation of O. Kurka.

\begin{prop}\label{prop:UnifMinimal}

\alaligne

\begin{enumerate}
    \item Suppose that the D-family $\H$ is invariant under isomorphisms. If $X$ is $\H$-minimal, then it is uniformly $\H$-minimal. 
    
    \item If $X$ is $d$-minimal, then it is uniformly $d$-minimal.
\end{enumerate}

\end{prop}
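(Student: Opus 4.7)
Part (2) is immediate from part (1), since the D-family $\H_d^X$ is invariant under isomorphism (as the notion of $d$-largeness is intrinsic to the isomorphism class of a subspace).

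For part (1), assume $X$ is $\H$-minimal and $\H$ is invariant under isomorphism, and suppose for contradiction that $X$ is not uniformly $\H$-minimal. The plan is to apply Theorem~\ref{thm:LocalGowersThm} (the local version of Gowers' Ramsey-type theorem) iteratively to the analytic sets
\[
\mathcal{X}_n := \bigl\{(y_k)_{k \in \N} \in (S_X)^\N \bigm| X \sqsubseteq_n [y_k : k \in \N]\bigr\},
\]
producing a decreasing nested sequence $X = Z_0 \supseteq Z_1 \supseteq \cdots$ in $\H$ with $X \not\sqsubseteq_n Z_n$ for every $n$, and then to diagonalize with careful codimension control via Lemma~\ref{lem:Diag} to derive a contradiction with $\H$-minimality.

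At stage $n$, I apply Theorem~\ref{thm:LocalGowersThm} to $\mathcal{X}_n$ starting from $Z_{n-1}$. By the ``moreover'' clause (using the $\H$-good FDD of $Z_{n-1}$), the resulting $Z_n \in \H_{\restriction Z_{n-1}}$ can be arranged to carry an $\H$-good FDD that is a block FDD of $Z_{n-1}$'s. In alternative~(a), concatenating Auerbach bases of the summands of this FDD gives a normalized block sequence $(y_k)$ with $[y_k : k \in \N] = Z_n$, so $X \not\sqsubseteq_n Z_n$ as desired. Alternative~(b) would yield a subspace $Z_n$ in which Player~II can force a sequence $(y_k)$ in any prescribed $W \in \H_{\restriction Z_n}$ lying within $\Delta$ of some $(y_k') \in \mathcal{X}_n$; with $\Delta$ chosen small, the small perturbation principle gives $X \sqsubseteq_{2n} W$ for every $W \in \H_{\restriction Z_n}$. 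Combined with $\H$-minimality (which implies that the $\H$-minimal subspace $Z_n$ is embedded in every $Y \in \H$, and with iso-invariance produces an isomorphic copy of $Z_n$ inside $Y$ lying in $\H$), this propagates to uniform embedding of $X$ into every $Y \in \H$, contradicting the hypothesis. Hence (a) must hold at every stage.

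Having built the nested sequence $(Z_n)$ with $X \not\sqsubseteq_n Z_n$, I apply the construction in the proof of Lemma~\ref{lem:Diag} to produce $Z_\infty \in \H$ with $Z_\infty \subseteq^* Z_n$ for every $n$. The key point is to revisit the choice of the finite-dimensional subspaces $F_n$: instead of requiring $\dim(F_n) \geqslant n$ as in the original proof, I take $\dim(F_n) = \lceil \log(n+2) \rceil$, which still guarantees $\dim(F_n) \to \infty$ (hence $Z_\infty$ is infinite-dimensional and in $\H$), while the codimension $k_n := \dim(Z_\infty / (Z_\infty \cap Z_n))$ now satisfies $k_n \leqslant \dim(F_n) = O(\log n)$.

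If $X \sqsubseteq_M Z_\infty$ via some embedding $T$, then $T$ restricts to an $M$-embedding of the codimension-$k_n$ subspace $T^{-1}(Z_\infty \cap Z_n) \subseteq X$ into $Z_n$. Composing with the $c(k_n)$-isomorphism between $X$ and any of its codimension-$k_n$ subspaces provided by Lemma~3 of~\cite{FerencziRosendalOnTheNumber} (with $c(k) = O(k^2)$), we obtain $X \sqsubseteq_{M \cdot c(k_n)} Z_n$. Since $X \not\sqsubseteq_n Z_n$, this forces $M \geqslant n / c(k_n) = \Omega\bigl(n / (\log n)^2\bigr)$, which diverges with $n$. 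Hence no such $M$ exists, so $X \not\sqsubseteq Z_\infty$, contradicting $\H$-minimality since $Z_\infty \in \H$.

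The main obstacle is the bootstrap ruling out alternative~(b) at each stage: converting the uniform embedding of $X$ into $\H$-subspaces of $Z_n$ (which is what (b) supplies directly) into a uniform embedding of $X$ into all of $\H$ requires controlling the isomorphism constant between $Z_n$ and its copy realized inside an arbitrary $Y \in \H$ via $\H$-minimality. This control is the technical heart of the argument and is where isomorphism-invariance of $\H$ plays an essential role (most likely via a second, nested application of Theorem~\ref{thm:LocalGowersThm} inside $Y$).
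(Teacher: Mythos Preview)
Your approach has a genuine gap in the diagonalization step, and separately you have misidentified where the real difficulty lies.

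\textbf{The diagonalization error.} You claim that in the proof of Lemma~\ref{lem:Diag} one may \emph{choose} $\dim(F_n)=\lceil\log(n+2)\rceil$. This is not possible. In that proof the subspace $F_{n+1}$ is not free: it must satisfy $[F_{n+1},Y_n+F_n]\subseteq\mathcal{U}_n$, and there is no a priori bound on how large $F_{n+1}$ needs to be for this inclusion to hold. The condition $\dim(F_{n+1})\geqslant n+1$ in the original proof is only a \emph{lower} bound (to make $Y_\infty$ infinite-dimensional), not an upper bound you may trade against. So your estimate $k_n=O(\log n)$ is unjustified, and the inequality $M\geqslant n/c(k_n)\to\infty$ does not follow. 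This breaks the contradiction.

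\textbf{The ``main obstacle'' is not one.} Conversely, the step you flag as the technical heart --- propagating alternative~(b) from $\H_{\restriction Z_n}$ to all of $\H$ --- is straightforward and needs no nested application of Theorem~\ref{thm:LocalGowersThm}. Since $X$ is $\H$-minimal and $Z_n\in\H$, fix once and for all an embedding $T\colon X\to Z_n$ with some constant $C_n$. For arbitrary $Y\in\H$ (so $Y\subseteq X$), the image $T(Y)$ lies in $\H_{\restriction Z_n}$ by isomorphism-invariance, hence $X\sqsubseteq_{2n}T(Y)$, and composing with $T^{-1}$ gives $X\sqsubseteq_{2nC_n}Y$. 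This is already a uniform bound, contradicting the hypothesis. (Your description had the transport going the wrong way: you do not need a controlled copy of $Z_n$ inside $Y$, you need a controlled copy of $Y$ inside $Z_n$.)

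\textbf{Comparison with the paper.} The paper's proof is considerably more elementary: it does not invoke Theorem~\ref{thm:LocalGowersThm} at all. It builds $(Y_n)$ and $(F_n)$ \emph{simultaneously}, at each step first fixing $F_n$ (whose dimension is dictated by the Ellentuck-open sets and cannot be controlled) and \emph{then} choosing $Y_{n+1}\in\H_{\restriction Y_n}$ so that $Y_n\not\sqsubseteq_{nK^{\dim F_n}}Y_{n+1}$, where $K$ is a uniform constant for embedding $X$ into its hyperplanes. Because the bad embedding constant is chosen \emph{after} $\dim F_n$ is known, the final contradiction goes through regardless of how large $\dim F_n$ turns out to be. This interleaving is precisely what is missing from your argument, and it is what allows the paper to avoid any growth-rate bookkeeping. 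The bootstrap from ``some $Y$ is uniformly $\H_{\restriction Y}$-minimal'' to ``$X$ is uniformly $\H$-minimal'' is then the same easy argument described above.
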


\begin{proof}

2. is a consequence of 1. and the fact that $\H_d$ is invariant under isomorphisms. 
%
%
To prove 1., we start with the following claim:

\begin{claim}\label{claim:UnifHMin}

There exists $Y \in \H$ which is uniformly $\H_{\restriction Y}$-minimal.

\end{claim}

\begin{proof}
Let $(\mathcal{U}_n)_{n \in \N}$ be a decreasing sequence of Ellentuck-open subsets of $\Sub(X)$ such that $\H = \bigcap_{n \in \N} \mathcal{U}_n$. The hyperplanes of $X$ are in $\H$, and they are pairwise isomorphic with a uniform constant. Thus, there exists a constant $K \geqslant 1$ such that $X$ $K$-isomorphically embeds into all of its hyperplanes. As a consequence, we get that for every $m \in \N$, $X$ $K^m$-isomorphically embeds into all of its subspaces of codimension $m$.

\smallskip

Suppose that 1. is not satisfied. We build inductively a decreasing sequence $(Y_n)_{n \in \N}$ of elements of $\H$ and an increasing sequence $(F_n)_{n \in \N}$ of finite-dimensional subspaces of $X$ in the following way. Let $Y_0 = X$ and $F_0 = \{0\}$. If $Y_n$ and $F_n$ have been defined, then by assumption, $Y_n$ is not uniformly $\H_{\restriction Y_n}$-minimal, so there exists $Y_{n + 1} \in \H_{\restriction Y_n}$ such that $Y_n$ does not $(nK^{\dim(F_n)})$-embed into $Y_{n + 1}$. The subspace $Y_{n + 1} + F_n$ is also in $\H$, so in $\mathcal{U}_n$; thus, we can choose $F_{n + 1}$ such that $F_n \subseteq F_{n + 1} \subseteq Y_{n + 1} + F_n$ and $[F_{n + 1}, Y_{n + 1} + F_n] \subseteq \mathcal{U}_n$. This achieves the induction.

\smallskip

We now let $Y = \overline{\bigcup_{n \in \N} F_n}$. For every $n \in \N$, we have $Y \subseteq Y_{n + 1} + F_n$, so $Y \in [F_{n + 1}, Y_{n + 1} + F_n] \subseteq \mathcal{U}_n$; hence, $Y \in \H$. Since $X$ is $\H$-minimal, there exists a $C$-isomorphic embedding $T \colon X \to Y$ for some constant $C$. For all $n \in \N$, let $X_n = T^{-1}(Y \cap Y_{n + 1})$. Recall that $Y \subseteq Y_{n + 1}+ F_n$; we deduce that $X_n$ has codimension at most $\dim(F_n)$ in $X$. Hence, $X$ $K^{\dim(F_n)}$-isomorphically embeds into $X_n$, so $X$ $(CK^{\dim(F_n)})$-isomorphically embeds into $Y_{n+1}$. In particular, $Y_n$ $(CK^{\dim(F_n)})$-isomorphically embeds into $Y_{n+1}$. For $n \geqslant C$, this contradicts the definition of $Y_{n + 1}$.
\end{proof}

We now prove that $X$ is uniformly $\H$-minimal. Let $Y \in \H$ be uniformly $\H_{\restriction Y}$-minimal, with constant $K$, given by \prettyref{claim:UnifHMin}. Let $C$ be such that $X$ $C$-isomorphically embeds into $Y$. If $Z$ is an arbitrary element of $\H$, then $Z$ also $C$-isomorphically embeds into $Y$, so the uniformly $\H_{\restriction Y}$-minimal space $Y$ $CK$-isomorphically embeds into $Z$, and therefore $X$ $C^2K$-isomorphically embeds into $Z$.
\end{proof}

An interesting consequence of \prettyref{prop:UnifMinimal} in the case of internal degrees is that, if $X$ is $d$-minimal, then $d$-large subspaces of $X$ are uniformly $d$-large, in the following sense:

\begin{lem}\label{lem:UnifLarge}

Suppose that $d$ is an internal degree, and that $X$ is $d$-minimal. Then there exists a mapping $\Gamma \colon \N \to \R_+$ with $\lim_{n \to \infty} \Gamma(n) = \infty$ having the following property: for every $d$-large subspace $Y \subseteq X$, and for every $n \in \N$, there exists an $n$-dimensional subspace $F \subseteq Y$ with $d(F) \geqslant \Gamma(n)$.

\end{lem}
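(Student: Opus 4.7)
The proof will combine two ingredients: the uniform $d$-minimality of $X$ from \prettyref{prop:UnifMinimal}, and the monotonicity of an internal degree along inclusions of finite-dimensional subspaces from \prettyref{rem:BasicRemarkDegrees}. The plan is to manufacture, once and for all inside $X$, a sequence of finite-dimensional subspaces with prescribed dimensions and diverging $d$-values, then transport them into an arbitrary $d$-large subspace $Y$ via a uniform embedding and quantify how much the degree can drop.

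First, I would fix $C \geqslant 1$ such that $X$ embeds $C$-isomorphically into every $d$-large subspace of $X$, using \prettyref{prop:UnifMinimal}. Next, I would show that the function
\[
\Gamma_X(n) := \sup\{d(F) \mid F \in \Subfin(X),\ \dim F = n\}
\]
is non-decreasing in $n$: given any $n$-dimensional $F \subseteq X$, one can extend $F$ to an $(n+1)$-dimensional subspace $G \subseteq X$, and \prettyref{rem:BasicRemarkDegrees} (which applies because $d$ is internal) gives $d(F) \leqslant d(G)$. Since $X$ is $d$-large, $\sup_n \Gamma_X(n) = \sup_{F \in \Subfin(X)} d(F) = \infty$, and monotonicity then forces $\Gamma_X(n) \to \infty$. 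Pick, for each $n$, a subspace $F_n \subseteq X$ with $\dim F_n = n$ and $d(F_n) \geqslant \Gamma_X(n) - 1$, so that $d(F_n) \to \infty$.

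Now for an arbitrary $d$-large $Y \subseteq X$, let $T \colon X \to Y$ be a $C$-isomorphic embedding. Then $T_{\restriction F_n} \colon F_n \to T(F_n)$ is an isomorphism with $\|T_{\restriction F_n}\| \cdot \|(T_{\restriction F_n})^{-1}\| \leqslant C$, so by the internal-degree criterion stated after \prettyref{defin:d-small},
\[
d(F_n) \leqslant K_d\bigl(C, d(T(F_n))\bigr).
\]
Defining $\varphi_C(k) := \inf\{t \geqslant 0 \mid K_d(C, t) \geqslant k\}$ and using monotonicity of $K_d(C,\cdot)$, this rearranges into $d(T(F_n)) \geqslant \varphi_C(d(F_n))$. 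Setting $\Gamma(n) := \varphi_C(d(F_n))$, the subspace $T(F_n) \subseteq Y$ has dimension $n$ and $d$-value at least $\Gamma(n)$, which is independent of $Y$.

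The one step that needs a short verification, and the main (albeit minor) obstacle, is the divergence $\varphi_C(k) \to \infty$ as $k \to \infty$. The axioms on $K_d$ only guarantee monotonicity in both variables and $K_d(s, t) \to t$ as $s \to 1$, so $K_d(C, \cdot)$ is merely a non-decreasing, everywhere finite $\R_+$-valued function. If $\varphi_C$ stayed bounded by some $M$ along a subsequence $k_i \to \infty$, then for every $\varepsilon > 0$ there would exist $t \leqslant M + \varepsilon$ with $K_d(C, t) \geqslant k_i$, whence by monotonicity $K_d(C, M + \varepsilon) \geqslant k_i$ for all $i$, contradicting finiteness of $K_d(C, M + \varepsilon)$. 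Hence $\varphi_C(k) \to \infty$, and therefore $\Gamma(n) \to \infty$, completing the proof.
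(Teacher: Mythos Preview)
Your proof is correct and follows essentially the same approach as the paper's: both use \prettyref{prop:UnifMinimal} to get a uniform embedding constant $C$, define the same function $\Gamma_X = \gamma$, show it is non-decreasing and diverges, transport near-optimal $n$-dimensional subspaces through a $C$-embedding, and then ``invert'' $K_d(C,\cdot)$ to produce $\Gamma$. The only cosmetic differences are that the paper writes $\Gamma(n) = \sup\{t \mid K_d(C,t) \leqslant \gamma(n)/2\}$ where you write $\varphi_C(d(F_n)) = \inf\{t \mid K_d(C,t) \geqslant d(F_n)\}$, and the paper notes (via \prettyref{lem:DegFinDim}) that $\gamma(n) < \infty$, which you implicitly use when choosing $d(F_n) \geqslant \Gamma_X(n) - 1$.
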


\begin{proof}

Recall that if $d$ is an internal degree, when writing $d(F)$ for $F \in \Banfin$, we actually mean $d(X, F)$ for any $X \in \Ban$ such that $F \subseteq X$. In particular, given an isomorphism $S \colon G \to F$ for any $F, G \in \Banfin$, then $(S, S^{-1})$ is a morphism from the pair $(F, F)$ to the pair $(G, G)$, so we have $d(G) \leqslant K_d(\|S\|\cdot \|S^{-1}\|, d(F))$.

\smallskip

By \prettyref{prop:UnifMinimal}, there exists a constant $C$ such that $X$ $C$-isomorphically embeds into all of its $d$-large subspaces. For all $n \in \N$, let $\gamma(n) = \sup\{d(F)\mid F \in \Subfin(X),\linebreak \dim(F) = n\}$, which is finite by \prettyref{lem:DegFinDim}. By \prettyref{rem:BasicRemarkDegrees}, $\gamma$ is non-decreasing, and since $X$ is $d$-large, it tends to infinity. Now let, for all $n\in \N$, $$\Gamma(n) = \sup\left\{t \in \R_+ \Bigg | K_d(C, t) \leqslant \frac{\gamma(n)}{2}\right\},$$ with the convention that $\sup \varnothing = 0$. This defines a mapping $\Gamma \colon \N \to [0, \infty]$; we will see later that it actually only takes finite values.


\smallskip

We first show that $\lim_{n \to \infty} \Gamma(n) = \infty$. Fix $K \geqslant 0$. There is $n_0 \in \N$ such that $\gamma(n_0) \geqslant 2K_d(C, K)$. Now fix $n \geqslant n_0$. For all $t \leqslant K$, we have $K_d(C, t) \leqslant K_d(C, K) \leqslant \frac{\gamma(n_0)}{2} \leqslant \frac{\gamma(n)}{2}$, so by definition of $\Gamma(n)$, we have $\Gamma(n) \geqslant K$, as wanted.

\smallskip

Now, we fix $Y$ a $d$-large subspace of $X$, and $n \in \N$, and we build an $n$-dimensional subspace $F \subseteq Y$ such that $d(F) \geqslant \Gamma(n)$ (this will in particular show that $\Gamma(n)$ is finite). Let $T \colon X \to Y$ be a $C$-isomorphic embedding. Fix $G \subseteq X$ an $n$-dimensional subspace with $d(G) > \frac{\gamma(n)}{2}$. Let $F = T(G)$. Then by the remark at the beginning of the proof, $\frac{\gamma(n)}{2} < d(G) \leqslant K_d(C, d(F))$. In particular, if $t \in \R_+$ is such that $K_d(C, t) \leqslant \frac{\gamma(n)}{2}$, then $t < d(F)$. Thus, $\Gamma(n) \leqslant d(F)$, as wanted.

\end{proof}

A $d$-minimal space that is not minimal has to be saturated with $d$-small subspaces. If $d$ is an internal degree, then for such a space $X$, \prettyref{lem:UnifLarge} is quite surprising: it implies that for subspaces $Y \subseteq X$, either the degrees of finite-dimensional subspaces of $Y$ are bounded, or their maximal value grows quite fast to infinity (at least at the same speed as $\Gamma$), but no intermediate growth is possible. This suggests that the structure of finite-dimensional subspaces of such a space $X$ must be rather peculiar. We do not know any example of a $d$-minimal space that is not minimal, and this last remark makes us think that maybe, such spaces do not exist when the degree $d$ is internal.

\begin{question}\label{question:ExistdMinimal}
Does there exist an internal degree $d$ such that some infinite-dimensional spaces are $d$-large, and for which all $d$-minimal Banach spaces are minimal? Does there exist one for which there exist $d$-minimal, non-minimal Banach spaces?
\end{question}

An immediate consequence of \prettyref{lem:UnifLarge} is the following:

\begin{cor}\label{cor:MinimalAsymp}

If $d$ is an internal degree, then $d$-minimal spaces cannot be asymptotically $d$-small.

\end{cor}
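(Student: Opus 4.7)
The plan is to derive a contradiction directly from \prettyref{lem:UnifLarge}. Suppose, for contradiction, that $X$ is a $d$-minimal space which is asymptotically $d$-small, witnessed by some constant $K$. Let $\Gamma \colon \N \to \R_+$ be the function provided by \prettyref{lem:UnifLarge}, so $\Gamma(n) \to \infty$; pick $n \in \N$ large enough that $\Gamma(n) > K$.

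By asymptotic $d$-smallness there exists a finite-codimensional subspace $X_n \subseteq X$ such that every $n$-dimensional subspace $F \subseteq X_n$ satisfies $d(X, F) \leqslant K$. Since $d$ is internal, this simply says that $d(F) \leqslant K$ for every such $F$. On the other hand, $X_n$ is finite-codimensional in the $d$-large space $X$, so by \prettyref{lem:DegFinCodim} the space $X_n$ is itself $d$-large. Applying \prettyref{lem:UnifLarge} to $Y = X_n$ and the chosen $n$, we obtain an $n$-dimensional subspace $F \subseteq X_n$ with $d(F) \geqslant \Gamma(n) > K$, contradicting the previous upper bound.

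The proof is essentially a one-line combination of two earlier results: the uniformity statement \prettyref{lem:UnifLarge} (which produces $\Gamma$) and the permanence of $d$-largeness under passing to finite-codimensional subspaces (\prettyref{lem:DegFinCodim}). There is no real obstacle; the only thing to check is that the asymptotic $d$-smallness constant $K$ is eventually beaten by $\Gamma(n)$, which is immediate because $\Gamma(n) \to \infty$. Internality of $d$ is used only to identify $d(X,F)$ with $d(F)$ so that the conclusion of \prettyref{lem:UnifLarge} directly contradicts the asymptotic bound.
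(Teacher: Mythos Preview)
Your proof is correct and follows exactly the approach the paper intends: the corollary is stated as an immediate consequence of \prettyref{lem:UnifLarge}, and you have simply spelled out the obvious contradiction, using \prettyref{lem:DegFinCodim} to ensure the finite-codimensional subspace $X_n$ is $d$-large so that \prettyref{lem:UnifLarge} applies to it.
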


As a consequence, we obtain:

\begin{thm}\label{thm:AniscaGeneralization}

If $d$ is an internal degree, then $d$-large, asymptotically $d$-small Banach spaces are ergodic.

\end{thm}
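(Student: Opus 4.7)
The plan is to combine three results already established in the paper: the second dichotomy for degrees (\prettyref{thm:2ndDichotoForDegrees}), the fact that $d$-tight spaces are ergodic (\prettyref{thm:TNHImpliesErgodic}), and the obstruction for $d$-minimality in the asymptotically $d$-small setting (\prettyref{cor:MinimalAsymp}). Let $X$ be a $d$-large, asymptotically $d$-small Banach space. Apply \prettyref{thm:2ndDichotoForDegrees} to $X$ to produce a $d$-large subspace $Y \subseteq X$ that is either $d$-minimal or $d$-tight.

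The next step is to rule out the $d$-minimal alternative. Here is where the hypothesis that $d$ is an internal degree enters crucially: as observed just after the definition of asymptotic $d$-smallness, for internal degrees the class of asymptotically $d$-small spaces is closed under passing to subspaces. Hence $Y$, being a subspace of the asymptotically $d$-small space $X$, is itself asymptotically $d$-small. By \prettyref{cor:MinimalAsymp}, an asymptotically $d$-small space cannot be $d$-minimal, so $Y$ must fall into the tight alternative, i.e. $Y$ is $d$-tight.

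Now by \prettyref{thm:TNHImpliesErgodic}(2), every $d$-tight space is ergodic; thus $Y$ is ergodic. Finally, since ergodicity is stable under passing to super-spaces (equivalently, as noted in the introduction, a subspace of a non-ergodic space is non-ergodic), the fact that $X$ contains the ergodic subspace $Y$ forces $X$ to be ergodic as well. This concludes the argument.

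No step here is expected to be a serious obstacle, since all the heavy lifting has been done in the previous sections; the only subtle point is the use of internality of $d$ to transfer the asymptotic $d$-smallness from $X$ to the subspace $Y$ provided by the dichotomy, which is precisely the feature that fails for general (non-internal) degrees and is the reason the statement is formulated for internal degrees.
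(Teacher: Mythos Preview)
Your proof is correct and follows essentially the same approach as the paper. The only difference is cosmetic: the paper invokes \prettyref{cor:HMinErgodic} directly (contrapositively: no $d$-minimal subspace implies ergodic), whereas you unpack that corollary into its constituents (the second dichotomy, \prettyref{thm:TNHImpliesErgodic}, and the upward hereditariness of ergodicity).
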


\begin{proof}
Suppose $X$ is a $d$-large, asymptotically $d$-small Banach space. Then all subspaces of $X$ are asymptotically $d$-small, so $X$ has no $d$-minimal subspaces. By \prettyref{cor:HMinErgodic}, $X$ is ergodic.
\end{proof}

Theorem \ref{thm:AniscaGeneralization} is a generalization of
Anisca's \prettyref{thm:AniscaAsympHilbert}, which corresponds to the case of the degree defined by $d_{BM}(F, \ell_2^{\dim(F)})$. This degree is studied in details in the next section.

\bigskip\bigskip

\section{The Hilbertian degree}\label{sec:Hilbert}

In this last section, we study the consequences of all the previous results in the special case of the \textit{Hilbertian degree},
that is, the internal degree defined by $d_{BM}(F, \ell_2^{\dim(F)})$, for which small spaces are exactly Hilbertian spaces, as a consequence of Kwapi\'en's theorem \cite{kwapien}. We shall denote this degree $d_2$:
$$d_2(F):=d_{BM}(F,\ell_2^{\dim(F)}).$$
To save notation, $d_2$-better FDD's will sometimes be called \textit{better FDD's} in this section.
Let us spell out that a non-Hilbertian space is therefore a $d_2$-HI space if it contains no direct sum of two non-Hilbertian subspaces, and
 $d_2$-minimal if it embeds into all of its non-Hilbertian subspaces (``minimal among non-Hilbertian spaces"). An FDD
 is $d_2$-tight if all non-Hilbertian spaces are tight in it.
In the case of the Hilbertian degree, our two dichotomies can be summarized as follows:

\begin{thm}\label{thm:GowersList}
Let $X$ be a non-Hilbertian Banach space. Then $X$ has a non-Hilbertian subspace $Y$ satisfying one of the following mutually exclusive properties:
\begin{description}
    \item[(1)] $Y$ is \MNH{} and has a $d_2$-better UFDD;
    \item[(2)] $Y$ has a $d_2$-better \TNH{} UFDD;
    \item[(3)] $Y$ is \MNH{} and $d_2$-hereditarily indecomposable;
    \item[(4)] $Y$ is \TNH{} and $d_2$-hereditarily indecomposable.
\end{description}
\end{thm}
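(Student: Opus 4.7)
The plan is to chain our two dichotomies in the Hilbertian case, keeping track of the structural properties inherited between the two applications. First I would apply \prettyref{thm:FirstDichotofordegrees} to $X$ to obtain a non-Hilbertian subspace $Y_1 \subseteq X$ that either (A) has a $d_2$-better UFDD, or (B) is $d_2$-HI. In case (A) I would then apply \prettyref{thm:2ndDichotoForDegrees} to $Y_1$, invoking the ``moreover'' clause of \prettyref{thm:2ndDichoto} so that in the tight alternative the $d_2$-tight FDD of the resulting $Y_2 \subseteq Y_1$ is obtained as a block-FDD of $Y_1$'s UFDD, and is therefore itself a UFDD. In case (B) I would just apply the second dichotomy directly to $Y_1$ with no extra bookkeeping.

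Next I would tidy up each of the four resulting branches. In case (A)+tight, \prettyref{cor:GoodTightFDD} lets me pass to a further blocking of the $d_2$-tight UFDD that is also $d_2$-better; since blockings of UFDD's are UFDD's, this yields case (2). In case (A)+minimal, the $Y_2$ produced by the second dichotomy need not carry any FDD, so I would use \prettyref{lem:BessagaPelczynski} with $U = Y_2$ and the UFDD of $Y_1$ to obtain $Y_2' \subseteq Y_1$ spanned by an $\H_{d_2}$-good block-UFDD of $Y_1$'s UFDD with $Y_2' \sqsubseteq Y_2$, and then block via \prettyref{lem:VeryGoodBlocking} to make the UFDD $d_2$-better. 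For this to give case (1), I must check that $Y_2'$ is still $d_2$-minimal: if $T : Y_2' \hookrightarrow Y_2$ is the embedding and $Z \subseteq Y_2'$ is non-Hilbertian, then $T(Z)$ is a non-Hilbertian subspace of the $d_2$-minimal space $Y_2$, so $Y_2 \sqsubseteq T(Z) \cong Z$ and hence $Y_2' \sqsubseteq Y_2 \sqsubseteq Z$, as required. In case (B) both sub-alternatives inherit $d_2$-HI from $Y_1$ to $Y_2$ for free (any direct sum of non-Hilbertian subspaces inside $Y_2$ would also live inside $Y_1$), producing cases (3) and (4).

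Finally I would verify that the four classes are pairwise exclusive. A space with a $d_2$-better UFDD splits along the even/odd indices as a direct sum of two non-Hilbertian subspaces, so cases (1)–(2) exclude cases (3)–(4). A $d_2$-tight space cannot be $d_2$-minimal: after using \prettyref{cor:GoodTightFDD} to assume the tight FDD $(F_n)$ is simultaneously $d_2$-good, the non-Hilbertian subspace $[F_n \mid n \notin I_0]$ refuses any isomorphic copy of the ambient space, contradicting minimality. This rules out (1)–(3) against (2)–(4).

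The main obstacle is the case (A)+minimal branch: the second dichotomy's moreover clause gives us nothing in the minimal alternative, so $Y_2$ arrives without any FDD, and we must restore a $d_2$-better UFDD inside it without losing $d_2$-minimality. The Bessaga--Pe\l czy\'nski selection of \prettyref{lem:BessagaPelczynski} inside $Y_1$ is exactly the tool that lets us do this cleanly, and the small observation that $d_2$-minimality descends to any non-Hilbertian subspace which embeds into the minimal space is what makes the argument go through.
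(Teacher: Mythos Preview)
Your approach is correct and matches the paper's intent: the paper states \prettyref{thm:GowersList} without proof, treating it as the obvious combination of the two degree-dichotomies, and your chaining (first dichotomy, then second, using the ``moreover'' clauses and \prettyref{lem:BessagaPelczynski} for the (A)+minimal branch) is exactly the right way to fill in the details. The observation that $d_2$-minimality passes to $Y_2'$ via the embedding into $Y_2$ is the key point, and you handle it cleanly.

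There is one small slip in your mutual-exclusivity argument. In showing that a $d_2$-tight space cannot be $d_2$-minimal, you claim that $[F_n \mid n \notin I_0]$ refuses a copy of the ambient space; but the definition of tightness (\prettyref{defin:TightFDD}) only forbids $X \sqsubseteq [F_n \mid n \notin \bigcup_{i\in A} I_i]$ for \emph{infinite} $A$, and removing a single interval need not suffice (e.g.\ Tsirelson's space is tight in its own basis yet isomorphic to each of its hyperplanes). The fix is immediate: take $A = 2\N$, so that $\N \setminus \bigcup_{i\in A} I_i \supseteq \bigcup_{i \text{ odd}} I_i$ is infinite; then $[F_n \mid n \notin \bigcup_{i\in A} I_i]$ is non-Hilbertian by $d_2$-goodness of $(F_n)$, yet $X$ does not embed into it, contradicting $d_2$-minimality.
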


It is clear from the definitions that if a Banach space $X$ does not contain any isomorphic copy of $\ell_2$, then the \HHP{} property is just the HI property and the $d_2$-minimality is just classical minimality. It is also easy to check that if $X$ is not $\ell_2$-saturated, then our two local dichotomies do not provide more information than the original ones.

\smallskip

In the case of $\ell_2$-saturated Banach spaces,  \prettyref{thm:GowersList} is more interesting and can be seen as the starting point of a Gowers list for $\ell_2$-saturated, non-Hilbertian spaces. It would be interesting to extend and to study more carefully this Gowers list (this could also be done in the case of other degrees). In particular, in the case of $\ell_2$-saturated spaces, the only class of those defined by \prettyref{thm:GowersList} that we know to be nonempty is \textbf{(2)}, as it will be seen in \prettyref{cor:UnconditionalTNH}.

\begin{question}
Which classes of those defined by \prettyref{thm:GowersList} contain $\ell_2$-saturated Banach spaces?
\end{question}

It would also be interesting to know where the classical $\ell_2$-saturated spaces lie in this classification. Perhaps the most iconic example of such a space is James' quasi-reflexive space \cite{James}. Another important one is Kalton-Peck twisted Hilbert space \cite{KaltonPeck} $Z_2$. Since $Z_2$ has a $2$-dimensional UFDD which is symmetric and therefore is a good UFDD, the cases (3) and (4) are excluded for subspaces of $Z_2$. Of course other twisted Hilbert spaces than $Z_2$ are also relevant. Note that Kalton proved that (non-trivial) twisted Hilbert spaces fail to have an unconditional basis \cite{Kaltonuncbasis}. Another $\ell_2$-saturated space of interest could be G. Petsoulas' space \cite{petsoulas}, whose properties have some similarities (but are
weaker) than those of $d_2$-HI spaces.
Another example was announced very recently by Argyros, Manoussakis and Motakis and will be commented upon in the subsection on \HHP{} spaces.

\begin{question}
Does James' space belong to one of the classes defined by \prettyref{thm:GowersList}? If not, in which of those classes can we find subspaces of James' space?
\end{question}

\begin{question}
Does Kalton-Peck space contain a non-Hilbertian $d_2$-minimal subspace?
\end{question}

\bigskip

\subsection{The property of minimality among non-Hilbertian spaces}

In this subsection, we study basic properties of minimality among non-Hilbertian spaces (or $d_2$-minimality). This property is particularly important in the study of ergodicity, since in the case of the Hilbertian degree, \prettyref{cor:HMinErgodic} takes the following form:

\begin{thm}\label{thm:ErgodicMNH}

Every non-ergodic, non-Hilbertian separable Banach space contains a \MNH{} subspace.

\end{thm}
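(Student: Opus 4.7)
The plan is to derive this theorem as a direct specialisation of \prettyref{cor:HMinErgodic}(2) to the Hilbertian degree $d_2$, so essentially no new work is required beyond identifying the right dictionary between general degree-theoretic vocabulary and the Hilbertian one.

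First I would recall the key translations. By Kwapień's theorem, already invoked in \prettyref{exs:Degree}(2), the $d_2$-small Banach spaces are exactly the Hilbertian ones; consequently, ``$d_2$-large'' is just a synonym for ``non-Hilbertian''. Moreover, by the very definition of $d_2$-minimality stated at the beginning of \prettyref{sec:Hilbert}, a $d_2$-minimal space is nothing but a \MNH{} space, namely a non-Hilbertian space which isomorphically embeds into each of its non-Hilbertian subspaces. Under these identifications the hypothesis of \prettyref{thm:ErgodicMNH} becomes exactly the hypothesis of \prettyref{cor:HMinErgodic}(2) for $d=d_2$, and the conclusion becomes exactly its conclusion.

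Hence the proof is simply: let $X$ be a non-ergodic, non-Hilbertian separable Banach space. Then $X$ is $d_2$-large, so \prettyref{cor:HMinErgodic}(2) applied to $d=d_2$ yields a $d_2$-minimal subspace $Y \subseteq X$, i.e.\ a \MNH{} subspace of $X$. There is no genuine obstacle here; the whole content has already been absorbed into the machinery of the second dichotomy (\prettyref{thm:2ndDichoto}) together with the fact that $d$-tight spaces are ergodic (\prettyref{thm:TNHImpliesErgodic}), which together give \prettyref{cor:HMinErgodic}. The only thing worth highlighting in the write-up is the explicit invocation of Kwapień's theorem to justify that the family $\H_{d_2}^X$ of non-Hilbertian subspaces of $X$ coincides with the family of $d_2$-large subspaces, and that $\mathcal H_{d_2}$ is automatically invariant under isomorphism so that part (1) of \prettyref{cor:HMinErgodic} could equally well have been cited.
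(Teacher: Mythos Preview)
Your proposal is correct and matches the paper's approach exactly: the paper simply states that in the case of the Hilbertian degree, \prettyref{cor:HMinErgodic} takes this form, with no further argument given. Your explicit identification of the dictionary ($d_2$-large $\leftrightarrow$ non-Hilbertian via Kwapie\'n, $d_2$-minimal $\leftrightarrow$ \MNH{}) is precisely what is implicit in the paper's one-line reduction.
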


In particular, Ferenczi--Rosendal's \prettyref{conj:Ergodic} reduces to the special case of \MNH{} spaces.

\smallskip

Concerning their relationship with Johnson's \prettyref{question:Johnson}, we can even say more. Indeed, the following result has been proved by Anisca \cite{AniscaUnconditional} (originally under a finite cotype hypothesis which may be removed due to, e.g., 
 \prettyref{thm:CuellarNearHilbert}).

\begin{thm}[Anisca]\label{thm:AniscaUnconditional}
A separable Banach space having finitely many different subspaces, up to isomorphism, contains an isomorphic copy of $\ell_2$.
\end{thm}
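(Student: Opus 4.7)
The plan is to argue by contradiction. Assume $X$ is separable with only finitely many subspaces up to isomorphism but does not contain $\ell_2$. Then no subspace of $X$ is Hilbertian (any Hilbertian separable space would contain $\ell_2$), $X$ is very far from being ergodic, and by \prettyref{thm:CuellarNearHilbert} $X$ has type $p$ and cotype $q$ for every $p<2<q$; in particular $X$ has finite cotype, which is precisely what lets us bypass the cotype hypothesis in Anisca's original argument.

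By \prettyref{thm:FerencziMinimal} applied to $X$, there is a minimal subspace $Y \subseteq X$, which is still non-Hilbertian, has finitely many subspaces up to isomorphism, and has finite cotype. Applying Gowers' first dichotomy (\prettyref{thm:Gowers1stDichoto}) to $Y$ rules out the HI alternative: an HI subspace $W \subseteq Y$ would satisfy $Y \sqsubseteq W$ by minimality, hence $Y$ itself would be HI (the HI property being hereditary), but \prettyref{thm:GowersMaurey} then contradicts the minimality of $Y$ applied to any proper subspace. So $Y$ has a subspace with an unconditional basis; as minimality passes to subspaces, we may replace $Y$ and assume $Y$ itself has an unconditional basis $(e_n)_{n\in\N}$.

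The central combinatorial step is a Ramsey-type argument: the assignment $A \mapsto [\text{iso-class of } [e_n\mid n\in A]]$ partitions $[\N]^\infty$ into finitely many analytic pieces, so by Silver's \prettyref{thm:Silver} there is an infinite $M\subseteq\N$ on which it is constant. Combined with \prettyref{square} (under non-ergodicity, $Y\cong Y\oplus[e_n\mid n\in A]$ for every subsequence) and with the finite cotype estimate, successive further extractions -- via stability arguments of Brunel--Sucheston / Krivine type -- upgrade isomorphism of spans to genuine equivalence of $(e_n)_{n\in M}$ with its subsequences, and eventually to a symmetric basis. This upgrade from ``all subsequence-spans isomorphic'' to ``basis symmetric'' is the main technical obstacle, and is exactly where finite cotype enters Anisca's original argument.

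Finally, a classical theorem of Pe\l czy\'nski asserts that a minimal Banach space with symmetric basis is isomorphic to $\ell_p$ for some $1\leqslant p<\infty$ or to $c_0$; non-Hilbertianness rules out $p=2$. But by Ferenczi--Galego (for $1\leqslant p<2$ and $c_0$) and by \prettyref{thm:CuellarNearHilbert} (which rules out $\ell_p$ for $p>2$ as non-ergodic), each of these spaces is ergodic and hence admits continuum many non-isomorphic subspaces, contradicting the assumption on $X$. Therefore $X$ must contain an isomorphic copy of $\ell_2$.
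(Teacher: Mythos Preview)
Your argument has a fatal error at the final step. The ``classical theorem of Pe\l czy\'nski'' you invoke --- that a minimal Banach space with a symmetric basis must be $\ell_p$ or $c_0$ --- does not exist and is in fact false. Schlumprecht's space $S$ (referenced in the paper as \cite{schlumprecht}) is minimal and has a $1$-symmetric basis, yet is not isomorphic to any $\ell_p$ or to $c_0$; indeed the paper itself lists it among the known minimal spaces alongside the $\ell_p$'s. You may be thinking of Zippin's theorem, but that requires the much stronger hypothesis that every normalized block sequence is equivalent to the basis, not merely that the space is minimal.

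There is a second serious gap earlier: the passage from ``all infinite subsequence-spans are isomorphic'' (which Silver's theorem does give you) to ``the basis is equivalent to all its subsequences'', and then to ``symmetric'', is not a routine Brunel--Sucheston/Krivine extraction. Isomorphism of the closed spans $[e_n\mid n\in A]$ and $[e_n\mid n\in B]$ says nothing a priori about the basic sequences $(e_n)_{n\in A}$ and $(e_n)_{n\in B}$ being equivalent, and stability/spreading-model arguments produce new sequences rather than upgrading the original one. This is precisely \emph{not} what Anisca does with the finite-cotype hypothesis.

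For comparison, the paper does not prove this theorem but summarizes Anisca's actual method: in an unconditional space with finite cotype not containing $\ell_2$, one constructs for each $n$ a subspace that has an $n$-dimensional UFDD but no UFDD of smaller dimension. Since the minimal UFDD-dimension is an isomorphic invariant, these subspaces are pairwise non-isomorphic, contradicting finiteness. The reduction to finite cotype via \prettyref{thm:CuellarNearHilbert} (your first paragraph) is correct and is exactly how the paper removes Anisca's original hypothesis, but the rest of your route diverges and does not reach the conclusion.
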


The result of Anisca is based on the construction, in unconditional spaces with finite cotype not containing copies of $\ell_2$, and for each $n$, of a subspace  having $n$-dimensional UFDD's but no UFDD of smaller dimension.


\smallskip

 In particular, this applies to Johnson spaces and we get:

\begin{prop}

Every Johnson space is \MNH{}.

\end{prop}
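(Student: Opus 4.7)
The plan is to combine Anisca's \prettyref{thm:AniscaUnconditional} with a simple counting argument on the isomorphism classes of subspaces of a Johnson space. Let $X$ be a Johnson space, so by definition $X$ has exactly two isomorphism classes of subspaces. First I would observe that $X$ itself cannot be Hilbertian: indeed, a Hilbertian space has only one isomorphism class of subspaces (namely $[\ell_2]$), which would contradict having exactly two classes. In particular, $[X]$ is a non-Hilbertian isomorphism class, and the other class must contain some subspace distinct from $X$ up to isomorphism.

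Next I would invoke \prettyref{thm:AniscaUnconditional} (Anisca's theorem): a separable Banach space having only finitely many different subspaces up to isomorphism must contain an isomorphic copy of $\ell_2$. Applying this to $X$, we obtain $\ell_2 \sqsubseteq X$, so $[\ell_2]$ is one of the two isomorphism classes of subspaces of $X$. Since $[\ell_2]$ is Hilbertian and $[X]$ is not, the two classes are precisely $[\ell_2]$ and $[X]$, and every subspace of $X$ is isomorphic either to $\ell_2$ or to $X$.

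Finally I would conclude the $d_2$-minimality of $X$ as follows. Let $Y \subseteq X$ be an arbitrary non-Hilbertian subspace. Then $Y$ cannot be isomorphic to $\ell_2$, so it must lie in the only remaining isomorphism class, $[X]$. Thus $Y \cong X$, which in particular gives an isomorphic embedding $X \sqsubseteq Y$. Since $X$ itself is non-Hilbertian and embeds into every non-Hilbertian subspace of itself, $X$ is $d_2$-minimal by definition.

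There is essentially no technical obstacle here once Anisca's result is available: the argument is a short counting argument. The only point that requires a moment of thought is ruling out the possibility that $X$ is Hilbertian, but this is immediate from the very definition of a Johnson space as having \emph{exactly} (rather than at most) two isomorphism classes of subspaces.
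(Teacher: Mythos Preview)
Your proof is correct and is exactly the argument the paper has in mind: the paper simply says ``In particular, this applies to Johnson spaces and we get'' immediately after stating Anisca's \prettyref{thm:AniscaUnconditional}, leaving the details implicit, and your write-up fills in precisely those details.
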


 In the rest of this paper, \MNH{} spaces that are not minimal will be called \textit{non-trivial \MNH{}}; these spaces are necessarily $\ell_2$-saturated. We do not know any example of a non-trivial \MNH{} space. If $X$ is such a space, then \prettyref{lem:UnifLarge} shows that there is a uniform lower bound on the growth rates of the functions $n \mapsto \sup\{d_{BM}(F, \ell_2^n) \mid F \in \Subfin(Y), \; \dim(F)=n\}$, where $Y$ ranges over non-Hilbertian subspaces of $X$. This very surprising property suggests that either non-trivial \MNH{} spaces do not exist, or the structure of their finite-dimensional subspaces is rather peculiar. Note that, however, this uniform growth property holds for the spaces $L_p$ for $2 < p < \infty$, which are not $\ell_2$-saturated (nor 
 \MNH{}) but contain copies of $\ell_2$. This is a consequence of the fact that the spaces $L_p$ for $2 < p < \infty$ are finitely representable in all of their non-Hilbertian subspaces (this can be obtained from Proposition 3.1 in \cite{PelczynskiRosenthal}).

\begin{question}\label{question:ExistMNH}
Does there exist a non-trivial \MNH{} space?
\end{question}



We now study additional properties of \MNH{} spaces, in particular those related to the existence of basic sequences. In the case of the Hilbertian degree, \prettyref{cor:MinimalAsymp} takes the following form:

\begin{prop}\label{prop:AsympMNH}

An asymptotically Hilbertian Banach space cannot be \MNH{}.

\end{prop}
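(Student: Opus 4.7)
The plan is to observe that \prettyref{prop:AsympMNH} is an immediate specialization of the general result \prettyref{cor:MinimalAsymp} to the Hilbertian degree $d_2$. Indeed, three ingredients suffice: that $d_2$ is an \emph{internal} degree (its value $d_2(F) = d_{BM}(F, \ell_2^{\dim(F)})$ depends only on $F$ and not on any ambient space, so the hypothesis of \prettyref{cor:MinimalAsymp} is satisfied); that $d_2$-minimality is, by definition, exactly minimality among non-Hilbertian spaces; and that ``asymptotically $d_2$-small'' coincides with ``asymptotically Hilbertian'' in the sense of \prettyref{defin:AsympHilbert}, as already recorded in the paper immediately after that definition.

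So the proof I would write consists of a single step: invoke \prettyref{cor:MinimalAsymp} with $d = d_2$. The contrapositive formulation reads that if $X$ is $d_2$-minimal, then $X$ is not asymptotically $d_2$-small, i.e.\ not asymptotically Hilbertian; equivalently, an asymptotically Hilbertian space cannot be $d_2$-minimal, which is the statement of \prettyref{prop:AsympMNH}.

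There is no real obstacle here, since the heavy lifting was already done in \prettyref{lem:UnifLarge} and \prettyref{cor:MinimalAsymp}: the key point, applied to $d_2$, is that a $d_2$-minimal space must contain, inside every one of its non-Hilbertian subspaces and for every dimension $n$, an $n$-dimensional subspace whose Banach-Mazur distance to $\ell_2^n$ is at least $\Gamma(n) \to \infty$; this manifestly prevents having, for some fixed constant $K$, a finite-codimensional subspace in which all $n$-dimensional subspaces are $K$-isomorphic to $\ell_2^n$.
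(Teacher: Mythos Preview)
Your proposal is correct and matches the paper's approach exactly: the paper does not even include a separate proof environment for this proposition, but simply introduces it with the sentence ``In the case of the Hilbertian degree, \prettyref{cor:MinimalAsymp} takes the following form,'' which is precisely the specialization you describe.
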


\begin{exa}\label{exa:UnconditionalTNH}

Let $(p_n)_{n \in \N}$ be a sequence of real numbers greater than $1$ and tending to $2$, and let $(k_n)_{n \in \N}$ be a sequence of natural numbers tending to $\infty$ such that\linebreak $\lim_{n \to \infty} d_{BM}(\ell_{p_n}^{k_n}, \ell_2^{k_n}) = \infty$. Consider the space $X = \left(\bigoplus_{n \in \N} \ell_{p_n}^{k_n}\right)_{\ell_2}$. This space has a better UFDD and is non-Hilbertian, $\ell_2$-saturated and asymptotically Hilbertian (this last property can be obtained as a consequence of Corollary 5 in \cite{Lewis}). In particular, it cannot have a \MNH{} subspace. So by \prettyref{thm:2ndDichotoForDegrees}, some block-FDD of its UFDD is \TNH{}. 
\end{exa}

\prettyref{exa:UnconditionalTNH} shows:

\begin{cor}\label{cor:UnconditionalTNH}
The class of non-Hilbertian, $\ell_2$-saturated Banach spaces having a better  \TNH{} UFDD is nonempty.
\end{cor}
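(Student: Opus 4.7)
The plan is to extract the witness directly from Example \ref{exa:UnconditionalTNH}, whose construction already does most of the work, and then refine the resulting FDD so as to simultaneously satisfy all three properties (UFDD, \TNH{}, better).

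First I would take the space $X = \left(\bigoplus_{n \in \N} \ell_{p_n}^{k_n}\right)_{\ell_2}$ with its canonical UFDD $(F_n)_{n \in \N}$ given by the summands $\ell_{p_n}^{k_n}$, as in \prettyref{exa:UnconditionalTNH}. By construction, $X$ is non-Hilbertian (because $d_2(F_n) = d_{BM}(\ell_{p_n}^{k_n}, \ell_2^{k_n}) \to \infty$, which also shows this UFDD is $d_2$-better), is $\ell_2$-saturated (each summand $\ell_{p_n}^{k_n}$ is close to Hilbertian and the $\ell_2$-sum structure allows to extract Hilbertian subspaces inside arbitrary subspaces, see \cite{Lewis}), and is asymptotically Hilbertian in the sense of \prettyref{defin:AsympHilbert}.

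Next I would apply the second dichotomy for degrees (\prettyref{thm:2ndDichotoForDegrees}) to $X$ with the degree $d = d_2$, using the already given UFDD $(F_n)$ as the fixed FDD of $X$ required by the ``moreover'' clause of \prettyref{thm:2ndDichoto}. Since asymptotic Hilbertianity passes to subspaces, no subspace $Y \subseteq X$ can be \MNH{}: this is exactly the content of \prettyref{prop:AsympMNH} (a corollary of \prettyref{cor:MinimalAsymp} specialised to the Hilbertian degree). Therefore the dichotomy forces a non-Hilbertian subspace $Y$ that is $d_2$-tight, and the ``moreover'' clause provides a $d_2$-tight FDD $(G_m)_{m \in \N}$ of $Y$ that is a block-FDD of $(F_n)$. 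A block-FDD of a UFDD is itself a UFDD (with unconditional constant at most that of $(F_n)$), so $Y$ already has a \TNH{} UFDD.

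Finally I would upgrade this UFDD to be $d_2$-better. By \prettyref{lem:VeryGoodBlocking} applied to the $d_2$-large space $Y$ and its FDD $(G_m)$, there is a blocking $(H_k)_{k \in \N}$ of $(G_m)$ that is $d_2$-better; by \prettyref{lem:HereditaryTightness} part 2, this blocking remains $d_2$-tight; and again, a blocking of a UFDD is a UFDD. The resulting space $[H_k \mid k \in \N] \subseteq Y \subseteq X$ is therefore a non-Hilbertian, $\ell_2$-saturated Banach space carrying a $d_2$-better \TNH{} UFDD, witnessing that the class in the statement is nonempty. The only subtle step is the simultaneous preservation of all three properties under blockings, but each of them is individually stable under blockings, so composing the two operations (first the tight block-FDD from the dichotomy, then the better blocking from \prettyref{lem:VeryGoodBlocking}) causes no conflict.
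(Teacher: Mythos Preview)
Your proof is correct and follows essentially the same route as the paper: the paper's argument is entirely contained in \prettyref{exa:UnconditionalTNH}, which takes the same space $X$, invokes asymptotic Hilbertianity together with \prettyref{prop:AsympMNH} to rule out the \MNH{} alternative, and then applies the second dichotomy to obtain a \TNH{} block-FDD of the canonical UFDD. You are in fact slightly more careful than the paper in spelling out the final blocking via \prettyref{lem:VeryGoodBlocking} and \prettyref{lem:HereditaryTightness} to secure the ``better'' property while preserving tightness and unconditionality; the paper leaves this routine step implicit. One cosmetic remark: since $(H_k)$ is a \emph{blocking} of $(G_m)$, the space $[H_k \mid k \in \N]$ equals $Y$ itself rather than a proper subspace, so your chain of inclusions should read $[H_k \mid k \in \N] = Y \subseteq X$.
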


The property of being asymptotically Hilbertian is closely related to property (H) of Pisier.

\begin{defin}[Pisier, \cite{PisierWeakHilbert}]

A Banach space $X$ is said to have the property (H) if for every $\lambda  \geqslant 1$, there exists a constant $K(\lambda)$ such that for every finite, normalized, $\lambda$-unconditional basic sequence $(x_i)_{i < n}$ of elements of $X$, we have: $$\frac{\sqrt{n}}{K(\lambda)} \leqslant \left\|\sum_{i < n} x_i\right\| \leqslant K(\lambda)\sqrt{n}.$$

\end{defin}

Recall that all normalized $\lambda$-unconditional basic sequences in Hilbert spaces are $\lambda$-equivalent to the canonical basis of $\ell_2$ (see for instance \cite{kalton}, Theorem 8.3.5). A consequence is that every Hilbertian space has property (H). Thus, property (H) is a property of proximity to Hilbertian spaces. The proof of the following result of Johnson (unpublished) can be found in Pisier's paper \cite{PisierWeakHilbert}.

\begin{prop}[Johnson]

Every space with property (H) is asymptotically Hilbertian.

\end{prop}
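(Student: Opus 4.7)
The plan is to extract from property (H) a uniform $\ell_2$-structure on sufficiently deep finite-dimensional subspaces of $X$, ultimately via the asymptotic structure machinery developed by Maurey, Milman, and Tomczak-Jaegermann (as laid out in detail in \cite{PisierWeakHilbert}). First I would use property (H) numerically: for any $\lambda \geqslant 1$, every normalized $\lambda$-unconditional basic sequence of length $n$ in $X$ has sum of norm in $[\sqrt{n}/K(\lambda),\, K(\lambda)\sqrt{n}]$. Applied to a hypothetical $1$-unconditional block sequence equivalent to the canonical basis of $\ell_p^n$, the sum would equal $n^{1/p}$, forcing $p=2$. Combined with the Maurey--Pisier theorem, this rules out $\ell_p^n$'s being uniformly finitely representable in $X$ for any $p \neq 2$, so $X$ has type $p$ and cotype $q$ for every $1 \leqslant p < 2 < q \leqslant \infty$, and moreover the corresponding constants on finite-codimensional subspaces of $X$ can be taken arbitrarily close to the Hilbertian values.

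Next I would promote this block-sequence control to control on \emph{arbitrary} $n$-dimensional subspaces. The cleanest formulation goes through asymptotic spaces in the sense of Maurey--Milman--Tomczak-Jaegermann: property (H) forces every asymptotic space of $X$ to be uniformly $K$-isomorphic to $\ell_2^n$, and this in turn is equivalent to asymptotic Hilbertianness. To see the implication on asymptotic spaces one picks an Auerbach basis in such a space, shows it is uniformly unconditional via a Gaussian--Rademacher comparison using the nearly-Hilbertian type and cotype constants available in sufficiently deep finite-codimensional subspaces, and then applies property (H) to the resulting uniformly unconditional basis to obtain the $\ell_2^n$-equivalence with a constant depending only on $K(\cdot)$ and on the ambient type/cotype constants, hence independent of $n$.

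The central obstacle is precisely this second step: passing from block-sequence information (directly accessible via property (H) together with Krivine's theorem) to control on every $n$-dimensional subspace of a finite-codimensional subspace, since an arbitrary such subspace need not be spanned by a block sequence of any fixed basis of $X$. Establishing uniform unconditionality of an Auerbach basis relies on Gaussian averaging and the subtle interplay between type and cotype of the ambient space. This is why Johnson's theorem, though superficially a short consequence of (H), ultimately rests on Pisier's full asymptotic structure theory rather than on the soft Ramsey-type methods developed earlier in this paper, and this is also the reason the present work only cites, rather than reproves, the result.
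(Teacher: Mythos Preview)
The paper does not prove this proposition; it explicitly attributes the result to Johnson (unpublished) and refers the reader to \cite{PisierWeakHilbert} for the argument. There is thus no proof in the present paper to compare against, a point your final paragraph already recognizes.

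Viewed as a sketch of the actual argument, however, your proposal has real gaps beyond the honest caveats you include. First, the Maurey--Milman--Tomczak-Jaegermann asymptotic-space framework postdates \cite{PisierWeakHilbert}, so it is certainly not the route Johnson's proof takes; more substantively, your claimed equivalence between ``every MMT asymptotic space of $X$ is uniformly $\ell_2^n$'' and asymptotic Hilbertianness in the sense of \prettyref{defin:AsympHilbert} is not correct without major additional input. The MMT game controls \emph{sequences} chosen successively from finite-codimensional subspaces, while \prettyref{defin:AsympHilbert} requires a single finite-codimensional subspace all of whose $n$-dimensional subspaces---with no distinguished basis---are close to $\ell_2^n$. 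Bridging these is precisely the hard part, not a known equivalence one may quote. Second, the proposed mechanism (``pick an Auerbach basis, show it is uniformly unconditional via a Gaussian--Rademacher comparison using nearly-Hilbertian type and cotype'') is not a recognizable argument: type and cotype inequalities bound Rademacher averages of arbitrary vectors, they do not produce unconditionality of a fixed Auerbach system. Third, even granting a uniformly $\lambda$-unconditional normalized basis $(x_i)_{i<n}$, property (H) only controls $\bigl\|\sum_{i\in A} x_i\bigr\|$ for subsets $A$; this does \emph{not} yield $\bigl\|\sum_i a_i x_i\bigr\|\asymp(\sum a_i^2)^{1/2}$ with a dimension-free constant. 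For instance, the canonical basis of the Lorentz space $\ell_{2,1}^n$ is $1$-unconditional and satisfies $\bigl\|\sum_{i\in A} e_i\bigr\|\asymp\sqrt{|A|}$, yet $d_{BM}(\ell_{2,1}^n,\ell_2^n)\to\infty$. So ``apply property (H) to obtain the $\ell_2^n$-equivalence'' is a genuine non sequitur, not merely a step left to the reader.
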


In particular, \MNH{} spaces fail property (H). A consequence is the following: 

\begin{lem}\label{lem:MNHandPropH}

Let $X$ be a \MNH{} space. Then there exists $\lambda_0 \geqslant 1$ satisfying the following property: in every non-Hilbertian subspace $Y$ of $X$, one can find finite-dimensional subspaces $F$ with a normalized $\lambda_0$-unconditional basis for which the Banach-Mazur distance $d_{BM}(F, \ell_2^{\dim(F)})$ is arbitrarily large.

\end{lem}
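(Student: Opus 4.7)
The plan is to combine two ingredients: the failure of Pisier's property (H) for $X$, and the uniform constant coming from $d_2$-minimality.

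First, because $X$ is \MNH{}, \prettyref{prop:AsympMNH} shows that $X$ cannot be asymptotically Hilbertian. By Johnson's result quoted above (property (H) implies asymptotically Hilbertian), $X$ must fail property (H). So there exists $\lambda_0 \geqslant 1$ such that for every $K \geqslant 1$ one can find a finite normalized $\lambda_0$-unconditional basic sequence $(x_i)_{i<n}$ in $X$ violating one of the two inequalities $\sqrt{n}/K \leqslant \|\sum_{i<n} x_i\| \leqslant K\sqrt{n}$.

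Second, I would translate this failure into a lower bound on the Banach–Mazur distance. The key observation is that in a Hilbert space, a normalized $\mu$-unconditional basic sequence $(z_i)_{i<n}$ satisfies the two-sided estimate $\sqrt{n}/\mu \leqslant \|\sum z_i\| \leqslant \mu\sqrt{n}$; this follows by averaging over signs together with the parallelogram identity. Hence, if $F := [x_i \mid i<n]$ satisfies $d_{BM}(F, \ell_2^n) \leqslant M$, then pushing the sequence $(x_i)$ through an isomorphism $T \colon F \to \ell_2^n$ with $\|T\|\cdot\|T^{-1}\|\leqslant M$, renormalizing, and transferring back yields a bound of the form $\sqrt{n}/(\lambda_0 M^2) \leqslant \|\sum x_i\| \leqslant \lambda_0 M^2 \sqrt n$ (the unconditionality constant of the image is at most $\lambda_0 M$, and the norms of the images lie in $[1/M,1]$). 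Consequently, the $\lambda_0$-unconditional sequences produced by the failure of (H) for increasingly large $K$ must sit in finite-dimensional subspaces whose distance to $\ell_2^n$ tends to infinity.

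Third, I would transfer these finite-dimensional examples from $X$ to an arbitrary non-Hilbertian subspace $Y \subseteq X$. By \prettyref{prop:UnifMinimal}, there is a uniform constant $C \geqslant 1$ such that $X$ $C$-isomorphically embeds into every non-Hilbertian subspace of $X$; fix such an embedding $T \colon X \to Y$. Given a finite normalized $\lambda_0$-unconditional basic sequence $(x_i)_{i<n}$ in $X$ with $d_{BM}([x_i], \ell_2^n) > CM'$, set $y_i := Tx_i/\|Tx_i\|$. A direct computation shows that $(y_i)$ is a normalized $\lambda_0 C$-unconditional basic sequence in $Y$, and since $T$ restricted to $[x_i]$ is a $C$-isomorphism onto $[y_i]$, we get $d_{BM}([y_i], \ell_2^n) \geqslant d_{BM}([x_i], \ell_2^n)/C > M'$. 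Taking $\lambda_0' := \lambda_0 C$ then proves the lemma.

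The only nontrivial step is the second one, the quantitative comparison between failure of property (H) and size of the Banach–Mazur distance to $\ell_2^n$; the remaining arguments amount to routine bookkeeping for unconditionality constants under isomorphism and renormalization.
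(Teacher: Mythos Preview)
Your proof is correct and follows essentially the same route as the paper: both combine the failure of property (H) for $X$ (via \prettyref{prop:AsympMNH} and Johnson's result) with the uniform embedding constant from \prettyref{prop:UnifMinimal}, and both hinge on the same quantitative step linking a bound on $d_{BM}(F,\ell_2^n)$ to a bound of the form $\sqrt{n}/K \leqslant \|\sum x_i\| \leqslant K\sqrt{n}$. The only cosmetic differences are that the paper reduces to $Y=X$ first and then argues by contradiction (and cites the fact that a normalized $\mu$-unconditional sequence in $\ell_2$ is $\mu$-equivalent to the canonical basis, rather than your averaging argument), whereas you prove the estimate in $X$ directly and then transfer, picking up a factor $C$ in the final $\lambda_0$; your exponent in the bound $\lambda_0 M^2$ is slightly optimistic, but the precise constant is irrelevant to the argument.
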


\begin{proof}

By \prettyref{prop:UnifMinimal}, $X$ uniformly embeds into all of its non-Hilbertian subspaces. In particular, it is enough to prove the result in the case where $Y = X$. Let $\lambda_0$ be witnessing that $X$ fails property (H). Towards a contradiction, suppose the existence of a constant $C$ such that every finite-dimensional subspace of $X$ with a normalized\linebreak $\lambda_0$-unconditional basis is $C$-isomorphic to a Euclidean space. Let $F$ be such a subspace and $(x_i)_{i < n}$ be its unconditional basis. Choose an isomorphism $T \colon F \to \ell_2^n$ with $\|T\| \leqslant C$ and $ \|T^{-1}\| = 1$, and let $y_i = T(x_i)$ and $z_i = \frac{y_i}{\|y_i\|}$ for all $i < n$. Then $(y_i)_{i < n}$ is\linebreak $C\lambda_0$-unconditional, and so is $(z_i)_{i < n}$. Hence, $(z_i)$ is $C\lambda_0$-equivalent to the canonical basis of $\ell_2^n$. Since, for all $i < n$, we have $1 \leqslant \|y_i\|\leqslant C$, and since $(z_i)$ is $C\lambda_0$-unconditional, we have, for every sequence $(a_i)_{i < n} \in \R^n$: $$\frac{1}{C\lambda_0}\left\|\sum_{i < n} a_i z_i\right\| \leqslant \left\|\sum_{i < n} a_i y_i\right\| \leqslant C^2 \lambda_0 \left\|\sum_{i < n} a_i z_i\right\|,$$ hence $(y_i)$ and $(z_i)$ are $C^2\lambda_0$-equivalent. Moreover, we know that $(x_i)$ and $(y_i)$ are $C$-equivalent. We deduce that $(x_i)$ is $C^4\lambda_0^2$-equivalent to the canonical basis of $\ell_2^n$. In particular, for $K = C^4\lambda_0^2$, we have: $$\frac{\sqrt{n}}{K} \leqslant \left\|\sum_{i < n} x_i\right\| \leqslant K\sqrt{n},$$ contradicting the choice of $\lambda_0$.

\end{proof}

\begin{thm}\label{thm:BasesMNH}

\alaligne

\begin{enumerate}

\item Every \MNH{} space has a non-Hilbertian subspace with a Schauder basis.

\item Every \MNH{} space having an unconditional FDD has a non-Hilbertian subspace with an unconditional basis.

\end{enumerate}

\end{thm}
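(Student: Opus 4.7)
The plan is to realize a $\lambda_0$-unconditional finite-dimensional structure inside a block-FDD of a subspace of $X$, via \prettyref{lem:MNHandPropH} combined with a Bessaga--Pe\l czy\'nski-style perturbation. For Part 1, since $X$ is $d_2$-large, \prettyref{lem:GoodFDD} yields a non-Hilbertian subspace $X_0 \subseteq X$ with a $d_2$-good FDD, which after applying \prettyref{lem:VeryGoodBlocking} may be assumed to be a $d_2$-better FDD $(E_k)_{k \in \N}$ with FDD constant $C$. By \prettyref{lem:MNHandPropH} applied to the $d_2$-minimal space $X$, fix $\lambda_0 \geq 1$ such that every non-Hilbertian subspace of $X$ contains $\lambda_0$-unconditional finite-dimensional subspaces of arbitrarily large $d_2$-value; in particular this holds in every tail $[E_k : k \geq M]$, which is non-Hilbertian because $(E_k)$ is $d_2$-good.

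I then build $(G_n)_{n \in \N}$ inductively as a block-FDD of $(E_k)$. Having constructed $G_0, \ldots, G_{n-1}$ supported on $[0, K_{n-1}]$ relative to $(E_k)$, pick $M_n > K_{n-1}$ and, inside the non-Hilbertian tail $[E_k : k \geq M_n]$, select a finite-dimensional $F_n$ with a normalized $\lambda_0$-unconditional basis $(u_i^n)_{1 \leq i \leq r_n}$ and $d_2(F_n) > n+2$. Using the small perturbation principle from Subsection \ref{subsec:DefNot} with tolerance $\varepsilon_n \to 0$, approximate each $u_i^n$ by a vector $g_i^n \in [E_k : M_n \leq k \leq K_n]$ for $K_n$ large enough, and set $G_n = [g_i^n : 1 \leq i \leq r_n]$. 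For $\varepsilon_n$ small enough, $(g_i^n)_i$ is $(1 + \varepsilon_n)$-equivalent to $(u_i^n)_i$, hence is a $\lambda_0(1 + \varepsilon_n)^2$-unconditional basis of $G_n$, and $G_n$ is $(1 + \varepsilon_n)^2$-isomorphic to $F_n$, so that $d_2(G_n) > n$ for $n$ large enough.

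The sequence $(G_n)_{n \in \N}$ is a block-FDD of $(E_k)$, hence an FDD of $W := [G_n : n \in \N]$ with constant at most $C$. Concatenating the bases $(g_i^n)_i$ across $n$ in the natural order yields a Schauder basic sequence of $W$ with basis constant bounded by $C(\lambda_0 + 1)$: within each $G_n$ the basis constant is controlled by the $\lambda_0(1+\varepsilon_n)^2$-unconditionality, while cross-block contributions are governed by the FDD constant $C$. Since $W$ contains each $G_n$ with $d_2(G_n) \to \infty$, $W$ is non-Hilbertian, completing Part 1. Part 2 is obtained by the identical argument, using as $(E_k)$ a blocking of the given unconditional FDD of $X$ which is $d_2$-better (unconditionality is preserved under blocking, as recalled in Subsection \ref{subsec:DefNot}); then $(G_n)$ is a block-FDD of a UFDD, hence a UFDD itself, with some unconditional constant $K$, and the concatenated basis of $W$ is consequently unconditional with constant at most $K(\lambda_0 + 1)$.

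The main obstacle is the Bessaga--Pe\l czy\'nski perturbation step, where one must simultaneously preserve a $(\lambda_0 + \varepsilon_n)$-unconditionality bound on the basis of $G_n$ and a near-isometry between $G_n$ and $F_n$ to carry over the $d_2$-distance estimate. Both are controlled by the small perturbation principle for bounded minimal systems of Subsection \ref{subsec:DefNot}, since a normalized $\lambda_0$-unconditional basis is automatically a $\lambda_0$-bounded minimal system; then choosing the tolerances $\varepsilon_n$ summable ensures the constructed $G_n$'s have controlled unconditionality and degrees $d_2(G_n) \geq d_2(F_n) / (1 + \varepsilon_n)^4 > n$, as required.
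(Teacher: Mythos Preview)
Your argument for Part~1 is correct and essentially the paper's approach: build an FDD whose summands carry uniformly $\lambda_0$-unconditional bases with $d_2$-degree tending to infinity, and concatenate.

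Part~2, however, has a genuine gap. You assert that if $(G_n)$ is a UFDD with constant $K$ and each $G_n$ carries a $(\lambda_0{+}1)$-unconditional basis, then the concatenated sequence is $K(\lambda_0{+}1)$-unconditional. This implication is false. The UFDD property bounds $\big\|\sum_n \eta_n x_n\big\|$ only for \emph{scalar} signs $\eta_n\in\{-1,1\}$; changing signs on the individual basis vectors inside $G_n$ replaces $x_n$ by some $y_n\in G_n$ with $\|y_n\|\leqslant(\lambda_0{+}1)\|x_n\|$, and a UFDD gives no control of $\big\|\sum_n y_n\big\|$ in terms of $\big\|\sum_n x_n\big\|$. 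A concrete obstruction is Kalton--Peck's space $Z_2$, already mentioned in the paper: it has a $2$-dimensional symmetric UFDD, every $2$-dimensional block carries (say) an Auerbach basis which is $2$-unconditional, and yet $Z_2$ admits no unconditional basis at all. So the concatenated basis cannot be unconditional, and your inequality $K(\lambda_0{+}1)$ cannot hold.

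The paper's proof of Part~2 supplies exactly the missing ingredient via a dichotomy. Either some normalized block-sequence of the given UFDD spans a non-Hilbertian subspace --- then that block-sequence is itself the desired unconditional basis. Or every normalized block-sequence spans a Hilbertian subspace; a uniformization argument then shows they are all $C$-equivalent to the $\ell_2$-basis for a single constant $C$. This uniform $\ell_2$-structure on block-sequences is what propagates unconditionality across blocks: writing $b_k=\|x_k\|$, $c_k=\|y_k\|$ (with $y_k$ the sign-changed version of $x_k$ inside $G_k$, so $c_k\leqslant 2\lambda_0 b_k$), one obtains
\[
\Big\|\sum_k y_k\Big\|\leqslant C\Big(\sum_k c_k^2\Big)^{1/2}\leqslant 2\lambda_0 C\Big(\sum_k b_k^2\Big)^{1/2}\leqslant 2\lambda_0 C^2\Big\|\sum_k x_k\Big\|.
\]
Without this step, or something replacing it, the argument cannot be completed.
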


A consequence of this theorem is that the alternative \textbf{(1)} in \prettyref{thm:GowersList} can be replaced with ``$Y$ is \MNH{} and has an unconditional basis''.

Knowing that every non-Hilbertian subspace of a Johnson space is isomorphic to the space itself, another consequence is:

\begin{cor}\label{cor:Johnsonbasis}

Every Johnson space has a Schauder basis. Moreover it has an unconditional basis if and only if it is isomorphic to its square.

\end{cor}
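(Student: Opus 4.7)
The plan is to first exploit the defining property of a Johnson space to collapse everything to statements about $X$ itself. Since $X$ is Johnson, by \prettyref{thm:AniscaUnconditional} it contains a copy of $\ell_2$; hence the two isomorphism classes of subspaces of $X$ are necessarily $\ell_2$ and $X$, so $X$ is non-Hilbertian and every non-Hilbertian subspace of $X$ is isomorphic to $X$. In particular $X$ is \MNH{}, and the isomorphism relation on $\Sub(X)$ has only two classes, so $X$ is non-ergodic.

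For the first assertion, I will apply \prettyref{thm:BasesMNH}(1) to the \MNH{} space $X$ to obtain a non-Hilbertian subspace $Y$ with a Schauder basis; by Johnsonness $Y \simeq X$, so $X$ has a Schauder basis.

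For the equivalence, the easy direction is: assume $X$ has an unconditional basis $(e_n)_{n\in\N}$. I will split $\N$ as a disjoint union $A\sqcup B$ of two infinite sets, giving a decomposition $X = [e_n\mid n\in A]\oplus[e_n\mid n\in B]$. Each summand is either $\simeq \ell_2$ or $\simeq X$; both cannot be $\ell_2$ (else $X$ would be Hilbertian), so at least one, say $[e_n\mid n\in A]$, is isomorphic to $X$. Since $X$ is non-ergodic with an unconditional basis, \prettyref{square} applies and gives $X \simeq X\oplus [e_n\mid n\in A]\simeq X\oplus X$.

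For the other direction, assume $X \simeq X\oplus X$, and apply the first dichotomy \prettyref{thm:FirstDichotofordegrees} with $d=d_2$ to $X$. If its $d_2$-HI alternative held, it would produce a $d_2$-HI, non-Hilbertian subspace of $X$; by Johnsonness this subspace is $\simeq X$, so $X$ would be $d_2$-HI. But $X \simeq X\oplus X$ exhibits $X$ as a direct sum of two non-Hilbertian subspaces, contradicting the $d_2$-HI property. Therefore the first alternative holds: $X$ has a non-Hilbertian subspace $Y$ with a $d_2$-better (in particular unconditional) UFDD; again $Y \simeq X$, so $X$ itself has an unconditional FDD. Now \prettyref{thm:BasesMNH}(2) produces a non-Hilbertian subspace of $X$ with an unconditional basis, which by Johnsonness must be $\simeq X$, concluding that $X$ has an unconditional basis. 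The only slightly delicate step is verifying that the $d_2$-HI alternative is genuinely incompatible with $X\simeq X\oplus X$, which is immediate from the definition of $d_2$-HI; otherwise the proof is simply an orchestrated use of \prettyref{thm:BasesMNH}, \prettyref{square}, and \prettyref{thm:FirstDichotofordegrees}.
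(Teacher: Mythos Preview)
Your proof is correct and follows essentially the same approach as the paper: the Schauder basis comes from \prettyref{thm:BasesMNH}(1) plus Johnsonness, and the equivalence combines \prettyref{square} with the first local dichotomy \prettyref{thm:FirstDichotofordegrees} and \prettyref{thm:BasesMNH}(2). The only minor difference is that for the direction ``unconditional basis $\Rightarrow$ isomorphic to its square'' you take a small detour by splitting the basis and identifying a summand with $X$, whereas the paper invokes \prettyref{square} directly (it already contains the conclusion $X \cong X \oplus X$ in its statement); your extra step is correct but unnecessary.
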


\begin{proof} If it has an unconditional basis then it is isomorphic to its square by Theorem \ref{square}. Conversely if it is isomorphic to its square then it is not $d_2$-HI and by the first local dichotomy (\prettyref{thm:FirstDichotofordegrees}), it must have a UFDD. It follows from \prettyref{thm:BasesMNH} that the space  has an unconditional basis.\end{proof}

A few additional restrictions on the existence of Johnson spaces follow from \prettyref{cor:Johnsonbasis}. 
Every Johnson space is HAPpy (every subspace has the Approximation Property).
If a Johnson space $X$ has an unconditional basis then it is reflexive, all its subspaces have GL-lust and therefore the GL-property,
so $X$ has weak cotype $2$ (Theorem 40 in \cite{MT}). On the other hand since $X$ is not weak Hilbert, 
$X$ cannot have weak type $2$ in this case (see \cite{PisierWeakHilbert} for these notions). 
For non-Hilbertian examples of
HAPpy spaces with a symmetric basis (and therefore also non asymptotically Hilbertian), see \cite{JohnsonSzankowski}. 

\smallskip

\prettyref{thm:BasesMNH} naturally opens the following two questions (the first one had already been asked by Pe\l czy\'nski \cite{Pelczynski}):

\begin{question}

\alaligne

\begin{enumerate}

\item Does every non-Hilbertian space have a non-Hilbertian subspace with a Schauder basis?

\item Does every non-Hilbertian space with unconditional FDD have a non-Hilbertian subspace with an unconditional basis?

\end{enumerate}

\end{question}

\begin{proof}[Proof of \prettyref{thm:BasesMNH}]

\alaligne

\begin{enumerate}
    
    \item Let $X$ be a \MNH{} space, and fix $\lambda_0$ as given by \prettyref{lem:MNHandPropH} for $X$. Build an FDD $(F_n)_{n \in \N}$ of a subspace of $X$, along with a decreasing sequence $(Y_n)_{n \in \N}$ of finite-codimensional subspaces of $X$, by induction as follows. Let $Y_0 = X$. The subspace $Y_n$ and all the $F_m$'s, for $m < n$, being built, we can find $F_n \subseteq Y_n$ with a normalized $\lambda_0$-unconditional basis such that $d_{BM}(F_n, \ell_2^{\dim(F_n)}) \geqslant n$. We then find a finite-codimensional subspace $Y_{n + 1} \subseteq Y_n$ with $Y_{n+1} \cap [F_m \mid m \leqslant n] = \{0\}$, such that the first projection $[F_m \mid m \leqslant n] \oplus Y_{n+1} \to [F_m \mid m \leqslant n]$ has norm at most $2$. This finishes the induction.
    
    \smallskip
    
    The sequence $(F_n)_{n \in \N}$ we just built is an FDD of a non-Hilbertian subspace $Y$ of $X$. It has constant at most $2$, and all the $F_n$'s have a basis with constant at most $\lambda_0$. Thus, concatenating these bases, we get a basis of $Y$ with constant at most $2+4\lambda_0$, as wanted.
    
    \smallskip
    
    \item Let $X$ be a \MNH{} space with a UFDD $(F_n)_{n \in \N}$. If $(F_n)$ has a normalized block-sequence spanning a non-Hilbertian subspace, then we are done. So from now on, we assume that every normalized block-sequence of $(F_n)$ spans a Hilbertian subspace. Since normalized block-sequences of $(F_n)$ are unconditional, we deduce that all of them are equivalent to the canonical basis of $\ell_2$. Our first step is to prove that this holds uniformly.
    
    \begin{claim}
    
    There exists a constant $C$ satisfying the following property: every normalized block-sequence of $(F_n)$ is $C$-equivalent to the canonical basis of $\ell_2$.
    
    \end{claim}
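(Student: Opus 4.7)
I would prove the claim by contradiction combined with a standard concatenation (``gliding hump'') argument. Assume the claim fails, so for every constant $C$ there is a normalized block-sequence of $(F_n)$ that is not $C$-equivalent to the canonical basis of $\ell_2$. The goal is to cook up a \emph{single} normalized block-sequence of $(F_n)$ that fails $C$-equivalence for every $C$; by the standing assumption in the proof of \prettyref{thm:BasesMNH}(2), together with the classical fact that normalized unconditional basic sequences in a Hilbert space are equivalent to the canonical basis of $\ell_2$, that single block-sequence would be $C_0$-equivalent to the $\ell_2$-basis for some finite $C_0$, a contradiction.

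The heart of the argument is a \emph{propagation-to-tails} lemma: if every normalized block-sequence of the tail $(F_m)_{m \geqslant L}$ is $C_L$-equivalent to the canonical basis of $\ell_2$, then every normalized block-sequence of $(F_n)$ is $C'$-equivalent to the canonical basis of $\ell_2$ for some $C'$ depending only on $L$, $C_L$, and the unconditional constant $\lambda$ of the UFDD $(F_n)$. The reason is that a normalized block-sequence $(y_i)_{i \in \N}$ of $(F_n)$ splits into a finite initial segment $(y_i)_{i<k}$ living in the finite-dimensional space $F_0 \oplus \cdots \oplus F_{L-1}$ (so $k \leqslant L$, and $(y_i)_{i<k}$ is automatically $L$-equivalent to the basis of $\ell_2^{k}$), and a tail $(y_i)_{i \geqslant k}$ which is a normalized block-sequence of $(F_m)_{m \geqslant L}$, hence $C_L$-equivalent to the canonical basis of $\ell_2$. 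Since $(y_i)_{i \in \N}$ is $\lambda$-unconditional, gluing the two pieces together via the triangle inequality and Cauchy--Schwarz (upper bound) and $\max(x,y) \geqslant 2^{-1/2}(x^2+y^2)^{1/2}$ combined with unconditionality (lower bound) yields an $\ell_2$-equivalence constant of order $\lambda \cdot \max(L, C_L)$.

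Contrapositively, the failure of the claim for $(F_n)$ propagates to every tail $(F_m)_{m \geqslant L}$. Combined with the elementary fact that equivalence of a normalized sequence to the canonical basis of $\ell_2$ is witnessed by finite sub-blocks, one obtains: for every $k \in \N$ and every $L \in \N$, there exists a finite normalized block $B_k = (y^k_1,\ldots, y^k_{l_k})$ of $(F_m)_{m \geqslant L}$ that is not $k$-equivalent to the canonical basis of $\ell_2^{l_k}$. Inductively choose integers $L_1 < L_2 < \cdots$ and such finite blocks $B_k$ of $(F_m)_{m \geqslant L_k}$ with $\max \supp(B_k) < L_{k+1}$. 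Their concatenation $(z_j)_j$ is a normalized block-sequence of $(F_n)$, hence $C$-equivalent to the canonical basis of $\ell_2$ for some $C$ by the standing hypothesis; but then each $B_k$, as a finite sub-sequence of $(z_j)$, is also $C$-equivalent to the basis of $\ell_2^{l_k}$, contradicting the choice of $B_k$ as soon as $k > C$. The main obstacle is the propagation step: carefully tracking the unconditional constant $\lambda$ through the gluing across the interface $[0,L-1]$ vs.\ $[L,\infty)$; once this is in hand, the remaining diagonalisation is routine.
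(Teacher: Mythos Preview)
Your proposal is correct and follows essentially the same approach as the paper: both reduce to tails (the paper simply asserts that the statement ``there exist $n_0$ and $C$ such that every normalized block-sequence of $(F_n)_{n\geqslant n_0}$ is $C$-equivalent to the $\ell_2$-basis'' is formally weaker but actually equivalent, which is precisely your propagation-to-tails lemma), and then concatenate finite ``bad'' blocks supported arbitrarily far out into a single normalized block-sequence that cannot be equivalent to the canonical basis of $\ell_2$, contradicting the standing hypothesis. The only difference is presentational: you spell out the propagation step explicitly, while the paper leaves it as an implicit exercise.
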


    \begin{proof}
    
    We prove the formally weaker, but actually equivalent, following statement: there exist $n_0 \in \N$ and a constant $C$ such that every normalized block-sequence of $(F_n)_{n \geqslant n_0}$ is $C$-equivalent to the canonical basis of $\ell_2$. Suppose that this does not hold. Then for every $n_0, N \in \N$ we can find a finite normalized block-sequence $(x_i)_{i < i_0}$ of $(F_n)_{n \geqslant n_0}$ which is not $N$-equivalent to the canonical basis of $\ell_2^{i_0}$. Applying this for successive values of $N$, we can build by induction a normalized block-sequence $(x_i)_{i \in \N}$ of $(F_n)$ and a sequence $0 = i_0 < i_1 < i_2 < \ldots$ such that for every $N \in \N$, the sequence $(x_i)_{i_N \leqslant i < i_{N+1}}$ is not $N$-equivalent to the canonical basis of $\ell_2^{i_{N+1} - i_N}$. In particular, $(x_i)_{i \in \N}$ is not equivalent to the canonical basis of $\ell_2$, a contradiction.
    
    \end{proof}
    
    We now finish the proof of \prettyref{thm:BasesMNH}, proceeding similarly as in 1. Fix $\lambda_0$ as given by \prettyref{lem:MNHandPropH} for $X$. Observe that for every $n_0 \in \N$, we can find a finite-dimensional subspace $G \subseteq [F_n \mid n \geqslant n_0]$, finitely supported on the FDD $(F_n)$, having a normalized $2\lambda_0$-unconditional basis and such that $d_{BM}(G, \ell_2^{\dim(G)})$ is arbitrarily large: indeed, it is enough to take a small perturbation of a (non-necessarily finitely supported) finite-dimensional subspace $G' \subseteq [F_n \mid n \geqslant n_0]$ with a normalized $\lambda_0$-unconditional basis and large $d_{BM}(G', \ell_2^{\dim(G')})$. Using this remark, we can build a better block-FDD $(G_k)_{k \in \N}$ of $(F_n)$ such that all of the $G_k$'s have a $2\lambda_0$-unconditional basis. Let $i_k = \sum_{l < k} \dim{G_l}$ for every $k \in \N$, and denote by $(x_i)_{i_k \leqslant i < i_{k + 1}}$ the unconditional basis of $G_k$. To conclude the proof, it is enough to prove that the sequence $(x_i)_{i \in \N}$ is unconditional.
    
    \smallskip
    
    So let $(a_i)_{i \in \N}$ be a finitely supported sequence of real numbers and $(\varepsilon_i)_{i \in \N}$ be a sequence of signs. For every $k \in \N$, let $b_k, c_k \geqslant 0$ and $y_k, z_k \in S_{G_k}$ be such that $b_ky_k=\sum_{i_k \leqslant i < i_{k + 1}} a_ix_i$ and $c_kz_k=\sum_{i_k \leqslant i < i_{k + 1}} \varepsilon_ia_ix_i$. In particular, we have $b_k = \left\|\sum_{i_k \leqslant i < i_{k + 1}} a_ix_i\right\|$ and $c_k = \left\|\sum_{i_k \leqslant i < i_{k + 1}} \varepsilon_ia_ix_i\right\|$, so since the sequence $(x_i)_{i_k \leqslant i < i_{k+1}}$ is $2\lambda_0$-unconditional, we have that $c_k \leqslant 2\lambda_0b_k$. Now, since $(y_k)_{k \in \N}$ and $(z_k)_{k \in \N}$ are normalized block-sequences of $(F_n)$, they are $C$-equivalent to the canonical basis of $\ell_2$. Thus, we have:
    \begin{eqnarray*}
    \left\|\sum_{i \in \N}\varepsilon_i a_i x_i\right\| & = & \left\|\sum_{k \in \N} c_k z_k\right\| \\
    & \leqslant & C \cdot \sqrt{\sum_{k \in \N} c_k^2} \\
    & \leqslant & 2\lambda_0C \cdot \sqrt{\sum_{k \in \N} b_k^2} \\
    & \leqslant & 2\lambda_0C^2 \cdot \left\|\sum_{i \in \N}a_i x_i\right\|,
    \end{eqnarray*}
    proving that the sequence $(x_i)_{i \in \N}$ is $2\lambda_0C^2$-unconditional.
    
\end{enumerate}

\end{proof}

\bigskip

\subsection{Properties of \HHP{} spaces}

Recall that \HHP{} spaces are non-Hilbertian Banach spaces that do not contain any direct sum of two non-Hilbertian subspaces. HI spaces are of course \HHP{}. We could only discover
two other examples of \HHP{} spaces. Before presenting them, we recall a basic result in operator theory. For its proof, see \cite{MaureyHI}, Proposition 3.2. The terminology of the next definition is from \cite{GowersMaurey}.

\begin{defin}
An operator $T \colon X \to Y$  between two Banach spaces is
\textit{infinitely singular} if there is no finite-codimensional subspace $X_0 \subseteq X$ such that $T_{\restriction X_0} \colon X_0 \to T(X_0)$ is an isomorphism.
\end{defin}

\begin{prop}[Folklore]\label{prop:InfSing}

An operator $T \colon X \to Y$ between two Banach spaces is infinitely singular if and only if for every $\varepsilon > 0$, there exists a subspace $X_\varepsilon \subseteq X$ such that $\left\|T_{\restriction X_\varepsilon}\right\| \leqslant \varepsilon$.

\end{prop}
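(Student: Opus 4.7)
The proof splits into two directions, of which the reverse implication is nearly immediate and the forward implication is the substantive one.

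For the reverse implication, assume that for every $\varepsilon > 0$ there exists $X_\varepsilon \in \Subinf(X)$ with $\|T_{\restriction X_\varepsilon}\| \leqslant \varepsilon$, and suppose towards contradiction that $T$ is not infinitely singular. Then there exists a finite-codimensional $X_0 \subseteq X$ such that $T_{\restriction X_0}$ is an isomorphism onto its image; let $c > 0$ be such that $\|Tx\| \geqslant c\|x\|$ for every $x \in X_0$. Apply the hypothesis with $\varepsilon < c$. Since $X_0$ is finite-codimensional and $X_\varepsilon$ is infinite-dimensional, $X_\varepsilon \cap X_0$ is nonzero (in fact of finite codimension in $X_\varepsilon$), so it contains a unit vector $x$. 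Then $\|Tx\| \leqslant \varepsilon < c \leqslant \|Tx\|$, a contradiction.

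For the forward implication, assume $T$ is infinitely singular and fix $\varepsilon > 0$. The plan is to build, inductively, a normalized basic sequence $(x_n)_{n \in \N}$ with basis constant at most $2$ such that $\|Tx_n\| \leqslant \varepsilon \cdot 2^{-(n+3)}$; then the subspace $X_\varepsilon := [x_n \mid n \in \N]$ will satisfy $\|T_{\restriction X_\varepsilon}\| \leqslant \varepsilon$. Indeed, for any $x = \sum_n a_n x_n \in X_\varepsilon$, the basis constant bound gives $|a_n| \leqslant 4\|x\|$ for every $n$, so
\[
\|Tx\| \;\leqslant\; \sum_{n} |a_n|\,\|Tx_n\| \;\leqslant\; 4\|x\| \sum_{n} \varepsilon\cdot 2^{-(n+3)} \;\leqslant\; \varepsilon\|x\|.
\]

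The inductive construction uses Mazur's classical technique. Suppose $x_0, \ldots, x_{n-1}$ have been built. By Mazur's lemma one finds a finite-codimensional subspace $Z_n \subseteq X$ (depending only on $x_0, \ldots, x_{n-1}$) such that any unit vector $z \in Z_n$ can be appended to the sequence while keeping the basis constant at most $2$. The crucial observation is that $T_{\restriction Z_n}$ is itself infinitely singular: any finite-codimensional subspace of $Z_n$ is finite-codimensional in $X$, so if $T_{\restriction Z_n}$ were an isomorphism on some such subspace, $T$ would not be infinitely singular. In particular $T_{\restriction Z_n}$ is not bounded below, so for every $\delta > 0$ there exists a unit vector $z \in Z_n$ with $\|Tz\| < \delta$. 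Applying this with $\delta = \varepsilon \cdot 2^{-(n+3)}$ gives the desired $x_n$.

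The main (though small) subtlety is the equivalence between ``not bounded below'' and ``contains unit vectors of arbitrarily small $T$-norm'', which is essentially the definition of boundedness from below; and the inheritance of infinite singularity to finite-codimensional restrictions, which is straightforward from the definition. No deep obstacle appears; the only care needed is arranging the geometric series so that the final norm bound lands at $\varepsilon$.
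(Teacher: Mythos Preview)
Your proof is correct and is the standard argument. Note that the paper does not actually prove this proposition: it cites \cite{MaureyHI}, Proposition~3.2, as its source. The argument you give---the easy contrapositive for one direction, and for the other the construction of a basic sequence via Mazur's lemma together with the observation that infinite singularity passes to finite-codimensional subspaces (so that unit vectors of arbitrarily small $T$-norm are available at every step)---is exactly the classical proof one finds in that reference.
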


\begin{exa}\label{exa:HIplusl2}

Let $Y$ be an HI space. Then $X = Y \oplus \ell_2$ is \HHP{}. Indeed, denote by $p_Y \colon X \to Y$ and $p_{\ell_2}\colon X \to \ell_2$ the two projections. Suppose that two non-Hilbertian subspaces $U, V \subseteq X$ are in direct sum. Then $\left(p_{\ell_2}\right)_{\restriction U}$ and $\left(p_{\ell_2}\right)_{\restriction V}$ are infinitely singular, so by \prettyref{prop:InfSing}, we can find subspaces $U' \subseteq U$ and $V' \subseteq V$ on which $p_{\ell_2}$ has arbitrarily small norm. In particular, $U'$ and $V'$ can be chosen in such a way that $\left\|\left(p_{\ell_2}\right)_{\restriction U' \oplus V'}\right\| \leqslant \frac{1}{2}$. Thus, $p_Y$ induces an isomorphism between $U' \oplus V'$ and $p_Y(U' \oplus V')$. In particular, $p_Y(U')$ and $p_Y(V')$
 are two subspaces of $Y$ that are in direct sum, contradicting the fact that $Y$ is HI.
\end{exa}

\begin{exa}\label{exa:ArgyrosRaikofstalis}

In \cite{ArgyrosRaikoftsalis}, Argyros and Raikoftsalis build, for every $1 \leqslant p < \infty$ (resp. for $p=\infty$) a space $\mathfrak{X}_p$ having the following properties: $\mathfrak{X}_p \cong \mathfrak{X}_p\oplus\ell_p$ (resp. $\mathfrak{X}_p \cong \mathfrak{X}_p\oplus c_0$), and for every decomposition as a direct sum $\mathfrak{X}_p = Y \oplus Z$, then $Y\cong \mathfrak{X}_p$ and $Z \cong \ell_p$ (resp. $Z \cong c_0$), or vice-versa. The space $\mathfrak{X}_p$ is built as an \emph{HI Schauder sum} of copies of $\ell_p$ (resp. $c_0$); the construction of such a sum is quite involved and is exposed in \cite{ArgyrosFelouzis}, Section 7. In \cite{ArgyrosRaikoftsalis}, the following results are proved for the space $\mathfrak{X}_p$:
\begin{enumerate}
\item $\mathfrak{X}_p$ does not contain any direct sum of two HI subspaces (see the proof of Lemma 1 in \cite{ArgyrosRaikoftsalis});
\item for every subspace $Y \subseteq \mathfrak{X}_p$ not containing any HI subspace, and for every $\varepsilon > 0$, there exists a projection $P$ of $\mathfrak{X}_p$ with image isomorphic to $\ell_p$ (resp. to $c_0$) such that $\left\|\left(\Id_{\mathfrak{X}_p} - P\right)_{\restriction Y}\right\| \leqslant \varepsilon$ (see Lemma 3 in \cite{ArgyrosRaikoftsalis}).
\end{enumerate}
This implies that $\mathfrak{X}_2$ is \HHP{}. Indeed, if two subspaces $Y, Z \subseteq \mathfrak{X}_2$ are in direct sum, then by 1., one of them does not contain any HI subspace, for example $Y$. Choosing a projection $P$ as given by 2. for $\varepsilon = \frac{1}{2}$, we get that $P_{\restriction Y}$ is an isomorphism onto its image, which is contained in an isomorphic copy of $\ell_2$; so $Y$ is Hilbertian.

\end{exa}

The interest of \HHP{} spaces in the study of ergodicity, and in particular of our conjectures \prettyref{conj:WeakJohnson} and \prettyref{conj:WeakErgodic}, comes from the following result:

\begin{thm}\label{thm:MNHHHP}

Let $X$ be a non-ergodic, non-Hilbertian separable Banach space. Then $X$ has a non-Hilbertian subspace $Y$ such that:
\begin{itemize}
    \item either $Y$ has an unconditional basis;
    \item or $Y$ is simultaneously \MNH{} and \HHP{}.
\end{itemize}

\end{thm}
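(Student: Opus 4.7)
The plan is to combine the three main results already proved in the paper: Theorem \ref{thm:ErgodicMNH} (which yields $d_2$-minimal subspaces in non-ergodic non-Hilbertian spaces), the first dichotomy in its degree form (Theorem \ref{thm:FirstDichotofordegrees}), and Theorem \ref{thm:BasesMNH}(2) (which upgrades unconditional FDD's to unconditional bases inside $d_2$-minimal spaces). The key structural observation that makes everything fit together is that $d_2$-minimality is inherited by every non-Hilbertian subspace.

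First, since $X$ is non-ergodic and non-Hilbertian, \prettyref{thm:ErgodicMNH} provides a non-Hilbertian subspace $Y_0 \subseteq X$ which is $d_2$-minimal. Applying \prettyref{thm:FirstDichotofordegrees} to $Y_0$ (with the Hilbertian degree $d_2$), we obtain a non-Hilbertian subspace $Y \subseteq Y_0$ which either has a $d_2$-better UFDD, or is $d_2$-HI. I now claim that $Y$ itself is $d_2$-minimal. Indeed, pick any non-Hilbertian $Z \subseteq Y$. Then $Z$ is a non-Hilbertian subspace of $Y_0$, so by $d_2$-minimality of $Y_0$ there is an isomorphic embedding $Y_0 \hookrightarrow Z$; composing with the inclusion $Y \hookrightarrow Y_0$ gives an embedding $Y \hookrightarrow Z$, which is exactly the $d_2$-minimality of $Y$. (This same computation shows that $d_2$-minimality descends to arbitrary non-Hilbertian subspaces.)

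If $Y$ falls into the second alternative of \prettyref{thm:FirstDichotofordegrees}, then $Y$ is simultaneously $d_2$-minimal and $d_2$-HI, and we are done. If $Y$ falls into the first alternative, then in particular $Y$ has an unconditional FDD, and since $Y$ is $d_2$-minimal, \prettyref{thm:BasesMNH}(2) yields a non-Hilbertian subspace $Y' \subseteq Y$ with an unconditional basis. Replacing $Y$ by $Y'$ in this case concludes the proof.

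There is no real obstacle here: all of the difficult combinatorial work has been carried out in the earlier dichotomy theorems and in \prettyref{thm:BasesMNH}. The only point that requires a moment's thought is the descent property of $d_2$-minimality to non-Hilbertian subspaces, which is a one-line consequence of the definition.
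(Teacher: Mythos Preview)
Your proof is correct and follows essentially the same route as the paper: pass to a \MNH{} subspace via \prettyref{cor:HMinErgodic} (equivalently \prettyref{thm:ErgodicMNH}), apply the first dichotomy for degrees, and in the UFDD case invoke \prettyref{thm:BasesMNH}(2). The paper's proof is terser and leaves the hereditary descent of $d_2$-minimality implicit, whereas you spell it out explicitly; otherwise the arguments are the same.
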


\begin{proof}
By \prettyref{cor:HMinErgodic}, we can assume that $X$ is \MNH{}. By \prettyref{thm:FirstDichotofordegrees}, either $X$ has a subspace with a better UFDD, or a \HHP{} subspace. In first case, \prettyref{thm:BasesMNH} shows that we can find a further non-Hilbertian subspace having an unconditional basis.
\end{proof}

It would of course be interesting to remove the second alternative, thus reducing somehow the problem to spaces with unconditional bases. This motivates the following question:

\begin{question}\label{question:MNHHHPErg}
Does there exist a non-ergodic Banach space which is simultaneously \MNH{} and \HHP{}?
\end{question}


\smallskip

Both examples of \HHP{} spaces given above contain an HI subspace. In particular, they are ergodic, and they cannot be $d_2$-minimal. Thus, \prettyref{question:MNHHHPErg} reduces to the special case of \HHP{} spaces that do not contain any HI subspace. The latter spaces are exactly those \HHP{} spaces that are $\ell_2$-saturated. We know no examples of such spaces.

\begin{question}\label{ExistHHP}
Do there exist $\ell_2$-saturated \HHP{} spaces?
\end{question}

After this article was submitted, Argyros, Manoussakis and Motakis announced in \cite{ArgyrosManoussakisMotakis} that they were able to build an $\ell_2$-saturated \HHP{} space, thus giving a positive answer to Question \ref{ExistHHP}. The construction will be published in a forthcoming paper, and the preprint \cite{ArgyrosManoussakisMotakis} exposes the construction of analogues of that space.

\smallskip

We now come back to \prettyref{question:MNHHHPErg}. We conjecture that the answer to this question is negative, and we actually have the following stronger conjecture:

\begin{conj}\label{conj:MNHHHP}
A Banach space cannot be simultaneously \MNH{} and \HHP{}.
\end{conj}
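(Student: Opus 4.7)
The plan is to derive a contradiction by combining an operator-theoretic rigidity for $d_2$-HI spaces with uniform $d_2$-minimality. The skeleton mirrors the classical Gowers--Maurey argument (\prettyref{thm:GowersMaurey}) showing that HI spaces are isomorphic to no proper subspace of themselves; here the aim is a parallel statement within the class of non-Hilbertian subspaces, which is then incompatible with $d_2$-minimality.

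First I would attempt to prove a $d_2$-analog of the Gowers--Maurey operator dichotomy: for a $d_2$-HI space $X$, for every non-Hilbertian $Y \subseteq X$ and every bounded $T \colon Y \to X$, there exists a scalar $\lambda$ such that $T - \lambda \, i_Y$ is \emph{$d_2$-strictly singular} in the sense that for every non-Hilbertian $Z \subseteq Y$ and every $\varepsilon > 0$ there is a further non-Hilbertian $Z' \subseteq Z$ with $\left\| (T - \lambda \, i_Y)_{\restriction Z'} \right\| < \varepsilon$. The proof would follow the Gowers--Maurey blueprint: otherwise one would find two non-Hilbertian $U, V \subseteq Y$, distinct scalars $\lambda_1 \neq \lambda_2$, and a constant $c > 0$ such that $T - \lambda_1 \, i_Y$ and $T - \lambda_2 \, i_Y$ are bounded below by $c$ on all further non-Hilbertian subspaces of $U$ and $V$ respectively; then a Ramsey-type extraction via \prettyref{thm:LocalGowersThm} would yield almost-disjointly-supported non-Hilbertian block subspaces $U' \subseteq U$, $V' \subseteq V$ that are in direct sum inside $X$, contradicting $d_2$-HI.

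Second, assuming the dichotomy, I would apply it to the embeddings $T \colon X \to Y$ provided by uniform $d_2$-minimality (\prettyref{prop:UnifMinimal}): we obtain $T = \lambda \, i_X + S$ with $\lambda \neq 0$ and $S$ $d_2$-strictly singular. A Fredholm-type argument adapted to the $d_2$-setting should then force $Y = T(X)$ to be of ``small $d_2$-codimension'' in $X$, in the sense that $X = Y + H$ for some Hilbertian $H$. To contradict this, I would exploit the $\ell_2$-saturation of $X$ together with a $d_2$-good FDD $(F_n)$ (available by \prettyref{lem:GoodFDD}): splitting $\N$ into two infinite sets $A, B$, the subspaces $Y = [F_n \mid n \in A]$ and $Z = [F_n \mid n \in B]$ are both non-Hilbertian by $d_2$-goodness, and the impossibility of covering $X$ by $Y + H$ with $H$ Hilbertian would follow by showing that a complemented Hilbertian subspace of $X$ transverse to $Y$ must leave a non-Hilbertian piece of $Z$ untouched, producing a forbidden non-Hilbertian direct sum inside $X$.

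The principal obstacle is clearly the first step. Gowers--Maurey's original proof leans on the fact that in an HI space the unit spheres of any two infinite-dimensional subspaces are at distance zero, and this follows from the failure of any direct sum decomposition. Transposing this to $d_2$-HI would require the same for any two \emph{non-Hilbertian} subspaces, and this is not obvious from the mere absence of non-Hilbertian direct sums: the Hilbertian ``slack'' in the definition may in principle permit genuinely separated non-Hilbertian subspaces. If sphere approximation fails, a fallback is to refine \prettyref{thm:FirstDichoto} into an operator-valued dichotomy, stating that for an operator $T \colon Y \to X$ from a non-Hilbertian $Y$ into a $d_2$-HI space $X$, either $T$ is a scalar inclusion modulo a $d_2$-strictly-singular piece, or $T$ witnesses a non-Hilbertian direct sum inside $X$; such a statement, likely provable via the adversarial Ramsey principle (\prettyref{thm:AdvRamsey}) applied to a suitable Gowers space of pairs (subspace, operator), would bypass the sphere-approximation issue and complete the argument.
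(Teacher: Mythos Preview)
The statement you are attempting to prove is \prettyref{conj:MNHHHP}, which the paper states explicitly as an open \emph{conjecture}; there is no proof of it in the paper to compare your attempt against. The paper only motivates the conjecture by analogy with Gowers--Maurey's \prettyref{thm:GowersMaurey}, while cautioning that the direct analogue fails (both \prettyref{exa:HIplusl2} and \prettyref{exa:ArgyrosRaikofstalis} are \HHP{} spaces isomorphic to proper subspaces of themselves).

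Your proposal, viewed as an attack on this open problem, has genuine gaps at each stage. Your Step~1 (the $d_2$-analog of the Gowers--Maurey operator dichotomy) is, as you recognize, the crux; but note that your Step~2 conclusion---that every isomorphic copy of $X$ inside $X$ is complemented by a Hilbertian subspace---is precisely \prettyref{question:SubHHP}, which the paper poses as an open question even for general \HHP{} spaces. So your intermediate target is itself unresolved, and the paper's discussion after \prettyref{lem:HIPlusl2} suggests it is not a routine consequence of the definitions. Your proposed fallback via an ``operator-valued'' refinement of \prettyref{thm:FirstDichoto} through \prettyref{thm:AdvRamsey} is not a proof sketch but a hope; nothing in the paper's machinery produces operator dichotomies of that type.

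Your Step~4 also has a subtle error: in a \HHP{} space, a $d_2$-good FDD $(F_n)$ exists by \prettyref{lem:GoodFDD}, but it cannot be unconditional (that would immediately contradict \HHP{}). For a non-unconditional FDD, splitting $\N = A \sqcup B$ gives subspaces $[F_n \mid n \in A]$ and $[F_n \mid n \in B]$ that are both non-Hilbertian, but their sum is in general \emph{not} a topological direct sum---the natural projection need not be bounded. So you cannot manufacture a forbidden non-Hilbertian direct sum this way, and the contradiction you sketch does not go through.
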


This conjecture is motivated by the fact that the \HHP{} property is a weakening of the HI property, and it is known that HI spaces have many different subspaces, up to isomorphism. For example, Gowers--Maurey's \prettyref{thm:GowersMaurey} says that HI spaces cannot be isomorphic to any proper subspace of themselves. This implies, in particular, that they cannot be minimal. It would be tempting to adapt Gowers--Maurey's approach to \HHP{} spaces. Note that, however, in the case of \HHP{} spaces, we cannot hope to have a result as strong as Gowers--Maurey's one, since both spaces presented in \prettyref{exa:HIplusl2} and in \prettyref{exa:ArgyrosRaikofstalis} are isomorphic to their hyperplanes and even, to their direct sum with $\ell_2$. However, we can hope that these spaces cannot be isomorphic to ``too deep'' subspaces of themselves. This is at least the case for our first example, as shown by the following lemma:

\begin{lem}\label{lem:HIPlusl2}

Let $Y$ be an HI space and let $X = Y \oplus \ell_2$. Then every subspace of $X$ that is isomorphic to $X$ is complemented in $X$ by a (finite- or infinite-dimensional) Hilbertian subspace.

\end{lem}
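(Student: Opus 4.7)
The plan is to use Gowers--Maurey's structure theorem for operators on HI spaces to control the projection of $Z$ onto $Y$, and then to build a Hilbertian (or finite-dimensional) complement of $Z$ explicitly out of a finite-dimensional complement inside $Y$ and an orthogonal complement inside $\ell_2$. Denote the canonical projections along $X = Y \oplus \ell_2$ by $P_Y \colon X \to Y$ and $P_{\ell_2} \colon X \to \ell_2$. Fix an isomorphism $T \colon X \to Z$ and set $W := T(Y \oplus \{0\})$, so that $W \subseteq Z$ is isomorphic to $Y$, hence HI.

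The key step is to show that $P_Y(Z)$ is closed and of finite codimension in $Y$. Consider $A := P_Y \circ T|_Y \colon Y \to Y$. By Gowers--Maurey's structure theorem for operators on HI spaces, there exist $\lambda \in \R$ and a strictly singular operator $S \colon Y \to Y$ with $A = \lambda I_Y + S$. If $\lambda = 0$, then $A$ is strictly singular, and for any sufficiently small $\varepsilon > 0$ there is an infinite-dimensional subspace $Y_0 \subseteq Y$ with $\|A|_{Y_0}\| \leqslant \varepsilon$; for $w = T(y)$ with $y \in Y_0$ this gives $\|P_Y(w)\| \leqslant \varepsilon\|T^{-1}\|\,\|w\|$, so $P_{\ell_2}|_{T(Y_0)}$ is bounded below and yields an isomorphic embedding of $T(Y_0) \cong Y_0$ into $\ell_2$. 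This contradicts the fact that $Y_0$ is HI. Hence $\lambda \neq 0$, and $A$ is a strictly singular perturbation of an invertible operator, therefore Fredholm of index $0$. In particular, $P_Y(W) = A(Y)$ is closed and of finite codimension in $Y$, and so is $P_Y(Z) \supseteq P_Y(W)$.

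Fix a finite-dimensional $F \subseteq Y$ with $Y = P_Y(Z) \oplus F$. Let $K := Z \cap \ell_2$, a closed subspace of $\ell_2$, let $K^\perp$ denote its orthogonal complement in $\ell_2$, and define $H := F + K^\perp \subseteq X$. Since $F \subseteq Y$ and $K^\perp \subseteq \ell_2$ meet trivially, $H$ is closed and isomorphic to $F \oplus K^\perp$, which is either finite-dimensional or Hilbertian. For $Z \cap H = \{0\}$: if $z = f + k^\perp \in Z \cap H$, then $P_Y(z) = f$ lies in $P_Y(Z) \cap F = \{0\}$, so $z = k^\perp \in Z \cap K^\perp \subseteq (Z \cap \ell_2) \cap K^\perp = K \cap K^\perp = \{0\}$. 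For $Z + H = X$: given $x = y + h$ with $y \in Y$ and $h \in \ell_2$, split $y = y' + f$ along $P_Y(Z) \oplus F$, pick $z \in Z$ with $P_Y(z) = y'$, so that $z = y' + k$ for some $k \in \ell_2$; then split $h - k = k_0 + k_0^\perp$ along $K \oplus K^\perp$, and write $x = (z + k_0) + (f + k_0^\perp) \in Z + H$. Both $Z$ and $H$ being closed, the open mapping theorem promotes this algebraic splitting into a topological direct sum.

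The principal obstacle is the Fredholm step in the second paragraph, which is what converts the qualitative fact that $W \subseteq Z$ is HI into the quantitative statement that $P_Y(Z)$ is finite-codimensional in $Y$; once this is secured, the explicit construction of the Hilbertian complement is straightforward.
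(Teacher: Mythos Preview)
Your proof is correct and follows the same overall strategy as the paper: show that the $Y$-projection of the HI part of the copy of $X$ has finite codimension in $Y$, then build a Hilbertian complement from a finite-dimensional piece of $Y$ together with a piece of $\ell_2$. The execution differs in two places. For the Fredholm step, the paper argues that $(P_Y)_{\restriction V}$ is not infinitely singular (using \prettyref{prop:InfSing}) and then invokes \prettyref{thm:GowersMaurey} to conclude finite codimension, whereas you go through the operator structure on HI spaces. For the complement, the paper simply takes an arbitrary complement of $U \cap (\ell_2 + F)$ inside the Hilbertian space $\ell_2 + F$, while you give the explicit formula $H = F \oplus K^\perp$ with $K = Z \cap \ell_2$; your construction is a bit more concrete, theirs a bit shorter.

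One technical caveat: since the paper works over $\R$, the representation $A = \lambda I_Y + S$ with $\lambda \in \R$ is not guaranteed for an arbitrary real HI space (the quotient $\mathcal{L}(Y)/\mathcal{S}(Y)$ can be $\R$, $\C$, or $\mathbb{H}$). What you actually need, and what does hold in full generality, is the dichotomy that every bounded operator on an HI space is either strictly singular or Fredholm of index $0$; with that formulation your argument goes through unchanged.
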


\begin{proof}
Denote by $P_Y \colon X \to Y$ and $P_{\ell_2} \colon X \to \ell_2$ the projections. Let $U \subseteq X$ be an isomorphic copy of $X$; we can write $U = V \oplus W$, where $V \cong Y$ and $W \cong \ell_2$. Suppose that $(P_Y)_{\restriction V}$ is infinitely singular. Then by \prettyref{prop:InfSing}, we can find a subspace $V'' \subseteq V$ on which $P_Y$ has small norm. In particular, $P_{\ell_2}$ would induce an isomorphism between $V''$ and a subspace of $\ell_2$, a contradiction. Thus, $(P_Y)_{\restriction V}$ is not  infinitely singular: we can find a finite-codimensional subspace $V'$ of $V$ such that $P_Y$ induces an isomorphism between $V'$ and $P_Y(V')$.

\smallskip

Observe that $V' \cong P_Y(V')$, and that $V'$ and $P_Y(V')$ are respectively subspaces of $V$ and $Y$, that are HI and isomorphic. By Gowers--Maurey's \prettyref{thm:GowersMaurey}, we deduce that the codimension of $P_Y(V')$ in $Y$ is equal to the codimension of $V'$ in $V$, so is finite. So write $Y = P_Y(V') \oplus F$, where $F$ has finite dimension. We have $P_Y(V') \subseteq V + \ell_2$, so $Y \subseteq V + \ell_2 + F$, so $X = V + \ell_2 + F = U + \ell_2 + F$. Letting $Z$ be a complement of $U \cap (\ell_2 + F)$ in $\ell_2 + F$, we get that $Z$ is Hilbertian and that $X = U \oplus Z$.

\end{proof}


\begin{question}\label{question:SubHHP}
Let $X$ be \HHP{} and let $Y$ be a subspace of $X$ which is isomorphic to $X$. Does it follow that $Y$ is complemented by a (finite- or infinite-dimensional) Hilbertian subspace?
\end{question}


\bigskip
\paragraph{Acknowledgments.} The authors would like to thank F. Le Ma\^itre and S. Todorcevic for useful suggestions, and Ch. Rosendal and O. Kurka for comments
on drafts of the paper. They also thank the anonymous referees for comments leading to the improvement of this article.

\bigskip\bigskip

\bibliographystyle{plain}
\bibliography{main}

 \par 
  \bigskip
  \textsc{\footnotesize W. Cuellar Carrera, 
Departamento de Matem\'atica, Instituto de Matem\'atica e
Estat\'\i stica, Universidade de S\~ao Paulo, rua do Mat\~ao 1010,
05508-090 S\~ao Paulo SP, BRAZIL}

\textit{E-mail address}: \texttt{cuellar@ime.usp.br}

  \par
  \bigskip
    \textsc{\footnotesize N. de Rancourt, Universit\"at Wien, Institut f\"ur Mathematik,
    Kurt G\"odel Research Center, Augasse 2-6, UZA 1 -- Building 2, 1090 Wien, AUSTRIA}
    
    {\small \textit{Current adress:}
    Charles University,
    Faculty of Mathematics and Physics,
  Department of Mathematical Analysis,
  Sokolovsk\'a 49/83,
  186 75 Praha 8,
  CZECH REPUBLIC
    
    \textit{E-mail address}: \texttt{rancourt@karlin.mff.cuni.cz}
    
  \par 
  \bigskip
  \textsc{\footnotesize V. Ferenczi, 
Departamento de Matem\'atica, Instituto de Matem\'atica e
Estat\'\i stica, Universidade de S\~ao Paulo, rua do Mat\~ao 1010,
05508-090 S\~ao Paulo SP, BRAZIL,
and \\
Equipe d'Analyse Fonctionnelle,
Institut de Math\'ematiques de Jussieu,
Sorbonne Universit\'e - UPMC,
Case 247, 4 place Jussieu,
75252 Paris Cedex 05,
FRANCE}

\textit{E-mail address}: \texttt{ferenczi@ime.usp.br}}

\end{document}